\numberwithin{equation}{section}
\newcommand{\rd}{\operatorname{red}}
\newcommand{\A }{\mathbb{A}}
\newcommand{\PP}{\mathbb{P}}
\newcommand{\ZZ}{\mathbb{Z}}
\newcommand{\bR}{\mathbf{R}}
\newcommand{\bT}{\mathbf{T}}
\newcommand{\bk}{\mathbf{k}}
\newcommand{\bw}{\mathbf{w}}
\newcommand{\kk}{\bk}
\def\bH{\mathbf{H}}
\newcommand{\cal}{\mathcal}
\def\cA{{\cal A}}
\def\cB{{\cal B}}
\def\cC{{\cal C}}
\def\cD{{\cal D}}
\def\cE{{\cal E}}
\def\cH{{\cal H}}
\def\cK{{\cal K}}
\def\cM{{\cal M}}
\def\cN{{\cal N}}
\def\cQ{{\cal Q}}
\def\cR{{\cal R}}
\def\cU{{\cal U}}
\def\cV{{\cal V}}
\def\cW{{\cal W}}
\def\cX{{\cal X}}
\def\cY{{\cal Y}}
\def\cZ{{\cal Z}}
\def\cG{{\cal G}}
\def\sE{{\mathscr E}}
\def\sH{{\mathscr H}}
\def\sL{{\mathscr L}}
\def\sM{{\mathscr M}}
\def\sO{{\mathscr O}}
\def\fE{\mathfrak{E}}
\def\fF{\mathfrak{F}}
\def\fM{\mathfrak{M}}
\def\2M{M}
\def\fN{\mathfrak{N}}
\def\fV{\mathfrak{V}}
\def\ff{\mathfrak{f}}
\def\fn{\mathfrak n}
\def\fp{\mathfrak{p}}
\def\mapright#1{\,\smash{\mathop{\lra}\limits^{#1}}\,}
\def\fdd{\fM_2^{\rm div}}
\def\dual{^{\vee}}
\def\sta{^\ast}
\def\st{^{\mathrm{st}}}
\def\upmo{^{-1}}
\def\sta{^{\ast}}
\def\sta{^*}
\def\lra{\longrightarrow}
\def\lsta{_{\ast}}
\newcommand{\Si}{\Sigma}
\newcommand{\Ga}{\Gamma}
\newcommand{\si}{\sigma}
\def\begeq{\begin{equation}}
\def\endeq{\end{equation}}
\def\and{\quad{\rm and}\quad}
\def\bl{\bigl(}
\def\br{\bigr)}
\def\defeq{:=}
\def\sub{\subset}
\def\Ao{{\mathbb A}^{\!1}}
\def\and{\quad\text{and}\quad}
\def\mapright#1{\,\smash{\mathop{\lra}\limits^{#1}}\,}
\def\lalp{_\alpha}
\DeclareMathOperator{\pr}{pr}
\DeclareMathOperator{\rank}{rank}
\newtheorem{prop}{Proposition}[section]
\newtheorem{theo}[prop]{Theorem}
\newtheorem{lemm}[prop]{Lemma}
\newtheorem{coro}[prop]{Corollary}
\newtheorem{defi}[prop]{Definition}
\newtheorem{assu}[prop]{Assumption}
\theoremstyle{definition}
\newtheorem{rema}[prop]{Remark}
\newtheorem{exam}[prop]{Example}
\def\Po{{\mathbb P^1}}
\def\Pn{{\mathbb P}^n}
\def\PP{{\mathbb P}}
\def\AA{{\mathbb A}}
\def\MPdd{\overline{M}_2(\Pn,d)}
\def\fM{\mathfrak{M}}
\def\sta{^\ast}
\def\beq{\begin{equation}}
\def\eeq{\end{equation}}
\def\vsp{\vskip5pt}
\let\ga=\gamma
\def\fN{{\mathfrak N}}
\def\sss{\mathrm{ss}}
\def\bcD{\overline{\cD}}
\def\bcC{\overline{\cC}}
\def\bcA{\overline{\cA}}
\def\bcB{\overline{\cB}}
\def\barM{\overline{M}}
\def\fq{\mathfrak q}
\def\wv{\widetilde{\mathcal V}}
\def\wh#1{\widehat{#1}}
\def\wc#1{\widecheck{#1}}
\def\ti#1{\widetilde{#1}}
\def\ov#1{\overline{#1}}
\def\ud#1{\underline{#1}}
\def\eref#1{(\ref{#1})}
\def\tn{\textnormal}
\def\ts{\textsf}
\def\inn{\!\in\!}
\def\eq{\!=\!}
\def\bsl{\backslash}
\def\lr#1{\langle{#1}\rangle}
\def\lrbr#1{[{#1}]}
\def\mwt{\fM_2^{\tn{wt}}}
\def\mn{\tn{mn}}
\def\nM{\textnormal M}
\def\P{\mathbb P}
\def\fE{\mathfrak E}
\def\bE{\mathbf E}
\def\se{\mathsf e}
\def\fK{\mathfrak{K}}
\def\De{\Delta}
\def\Up{\Upsilon}
\def\Th{\Theta}
\def\al{\alpha}
\def\be{\beta}
\def\ga{\gamma}
\def\de{\delta}
\def\ka{\kappa}
\def\la{\lambda}
\def\si{\sigma}
\def\th{\theta}
\def\ve{\varepsilon}
\def\vt{\vartheta}
\def\u{\upsilon}
\def\ze{\zeta}
\def\ord{\rho}
\def\dt#1{\theta_{#1}}
\def\wdt#1{\widehat{\theta}_{#1}}
\def\hmr{\varphi_{\le r}}
\def\ohmr{{\overline\varphi}_{\le r}}
\def\rd{\mathsf r}
\def\ph{\mathsf p}
\def\st{\mathsf s}
\def\chk{\raisebox{0.3ex}{\scalebox{0.66}{$\vee$}}}
\def\Mw{{\fM_2^{\rm wt}}}
\def\Md{\fM_2^{\rm div}}
\def\wt{^{\rm wt}}
\def\ve{\varepsilon}
\def\fD{\mathfrak{I}}
\def\ex{\tn{gr}}
\def\cht{\mathbf{m}}
\def\im{\tn{i-m}}
\def\PT{\tn{PT}}
\def\tMPdd{\widetilde{M}_2(\Pn,d)}
\newcommand\undermat[2]{%
	\makebox[0pt][l]{$\smash{\underbrace{\phantom{%
					\begin{matrix}#2\end{matrix}}}_{\text{$#1$}}}$}#2}
\newcommand{\doublewidetilde}[1]{{%
  \mathpalette\double@widetilde{#1}%
}}
\newcommand{\double@widetilde}[2]{%
  \sbox\z@{$\m@th#1\widetilde{#2}$}%
  \ht\z@=.9\ht\z@
  \widetilde{\box\z@}%
}
\title[Modular Resolution of Moduli Space of Stable Maps]
{Genus Two Stable Maps, Local Equations and Modular Resolutions}
\date{}
\author{Yi Hu}
\address{Department of Mathematics, University of Arizona, USA.}
\email{yhu@math.arizona.edu}
\author{Jun Li}
\address{Stanford University, USA.
\newline\indent
Current address:
Shanghai Center for Mathematical Sciences, China}
\email{lijun2210@fudan.edu.cn}
\author{Jingchen Niu}
\address{
Department of Mathematics, University of Arizona, USA.
\newline\indent
Current address:
Santakatu 12 G 109, Helsinki 00180, Finland}
\email{jingchen.niu@mail.huji.ac.il}
\begin{document}

\begin{abstract} 
We provide a geometric construction of a sequence of modular blowups
of the Artin stack parameterizing pre-stable pairs consisting of a  genus-two nodal curve and a smooth divisor. The resulting stack locally diagonalizes the tautological derived objects associated
with the moduli of stable maps from genus-two curves to projective space. As a consequence,
the singularities of the main component of the moduli space of stable maps are resolved, and the
entire space admits only normal crossing singularities. Our approach is expected to generalize to
higher genera.	
\end{abstract}

\maketitle
\setcounter{tocdepth}{2}
\tableofcontents

\section{Introduction}
\label{SecIntro}

Moduli problems are of central importance in algebraic geometry.
Among them, the moduli spaces $\barM_{g,k}(\PP^n,d)$ of 
genus $g$ degree $d$ stable maps into 
projective space with $k$ marked points
are particularly important. For example,
the Gromov-Witten theory is an intersection theory on these fundamental moduli spaces
(introduced and constructed in \cite{K, A, FP}).

This article focuses on the deep geometry of the moduli space $\barM_{g,k}(\PP^n,d)$.
By Murphy's law of Vakil (\cite{Vakil06}), 
the moduli space
$\barM_{g,k}(\PP^n,d)$ is arbitrarily singular: every singularity of finite type
over $\ZZ$ appears for some $n$, $g$, and $d$.
The resolution of singularity 
is arguably one of the hardest problems in algebraic geometry
(\cite{Hironaka64a, Hironaka64b, deJong96, ATW, McG}, etc.).
%To date, the problem remains open for positive characteristics.
%{\blue Over an algebraically closed field of characteristic~0,
Several moduli problems 
offer platforms for geometric solutions,
and $\barM_{g,k}(\PP^n,d)$ is one of them  (and this is {\it the pivotal motivation} for 
one of us to study
the moduli  $\barM_{g,k}(\PP^n,d)$). 
A canonical resolution of  $\barM_{g,k}(\PP^n,d)$ is also desirable for applications to 
computing GW invariants,
even though these invariants can be introduced without using any resolution 
(\cite{BF} and \cite{LT}).
When the genus $g$ is $0$,
the moduli space $\barM_{0,k}(\PP^n,d)$ is smooth. {For $g\eq 1$,
the moduli space $\barM_{1,k}(\PP^n,d)$ is
singular and the resolution of the main component of $\barM_{1,k}(\PP^n,d)$ 
was constructed by Vakil and Zinger in \cite{VZ08}, followed by
an algebraic approach to 
 the entire moduli $\barM_{1}(\PP^n,d)$ (without marked points)} by Hu and Li \cite{HL10}.

Our current article begins to treat the 
 more difficult cases of general genera.
We start with
the moduli space $\barM_2(\PP^n,d)$;
for conciseness and without loss of generality in terms of singularity, 
we still work with  curves without marked points.
The goal of this paper is
to {\it geometrically} describe  a canonical sequence of blowups of $\barM_2(\PP^n,d)$
such that the resulted stack admits only normal crossing singularities 
and each of its irreducible components 
is smooth.
This is the best that one can hope for as far as smoothness is concerned.

In the subsequent works \cite{HN1,HN2}, the first and third named authors proved that the blowup stacks constructed in both \cite{HL10} and the present article admit modular interpretations: they parameterize certain geometric objects. This perspective has the advantage of being more amenable
to generalization to higher genus.

Although not a primary motivation of this series of works, 
the smooth blowup of $\barM_2(\PP^n,d)$ obtained in this paper can be used, for instance, 
to prove the ``hyperplane'' property of the Gromov-Witten invariants of quintic Calabi-Yau threefolds
conjectured in \cite{LZ} (along with
other possible calculations). 
%This is also a reason that we work with unmarked curves.
Further, the method provided in this paper
may also be helpful for the resolution of other singular moduli used in the recent progresses in mirror symmetry.

\smallskip
To state our main results,
consider the smooth Artin stack $\fM_2^{\rm div}$ (written as $\mathfrak D_2$ in~\cite{HN2}) consisting  of pre-stable pairs
$\big(C, D\big)$ where~$C$  are connected genus 2 nodal curves 
and $D\!\subset\!C$ are simple divisors away from the nodes of $C$. 
A pair $(C,D)$ is \ts{pre-stable} if any smooth rational
component of $C$ missing the divisor $D$ has at least three nodes.

To resolve $\MPdd$, we construct a sequence of smooth blowups of $\fM_2^{\rm div}$, then pull back this sequence to $\MPdd$. The process would be more straightforward if there existed a natural global morphism from $\MPdd$ to $\fM_2^{\rm div}$. Fortunately, although such a global morphism is not
available, a rich supply of local morphisms suffices for our purpose.

For every $[C,u]\inn \ov M_2(\P^n,d)$, we have
$\big(C,u^{-1}(H)\big)\inn\fM_2^{\rm div}$ for a generic hyperplane $H\!\subset\!\P^n$.
Let $\breve{\P}^n\eq \tn{Gr}(n,n\!+\!1)$ be the space of the hyperplanes of $\P^n$.
Then, for every neighborhood $U\!\subset\!\ov M_2(\P^n,d)$ of $[C,u]$, we set
\begin{align}\label{Eqn:H}
	\bH_U:=
	\big\{\,H\inn\breve{\P}^n\,:\,
	\big(C', (u')^{-1}(H)\big)\inn\fM_2^{\rm div}~
	\forall~[C',u']\inn U\,
	\big\}\,,
\end{align}
which is nonempty as long as $U$ is small.
Each $H\inn \bH_U$ gives rise to a morphism 
\begin{align}\label{toP0}
	f_{U,H}:\,
	U\lra \fM_2^{\rm div}, \qquad
	[C,u]\mapsto \big(C, u^{-1}(H)\big).
\end{align}

We denote by 
\begin{align}
	\label{Eqn:univ_fami}
	\pi: \cX \lra \MPdd,\qquad 
	\ff: \cX \lra \Pn
\end{align}
the universal family of the moduli $\MPdd$.
The main technical goal of this article is to locally diagonalize the 
derived objects $\bR \pi_* \ff^* \sO_{\Pn}(k)$, $k\!\ge\!1$,
in the sense of Definition~\ref{dObject}.

\vsp\noindent
{\bf Theorem 1.}
{\it
There exist a  sequence of blowups $\widetilde \fM_2^{\rm div}\!\lra\!\fM_2^{\rm div}$,
a Deligne-Mumford stack $\ti M_2(\P^n,d)$,
and a morphism $\ti M_2(\P^n,d)\!\lra\!\ov M_2(\P^n,d)$
such that for every small open $U\!\subset\!\ov M_2(\P^n,d)$ and every $H\inn \bH_U$,
there exists a commutative diagram
\[
	\begin{tikzcd}[column sep=.5em,row sep=1.5em]
		U\times_{f_{U,H};\,\fM_2^{\rm div}}\ti\fM_2^{\rm div}  \arrow[dr,""] \arrow[rr,"\sim"] && U\times_{\ov M_2(\P^n,d)}\ti M_2(\P^n,d)
		\arrow[dl,""]
		\\
		& U 
	\end{tikzcd}
\]
where the horizontal arrow is an isomorphism (of stacks over $U$),
and the other two arrows are natural morphisms.

Moreover, the pullback of the derived object $\bR \pi_* \ff^* \sO_{\Pn}(1)$ to 
$\ti M_2(\P^n,d)$
becomes locally diagonalizable. 
For every integer $k\!>\!1$, 
the pullback of the derived object $\bR \pi_* \ff^* \sO_{\Pn}(k)$ to the main component $\ti M_2(\P^n,d)^{\rm mc}$ of 
$\ti M_2(\P^n,d)$
is also locally diagonalizable.}
\vsp

Here, the \ts{main components} $$\MPdd^{\rm mc}\subset \MPdd
\qquad\tn{and}\qquad\ti M_2(\P^n,d)^{\rm mc}\subset\ti M_2(\P^n,d)$$ are respectively the irreducible components whose general points
are stable maps with smooth domain curves.

The stack $\ti M_2(\P^n,d)$ is obtained by gluing $U\!\times_{f_{U,H};\,\fM_2^{\rm div}}\!\ti\fM_2^{\rm div}$ for all small open $U$ and $H\inn\bH_U$.
For the precise construction of $\ti M_2(\P^n,d)$,
see Corollary~\ref{Crl:Gluing}.

The case of Theorem 1 when $k\eq 1$ leads to the following conclusion.

\vsp\noindent
{\bf Corollary 2.} 
{\it 
$\widetilde{M}_2(\Pn,d)$
has smooth irreducible components and at worst normal crossing singularities. 
For $d\!>\!2$, the main component is of the expected dimension~$d(n\!+\!1)\!-\!n\!+\!3$.}
\vsp

Theorem 1 and Corollary 2 are proved in \S\ref{SecLocalEqns};
see Corollary~\ref{Crl:Gluing}, Proposition~\ref{Prp:Diag}, and  Theorem~\ref{inducedImmersion}.
Theorem~1 also gives rise to the following statement on the desingularization of the sheaves $\pi_\ast\ff^*\sO_{\Pn}(k)$.
It is proved in Corollary~\ref{Crl:Diag}.

\vsp\noindent
{\bf Corollary 3.}   
{\it
%Let $N$ be a fixed irreducible component of   $\widetilde{M}_2(\Pn,d) $  and
Let $(\ti\pi^{\rm mc}, \ti\ff^{\rm mc})$ be the pullback to $\ti M_2(\P^n,d)^{\rm mc}$ of the universal family $(\pi, \ff)$. Then the direct image sheaf $(\ti\pi^{\rm mc})_{\ast} (\ti\ff^{\rm mc})^*
\sO_{\Pn}(k)$  is locally free for all $k\!\ge\! 1$.
It is of rank $kd\!-\!1$ if $d\!>\!2$.}

\vsp 
As a by-product of the construction of $\ti\fM_2^{\rm div}$,
in Proposition~\ref{Prp:Diag_k},
we construct a Deligne-Mumford stack $\ti M_2(\P^n,d;k)$ for every integer $k\!>\!1$ in a manner similar to $\ti M_2(\P^n,d)$.
The stacks $\ti M_2(\P^n,d;k)$, albeit less smooth than $\ti M_2(\P^n,d)$,
behaves nicer as far as the local diagonalizability of the pullback of the derived object $\bR \pi_* \ff^* \sO_{\Pn}(k)$ is concerned.
The following statement is a recapitulation of Propositions~\ref{Prp:Diag_k}.

\vsp\noindent
{\bf Theorem 4.}
{\it
For every integer $k\!>\!1$,
the pullback of  $\bR \pi_* \ff^* \sO_{\Pn}(k)$ to 
$\ti M_2(\P^n,d;k)$
becomes locally diagonalizable. 
Moreover,
for every irreducible component $N$ of $\ti M_2(\P^n,d;k)$,
with $(\ti\pi_N, \ti\ff_N)$ denoting the pullback to $N$ of the universal family (\ref{Eqn:univ_fami}), the direct image sheaf $(\ti\pi_N)_{\ast} (\ti\ff_N)^*
\sO_{\Pn}(k)$  is locally free.
It is of rank $kd\!-\!1$ if $d\!>\!2$ and the general points of $N$ have smooth domain curves.
}

\vsp 
Our approach to $\ti\fM_2^{\rm div}/\fdd$ improves the technique of \cite{HL10} and
combines with the idea of the derived modular. To explain this,
notice the stack $\fdd$ is equipped with a universal curve $\rho: \cC \to \fdd$ as well as a universal divisor $\cD\!\subset\!\cC$.
We study the parallel diagonalization problem of
the derived object $\rho_* \sO_\cC(\cD)$.
Locally over a smooth chart $\cV$ of $\fdd$, the object $\rho_* \sO_\cC(\cD)|_\cV$ can be represented by a
two-term structural homomorphism
\beq\label{PHI}
\varphi: \sO_\cV^{\oplus m+1} \lra \sO_\cV^{\oplus 2},
\eeq
where $m$ is the degree of $\cD$. 
{\it Throughout this article, $m$ can be any positive integer;
it corresponds to the case of $\pi_*\ff^* \sO_{\Pn}(k)$ when $m=kd$.}

We provide a detailed
analysis of this homomorphism in terms of local modular parameters 
in \S\ref{SecPhi} (and later in \S\ref{SecChangeOfPhi}). 
The analysis of $\varphi$,
summarized in Proposition~\ref{Prp:phi_key}, guides our construction of the modular blowups 
%(for all $k\!\ge\!0$ and all $d\!>\!2$) 
in \S \ref{global}.
The goal is to find global {\it smooth} blowups of $\fdd$ such that all the pullback of (\ref{PHI})
can be diagonalized everywhere, locally.
As proved in \S \ref{SecLocalEqns}, this implies Theorem 1 as well as Corollaries 2 and 3.
The procedure involves three \ts{rounds},
the first and the last of which further contain several \ts{phases}.
When the degree $d$ is fixed,
each round or phase consists of finitely many \ts{steps}.
For conciseness,
we hereafter write
\begin{equation}\label{e_rps}
 \rd_i,\quad
 \ph_j,\quad
 \st_\ell
\end{equation}
respectively for the $i$-th round, the $j$-th phase, and the $\ell$-th step whenever applicable.

We point out here that in order to locally diagonalize $\bR \pi_* \ff^* \sO_{\Pn}(k)$  with $k\!\ge\!2$,
 $(\rd_1\ph_5)$ and $(\rd_3\ph_4)$ can be omitted;
to locally diagonalize $\bR \pi_* \ff^* \sO_{\Pn}(k)$  with $k\!\ge\!3$,
 $(\rd_3\ph_3)$  can be omitted as well.

%In the following discussions, we focus on the component of $\Mw$ such that the total weight of its weighted curves is
%the (arbitrarily) fixed integer $m$.
Many sequences of our blowups are performed in the Artin stack 
$\Mw$ of  stable weighted curves of genus 2 (i.e.~genus 2 nodal curves whose
irreducible components are decorated by weights in~$\ZZ_{\ge 0}$, satisfying certain stability condition; c.f.~\S\ref{sheafStructures}), 
which comes equipped with the morphisms
\begin{align}\label{MPddToWeight0}
&\fdd \lra  \fM_2^{\rm wt}, \qquad(C,D) \mapsto \big(C, c_1(D)\big),\nonumber
\\
&\varrho: \MPdd \lra \fM_2^{\rm wt}, \qquad
[C,u]\mapsto \big(C, c_1(u^*\sO_{\Pn}(1))\big),
\end{align}
where for every small open $U\!\subset\!\MPdd$ and $H\inn\bH_U$, the composition of the natural inclusion $U\!\to\!\MPdd$ and the latter morphism factors through~\eref{toP0} and the former.
Each blowup $\ti\fM'\!\to\!\mwt$ determines the blowup $\fdd\!\times_{\mwt}\ti\fM'$ of~$\fdd$.

\vsp
\noindent
{\it The first round of blowup $(\rd_1)$. }  
\vsp
We first blow up $\mwt$ successively along the proper transforms of the substacks that are illustrated in Figure~\ref{figBlowup1} and precisely described in \S\ref{rd1}.
This round consists of five phases of sequential blowups: the phases are classified by 
the types of the \ts{cores} of curves (i.e.~the smallest subcurve of genus 2;
c.f.~Definition \ref{Dfn:nodal_curve})
in the corresponding centers
as well as the distribution of weights (see~Figure~\ref{figBlowup1}). 
Within each given phase, the blowup centers
are classified by the number of rational tails attached to the cores (also see Figure~\ref{figBlowup1}).
%As mentioned in the introduction,
%the last phase, the fifth, is included to solely treat the boundary components of $\MPdd$;
%the smooth cores with weight 1 are included for the purpose of uniformity of presentation.

\begin{figure}[htp]
\begin{center}
\begin{tikzpicture}[scale=0.28]
%1a
 \def\1a{
       (0,0.8)..controls (-0.9,0.8) and (-1.5,0.36)..
       (-1.5,0)..controls (-1.5,-0.36) and (-0.9,-0.8)..
       (0,-0.8)..controls (0.96,-0.8) and (1.36,-0.42)..
       (1.36,-0.36)..controls (1.36,-0.3) and (1.3,-0.2)..
       (1.2,-0.2)..controls (1.1,-0.2) and (0.96,-0.36)..
       (0.76,-0.36)..controls (0.56,-0.36) and (0.3,-0.2)..
       (0.3,0)..controls (0.3,0.2) and (0.56,0.36)..
       (0.76,0.36)..controls (0.96,0.36) and (1.1,0.2)..
       (1.2,0.2)..controls (1.3,0.2) and (1.36,0.3)..
       (1.36,0.36)..controls (1.36,0.42) and (0.96,0.8)..
       (0,0.8)
       (-0.88,0)..controls (-0.64,0.12)..(-0.4,0)
       (-0.4,0)..controls (-0.64,-0.12)..(-0.88,0)
       (-1.04,0.08)--(-0.88,0)
       (-0.4,0)--(-0.24,0.08)
       }
 \draw[xshift=0,yshift=-12cm] \1a;
 \draw[xshift=1.2cm,yshift=-12cm,fill=black!42] (0.31,0) circle (0.3cm);
 %\draw[xshift=0,yshift=-12cm,fill=black!42]
       (0,1.2cm) circle (0.4cm);
 
 \draw[xshift=8cm,yshift=-12cm] \1a;
 \draw[xshift=9.2cm,yshift=-12cm,fill=black!42] (0.31,0) circle (0.3cm);
 \draw[xshift=8cm,yshift=-12cm,fill=black!42]
       (0,1.2cm) circle (0.4cm);

 \draw[xshift=16cm,yshift=-12cm] \1a;
 \draw[xshift=17.2cm,yshift=-12cm,fill=black!42] (0.31,0) circle (0.3cm);
 \draw[xshift=16cm,yshift=-12cm,fill=black!42]
       (-.6,1.16cm) circle (0.4cm)
       (0.6,1.16cm) circle (0.4cm);

 \def\g1{
       (-1,0) ellipse (1 and 0.5)
       (-1.4,0)..controls(-1,0.1)..(-0.6,0)
       (-0.6,0)..controls(-1,-0.1)..(-1.4,0)
       (-0.5,0.05)--(-0.6,0)
       (-1.4,0)--(-1.5,0.05)
       }
 \draw[xshift=.5cm, yshift=-18cm] \g1;
 %\draw[xshift=0.5cm, yshift=-18cm, fill=black!42] (-1cm,0.9cm) circle (0.4cm);
 \draw[xshift=2.5cm, yshift=-18cm,fill=black!42] \g1;
 
 \draw[xshift=8.5cm, yshift=-18cm] \g1;
 \draw[xshift=8.5cm, yshift=-18cm, fill=black!42] (-1cm,0.9cm) circle (0.4cm);
 \draw[xshift=10.5cm, yshift=-18cm,fill=black!42] \g1;
 
 \draw[xshift=16.5cm, yshift=-18cm] \g1;
 \draw[xshift=16.5cm, yshift=-18cm, fill=black!42]
 (-1.5cm,0.87cm) circle (0.4cm)
 (-0.5cm,0.87cm) circle (0.4cm);
 \draw[xshift=18.5cm, yshift=-18cm,fill=black!42] \g1;
 
 \draw[yshift=-6cm] \g1;
 \draw[xshift=-.1cm,yshift=-6cm,fill=black!42] (0.5cm,0) circle (0.4cm);
 %\draw[yshift=-6cm,fill=black!42] (-1cm,0.9cm) circle (0.4cm);
 \draw[xshift=2.8cm,yshift=-6cm] \g1;
 
 \draw[xshift=8cm, yshift=-6cm] \g1;
 \draw[xshift=7.9cm,yshift=-6cm,fill=black!42] (0.5cm,0) circle (0.4cm);
 \draw[xshift=8cm, yshift=-6cm,fill=black!42] (-1cm,0.9cm) circle (0.4cm);
 \draw[xshift=10.8cm,yshift=-6cm] \g1;

 \draw[xshift=16cm, yshift=-6cm] \g1;
 \draw[xshift=15.9cm,yshift=-6cm,fill=black!42] (0.5cm,0) circle (0.4cm);
 \draw[xshift=16cm, yshift=-6cm,fill=black!42] (-1.5cm,0.87cm) circle (0.4cm)
 (-.5cm,0.87cm) circle (0.4cm);
 \draw[xshift=18.8cm,yshift=-6cm] \g1; 
 
 \draw[xshift=22.7cm, yshift=-6cm] \g1;
 \draw[xshift=22.6cm,yshift=-6cm,fill=black!42] (0.5cm,0) circle (0.4cm);
 \draw[xshift=22.7cm, yshift=-6cm,fill=black!42] (-1cm,0.9cm) circle (0.4cm)
 (1.8cm,0.9cm) circle (0.4cm);
 \draw[xshift=25.5cm,yshift=-6cm] \g1;
 
 \def\g2left{
       (0,0.8) arc (90:270:1.6 and 0.8)
       (-1.04,0.08)--(-0.88,0)
       ..controls (-0.64,-0.12)..(-0.4,0)
       --(-0.24,0.08)
       (-0.88,0)..controls (-0.64,0.12)..(-0.4,0)
       }
 \draw \g2left
       [xscale=-1] \g2left;
 \draw[fill=black!42] (0,1.2cm) circle (0.4cm);
 
 \draw [xshift=8cm]\g2left
       [xscale=-1] \g2left;
 \draw[fill=black!42, xshift=8cm]
  (-.8cm,1.13cm) circle (0.4cm)
  (.8cm,1.13cm) circle (0.4cm);
 
 \draw [xshift=16cm]\g2left
       [xscale=-1] \g2left;
 \draw[fill=black!42, xshift=16cm]
  (0,1.2cm) circle (0.4cm)
  (-1cm,1.08cm) circle (0.4cm)
  (1cm,1.08cm) circle (0.4cm);
 
 \filldraw [xshift=28.5cm]
  (0,0) circle (2pt) 
  (.5,0) circle (2pt) 
  (1,0) circle (2pt);
  
 \filldraw [xshift=28.5cm, yshift=-6cm]
  (0,0) circle (2pt) 
  (.5,0) circle (2pt) 
  (1,0) circle (2pt);
  
 \filldraw [xshift=28.5cm, yshift=-12cm]
  (0,0) circle (2pt) 
  (.5,0) circle (2pt) 
  (1,0) circle (2pt);
 
 \filldraw [xshift=28.5cm, yshift=-18cm]
  (0,0) circle (2pt) 
  (.5,0) circle (2pt) 
  (1,0) circle (2pt);
  
 \draw[->,>=stealth, very thick]
  (3,0)->(5.1,0);
 \draw[->,>=stealth, very thick]
  (11,0)->(13.1,0);
 \draw[->,>=stealth, very thick]
  (18.9,0)->(27.4,0);
 
 \draw[->,>=stealth, very thick, yshift=-6cm]
  (4,0)->(5,0);
 \draw[->,>=stealth, very thick, yshift=-6cm]
  (12,0)->(13,0);
 \draw[->,>=stealth, very thick, yshift=-6cm]
  (26.6,0)->(27.4,0);
  
 \draw[->,>=stealth, very thick, yshift=-12cm]
  (3,0)->(5.3,0);
 \draw[->,>=stealth, very thick, yshift=-12cm]
  (11,0)->(13.3,0);
 \draw[->,>=stealth, very thick, yshift=-12cm]
  (19,0)->(27.4,0);
 
 \draw[->,>=stealth, very thick, yshift=-18cm]
  (3.8,0)->(5.2,0);
 \draw[->,>=stealth, very thick, yshift=-18cm]
  (11.8,0)->(13.2,0);
 \draw[->,>=stealth, very thick, yshift=-18cm]
  (19.8,0)->(27.4,0);
 
 \draw[->,>=stealth,very thick, rounded corners]
  (30.7,0)--(31.4,0)--(31.4,-3.1)--(-3.9,-3.1)--(-3.9,-6)--(-3,-6);
 \draw[->,>=stealth,very thick, yshift=-6cm, rounded corners]
  (30.7,0)--(31.4,0)--(31.4,-3.1)--(-3.9,-3.1)--(-3.9,-6)--(-2.7,-6);
 \draw[->,>=stealth,very thick, yshift=-12cm, rounded corners]
  (30.7,0)--(31.4,0)--(31.4,-3.1)--(-3.9,-3.1)--(-3.9,-6)--(-2.7,-6);
 
 \draw[yshift=-0.8cm]
  node[below] {\scriptsize{$\ov\fM_{(1,1)}$}};
 \draw[xshift=8cm,yshift=-0.8cm]
  node[below] {\scriptsize{$\ov\fM_{(1,2)}$}};
 \draw[xshift=16cm,yshift=-0.8cm]
  node[below] {\scriptsize{$\ov\fM_{(1,3)}$}};
  
  \draw[decorate,decoration=brace]
   (-1.9,-0.7)--(-1.9,1.4);
  \draw[decorate,decoration=brace]
   (1.9,1.4)--(1.9,-.7);
  \draw (-1.9,1.9)--(1.9,1.9);
  
  \draw[xshift=8cm,decorate,decoration=brace]
   (-1.9,-0.7)--(-1.9,1.4);
  \draw[xshift=8cm,decorate,decoration=brace]
   (1.9,1.4)--(1.9,-.7);
  \draw[xshift=8cm] (-1.9,1.9)--(1.9,1.9);
  
  \draw[xshift=16cm,decorate,decoration=brace]
   (-1.9,-0.7)--(-1.9,1.4);
  \draw[xshift=16cm,decorate,decoration=brace]
   (1.9,1.4)--(1.9,-.7);
  \draw[xshift=16cm] (-1.9,1.9)--(1.9,1.9);
  
 \draw[xshift=0.5cm, yshift=-18.6cm]
  node[below] {\scriptsize{$\ov\fM_{(4,1)}$}};
 \draw[xshift=8.5cm, yshift=-18.6cm]
  node[below] {\scriptsize{$\ov\fM_{(4,2)}$}};
 \draw[xshift=16.5cm, yshift=-18.6cm]
  node[below] {\scriptsize{$\ov\fM_{(4,3)}$}};  
  
  \draw[decorate,decoration=brace,xshift=0.5cm,yshift=-18.6cm]
   (-2.3,0)--(-2.3,1.2);
  \draw[decorate,decoration=brace,xshift=0.5cm,yshift=-18.6cm]
   (2.3,1.2)--(2.3,0);
  \draw[xshift=0.5cm,yshift=-18.6cm] (-2.3,1.6)--(2.3,1.6);
  
  \draw[decorate,decoration=brace,xshift=8.5cm,yshift=-18.6cm]
   (-2.3,0)--(-2.3,1.9);
  \draw[decorate,decoration=brace,xshift=8.5cm,yshift=-18.6cm]
   (2.3,1.9)--(2.3,0);
  \draw[xshift=8.5cm,yshift=-18.6cm] (-2.3,2.2)--(2.3,2.2);
  
  \draw[decorate,decoration=brace,xshift=16.5cm,yshift=-18.6cm]
   (-2.3,0)--(-2.3,1.9);
  \draw[decorate,decoration=brace,xshift=16.5cm,yshift=-18.6cm]
   (2.3,1.9)--(2.3,0);
  \draw[xshift=16.5cm,yshift=-18.6cm] (-2.3,2.2)--(2.3,2.2);
  
 \draw[xshift=0, yshift=-12.8cm]
  node[below] {\scriptsize{$\ov\fM_{(3,1)}$}};
 \draw[xshift=8cm, yshift=-12.8cm]
  node[below] {\scriptsize{$\ov\fM_{(3,2)}$}};
 \draw[xshift=16cm, yshift=-12.8cm]
  node[below] {\scriptsize{$\ov\fM_{(3,3)}$}};
  
  \draw[decorate,decoration=brace,xshift=0cm,yshift=-12.8cm]
   (-1.8,.1)--(-1.8,1.6);
  \draw[decorate,decoration=brace,xshift=0cm,yshift=-12.8cm]
   (2,1.6)--(2,.1);
  \draw[xshift=0cm,yshift=-12.8cm] (-1.8,2)--(2,2);
  
  \draw[decorate,decoration=brace,xshift=8cm,yshift=-12.8cm]
   (-1.8,.1)--(-1.8,2.3);
  \draw[decorate,decoration=brace,xshift=8cm,yshift=-12.8cm]
   (2,2.3)--(2,.1);
  \draw[xshift=8cm,yshift=-12.8cm] (-1.8,2.7)--(2,2.7);
  
  \draw[decorate,decoration=brace,xshift=16cm,yshift=-12.8cm]
   (-1.8,.1)--(-1.8,2.3);
  \draw[decorate,decoration=brace,xshift=16cm,yshift=-12.8cm]
   (2,2.3)--(2,.1);
  \draw[xshift=16cm,yshift=-12.8cm] (-1.8,2.7)--(2,2.7);
  
 \draw[xshift=.4cm, yshift=-6.6cm]
  node[below] {\scriptsize{$\ov\fM_{(2,1)}$}};
 \draw[xshift=8.4cm, yshift=-6.6cm]
  node[below] {\scriptsize{$\ov\fM_{(2,2)}$}};
 \draw[xshift=19cm, yshift=-6.6cm]
  node[below] {\scriptsize{$\ov\fM_{(2,3)}$}};
 \draw[xshift=19.77cm, yshift=-5.8cm]
  node {\scriptsize$\cup$};
  
  \draw[decorate,decoration=brace,xshift=.4cm,yshift=-6.6cm]
   (-2.6,0)--(-2.6,1.2);
  \draw[decorate,decoration=brace,xshift=.4cm,yshift=-6.6cm]
   (2.6,1.2)--(2.6,0);
  \draw[xshift=.4cm,yshift=-6.6cm] (-2.6,1.6)--(2.6,1.6);
  
  \draw[decorate,decoration=brace,xshift=8.4cm,yshift=-6.6cm]
   (-2.6,0)--(-2.6,1.9);
  \draw[decorate,decoration=brace,xshift=8.4cm,yshift=-6.6cm]
   (2.6,1.9)--(2.6,0);
  \draw[xshift=8.4cm,yshift=-6.6cm] (-2.6,2.2)--(2.6,2.2);
  
  \draw[decorate,decoration=brace,xshift=19cm,yshift=-6.6cm]
   (-5.2,0)--(-5.2,1.9);
  \draw[decorate,decoration=brace,xshift=19cm,yshift=-6.6cm]
   (6.7,1.9)--(6.7,0);
  \draw[decorate,decoration=brace,xshift=19cm,yshift=-6.6cm]
   (1.6,0)--(1.6,1.9);
  \draw[decorate,decoration=brace,xshift=19cm,yshift=-6.6cm]
   (-.05,1.9)--(-.05,0);
  \draw[xshift=19cm,yshift=-6.6cm] (-5.2,2.2)--(-.05,2.2);
  \draw[xshift=19cm,yshift=-6.6cm] (1.6,2.2)--(6.7,2.2);
  
 \draw (-7.5,0) node{\small\ts{Phase 1:}};
 \draw (-7.5,-1.2) node{\small{($\rd_1\ph_1$)}};
 \draw (-7.5,-6) node{\small\ts{Phase 2:}};
 \draw (-7.5,-7.2) node{\small{($\rd_1\ph_2$)}};
 \draw (-7.5,-12) node{\small\ts{Phase 3:}};
 \draw (-7.5,-13.2) node{\small{($\rd_1\ph_3$)}};
 \draw (-7.5,-18) node{\small\ts{Phase 4:}};
 \draw (-7.5,-19.2) node{\small{($\rd_1\ph_4$)}};
 
 %%%%%%%%%%%%%%%%%%% STAGE 5 %%%%%%%%%%%%%%%%%%%%
 \draw (-7.5,-24) node{\small\ts{Phase 5:}};
 \draw (-7.5,-25.2) node{\small{($\rd_1\ph_5$)}};
  \def\g2{
       (0,0.8) arc (90:450:1.6 and 0.8)
       %next, hole 1
       (0.4,0)..controls (0.64,0.12)..(0.88,0)
       (0.88,0)..controls (0.64,-0.12)..(0.4,0)       
       (0.24,0.08)--(0.4,0)
       (0.88,0)--(1.04,0.08)
       %next, hole 2
       (-0.4,0)..controls (-0.64,-0.12)..(-0.88,0)
       (-0.88,0)..controls (-0.64,0.12)..(-0.4,0)       
       (-0.24,0.08)--(-0.4,0)
       (-0.88,0)--(-1.04,0.08)
       }
  \shadedraw [shading=radial,yshift=-24cm] \g2;
  \draw[yshift=-24cm,fill=black!42] (0,1.2cm) circle (0.4cm);
  
  \shadedraw [shading=radial,xshift=8cm,yshift=-24cm] \g2;
  \draw[yshift=-24cm,xshift=8cm,fill=black!42]
  (-.8cm,1.13cm) circle (0.4cm)
  (.8cm,1.13cm) circle (0.4cm);
 
 \shadedraw [shading=radial,xshift=16cm,yshift=-24cm] \g2;  
 \draw[fill=black!42,yshift=-24cm,xshift=16cm]
  (0,1.2cm) circle (0.4cm)
  (-1cm,1.08cm) circle (0.4cm)
  (1cm,1.08cm) circle (0.4cm);
  
 \draw[yshift=-24.8cm]
  node[below] {\scriptsize{$\ov\fM_{(5,1)}$}};
 \draw[xshift=8cm,yshift=-24.8cm]
  node[below] {\scriptsize{$\ov\fM_{(5,2)}$}};
 \draw[xshift=16cm,yshift=-24.8cm]
  node[below] {\scriptsize{$\ov\fM_{(5,3)}$}};
    
  \draw[decorate,decoration=brace,yshift=-24cm]
   (-1.9,-0.7)--(-1.9,1.4);
  \draw[decorate,decoration=brace,yshift=-24cm]
   (1.9,1.4)--(1.9,-.7);
  \draw[yshift=-24cm] (-1.9,1.9)--(1.9,1.9);
  
  \draw[xshift=8cm,decorate,decoration=brace,yshift=-24cm]
   (-1.9,-0.7)--(-1.9,1.4);
  \draw[xshift=8cm,decorate,decoration=brace,yshift=-24cm]
   (1.9,1.4)--(1.9,-.7);
  \draw[xshift=8cm,yshift=-24cm] (-1.9,1.9)--(1.9,1.9);
  
  \draw[xshift=16cm,decorate,decoration=brace,yshift=-24cm]
   (-1.9,-0.7)--(-1.9,1.4);
  \draw[xshift=16cm,decorate,decoration=brace,yshift=-24cm]
   (1.9,1.4)--(1.9,-.7);
  \draw[xshift=16cm,yshift=-24cm] (-1.9,1.9)--(1.9,1.9);
  
 \draw[->,>=stealth, very thick,yshift=-24cm]
  (3,0)->(5.1,0);
 \draw[->,>=stealth, very thick,yshift=-24cm]
  (11,0)->(13.1,0);
 \draw[->,>=stealth, very thick,yshift=-24cm]
  (18.9,0)->(27.4,0);
 
 \draw[->,>=stealth,very thick, yshift=-18cm, rounded corners]
  (30.7,0)--(31.4,0)--(31.4,-3.1)--(-3.9,-3.1)--(-3.9,-6)--(-2.7,-6);
 
 \filldraw [xshift=28.5cm,yshift=-24cm]
  (0,0) circle (2pt) 
  (.5,0) circle (2pt) 
  (1,0) circle (2pt);
 
 \draw [xshift=12cm, yshift=-28cm] 
 (0,0) rectangle (1,.6)
 (3.5,.3) node {\tiny{:\;$\tn{weight}\!=\!0$}}; 
 \shadedraw [shading=radial,xshift=19cm, yshift=-28cm] 
 (0,0) rectangle (1,.6);
 \draw [xshift=19cm, yshift=-28cm] 
 (3.5,.3) node {\tiny{:\;$\tn{weight}\!=\!1$}};
 \draw [fill=black!42,xshift=26cm, yshift=-28cm] 
 (0,0) rectangle (1,.6)
 (3.5,.3) node {\tiny{:\;$\tn{weight}\!\ge\!1$}};
\end{tikzpicture}
\end{center}
\caption{The first round ($\rd_1$) of the modular blowups}\label{figBlowup1}
\end{figure}

Here, we comment that in $(\rd_1\ph_5$), we treat the case when the core of a curve has weight 1,
even though it does not correspond to any point on the main 
component $\MPdd^{\rm mc}$. 
The reason is twofold.
First,
the structural homomorphism (\ref{PHI}) is not diagonalized near the corresponding points of $\fdd$.
Since we have reduced the problem of resolving $\MPdd$ to locally diagonalizing (\ref{PHI}),
we shall not specify which points of $\fdd$
miss the image of $\MPdd$ or its main component under
(\ref{toP0}).
Second,
the underlying weighted curves of the blowup center of $(\rd_1\ph_5)$ 
correspond to some boundary 
components of $\MPdd$.
As presented in Corollary 2,
we aim to resolve the entire moduli so that all the components are smooth and meet transversely.

When $(\rd_1)$ terminates,
we denote the final stack by $\ti\fM^{\rd_1}$.
Every point of $\Mw$ has a neighborhood~$\cV$ so that the
pullback $\ti\varphi^{\rd_1}$ of the structural homomorphism 
$\varphi$ of~\eqref{PHI} to $\ti\cV^{\rd_1}=\cV\times_{\Mw}\ti\fM^{\rd_1}$ 
has one row that contains an element that divides all other entries of $\ti\varphi^{\rd_1}$
in that row (in such a case, we say that this row is  locally $``$diagonalized$"$).
However, $\ti\varphi^{\rd_1}|_{\ti\cV^{\rd_1}}$ may
{\it not} be locally diagonalizable (c.f.~Definition~\ref{DfnDiag}) due to the existence of 
\noindent (1): some distinctive directions in the exceptional divisors obtained in $(\rd_1)$ and/or
\noindent (2): the Weierstrass and conjugate points on the smallest  subcurves of genus two.

The second round of blowups, $(\rd_2)$, solves the former case;
the third round of blowups, $(\rd_3)$, solves the latter.

\vsp
\noindent
{\it The second round of blowups $(\rd_2)$.} 
\vsp
After the first round  is done,
one of the two rows, say, the first row,  of  the structural homomorphism 
$\varphi$ can be  locally $``$diagonalized$"$, as explained above. We then
 turn our attention to the second row. But, in the second row, on any affine chart,
some  terms of the proper transform of $\varphi$  may acquire exceptional parameters.
Note that locally, the blowup centers in the first rounds are always some intersections of the (local) divisors
corresponding to the loci where  nodes are not smoothed (we may call them \ts{nodal divisors}). 
The blowup centers in the second rounds are similar in the sense
that, locally, they are always some intersections of the proper transforms of
the exceptional divisors created in ($\rd_1\ph_1$) as well as
the proper transforms of nodal divisors.
In particular, the blowup centers $X_{i,j}$ in the second rounds are always contained in the exceptional
divisors of ($\rd_1\ph_1$), as  illustrated in Figure~\ref{figBlowup2}.
But, locally, the process in $(\rd_2)$ is parallel to that in $(\rd_1)$;
the difference is just that some exceptional parameters in $(\rd_2)$ will take the roles
of node-smoothing parameters in $(\rd_1)$. 
%In a way, some exceptional parameters $\ve$
%will take the roles previously played by node-smoothing parameters. That is, 
In this narrow sense, we may say that
the second round of blowups in essence is totally analogous to the first round.
%in the sense that the acquire exceptional parameters $\ve$
% will play the role of node-smoothing parameters as in the first  round.
The blowup centers $X_{i,j}$ in $(\rd_2)$ are defined in \S\ref{rd2},  specifically in \eqref{e_Xkk'}. 
%The language of the so-called derived blowups in the coming sections
%is developed to deal with this additional exceptional
%parameters in the second row after the first round blowups.

\begin{figure}[htp]
\begin{center}
\begin{tikzpicture}[scale=0.3] 
 \def\g2left{
       (0,0.8) arc (90:270:1.6 and 0.8)
       (-1.04,0.08)--(-0.88,0)
       ..controls (-0.64,-0.12)..(-0.4,0)
       --(-0.24,0.08)
       (-0.88,0)..controls (-0.64,0.12)..(-0.4,0)
       }
 \draw[yshift=-13cm] \g2left
       [xscale=-1] \g2left
       (-.8,1.9)--(.8,1.9)
       (-1.9,3)--(1.9,3);
 \draw[yshift=-13cm,fill=black!42] 
       (-.8cm,2.3cm) circle (0.4cm)
       (.8cm,2.3cm) circle (0.4cm);
 \filldraw[yshift=-13cm]
       (-.8cm,1.9cm) circle (3pt)
       (.8cm,1.9cm) circle (3pt)
       (-.8,.73) circle (3pt)
       (.8,.73) circle (3pt);
 \draw[yshift=-13cm] 
 	   (-.8,1.1) node {\tiny $\mathfrak{{}_1}$}
       (.8,1.1) node {\tiny $\mathfrak{{}_2}$}
       (0,1.38) node {\tiny $\times$}
       (-.8,2.3) node {\tiny $\mathfrak{{}_1}$}
       (.8,2.3) node {\tiny $\mathfrak{{}_2}$}
       (-4.5,.95) node {\tiny{$\wh\fM_{\ord}^\bullet\eq\tn{PT}$}}
       (-4,4.5) node {\tiny{$\P(L_{2;1}\!\oplus 0)$}}
       (-2.1,-1.8) node {\scriptsize{$X_{2,2}$}};
 \draw [->,>=stealth,yshift=-13cm] 
       (-6,3.9)--(-6,1.8);
 \draw[decorate,decoration=brace,yshift=-13cm]
       (-1.9,-0.7)--(-1.9,2.7);
 \draw[decorate,decoration=brace,yshift=-13cm]
       (1.9,2.7)--(1.9,-.7);
 \draw[yshift=-13cm] (-6.8,5.2) rectangle (2.5,-1.1);
 
 \draw[xshift=11.5cm,yshift=-13cm] \g2left
       [xscale=-1] \g2left
       (-.8,1.9)--(.8,1.9)
       (-1.9,3.3)--(1.9,3.3);
 \draw[yshift=-13cm,xshift=11.5cm,fill=black!42] 
       (-.8cm,2.3cm) circle (0.4cm)
       (.5cm,2.82cm) circle (0.2cm)
       (1.1cm,2.82cm) circle (0.2cm);
 \draw[yshift=-13cm,xshift=11.5cm] (.8cm,2.3cm) circle (0.4cm);
 \filldraw[yshift=-13cm,xshift=11.5cm]
       (-.8cm,1.9cm) circle (3pt)
       (.8cm,1.9cm) circle (3pt)
       (-.8,.73) circle (3pt)
       (.8,.73) circle (3pt);
 \draw[yshift=-13cm,xshift=11.5cm]
       (-.8,1.1) node {\tiny $\mathfrak{{}_1}$}
       (.8,1.1) node {\tiny $\mathfrak{{}_2}$}
       (0,1.38) node {\tiny $\times$}
       (-.8,2.3) node {\tiny $\mathfrak{{}_1}$}
       (.8,2.3) node {\tiny $\mathfrak{{}_2}$}
       (-4.5,.95) node {\tiny{$\wh\fM_{\ord}^\bullet\eq\tn{PT}$}}
       (-3.4,4.5) node {\tiny{$\P(L_{2;1}\!\oplus\!L_{2;2})$}}
       (-2.1,-1.8) node {\scriptsize{$X_{2,3}$}};
 \draw [->,>=stealth,xshift=11.5cm,yshift=-13cm] 
       (-6,3.9)--(-6,1.8);
 \draw[xshift=11.5cm,yshift=-13cm,decorate,decoration=brace]
       (-1.9,-0.7)--(-1.9,2.8);
 \draw[xshift=11.5cm,yshift=-13cm,decorate,decoration=brace]
       (1.9,2.8)--(1.9,-.7);
 \draw[xshift=11.5cm,yshift=-13cm] (-6.8,5.2) rectangle (2.5,-1.1);
       
 \draw[xshift=23.1cm,yshift=-13cm] \g2left
       [xscale=-1] \g2left
       (-.8,1.9)--(.8,1.9)
       (-1.9,3.4)--(1.9,3.4);
 \draw[xshift=23.1cm,yshift=-13cm,fill=black!42] 
       (-.8cm,2.3cm) circle (0.4cm)
       (.8cm,2.9cm) circle (0.2cm)
       (.28cm,2.6cm) circle (0.2cm)
       (1.32cm,2.6cm) circle (0.2cm);
 \draw[xshift=23.1cm,yshift=-13cm] (.8cm,2.3cm) circle (0.4cm);
 \filldraw[xshift=23.1cm,yshift=-13cm]
       (-.8cm,1.9cm) circle (3pt)
       (.8cm,1.9cm) circle (3pt)
       (-.8,.73) circle (3pt)
       (.8,.73) circle (3pt);
 \draw[xshift=23.1cm,yshift=-13cm]
       (-.8,1.1) node {\tiny $\mathfrak{{}_1}$}
       (.8,1.1) node {\tiny $\mathfrak{{}_2}$}
       (0,1.38) node {\tiny $\times$}
       (-.8,2.3) node {\tiny $\mathfrak{{}_1}$}
       (.8,2.3) node {\tiny $\mathfrak{{}_2}$}
       (-4.5,.95) node {\tiny{$\wh\fM_{\ord}^\bullet\eq\tn{PT}$}}
       (-3.4,4.5) node {\tiny{$\P(L_{2;1}\!\oplus\!L_{2;2})$}}
       (-2.1,-1.8) node {\scriptsize{$X_{2,4}$}};
 \draw [->,>=stealth,xshift=23.1cm,yshift=-13cm] 
       (-6,3.9)--(-6,1.8);
 \draw[xshift=23.1cm,yshift=-13cm,decorate,decoration=brace]
       (-1.9,-0.7)--(-1.9,2.9);
 \draw[xshift=23.1cm,yshift=-13cm,decorate,decoration=brace]
       (1.9,2.9)--(1.9,-.7);
 \draw[xshift=23.1cm,yshift=-13cm] (-6.8,5.2) rectangle (2.5,-1.1); 
       
 \draw \g2left
       [xscale=-1] \g2left
       (-1.2,2.1)--(1.2,2.1)
       (-1.9,3.2)--(1.9,3.2);
 \draw[fill=black!42] 
       (-1.2cm,2.5cm) circle (0.4cm)       
       (0,2.5cm) circle (0.4cm)
       (1.2cm,2.5cm) circle (0.4cm);
 \filldraw
       (-1.2cm,2.1cm) circle (3pt)
       (0,2.1cm) circle (3pt)
       (1.2cm,2.1cm) circle (3pt)
       (-1.2,.54) circle (3pt)
       (0,.8cm) circle (3pt)
       (1.2,.54) circle (3pt);
 \draw
       (-1.2,1.0) node {\tiny $\mathfrak{{}_1}$}
       (0,.45) node {\tiny $\mathfrak{{}_2}$}
       (1.2,1.0) node {\tiny $\mathfrak{{}_3}$}
       (0,1.43) node {\tiny $\times$}
       (-1.2,2.5) node {\tiny $\mathfrak{{}_1}$}
       (0,2.5) node {\tiny $\mathfrak{{}_2}$}
       (1.2,2.5) node {\tiny $\mathfrak{{}_3}$}
       (-4.5,.95) node {\tiny{$\wh\fM_{\ord}^\bullet\eq\tn{PT}$}}
       (-3.4,4.5) node {\tiny{$\P(L_{3;1}\!\oplus\!0\!\oplus\!0)$}}
       (-2.1,-1.8) node {\scriptsize{$X_{3,3}$}};
 \draw [->,>=stealth] 
       (-6,3.9)--(-6,1.8);
 \draw[decorate,decoration=brace]
       (-1.9,-0.7)--(-1.9,2.8);
 \draw[decorate,decoration=brace]
       (1.9,2.8)--(1.9,-.7);
 \draw (-6.8,5.2) rectangle (2.5,-1.1); 
  
 \draw[xshift=11.5cm] \g2left
       [xscale=-1] \g2left
       (-1.2,2.1)--(1.2,2.1)
       (-1.9,3.5)--(1.9,3.5);
 \draw[xshift=11.5cm,fill=black!42] 
       (-1.2cm,2.5cm) circle (0.4cm)
       (1.2cm,2.5cm) circle (0.4cm)
       (-.3cm,3.02cm) circle (0.2cm)
       (.3cm,3.02cm) circle (0.2cm);
 \draw[xshift=11.5cm]
       (0cm,2.5cm) circle (0.4cm);
 \filldraw[xshift=11.5cm]
       (-1.2cm,2.1cm) circle (3pt)
       (0,2.1cm) circle (3pt)
       (1.2cm,2.1cm) circle (3pt)
       (-1.2,.54) circle (3pt)
       (0,.8cm) circle (3pt)
       (1.2,.54) circle (3pt);
 \draw[xshift=11.5cm]
       (-1.2,1) node {\tiny $\mathfrak{{}_1}$}
       (0,.45) node {\tiny $\mathfrak{{}_2}$}
       (1.2,1) node {\tiny $\mathfrak{{}_3}$}
       (0,1.43) node {\tiny $\times$}
       (-1.2,2.5) node {\tiny $\mathfrak{{}_1}$}
       (0,2.5) node {\tiny $\mathfrak{{}_2}$}
       (1.2,2.5) node {\tiny $\mathfrak{{}_3}$}
       (-4.5,.95) node {\tiny{$\wh\fM_{\ord}^\bullet\eq\tn{PT}$}}
       (-2.7,4.5) node {\tiny{$\P(L_{3;1}\!\oplus\!L_{3;2}\!\oplus\!0)$}}
       (-2.1,-1.8) node {\scriptsize{$X_{3,4}$}};
 \draw [->,>=stealth,xshift=11.5cm] 
       (-6,3.9)--(-6,1.8);
 \draw[xshift=11.5cm,decorate,decoration=brace]
       (-1.9,-0.7)--(-1.9,2.8);
 \draw[xshift=11.5cm,decorate,decoration=brace]
       (1.9,2.8)--(1.9,-.7);
 \draw[xshift=11.5cm] (-6.8,5.2) rectangle (2.5,-1.1); 
       
 \draw[xshift=23.1cm] \g2left
       [xscale=-1] \g2left
       (-1.2,2.1)--(1.2,2.1)
       (-1.9,3.5)--(1.9,3.5);
 \draw[xshift=23.1cm,fill=black!42] 
       (-1.2cm,2.5cm) circle (0.4cm)
       (1.2cm,2.5cm) circle (0.4cm)
       (0cm,3.1cm) circle (0.2cm)
       (-.52cm,2.8cm) circle (0.2cm)
       (.52cm,2.8cm) circle (0.2cm);
 \draw[xshift=23.1cm]
       (0cm,2.5cm) circle (0.4cm);
 \filldraw[xshift=23.1cm]
       (-1.2cm,2.1cm) circle (3pt)
       (0,2.1cm) circle (3pt)
       (1.2cm,2.1cm) circle (3pt)
       (-1.2,.54) circle (3pt)
       (0,.8cm) circle (3pt)
       (1.2,.54) circle (3pt);
 \draw[xshift=23.1cm]
       (-1.2,1) node {\tiny $\mathfrak{{}_1}$}
       (0,.45) node {\tiny $\mathfrak{{}_2}$}
       (1.2,1) node {\tiny $\mathfrak{{}_3}$}
       (0,1.43) node {\tiny $\times$}
       (-1.2,2.5) node {\tiny $\mathfrak{{}_1}$}
       (0,2.5) node {\tiny $\mathfrak{{}_2}$}
       (1.2,2.5) node {\tiny $\mathfrak{{}_3}$}
       (-4.5,.95) node {\tiny{$\wh\fM_{\ord}^\bullet\eq\tn{PT}$}}
       (-2.7,4.5) node {\tiny{$\P(L_{3;1}\!\oplus\!L_{3;2}\!\oplus\!0)$}}
       (2.9,-1.8) node {\scriptsize{$X_{3,5}$}}
       (2.9,2) node {\scriptsize{$\cup$}};
 \draw [->,>=stealth,xshift=23.1cm] 
       (-6,3.9)--(-6,1.8);
 \draw[xshift=23.1cm,decorate,decoration=brace]
       (-1.9,-0.7)--(-1.9,2.8);
 \draw[xshift=23.1cm,decorate,decoration=brace]
       (1.9,2.8)--(1.9,-.7);
 \draw[xshift=23.1cm] (-6.8,5.2) rectangle (12.8,-1.1);  
       
 \draw[xshift=33.3cm] \g2left
       [xscale=-1] \g2left
       (-1.2,2.1)--(1.2,2.1)
       (-1.9,3.5)--(1.9,3.5);
 \draw[xshift=33.3cm,fill=black!42] 
       (-1.2cm,2.5cm) circle (0.4cm)
       (-.3cm,3.02cm) circle (0.2cm)
       (.3cm,3.02cm) circle (0.2cm)
       (.9cm,3.02cm) circle (0.2cm)
       (1.5cm,3.02cm) circle (0.2cm);
 \draw[xshift=33.3cm]
       (0cm,2.5cm) circle (0.4cm)
       (1.2cm,2.5cm) circle (0.4cm);
 \filldraw[xshift=33.3cm]
       (-1.2cm,2.1cm) circle (3pt)
       (0,2.1cm) circle (3pt)
       (1.2cm,2.1cm) circle (3pt)
       (-1.2,.54) circle (3pt)
       (0,.8cm) circle (3pt)
       (1.2,.54) circle (3pt);
 \draw[xshift=33.3cm]
       (-1.2,1) node {\tiny $\mathfrak{{}_1}$}
       (0,.45) node {\tiny $\mathfrak{{}_2}$}
       (1.2,1) node {\tiny $\mathfrak{{}_3}$}
       (0,1.43) node {\tiny $\times$}
       (-1.2,2.5) node {\tiny $\mathfrak{{}_1}$}
       (0,2.5) node {\tiny $\mathfrak{{}_2}$}
       (1.2,2.5) node {\tiny $\mathfrak{{}_3}$}
       (-4.5,.95) node {\tiny{$\wh\fM_{\ord}^\bullet\eq\tn{PT}$}}
       (-2,4.5) node {\tiny{$\P(L_{3;1}\!\oplus\!L_{3;2}\!\oplus\!L_{3;3})$}};
 \draw [->,>=stealth,xshift=33.3cm] 
       (-6,3.9)--(-6,1.8);
 \draw[xshift=33.3cm,decorate,decoration=brace]
       (-1.9,-0.7)--(-1.9,2.8);
 \draw[xshift=33.3cm,decorate,decoration=brace]
       (1.9,2.8)--(1.9,-.7);
 
 \filldraw [xshift=28cm, yshift=-11cm]
  (0,0) circle (2pt) 
  (.4,0) circle (2pt) 
  (.8,0) circle (2pt);
  
 \filldraw [xshift=38.2cm, yshift=2cm]
  (0,0) circle (2pt) 
  (.4,0) circle (2pt) 
  (.8,0) circle (2pt);
    
 \draw[yshift=-13cm,color=black!42,thick,decorate,decoration=brace]
  (28.8,-3.1)--(-6.7,-3.1);
 \draw[color=black!42,thick,decorate,decoration=brace]
  (39,-3.1)--(-6.7,-3.1);
 
 \draw[yshift=-13cm,->,>=stealth, very thick]
  (3.2,2)->(4.2,2);
 \draw[yshift=-13cm,->,>=stealth, very thick]
  (14.7,2)->(15.7,2);
 \draw[yshift=-13cm,->,>=stealth, very thick]
  (26.3,2)->(27.3,2);
  
 \draw[->,>=stealth, very thick]
  (3.2,2)->(4.2,2);
 \draw[->,>=stealth, very thick]
  (14.7,2)->(15.7,2);
 \draw[->,>=stealth, very thick]
  (36.5,2)->(37.5,2);
  
 \draw[->,>=stealth,very thick, rounded corners]
  (39.7,2)--(40.4,2)--(40.4,-6)--(-2.15,-6)--(-2.15,-7.6);
 \draw[yshift=13cm,->,>=stealth,very thick, rounded corners]
  (8.1,-4)--(8.8,-4)--(8.8,-6)--(-2.15,-6)--(-2.15,-7.6);
% \draw[yshift=-11.6cm, ->,>=stealth,very thick, rounded corners]
%  (39.7,1.55)--(40.4,1.55)--(40.4,-6.4)--(-10.1,-6.4)--(-10.1,-10.45)--(-9.2,-10.45);
 
 \draw[color=black!42] 
  (12.2,-17.5) node{\small{images $\subset\cE_{2}$}}
  (14.35,-4.5) node{\small{images $\subset\cE_{3}$}};
 \draw 
  (-6.8,9) node[right] {\scriptsize{$X_{k,k'}$ with $k\ge 4$ (not illustrated)}};
\end{tikzpicture}
\end{center}
\caption{The second round $(\rd_2)$ of the modular blowups}\label{figBlowup2}
\end{figure}

%{\green Here, quick descriptions  f $(i,j)$ for $X_{(i,j)}$.}
On $\ti\fM^{\rd_1}$, let $\cE_k$ be the proper transform of the exceptional divisor created
in the $k$-th step of $(\rd_1\ph_1)$, which may meet the proper transforms 
of some nodal divisors. 
Locally, each connected component of $X_{k,k'}$ is the intersection of $\cE_k$ with 
(the proper transforms of) some $k'\!-\!1$ many nodal divisors. Then 
$$\ti\fM^{\rd_2} \lra \ti\fM^{\rd_1}$$
decomposes as the sequential blowups along the proper transforms of $\{X_{k,k'}\}$,   with respect to the lexicographic order on $(k,k')$ so that the order on $k'$ is the usual order of natural numbers, whereas that on $k$ is in reverse order of natural numbers; c.f.~Figure~\ref{figBlowup2}.

After the second round, $(\rd_2)$,
the only obstacle to the
pullback of $\varphi$ being locally diagonalizable is the existence of the Weierstrass and conjugate points.
They will be dealt with in the next (i.e.~the last) round of blowups.

\vsp 
\noindent
{\it The third round of blowups $(\rd_3)$.}
\vsp
There are four phases of sequential blowups in this round. 

The blowup centers in the 
first phase $(\rd_3\ph_1)$ locally lie in the proper transforms of intersections of exceptional divisors of $(\rd_1\ph_1)$ and some nodal divisors.
Each of such intersections,
after $(\rd_2)$,
gives rise to two distinctive directions in the corresponding exceptional divisor of $(\rd_1\ph_1)$.
For the underlying curve of 
a general point of the blowup center of any step in $(\rd_3\ph_1)$, its core is of weight zero, and there are two rational
tails attached to conjugate points of the core that correspond to the two distinctive directions aforementioned.

The blowup centers in the 
second phase $(\rd_3\ph_2)$ locally lie in the proper transforms of intersections of exceptional divisors of $(\rd_2)$ and some nodal divisors.
Each of such intersections
gives rise to a distinctive direction in the corresponding exceptional divisor of $(\rd_2)$.
For the underlying curve of 
a general point of the blowup center of any step in $(\rd_3\ph_2)$, its core is of weight zero, and there is a rational
tail attached to a Weierstrass point of the core that corresponds to the distinctive direction aforementioned.

All the previous blowups are performed over the base stack $\fM_2^{\rm wt}$.
The remaining blowups are to be performed over the base stack $\fdd$.

The general points of the blowup centers in the 
third phase $(\rd_3\ph_3)$ are related to the pairs $(C, D)$
 such that the core $F$ of each $C$ is smooth and satisfies $D\!\cap\!F$ consists of two conjugate points.
More precisely, let~$\cH_k$ ($k\!\ge\!0$) be the closed substack of $\fdd$ whose general points
 are the pairs $(C,D)$ 
 % (i.e.~the smallest genus~2 subcurve of $C$; see Definition \ref{Dfn:nodal_curve}) 
 such that there are $k$ rational tails attached to the smooth core $F$,
 satisfying $D\!\cap\!F$ consists of two conjugate points.
We blow up the stack $\fdd\!\times_\Mw\!\ti\fM^{\rd_3\ph_2}$ along the proper transforms of 
 $\cH_0\!\times_\Mw\!\ti\fM^{\rd_3\ph_2}, \cdots, \cH_k\!\times_\Mw\!\ti\fM^{\rd_3\ph_2}, \cdots$  and obtain $\ti\fM^{\rd_3\ph_3}$.

The blowup centers in the 
fourth phase $(\rd_3\ph_4)$ 
locally lie in the proper transforms of intersections of exceptional divisors of $(\rd_1\ph_5)$ and some nodal divisors.
Each of such intersections
gives rise to a distinctive direction in the corresponding exceptional divisor of $(\rd_1\ph_5)$.
For the underlying point $(C,D)$ of 
a general point of the blowup center of any step in $(\rd_3\ph_4)$, its core $F$ contains exactly one point of $D$, and this point is conjugate to a point of the core where a rational tail is attached; this tail corresponds to the distinctive direction of the exceptional divisor aforementioned.
We remark that $(\rd_3\ph_4)$ solely treats the boundary  components of $\MPdd$, just as  in $(\rd_1\ph_5)$.

A set of examples that illustrate the above blowups are given in \S \ref{Subsec:blowup_examples}.

The key fact to verify is that
the pullback of $\varphi$ becomes locally diagonalizable when $(\rd_3)$ terminates; see Proposition~\ref{PrpChangeofPhi}.
To this end, 
%We remark that the description of the modular blowups in \S\ref{SecDescriptionOfBlowups} is {\it global}.
%However, 
it is crucial to understand how each round or phase of (the globally defined)
modular blowups affects the moduli space {\it locally},
so as to study the local behavior of the structural homomorphism $\varphi$ in \S\ref{SecChangeOfPhi}.
%We thus introduce the notion of \ts{locally tree-compatible} blowups {\red in \S\ref{SubsecTree}},
%which provides the desired connection between the globally defined blowups and their local properties.

Once  Proposition~\ref{PrpChangeofPhi} is established in \S\ref{SecChangeOfPhi},
we apply the techniques in \cite{HL10}  and \cite{HN2} to obtain the local structure of  $\widetilde{M}_2(\PP^n,d)$ in \S\ref{localEquations}.

Throughout the paper, we work over a fixed algebraically closed field
$\kk$ of characteristic zero.

\vsp
\noindent
{\bf Acknowledgement.}
%We are very grateful to the two anonymous referees for their thorough and careful reading, very critical comments and helpful suggestions. As a result of their suggestions, many mistakes and errors are corrected, presentation is improved, and (hopefully) the paper becomes more readable. 
%While this paper being prepared, the first named author was partially supported by  NSF DMS 0901136;
%the second named author was partially supported by  NSF  DMS 1564500 and DMS 1601211.
The first named author would like to thank Dawei Chen for useful discussions on
 Weierstrass and conjugate loci. 
The third named author would like to thank Aleksey~Zinger for the valuable discussions on the blowup loci.
We also thank  Sanghyeon Lee and Mulin Li for their questions and for pointing out some errors.
During the revision of this paper, the first named author was visiting Great Bay University, whose generous hospitality and support are gratefully acknowledged.

The anonymous referee supplied an extensive report containing numerous insightful comments, valuable suggestions, and meticulous corrections. The referee’s generous contribution is hereby formally acknowledged and deeply appreciated.

\section{Local defining equations of $\ov M_2(\PP^n,d)$}\label{SecPhi}

In this section, we study in details the local defining equations of $\ov M_2(\PP^n,d)$.
Our resolution process is guided by these local defining equations. Some crucial examples
will be provided to illustrate the simple ideas before introducing the heavy machinery.

As explained in \cite{HL10}, the local structure of $\ov M_2(\PP^n,d)$,
indeed, $\ov M_g(\PP^n,d)$ for any $g$,  is governed by
the derived object $\bR\pi_*\ff^* \sO_{\Pn}(1)$. Our aim of this section is
two-fold: (1)
to describe $\bR\pi_*\ff^* \sO_{\Pn}(k)$
as explicit as possible; (2) to explain how to locally diagonalize  (in the sense of Definition~\ref{dObject})
the object $\bR\pi_*\ff^* \sO_{\Pn}(k)$, the $k=1$ case of 
which will imply a desingularization of $\ov M_2(\PP^n,d)$,
as desired.
%{\blue The same approach also applies to $\bR\pi_*\ff^* \sO_{\Pn}(k)$,}

The local defining equations of $\ov M_2(\PP^n,d)$ are summarized in Proposition~\ref{Prp:phi_key}.

\subsection{Basic setup}
\label{sheafStructures}

For the aforesaid purpose, we 
consider two stacks throughout this paper:
$\mwt$ and $\fM_2^{\rm div}$.

First, let $\mwt$ be the Artin stack of stable pairs $(C,\textbf{w})$ of genus~2 nodal curves $C$ with non-negative weights $\textbf{w}\inn H^2(C,\mathbb Z)$, meaning that $\textbf{w}(\Si)\!\ge\!0$ for all irreducible components $\Si$ of $C$.
Here $(C,\bw)$ is said to be \ts{stable} if every rational irreducible component of weight 0 contains at least three nodal points;
c.f.~\cite[\S2.1]{HL10}.
The stability requirement guarantees that each connected component of $\mwt$,
determined by the total weight on the curve,
is of finite type.

Next,
let $\fM_2^{\rm div}$ be the Artin stack of stable pairs $(C,D)$ where $C$ are genus 2 nodal curves and $D\!\subset\!C$ are simple and effective divisors on $C$ away from nodes.
Here $(C,D)$ is said to be  \ts{pre-stable} if every rational irreducible component disjoint from $D$ contains at least three nodal points;
c.f.~\cite[\S2.3]{HL10}. 
The stability requirement guarantees that each connected component of $\fM_2^{\rm div}$,
determined by the degree of $D$,
is also of finite type.

Between the above two stacks,
there is a morphism
\begin{align}\label{Eqn:div_to_wt}
	\fM^{\rm div}_2\lra\Mw,\qquad
	&(C,D)\mapsto \big(C,c_1(D)\big).
\end{align}
We point out that this is a smooth morphism.

In studying $\MPdd$ (along with $\pi_*\ff^* \sO_{\Pn}(k)$),
we cover it by sufficiently small \'etale opens $\{U \to \MPdd\}$ such that for each $U\inn\{U\}$,
the set $\bH_U$ as in (\ref{Eqn:H}) is nonempty.
We pick an affine \'etale $\{\cV\to \fM_2^{\rm div}\}$ so that for each $U\inn\{U\}$ and $H\inn\bH_U$,
the morphism %(depending on $H_{\alpha_1}, \cdots ,H_{\alpha_k}$)
\begin{align*}
f_{U,H}:\,
 U\lra \fM_2^{\rm div},\qquad
 [C,u]\mapsto\big(C,u^{-1}(H)\big)
\end{align*}
introduced in (\ref{toP0}) factors through
$U\to \cV$ for some $\cV\inn\{\cV\}$.
This way, we  cover $\MPdd$  by the charts $\{U/\cV\}$.

Using the (local) morphism \eqref{toP0}, 
it suffices  to study a parallel problem on $\fM_2^{\rm div}$.
%(for $\sO_{\Pn}(k)$ with $k \ge 2$) and later that of $\fP_2$ (for $\sO_{\Pn}(1)$).
That is, for a fixed $(C,D)\in\fM^{\rm div}_2$ lying over $(C,\bw) \in \Mw$ 
 with $\deg D=m$ for some positive integer $m$,
we pick an affine smooth chart $\cV\to\fM^{\rm div}_2$
containing $(C,D)$. Here, $m$ can be any positive integer;
it corresponds to the case of ${\bf R}\pi_*\ff^* \sO_{\Pn}(k)$ when $m=dk$.

Let 
$ \rho\!: \cC\!\to\!\cV$ and $\cD\!\sub\!\cC$
be the universal family on $\cV$.
We find disjoint sections
$\cA_1, \cA_2, \cB$ of $\cC/\cV$, disjoint from $\cD$, called the \ts{auxiliary divisors}.
We list this package as
\beq\label{package}
\rho: \cC\to \cV \, (\to\fM_2^{\rm div}),
\quad\cD\sub \cC; \quad \text{plus}\  \cA=\cA_1\!+\!\cA_2, \, \cB\sub \cC.
\eeq
%Again, $\cD\sub \cC$ is the universal divisor and $\cA,\cB\sub \cC$ are auxiliary divisors.

Let
$$\sL=\sO_\cC (\cD),\quad 
\sM
=\sO_\cC(\cA-\cB),\and \sO_\cA(\cA)=\sO_\cV(\cA)\otimes_{\sO_\cC}\sO_\cA.
$$
Note that $\sL(\cA-\cB)=\sM(\cD)$.
We consider the two-term complex
\beq \label{intro-varphi} 
 \rho_* \sL(\cA) \lra \rho_* \sO_\cA(\cA)
\eeq
via the evaluation homomorphism. These data will be fixed throughout the paper.

\begin{defi}\label{Dfn:nodal_curve}
Let $C$ be a nodal curve.
We call a node $q$ (resp. an irreducible component $C'$) of $C$ 
\ts{separating}
if $C\bsl q$ (resp. $C\bsl C'$) is
disconnected.
We call other nodes (resp. irreducible components) of $C$ \ts{non-separating}.
A curve $C$ is said to be \ts{separable} (resp.~\ts{inseparable}) if it contains at least one separating node (resp.~contains no separating node).
 A \ts{genus-one
inseparable component}
of $C$ is a subcurve of $C$ that itself is an inseparable curve of (arithmetic) genus one.
If $C$ is of positive (arithmetic) genus $g$,
we call the smallest connected genus $g$ subcurve $F$ of $C$ the \ts{core} of~$C$.
Removing $F$ from $C$ results in disjoint connected trees of rational curves,
called the \ts{tails} of $C$.
Each tail is attached to the core $F$ at a separating node,
called a \ts{pivotal node}.
\end{defi}

 We remark it is possible that a non-separating node of $C$ is a separating node of a subcurve of $C$,
hence for a genus-one inseparable component $C'$ of $C$, its nodes must be non-separating in $C'$.
It may also be worth pointing out here that
an inseparable $F$ can be reducible: for example, a smooth rational curve attached to
two distinct points of a smooth genus one curve.

\vsp
\begin{assu}\label{Ass:basic} 
Possibly after an \'etale base change and  shrinking  $\cV$,
we assume
\begin{enumerate}
[label=(\Alph*),leftmargin=*]
\item \label{AssI}  $a_i=\cA_i\cap C$ and $b=\cB\cap C$ lie in the core $F$ of $C$ and are in general position; in case $F$ is
separable, then $a_1$ and $a_2$ lie on different genus-one inseparable components of $F$;
\item \label{AssII}
 $\cD=\cD_1+\cdots+\cD_m$ is a union of $m$ disjoint sections $\cD_i$ of $\cC\to\cV$;
\item \label{AssIII}
Assumptions~\ref{AssI} and~\ref{AssII} hold for all fibers
of $\cC\to\cV$.
\end{enumerate}
\end{assu}
\vsp

We emphasize that the general position requirement in
Assumption~\ref{AssI} implies none of $a_1$, $a_2$, and $b$ is a Weierstrass point in the sense of Definition~\ref{Dfn:Weierstrass},
and any two of these three points are not conjugate in the sense of Definition~\ref{Dfn:conjugate}.
Particularly, $a_1$, $a_2$, and $b$
cannot be on any non-separating bridge (should it exist;
 see Definition~\ref{Dfn:bridge} for terminology),
because by Part~\ref{Cond:W_non_sep_bri} of Lemma~\ref{Lm:W_smoothness} (in~\S\ref{Subsec:WC}),
any point on a non-separating bridge is a Weierstrass point.

The lemma below follows from direct verification.

\begin{lemm}\label{Lm:L(A)}
Under Assumption~\ref{Ass:basic}, we have
$R^1 \rho_* \sL(\cA)\eq 0$, $\rho_* \sL(\cA)$ is locally
free, and $\bR\rho_*\sL$ is quasi-isomorphic to the complex \eqref{intro-varphi}.
\end{lemm}

\subsection{The first reduction}
\label{firstReduction}

%%\begin{say}
%We let
%$\rho: (\cC,\cD)\to \cV$, $\cA, \cB\sub \cC$ and $(\cC,\cD)\times_\cV 0=(C,D)$
%be as in \eqref{package}, satisfying the basic assumptions.

%
%an {\it arbitrary} family of
%genus-2 curves such that its general members are smooth curves. (We note here that only in this section
%$\cC$ is an arbitrary family of curves; in the rest
%of the paper $\cC$ is the universal curve over $\fM_2^{\rm div}$.)

%be sections of the family such that $\cD_1 (v), \cdots, \cD_m (v)$
%are disjoint for general $v \in \cV$. {Note here that the points
%$\cD_1 (0), \cdots, \cD_m (0)$ are allowed to coincide\footnote{In
%the genus-1 case \cite{HL10}, this consideration is not needed.}.}
%We let $\cD$ be $\sum_i \cD_i$. By a suitable base change if necessary, we
%can assume that
%there are general sections $\cA_1, \cA_2, \cB$ such that they are
%disjoint from $\cD$ and pass through the core of every fiber of
%$\cC/\cV$.
 %%\end{say}

%%\begin{say}
The standard inclusion $\sO_{\cC}\sub \sO_{\cC}(\cD)$
determines the obvious section $1\in\Gamma(\rho\lsta\sL)$. For other sections, let
$\sM= \sO_{\cC}(\cA-\cB)$
and consider the inclusion
$$ \sM (\cD_i)=\sO_{\cC}(\cD_i+\cA-\cB)\mapright{\sub}
\sM (\cD) =\sO_{\cC}(\cD+\cA-\cB),
$$
and the induced inclusions
$$\eta_i: \rho_*\sM(\cD_i) \mapright{\sub}
\rho_*\sM (\cD).
$$
Because of Assumption~\ref{Ass:basic}, the above terms are locally free. By
Riemann-Roch Theorem, $\rho\lsta\sM (\cD_i)$ is invertible and
$\rho\lsta\sM (\cD)$ has rank $m$. Let \beq\label{varphi}\varphi:
\rho\lsta\sM (\cD)\lra
%\rho\lsta\bl\sO_{\cA}(\cD+\cA-\cB)\br=
\rho\lsta\sO_{\cA}(\cA)\and
\varphi_i: \rho\lsta\sM (\cD_i)\lra
%\rho\lsta\bl\sO_{\cA}(\cD_i+\cA-\cB)\br=
\rho\lsta \sO_{\cA}(\cA)
\eeq
be the evaluation homomorphisms; then $\varphi_i\eq\varphi
\!\circ\!\eta_i$. We denote their sum by
\begin{align*}
(\eta_i)&: \oplus_{i=1}^m \rho_*\sM (\cD_i)\lra \rho_*\sM (\cD),\\
(\varphi_i)&: \oplus _{i=1}^m\rho\lsta\sM (\cD_i)\lra\rho\lsta\sO_{\cA}(\cA).
\end{align*}
%By shrinking $\cV$ if necessary, we assume and fix $\rho\lsta\bl\sO_{\cA}(\cA)) \cong \sO_\cV^{\oplus 2}$.
%%\end{say}

\begin{lemm} \label{lemm:usefulFacts}
We have
(1). $\rho\lsta\sL\cong\sO_\cV\oplus\rho\lsta \sL(-\cB)$; (2). $\rho\lsta\sL(-\cB)\cong \ker\varphi$;
(3). $(\eta_i)$ is an isomorphism, and (4).
$(\varphi_i)=\varphi \circ(\eta_i)$.
Consequently,
$$\rho\lsta\sL\cong
\sO_\cV\oplus \ker\{(\varphi_i)\}.
$$
\end{lemm}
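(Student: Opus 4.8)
The plan is to treat the four claims in turn, each being a routine consequence of the basic assumptions together with Riemann--Roch on the fibers, and then to combine (1)--(4) to get the final direct sum decomposition. First I would prove (1): the structure sequence $0\to\sO_\cC\to\sL=\sO_\cC(\cD)\to\sO_\cD(\cD)\to 0$ twisted down by $\cB$ gives $0\to\sO_\cC(-\cB)\to\sL(-\cB)\to\sO_\cD(\cD-\cB)\to 0$; since $\cD$ is disjoint from $\cB$, $\sO_\cD(\cD-\cB)\cong\sO_\cD(\cD)$. Comparing with the untwisted sequence and using that $\cB$ meets each fiber in a single smooth point of the core $F$, one sees that the section $1\in\Gamma(\rho_*\sL)$ restricts nontrivially along $\cB$, so the evaluation-at-$\cB$ map $\rho_*\sL\to\rho_*\sO_\cB(\cD)\cong\sO_\cV$ is a split surjection with kernel $\rho_*\sL(-\cB)$; splitting it by $1$ gives (1). (One uses here the fact noted after the basic assumptions, that $R^1\rho_*\sL(\cA)=0$ and the relevant pushforwards are locally free, plus shrinking $\cV$ so this holds fiberwise.)

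Next I would prove (3). The inclusions $\sM(\cD_i)\hookrightarrow\sM(\cD)$ assemble, after summing, to $(\eta_i)\colon\bigoplus_{i=1}^d\rho_*\sM(\cD_i)\to\rho_*\sM(\cD)$. Both sides are locally free of rank $d$ by Riemann--Roch (the left-hand summands are invertible since $\deg(\sM(\cD_i))|_{\text{fiber}}$ is in the range where $h^1=0$ and $h^0=1$, the right-hand side has rank $d$ for the same reason). To see $(\eta_i)$ is an isomorphism it suffices — both sheaves being locally free of the same rank — to check it fiberwise, i.e.\ that on $C$ the natural map $\bigoplus_i H^0(\sM(D_i))\to H^0(\sM(D))$ is an isomorphism. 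Injectivity: a section of $\sM(D)$ vanishing along all but at most one $D_i$ that lies in the image of $H^0(\sM(D_i))$ is determined by its "pole" at $D_i$, and the only relation would force a global section of $\sM=\sO_C(\cA-\cB)$, which is zero because $\deg\sM|_F<0$ on the core (here the general-position hypothesis on $a_1,a_2,b$ is used), and $\sM$ has negative degree summands on any tail it is supported nontrivially on — so $H^0(\sM)=0$. A dimension count $d=\sum 1$ then gives surjectivity. This is the step I expect to require the most care, since one must chase the assumptions A--C (core location, general position, separable-core case) to rule out unexpected global sections; everything else is formal.

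Claim (4) is definitional: $\varphi_i=\varphi\circ\eta_i$ was already observed just above the lemma statement, and summing gives $(\varphi_i)=\varphi\circ(\eta_i)$. Claim (2), $\rho_*\sL(-\cB)\cong\ker\varphi$, follows by comparing two exact sequences: on the one hand $0\to\sL\to\sL(\cA)\to\sO_\cA(\cA)\to 0$ pushes forward (using $R^1\rho_*\sL=0$, which holds since we can arrange it by the assumptions, or by applying the $(-\cB)$-analysis) to $0\to\rho_*\sL\to\rho_*\sL(\cA)\to\rho_*\sO_\cA(\cA)$, exhibiting $\rho_*\sL=\ker$ of the evaluation; on the other hand the natural identification $\rho_*\sM(\cD)\cong\rho_*\sL(\cA)$ (as $\sM(\cD)=\sO_\cC(\cD+\cA-\cB)$ and $\sL(\cA)=\sO_\cC(\cD+\cA)$ differ by the twist by $-\cB$ which, combined with the splitting of (1), identifies the $\ker\varphi$ piece with $\rho_*\sL(-\cB)$). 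Finally, assembling: by (3) and (4), $\ker\{(\varphi_i)\}\cong\ker\varphi$ via the isomorphism $(\eta_i)$; by (2) this is $\rho_*\sL(-\cB)$; by (1) we get
$$\rho_*\sL\;\cong\;\sO_\cV\oplus\rho_*\sL(-\cB)\;\cong\;\sO_\cV\oplus\ker\{(\varphi_i)\},$$
which is the asserted consequence. The only genuine obstacle is the fiberwise vanishing $H^0(C,\sO_C(\cA-\cB)|_C)=0$ and the resulting isomorphism in (3); once that is in hand the rest is bookkeeping with exact sequences.
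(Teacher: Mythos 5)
Your proposal is correct and follows essentially the same route as the paper: the paper proves (3) by the identical reduction to a fiberwise statement over closed points $z\in\cV$, uses equality of ranks to reduce to injectivity, and deduces injectivity from the distinctness of the points $\cD_i\cap\cC_z$; it treats (4) as a tautology and refers (1) and (2) to the genus-one case of \cite{HL10}, for which your exact-sequence arguments (splitting off the section $1$ via evaluation along $\cB$, and identifying $\ker\varphi$ with $\rho\lsta\sL(-\cB)$ through $\sM(\cD)=\sL(-\cB)(\cA)$) are the intended ones. One justification in your write-up is wrong, though the claim it supports is true and is indeed the crux: $H^0(C,\sO_C(\cA-\cB)|_C)=0$ does \emph{not} follow from negativity of degree, since $a_1,a_2,b$ all lie on the core $F$ by Basic Assumption A, so $\deg\sO_C(\cA-\cB)|_F=\deg(a_1+a_2-b)=1>0$ and the restriction to every tail has degree $0$. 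The vanishing instead comes from the general-position hypothesis (a degree-one line bundle on the genus-two core has $h^0=0$ unless it is effective, which general position rules out; a section on a tail is then a constant forced to vanish at the attaching node) --- this is exactly the vanishing $H^j(\sO_F(a-b))=0$ the paper invokes in \S\ref{construct-rational-tails}. With that correction your argument for (3) matches the paper's.
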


\begin{proof}
The proofs of (1) and (2) are the same as for the genus 1 case, \cite[Lemma
4.10]{HL10}; we omit the details.
The proof of (3) is also similar. We now 
elaborate it below.
Since both
$\rho_*\sM (\cD_i)$ and $\rho_*\sM (\cD)$ are locally free, we
only need to show that for any closed $z\in \cV$,
$\oplus_{i=1}^m \rho_*\sM (\cD_i)|_z\to \rho_*\sM (\cD)|_z$
is an
isomorphism. By base change, it suffices to show that the tautological
homomorphism
$$\bigoplus_{i=1}^m \eta_{i}(z):\bigoplus_{i=1}^m H^0\bl
\sO_{\cC_z}(\cD_i+\cA-\cB)\br \mapright{} H^0\bl\sO_{\cC_z}(\cD+\cA-\cB)\br
$$
is an isomorphism. Because both sides are of equal
dimensions, it suffices to show that
it is injective. 
%The injectivity follows from that
%$\cD_1\cap \cC_z,\cdots,\cD_m \cap\cC_z$ are distinct in $\cC_z$.
For this, we look at the composite
of $\oplus \eta_i(z)$ with 
the restriction
$$\phi_j(z):H^0\bl
\cC_z,\sO_{\cC_{{z}}}(\cD+\cA-\cB)\br \lra H^0\bl\cC_z\cap\cD_j,
\sO_{\cC_z\cap\cD_j}(\cD+\cA-\cB)\br.$$
For any $i$, when $i=j$, $\phi_j(z)\circ\eta_i(z)$ is an
isomorphism for all the points $z \in \cV$, and when $i\ne j$,
$\phi_j(z)\circ\eta_i(z)=0$ holds for general point
$z \in \cV$ and hence also holds for all the points. 
Thus, if $(\oplus_{i=1}^m\eta_i(z))(\oplus v_i)
=\oplus_{i=1}^m\eta_i(z)(v_i)=0$ where
$v_i \in H^0\big(
\sO_{\cC_z}(\cD_i+\cA-\cB)\big)$, then 
$\oplus_{i=1}^m \phi_j(z)\circ\eta_i(z)( v_i)=\phi_j(z)\circ\eta_j(z)( v_j)=0$.
Hence, $v_j=0$ for all $1 \le j \le m$.
This shows that $\oplus_{i=1}^m\eta_i(z)$ is injective.
The item (4) is a tautology. This proves the lemma.
\end{proof}

%%\begin{say}
This proves that the homomorphism $\varphi$ in \eqref{varphi}
%$$\varphi: \rho\lsta\sM (\cD) \cong
%\rho\lsta\bl\sO_{\cA}(\cD+\cA-\cB)\br\lra \rho_* (\sO_\cA (\cA))
%$$
is equivalent to the homomorphism $(\varphi_i)$, via the isomorphism
$(\eta_i)$.

%\beq
%\label{homsum} \Oplus_i \varphi_i: \bigoplus_i
%\rho_*\sM (\cD_i) \cong
%\rho\lsta\bl\sO_{\cA}(\cD+\cA-\cB)\br\lra\rho_* \sO_\cA (\cA).
%\eeq
We will call either $\varphi$ or $(\varphi_i)$ a \ts{structural homomorphism}.
Note that each $\varphi_i$ has the form
\beq\label{varphiA}
\varphi_i = \varphi_{i}^1 \oplus
\varphi_{i}^2: \rho_*\sM (\cD_i)\lra \rho_* \sO_{\cA_1}
(\cA_1) \oplus \rho_* \sO_{\cA_2} (\cA_2).
 \eeq
These  homomorphisms  will be our focus in this section.
%%\end{say}

\subsection{Local open immersions}\label{subsect:localEq}

To obtain local equations of $\MPdd$, we consider 
the case of $\pi_*\ff^* \sO_{\Pn}(1)$, and 
 let $m=d$. Then,
as in \S\ref{sheafStructures},  we  cover $\MPdd$  by charts $\{U/\cV\}$.
 Let $\sE_{\cV}$ be the total space of the vector
bundle $\rho_* \sL(\cA)^{\oplus n}$ and let $$p: \sE_{\cV} \to
\cV$$ be the projection.  We also set
$$\rho_* (\sL(\cA)^{\oplus n}|_\cA)=\rho_* (\sL(\cA)^{\oplus
n}|_{\cA_1}) \oplus \rho_* (\sL(\cA)^{\oplus n}|_{\cA_2}).$$  Then
the tautological restriction homomorphism 
\beq \label{restHom-0}
\text{rest}: \rho_* \sL(\cA)^{\oplus n}\lra \rho_*
(\sL(\cA)^{\oplus n}|_\cA) \eeq lifts to a section 
\beq \label{sec1-0}
\Phi \in \Gamma (\sE_{\cV}\,,\; p^*\rho_* (\sL(\cA)^{\oplus
n}|_{\cA})). \eeq

%%\begin{say}
By choosing suitable trivialization, the homomorphism
$``$rest$"$ of \eqref{restHom-0} can be identified with the
homomorphism
 \beq \label{0plusphi-0} (0 \oplus \varphi)^{\oplus n}:
(\sO_\cV \oplus \rho_*\sM(\cD))^{\oplus n} \lra (\rho_*
\sO_{\cA}(\cA))^{\oplus n}\,, \eeq where $\varphi$ is the structural
homomorphism in \eqref{varphi}.  Then, the section
$\Phi$ of  \eqref{sec1-0} is the lift of $(0 \oplus \varphi)^{\oplus
n}$ in~\eref{0plusphi-0}.
%%\end{say}

%We restate Theorem \ref{immersion0}.
\vsp

\begin{theo}\label{thm:immersion} 
	Let $\cU= \cV\! \times_{\fM_2^{\rm div};f_{U,H}} \!U$.
Then there is a canonical open immersion \beq\label{immersion}
\cU
\lra \;{\rm (} \Phi=0 {\rm )} \sub \sE_{\cV}.\eeq
\end{theo}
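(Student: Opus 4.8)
The plan is to describe both $(\Phi=0)$ and $\cU$ by their functors of points over $\cV$ and to recognize $\cU$ as the open subfunctor of $(\Phi=0)$ cut out by base-point-freeness and stability. \emph{First, the functor of $(\Phi=0)$.} By the Basic Assumptions of \S\ref{331} one has $R^1\rho_*\sL(\cA)=0$, so $\rho_*\sL(\cA)$ is locally free with formation commuting with base change; the same holds for $\rho_*\bigl(\sL(\cA)^{\oplus n}|_\cA\bigr)$ since $\cA\to\cV$ is finite flat. Hence, for a $\cV$-scheme $T$ — writing $\cC_T=\cC\times_\cV T$ and $\sL_T,\sL(\cA)_T,\cA_T,\cD_T$ for the pullbacks — the total space $\cE_\cV$ represents $T\mapsto\Gamma\bigl(\cC_T,\sL(\cA)_T^{\oplus n}\bigr)$, and by \eqref{restHom}--\eqref{sec1} the section $\Phi$ sends a $T$-point $(\xi_1,\dots,\xi_n)$ to the tuple of restrictions $\bigl(\xi_i|_{\cA_T}\bigr)_i$; thus $(\Phi=0)(T)=\{(\xi_i)\,:\,\xi_i|_{\cA_T}=0\ \text{for all }i\}$. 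Applying $\Gamma(\cC_T,-)$ to $0\to\sL_T\to\sL(\cA)_T\to\sL(\cA)_T|_{\cA_T}\to 0$ — which stays exact after pullback to $\cC_T$ because $\cA$ is a relative effective Cartier divisor over $\cV$ — identifies those $\xi_i$ with sections of $\sL_T=\sO_{\cC_T}(\cD_T)$. Therefore $(\Phi=0)$ represents $T\mapsto\Gamma\bigl(\cC_T,\sO_{\cC_T}(\cD_T)\bigr)^{\oplus n}$ (matching the description of $\Phi$ as the lift of $(0\oplus\varphi)^{\oplus n}$ in \eqref{0plusphi}, via Lemma \ref{lemm:usefulFacts}). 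Note that the tautological section $1\in\Gamma(\rho_*\sL)$ of \S\ref{firstReduction} gives over every $T$ a fixed section $s_0$ of $\sO_{\cC_T}(\cD_T)$ with zero divisor $\cD_T$.

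\emph{Next, the morphism $\Psi\colon\cU\to(\Phi=0)$.} A $T$-point of $\cU=\cV\times_{\fM_2^{\rm div}}U$ is a $T$-point $v$ of $\cV$ — i.e.\ the pair $(\cC_T,\cD_T)$ — together with a degree-$d$ stable map $f\colon\cC_T\to\Pn$ satisfying $f^{-1}(H)=\cD_T$ (this being the content of the fibre product over $\fM_2^{\rm div}$, cf.\ \eqref{Mtodiv}), plus the routine \'etale-chart datum recording the factorization through $U$. Since $H$ is a hyperplane, $f^*\sO_{\Pn}(1)\cong\sL_T=\sO_{\cC_T}(\cD_T)$ canonically: choosing coordinates with $H=\{x_0=0\}$, putting $L=f^*\sO_{\Pn}(1)$ and $t_i=f^*x_i$, the identity $\operatorname{div}(t_0)=f^{-1}(H)=\cD_T$ yields a base-change-compatible isomorphism $\psi\colon L\xrightarrow{\ \sim\ }\sO_{\cC_T}(\cD_T)$ normalized by $\psi(t_0)=s_0$; set $\tilde t_i=\psi(t_i)$. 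The rule $(v,f)\mapsto(v;\tilde t_1,\dots,\tilde t_n)$ is insensitive to the coordinates adapted to $H$ (a rescaling of the $x_i$ rescales $\psi$ and the $t_i$ compatibly), hence canonical and functorial in $T$; this defines $\Psi$.

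\emph{Finally, $\Psi$ is an open immersion.} It is a monomorphism because $f$ is recovered from its image as $f=[s_0:\tilde t_1:\cdots:\tilde t_n]$. A $T$-point $(v;\tilde t_1,\dots,\tilde t_n)$ of $(\Phi=0)$ lies in the image precisely when (a) $s_0,\tilde t_1,\dots,\tilde t_n$ have no common zero on $\cC_T$, so that $g:=[s_0:\tilde t_1:\cdots:\tilde t_n]\colon\cC_T\to\Pn$ is a genuine morphism — and then $g^*\sO_{\Pn}(1)=\sO_{\cC_T}(\cD_T)$ is the line bundle generated by these sections, so $g$ has degree $d$ and $g^{-1}(H)=\operatorname{div}(s_0)=\cD_T$ — and (b) $g$ is stable. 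Base-point-freeness of a finite collection of sections of a coherent sheaf, and stability of a family of prestable maps, are both open conditions on $T$; over the intersection of these open loci the morphisms $g$ form a family of degree-$d$ stable maps with $g^{-1}(H)$ equal to the universal divisor, i.e.\ a $T$-point of $\cU$ (again modulo the harmless chart bookkeeping), inverse to $\Psi$. Hence $\Psi$ identifies $\cU$ with an open subscheme of $(\Phi=0)$, and the construction is manifestly canonical.

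\emph{The main difficulty} lies entirely in the first step: establishing that $(\Phi=0)$ really does represent $n$-tuples of sections of $\sO_\cC(\cD)$ uniformly over all test schemes $T$. This rests on the base-change vanishing $R^1\rho_*\sL(\cA)=0$ and the fibrewise hypotheses built into the Basic Assumptions of \S\ref{331}, together with enough care about the trivializations underlying \eqref{0plusphi} to see that the identification ``$\Phi=$ restriction to $\cA$'', hence the functor represented by $(\Phi=0)$, is stable under arbitrary base change. The remaining ingredients — openness of base-point-freeness and of stability, reconstruction of the universal stable map over the good locus, and the bookkeeping relating the Deligne--Mumford stack $\MPdd$ to its scheme chart $U$ — are standard in the deformation theory of stable maps.
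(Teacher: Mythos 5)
Your proof is correct and follows essentially the same route as the paper, which itself only says the statement and its proof are parallel to Theorem 2.16 of \cite{HL10}: identify $(\Phi=0)$ with the space of $n$-tuples of sections of $\sO_{\cC_T}(\cD_T)$ via cohomology-and-base-change and the kernel of restriction to $\cA$, send a stable map with $u^{-1}(H)=\cD$ to its coordinate sections normalized by $t_0\mapsto s_0$, and recognize the image as the open locus where base-point-freeness and stability hold. The only point left implicit in your write-up is the \'etale-chart bookkeeping (that $U\to\fM_2^{\rm div}$ is chosen to factor through $\cV$, so that $\cU\to(\Phi=0)$ is an open immersion rather than merely \'etale onto the stable base-point-free locus), but this matches the level of detail the paper itself supplies.
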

\begin{proof} This theorem is the natural  extension of \cite[Theorem
2.17]{HL10}; the proof is also parallel.
\end{proof}

%%\subsection{The structures of the tautological derived objects}
%\label{SecPhi}

In what follows, we will focus on 
the  structures of the  objects 
\beq \nonumber \varphi:
\rho\lsta\sM (\cD)\lra
%\rho\lsta\bl\sO_{\cA}(\cD+\cA-\cB)\br=
\rho\lsta\sO_{\cA}(\cA). \eeq
We begin with  the notion of locally diagonalizable derived objects as introduced in \cite{HL11}.
It is a key to this article.

\subsection{Locally diagonalizable homomorphisms}

Let $\varphi: \sE_1\to \sE_2$ be a homomorphism of locally free sheaves of $\sO_U$-modules on a
DM stack $U$.

\begin{defi}\label{diagonalizable}\label{DfnDiag} We say  $\varphi$ is diagonalizable if
there are integers $r$, $l_1$ and $l_2\in\ZZ_{\ge 0}$, $p_i\in\Gamma(\sO_U)$ such that
the ideals $(p_{i})  \supset ( p_{i+1})$, and
isomorphisms $\sE_i\cong \sO_U^{\oplus r}\oplus \sO_U^{\oplus l_i}$
such that
$$\varphi=\mathrm{diag}[p_1,\cdots, p_r]\oplus 0: \sO_U^{\oplus r}\oplus \sO_U^{\oplus l_1}\lra
\sO_U^{\oplus r}\oplus \sO_U^{\oplus l_2},
$$
where $\mathrm{diag}[p_1,\cdots, p_r]: \sO_U^{\oplus r}\to \sO_U^{\oplus r}$
is the diagonal homomorphism and 
\hbox{$0: \sO_U^{\oplus l_1}\to \sO_U^{\oplus l_2}$} is the zero homomorphism.
We say $\varphi$ is locally diagonalizable if there is an \'etale cover $U\lalp$ of $U$
such that $\varphi$ is diagonalizable over~$U\lalp$.
\end{defi}
 
\iffalse
 It is direct to check that if $\varphi: \sE_1 \to \sE_2$ is locally
diagonalizable, then
%\item for every $0 \le r \le m-1$, the  determinantal ideal $\sI_{\varphi, r}$ is invertible; 
$\ker (\varphi|_{Z})$ is locally free for any integral closed $Z\sub U$. 
\fi

\begin{defi}\label{dObject}
Let $\bf E$ be a perfect derived object on a DM stack $M$. 
We say $\bf E$ is locally diagonalizable if
there is an  \'etale cover $\{U\}$ of $M$ such that ${\bf E}|_U$ is quasi-isomorphic to  $[\varphi: \sE_1\to \sE_2]$, where
$\sE_1$ and $\sE_2$ are locally free sheaves and $\varphi$ is diagonalizable.
\end{defi}

Using~\cite[Proposition 3.2]{HL11} (the universality of diagonalization),  one checks directly that the definition does not
depend on the local representation of the object $\bf E$. Here it is worth to point out that in Definition \ref{diagonalizable},
if the requirement  $(p_{i+1})  \sub( p_{i})$ is removed, then the ``diagonalization" thus defined
 is not a property of the derived object
but merely a property of a presentation of the object $\bf E$.  %(cf. Example \ref{no-diag}).

The following result is a conclusion of~\cite[Corollary~4.4]{HL11}.
It is needed to deduce Corollary 3 from Theorem 1 in \S\ref{SecIntro}.

\begin{prop}[{\cite{HL11}}]
\label{PrpLocallyFree}
Let $M$ and $\ti M$ be two DM stacks and $\bf E$ be a perfect derived object on $M$.
Assume there is a birational morphism 
$\psi\!:\ti M\!\to\!M$ such that the pullback of $\bf E$ is locally diagonalizable in the sense of Definition~\ref{DfnDiag}.
Then, 
for any irreducible component $N$ of $\ti M$ endowed with the reduced stack structure,
$\sH^0\big(L\psi^*{\bf E}|_N\big)$ is locally free.
\end{prop}

% To desingularize $\ov M_2(\PP^n,d)$, we will diagonalize the homomorphism
%\beq \nonumber \varphi: \rho\lsta\sM (\cD)\lra
%\rho\lsta\bl\sO_{\cA}(\cD+\cA-\cB)\br=
%\rho\lsta\sO_{\cA}(\cA). \eeq
As local diagonalization is functorial, a local diagonalization of 
$\bR \rho_* \sO_\cC (\cD)$ will 
imply a resolution of  $\ov M_2(\PP^n,d)$.
%The proof of Corollary \ref{genericSmoothness} is a prototype of such an implication. 
The remainder of \S2 is devoted to provide
detailed analysis of the local structures of the object $\bR \rho_* \sO_\cC (\cD)$.

\subsection{Notation on nodal curves and the initial forms of structural homomorphisms}\label{mpara}
\label{SubsecPhi}

By the deformation theory of nodal curves, for
each node $q\in C$ there is a regular function
$\zeta_q\in\Gamma(\sO_\cV)$ so that $\{\zeta_q=0\}$ is
the locus where the node $q$ is not smoothed; the divisor
$(\ze_q\eq 0)$ is smooth.
We will refer to $\zeta_q$ as a \ts{modular parameter}  or a \ts{node-smoothing parameter}.

To proceed, we follow the notation and terminology of nodal curves
 in Definition~\ref{Dfn:nodal_curve}.
Let $C$ be a nodal curve of genus 2.
We denote by $N(C)$ the set of the nodes of $C$.
A node $q\inn N(C)$ is called a \ts{core node}
if it is a node (i.e.~a singular point) of the core $F$ of $C$;
particularly, any pivotal node cannot be a core node.

We say a separating  node $q$ \ts{separates} (or \ts{lies between}) two  smooth
points $\de$ and $\de'$ of $C$
if  $\de$ and $\de'$ lie in different connected components of $C\bsl\{q\}$.
If we replace $\de'$ by a
connected subcurve  
$C'$ of $C$,
we say $q$ \ts{lies between} $\de$ and $C'$ if  $\de'$ and $C'\bsl\{q\}$ lie in different connected components of $C\bsl\{q\}$.
We denote by
$$
N_{[\de,\de']}\qquad \big(\tn{resp}.~N_{[\de,C']}\big)
$$ the set of the {\it separating} nodes lying between $\de$
and $\de'$ (resp.~$C'$).
Particularly, 
for $C'\eq F$, we write
\begin{align*}
	N_{[\de]}:=N_{[\de,F]}.
\end{align*}

Below, we introduce an important type of rational subcurves of the core $F$ of $C$.

\begin{defi}
	\label{Dfn:bridge}
If $F$ is a union of two connected subcurves: $\tn B$ of arithmetic genus 0 and $F_1$ of arithmetic genus 1,
such that $\tn B$ and $F_1$ do not share any common irreducible component,
then we call $\tn B$
a \ts{non-separating bridge} of 
$C$.
In other words, $\tn B$ is a non-separating bridge if there exist two non-separating core nodes $\fp$ and $\fq$ such that
\begin{align*}
	F= \tn B\cup F_1,\qquad
	\tn B\cap F_1=\{\fp,\fq\},\qquad
	p_a(\tn B)=0,\qquad
	p_a(F_1)=1,
\end{align*}
where $p_a$ denotes the arithmetic genus.

If $F$ is a union of three connected subcurves: $\tn B$ of arithmetic genus 0, and $F_1$ and $F_2$ both of arithmetic genus 1,
such that any two of $\tn B$, $F_1$ and $F_2$ do not share any common irreducible component,
then we call $\tn B$
a \ts{separating bridge} of 
$C$.
In other words,
$\tn B$ is a separating bridge if there exist two separating core nodes $\fp$ and $\fq$ such that
\begin{align*}
	F= \tn B\cup F_1\cup F_2,\quad
	\tn B\cap F_1=\{\fp\},\quad
	\tn B\cap F_2=\{\fq\},\quad
	p_a(\tn B)=0,\quad
	p_a(F_1)=p_a(F_2)=1
\end{align*}
(which implies $F_1\!\cap\!F_2\eq \emptyset$ because $p_a(F)\eq 2$).

A bridge, non-separating or separating, is \ts{maximal} if it is not a strict subset of any other bridge.

When needed, we denote a bridge $\tn B$ by $\tn B[\fp,\fq]$ to emphasize the two nodes.
\end{defi}

It is straightforward that all the core nodes meeting a non-separating (resp.~separating) bridge $\tn B[\fp,\fq]$, including $\fp$ and $\fq$, are non-separating (resp.~separating).
By the topology of nodal curves,
a genus-2 nodal curve $C$ may contain at most three non-separating bridges, or at most one separating bridge.
Indeed, if $C$ contains three non-separating bridges,
then it cannot have any separating bridge;
if $C$ contains a separating bridge,
then it may contain at most two non-separating bridges.

For example, the shaded component of the leftmost picture in the third row (resp.~the second row) of Figure~\ref{figBlowup1} is a non-separating bridge (resp.~a separating bridge).

If there exists  a non-separating bridge $B\eq \tn B[\fp,\fq]$ on $F$,
for every smooth point $\de\inn B$,
we define 
\begin{align}\label{Eqn:N_bridge}
	N_{[\de,\fp]}\qquad
	\big(\tn{resp}.~N_{[\de,\fq]}\big)
\end{align}
to be set of all the core nodes of $B$ between $\de$ and $\fp$ (resp.~$\fq$), including $\fp$ (resp.~$\fq$).
We emphasize  both $N_{[\de,\fp]}$ and $N_{[\de,\fq]}$ are nonempty as long as $B$ exists, because $\fp\inn N_{[\de,\fp]}$ and $\fq\inn N_{[\de,\fq]}$.

If $T$ is a tail and $\de, \de'\inn T$,
among all the  nodes of $N_{[\de]}\!\cap\!N_{[\de']}$,
	i.e.~all the common nodes lying between $\de$ and $F$ as well as between $\de'$ and $F$,
	we denote by $\de\!\wedge\! \de'$ the one that is the farthest from $F$.
	In other words,  $\de\!\wedge\! \de'$ is the node satisfying
\begin{equation}\label{Eqn:wedge}
	N_{[\de\wedge \de']}= N_{[\de]}\cap N_{[\de']}\,;
\end{equation}
in particular, $\de\!\wedge\! \de'$ cannot be any core node.
If $\de$ and $\de'$ are on different tails, or at least one of them is on the core,
we define $N_{[\de\wedge \de']}\!:= \!\emptyset$, which is in accordance with (\ref{Eqn:wedge}). However, $\de\!\wedge\!\de'$ is not defined in these cases.

Next, consider the family $\cC\to\cV$ with $C=\cC\times_{\cV}0$ the central fiber mentioned before.
For $1\le i \le m$ and $1 \le  s \le 2$, let
$$\delta_i = \cD_i \cap C \and a_s = \cA_s \cap C.$$
For smooth points $\de,\de'\inn C$,
we define
\beq\label{product}
\zeta_{[\de,
\de']}=\prod_{q\in N_{[\de, \de']}}\zeta_q\and
\zeta_{[\de]}=\prod_{q\in
N_{[\de]}}\zeta_q.
 \eeq
If there exists  a non-separating bridge $B\eq \tn B[\fp,\fq]$ on $F$,
then for every smooth point $\de\inn B$,
we set
\begin{align}\label{Eqn:bridge_product}
	\zeta_{[\de,
	\fp]}=\prod_{q\in N_{[\de, \fp]}}\zeta_q
	\and	
	\zeta_{[\de,
	\fq]}=\prod_{q\in N_{[\de, \fq]}}\zeta_q.
\end{align}
In case $N_{[\de, \de']}$ (resp.~$N_{[\de]}$, $N_{[\de,\fp]}$, $N_{[\de,\fp]}$) is empty, we set $\zeta_{[\de, y]}$ (resp.~$\zeta_{[\de]}$, $\zeta_{[\de, \fp]}$, $\zeta_{[\de, \fq]}$) to be 1.
Similarly, if $\de\!\wedge\! \de'$ does not exist (i.e.~$N_{[\de\wedge\de']}\eq\emptyset$),
we set $\ze_{[\de\wedge \de']}\eq 1$.

The above notation on nodal curves are illustrated in Figure~\ref{Fig:nodal} below.

\vsp
We fix trivializations
\beq\label{tri-A}
\rho_*\sM (\cD_i) \cong\sO_\cV,\quad \rho_* \sO_{\cA_s}(\cA_s)\cong \sO_\cV;\quad
1 \le i \le m
,\quad 1 \le s \le 2,
\eeq
and the induced trivialization $\rho_* \sO_\cA(\cA)\cong\sO_\cV^{\oplus 2}$, throughout 
this section, unless stated otherwise.

\begin{prop}\label{HomFirstOrder}
\label{PrpPhi} After fixing \eqref{tri-A}, % and $\rho_* \sO_{\cA_s}(\cA_s)\cong \sO_\cV$,
the evaluation homomorphism % $\varphi_{i}^i$ is given by
\beq\label{explicitHom} \varphi_{si }: %= \zeta_{[\delta_i,a_s]}:
\rho_* \sM (\cD_i) \lra \rho_* \sO_{\cA_s}(\cA_s)
\eeq
is given by the multiplication
$$\varphi_{si}= c_{si}\cdot \zeta_{[\delta_i,a_s]}\ : \sO_\cV\lra\sO_\cV,
\quad c_{si}\in \Gamma(\sO_\cV\sta).
$$
\end{prop}

\begin{proof}
The proof is parallel to that of
\cite[Prop. 4.13]{HL10} and is thus omitted.
\end{proof}

By abuse of notation,
we still denote by~$\varphi$ the $2\!\times\!m$ matrix 
$(c_{si}\cdot \zeta_{[\delta_i,a_s]} )$
with entries as in~(\ref{explicitHom}) 
and still call it the \ts{structural homomorphism} when the context is clear.

\subsection{Weierstrass and conjugate loci}
\label{Subsec:WC}

In the genus 1 case,
the analogue of Proposition~\ref{HomFirstOrder} contains all the necessary data for locally diagonalizing the structural homomorphism in the sense of Definition~\ref{DfnDiag}.
In the genus 2 case, however,
Proposition~\ref{HomFirstOrder} alone is not sufficient for studying the $2\!\times\!2$ minors of $\varphi$,
which is crucial in diagonalizing $\varphi$.
To this end,	 
we need to first study the loci of $\fM^{\rm div}_{2}$ consisting of $(C,D)$ when some points of $D$ are in the special positions of $C$.
For conciseness, in this subsection we deal with the loci of $\bigsqcup_{\ell>0}\ov\cM_{2,\ell}$ rather than $\fM^{\rm div}_{2}$,
because {\it locally} neither the order of the marked points nor the subtle difference in the stability conditions
% (e.g.~$(C,D)\inn \fM^{\rm div}_{2}$ may have a rational subcurve that contains only one nodal point and one point from $D$) 
of the two stacks affects the local equations of the Weierstrass or conjugate loci;
see Remark~\ref{Rmk:D2_vs_M2l} for more details.

\begin{defi}\label{Dfn:Weierstrass}
A point $\de$ on a smooth genus 2 curve $C$ is called a \ts{Weierstrass} point if
$$
 h^0\big(C,\sO_C(2\de)\big)=2.
$$
We denote by $\cW\!\subset\!\cM_{2,1}$ the locus of smooth genus 2 curves with one marked Weierstrass point and
by $\ov\cW$ the closure of $\cW$ in $\ov\cM_{2,1}$.
For every one-marked stable curve $(C,\de)\inn\ov\cW$, we call the marked point $\de$ a \ts{Weierstrass point}.
\end{defi}

It is well known that $\cW$ is a smooth Cartier divisor of $\cM_{2,1}$;
see~\cite[Proposition~3.2.1]{Cu} for example.
We show the same result holds for $\ov\cW\inn\ov\cM_{2,1}$ as well.
In fact,
using admissible double covers (c.f.~\cite[\S 3G]{HM98}), we can identify $\overline{\cM}_2$ with 
 $\overline{\cM}_{0,6}/S_6$ as coarse moduli spaces, where $S_6$ is the symmetry group on 6 letters. Here,
 the 6 unordered markings on the target rational curve 
 are the branch points of the hyperelliptic admissible double covers. 
 With this identification, choosing a Weierstrass point in the domain curve is the same as specifying a distinctive marking out of the 6 unordered markings in the target rational curve. Hence, $\ov\cW$ is isomorphic 
 to $\overline{\cM}_{0,6}/S_5$ as coarse moduli spaces, where the last marking is declared to be distinctive from the others. 
 In particular, $\ov\cW$ is a smooth Cartier divisor in $\overline{\cM}_{2,1}$.

For $1\!\le\!i\!\le\!\ell$, let $\ov\cW_{\ell;i}\!\subset\!\ov\cM_{2,\ell}$ be the pullback of $\ov\cW$ in $\ov\cM_{2,\ell}$ via the forgetful morphism that forgets all but the $i$-th marked point.
If $x\eq(C,\de_1,\ldots,\de_\ell)\inn\ov\cW_{\ell;i}$,
we say $\de_i$ is \ts{Weierstrass}.

Below, we study the local equation of $\ov\cW_{\ell;i}$.
For convenience, for every point $\de\inn C$,
hereafter we write
\begin{align}
	\label{Eqn:lr}
	\lr \de:= 
	\begin{cases}
		\de, & \tn{if}~\de~\tn{is~on~the~core~of}~C,\\
		\tn{the~pivotal~node~of~the~tail~containing}~\de, & \tn{if}~\de~\tn{is~on~a~tail~of}~C;
	\end{cases}
	\end{align}
see Figure~\ref{Fig:nodal} for illustration.
\begin{figure}[tb]
	\begin{center}
		\begin{tikzpicture}[scale=1.1]
			\def\g1{
				(-1,0) ellipse (1 and 0.5)
				(-1.4,0)..controls(-1,0.1)..(-0.6,0)
				(-0.6,0)..controls(-1,-0.1)..(-1.4,0)
				(-0.5,0.05)--(-0.6,0)
				(-1.4,0)--(-1.5,0.05)
			}
			\def\crs{
				(-.03,-.03)--(.03,.03)
				(-.03,.03)--(.03,-.03)
			}
			\draw\g1;
			\draw[xshift=-1cm]
			(-1.3,0) circle (.3cm)
			(-2.749,0) circle (.3cm)
			(-2.149,.829) circle (.3cm)
			(0,.8) circle (.3cm)
			(-.424,1.224) circle (.3cm)
			(.424,1.224) circle (.3cm)
			(-1.024,1.224) circle (.3cm)
			(-.424,1.824) circle (.3cm)
			;
			
			\draw[xshift=-1cm,thick]
			(-1.724,.424) circle (.3cm)
			(-1.724,-.424) circle (.3cm)
			(-2.149,0) circle (.3cm)
			;
			
			\draw[xshift=-3.5cm,thick]
			(-1.724,.424) circle (.3cm)
			(-1.724,-.424) circle (.3cm)
			(-2.149,0) circle (.3cm)
			;
			
			\draw[xshift=-2cm,yshift=1cm,thick]
			\crs;
			\draw[xshift=-2.15cm,yshift=1.4cm,thick]
			\crs;
			\draw[xshift=-.8cm,yshift=.75cm,thick]
			\crs;
			\draw[xshift=-.4cm,yshift=1.35cm,thick]
			\crs;
			\draw[xshift=-1.2cm,yshift=1.8cm,thick]
			\crs;
			\draw[xshift=-3.34cm,yshift=.829cm,thick]
			\crs;
			\draw[xshift=-2.94cm,yshift=.88cm,thick]
			\crs;
			\draw[xshift=-3.94cm,yshift=.1cm,thick]
			\crs;
			\draw[xshift=-2.724cm,yshift=-.66cm,thick]
			\crs;
			
			\draw
			(-2.45,.15) node {\tiny{$\fp$}}
			(-2.45,-.15) node {\tiny{$\fq$}}
			(-2.9,.525) node {\tiny{$c$}}
			(-2.88,.32) node {\tiny{$d$}}
			(-2.88,-.32) node {\tiny{$g$}}
			(-1,.39) node {\tiny{$h$}}
			(-.7,1.1) node {\tiny{$i$}}
			(-1.2,.9) node {\tiny{$j$}}
			(-1.63,1.23) node {\tiny{$k$}}
			(-1.4,1.4) node {\tiny{$\ell$}}
			(-2.05,0) node[right] {\tiny{$s$}}
			(-2.724,-.66) node[below] {\tiny{$\de_6$}}
			(-3.35,0) node {\tiny{$f$}}
			(-3.94,.1) node[left] {\tiny{$\de_5$}}
			(-3.34,.829) node[left] {\tiny{$\de_3$}}
			(-2.94,.88) node[right] {\tiny{$\de_4$}}
			(-2,1) node[below] {\tiny{$\de_2$}}
			(-2.2,1.4) node[above] {\tiny{$\de_1$}}
			(-.8,.7) node[right] {\tiny{$\de_7$}}
			(-.4,1.4) node[right] {\tiny{$\de_8$}}
			(-1.2,1.85) node[right] {\tiny{$\de_9$}}
			(-5.3,-1.3) node {\scriptsize{$\tn B[\fp,\fq]$}}
			(-2,-1.3) node {\scriptsize{$(C,D=\de_1+\cdots+\de_9)$}}
			(1,2) node[right] {\scriptsize{core nodes: $s,\fp,d,g,\fq$}}
			(1,1.6) node[right] {\scriptsize{separating nodes: $s,c,f,h,i,j,k,\ell$}}
			(1,1.2) node[right] {\scriptsize{non-separating nodes: $\fp,d,g,\fq$}}
			(1,.8) node[right] {\scriptsize{$\de_1\!\wedge\!\de_2\eq k$}}
			(1,.4) node[right] {\scriptsize{$\de_1\!\wedge\!\de_9\eq j$}}
			(1,0) node[right] {\scriptsize{$\de_1\!\wedge\!\de_7\eq\de_1\!\wedge\!\de_8\eq h$}}
			(1,-.4) node[right] {\scriptsize{$\ze_{[\de_1\wedge\de_3]}\eq 1$}}
			(1,-.8) node[right] {\scriptsize{$N_{[\de_1]}\eq \{h,j,k\}$}}
			(1,-1.2) node[right] {\scriptsize{$N_{[\de_1,\de_5]}\eq \{h,j,k,s,f\}$}}
			(5.5,2) node[right] {\scriptsize{$N_{[\de_6,\fp]}\eq \{\fp,d,g\}$}}
			(5.5,1.6) node[right] {\scriptsize{$N_{[\de_6,\fq]}\eq \{\fq\}$}}
			(5.5,1.2) node[right] {\scriptsize{$\lr{\de_1}\eq\lr{\de_2}\eq h$}}
			(5.5,.8) node[right] {\scriptsize{$\lr{\de_5}\eq f$}}
			(5.5,.4) node[right] {\scriptsize{$\lr{\de_6}\eq\de_6$}}
			(5.5,0) node[right] {\scriptsize{$\ze_{[z_{\lr{\de_5}},\fp]}\eq\ze_{\fp}\ze_{d}$}}
			(5.5,-.4) node[right] {\scriptsize{$\ze_{[z_{\lr{\de_5}},\fq]}\eq\ze_{\fq}\ze_{g}$}}
			(5.5,-.8) node[right] {\scriptsize{$\ze_{[z_{\lr{\de_6}},\fp]}\eq\ze_{\fp}\ze_{d}\ze_{g}$}}
			(5.5,-1.2) node[right] {\scriptsize{$\ze_{[z_{\lr{\de_6}},\fq]}\eq\ze_{\fq}$}}
			;			
		\end{tikzpicture}
	\end{center}
	\caption{An example of the notation on nodal curves}\label{Fig:nodal}
\end{figure}

\begin{lemm}\label{Lm:W_smoothness}
Let $1\!\le\!i\!\le\!\ell$, $x\eq(C,\de_1,\ldots,\de_\ell) \inn\ov\cM_{2,\ell}$, and $\cV\!\lra\!\ov\cM_{2,\ell}$ be a sufficiently small smooth chart containing $x$.
Let $\{\ze_q\}$ be a set of modular parameters (corresponding to all the nodes of~$C$) on $\cV$ as in~\S\ref{SubsecPhi}.
We then have the following.
\begin{enumerate}
[leftmargin=*,label=(\arabic*)]
\item 
There exists $\ka\inn\Ga(\sO_\cV)$ such that
$$\ov\cW_{\ell;i}\cap\cV=(\ka= 0).$$
In other words,
$\ov\cW_{\ell;i}$ is a Cartier divisor of $\ov\cM_{2,\ell}$.

\item 
If $\de_i$ is on a separating bridge of the core of $C$,
then $x\!\not\in\!\ov\cW_{\ell;i}$.
In other words,
a Weierstrass marked point cannot be on any separating bridge (of the core).

\item \label{Cond:W_sep_bri}
Assume $x\inn\ov\cW_{\ell;i}$, and $\lr{\de_i}$ as in (\ref{Eqn:lr}) does not lie on any non-separating bridge.
Then, $\ov\cW_{\ell;i}$ is smooth at~$x$.
In this case, $\ka\!\sqcup\!\{\ze_q\}$ form a subset of independent local parameters on $\cV$.

\item \label{Cond:W_non_sep_bri}
If $\lr{\de_i}$ lies on a maximal non-separating bridge $\tn B[\fp,\fq]$, then $x\inn \ov\cW_{\ell;i}$.
In this case,
with $z_{\lr{\de_i}}$ denoting a smooth point on $\tn B[\fp,\fq]$ that is sufficiently close to $\lr{\de_i}$,
there exist $f,g\inn\Ga(\sO^*_\cV)$ such that
$$
 \ka=
 f\ze_{[z_{\lr{\de_i}}\,,\,\fp]}+g\ze_{[z_{\lr{\de_i}}\,,\,\fq]}.
$$
\end{enumerate} 
\end{lemm}

We remark in the right-hand side of the above identity,
the products $\ze_{[z_{\lr{\de_i}}\,,\,\fp]}$ and $\ze_{[z_{\lr{\de_i}}\,,\,\fq]}$ as defined in (\ref{Eqn:bridge_product}) only involve the core nodes on $\tn B[\fp,\fq]$;
see Figure~\ref{Fig:nodal} for example.
We further point out that the reason to
use $z_{\lr{\de_i}}$
instead of ${\lr{\de_i}}$ in $\ze_{[z_{\lr{\de_i}}\,,\,\fp]}$ and $\ze_{[z_{\lr{\de_i}}\,,\,\fq]}$
is to make it clear that $\ze_{\lr{\de_i}}$ does not divide either of the products when
$\lr{\de_i}$ is a pivotal node.

When $\lr{\de_i}$ lies on a maximal non-separating bridge $\tn B[\fp,\fq]$, Lemma~\ref{Lm:W_smoothness} implies $\ov\cW_{\ell;i}$ is singular unless the irreducible component containing $\lr{\de_i}$ also contains $\fp$ or~$\fq$.
For instance, suppose the maximal non-separating bridge $\tn B[\fp,\fq]$ consists of one smooth rational subcurve. There then exist $f,g\inn\Ga(\sO^*_\cV)$
$$
 \ov\cW_{\ell;i}\cap\cV=\big\{
 f\ze_{\fp}+g\ze_{\fq}=0
 \big\}.
$$
Hence, in this case,  $\ov\cW_{\ell;i}$ is smooth at $x$.
Consider another example.
Suppose the maximal non-separating bridge $\tn B[\fp,\fq]$ is made of three smooth rational subcurves $R_a$, $R_b$ and $R_c$ with
$\fp \in R_a$,  $\fq \in R_c$, $\de_i \in R_b$. We let $q_a=R_a \cap R_b$ and $q_b= R_b \cap R_c$.
There then exist $f,g\inn\Ga(\sO^*_\cV)$
$$
 \ov\cW_{\ell;i}\cap\cV=\big\{
 f\zeta_{q_a}\ze_{\fp}+g \zeta_{q_b}\ze_{\fq}= 0
 \big\}.
$$
Hence, in this case,  $\ov\cW_{\ell;i}$ is singular at $x$.

To prove Lemma~\ref{Lm:W_smoothness},
we need the following local property of the forgetful maps.
Consider the forgetful map
$\wh\pr\!:\ov\cM_{2,\ell}\!\lra\!\ov\cM_{2,\ell-1}$
that forgets the last marked point.
Given $\wh x\eq(\wh C,\wh\de_1,\ldots,\wh\de_\ell)$,
we write $\wh\pr(\wh x)\eq(\wh C',\wh\de_1',\ldots,\wh\de_{\ell-1}')$.
Since $\wh\pr$ can be identified with the universal curve $\ov\cC_{2,\ell-1}\!\lra\!\ov\cM_{2,\ell-1}$,
the $\ell$-marked curve $\wh x$ can be identified with a pair $(\wh\pr(\wh x),\wh\de_\ell')$,
where $\wh\de_\ell'$ can be either a smooth point or a node of $\wh C'$.
We take smooth charts $\wh\cU\!\lra\!\ov\cM_{2,\ell}$ and $\wh\cV\!\lra\!\ov\cM_{2,\ell-1}$ centered at $\wh x$ and $\wh\pr(\wh x)$, respectively.

\begin{lemm}\label{Lm:Forgetful_local}
Shrinking $\wh \cU$ and $\wh \cV$ if necessary,
we have the following.
\begin{itemize}
[leftmargin=*]
\item If $\wh\de_\ell'$ is a smooth point of $\wh C'$,
every subset $\ts S$ of independent local parameters on $\wh\cV$ pulls back to a subset of independent local parameters on $\wh\cU$ via $\wh\pr$.
\item If $\wh\de_\ell'$ is a node $q$ of $\wh C'$,
then $\wh\de_\ell$ is the marked point on a smooth rational component of $\wh C$ that contains exactly two nodal points $q_1$ and $q_2$.
With $\ze_q$ denoting the modular parameter corresponding to the smoothing of the node $q$,
if $\ts S\!\sqcup\!\{\ze_q\}$ is a subset of independent local parameters on $\wh\cV$,
then $\ti{\ts S}\!\sqcup\!\{\ze_{q_1},\ze_{q_2}\}$ forms a subset of independent local parameters of $\wh\cU$, where $\ti{\ts S}$ is the pullback of $\ts S$ via $\wh\pr$.
In addition, $\ze_q$ pulls back to $\ze_{q_1}\ze_{q_2}$ up to a unit.
\end{itemize}
\end{lemm}

The statements of Lemma~\ref{Lm:Forgetful_local} are standard by the deformation theory of nodal curves with marked points,
hence the proof is omitted.

\begin{proof}[Proof of Lemma~\ref{Lm:W_smoothness}]
The first statement holds because $\ov\cW$ is a Cartier divisor of $\ov\cM_{2,1}$.

To see the second statement,
just notice that for every $(C',\de')\inn\ov\cW$,
$\de'$ cannot be on any separating bridge $\tn B$ of $C'$ (should $\tn B$ exist).
This can be verified directly from the admissible covers,
but for the convenience of showing the last statement,
we cite~\cite[Proposition~3.12]{BCGGM},
which lists all the boundary components of $\ov\cW$; see~\cite[Figure~11]{BCGGM} particularly.

It remains to check the smoothness of $\ov\cW_{\ell;i}$.
Let $\ov x\eq(\ov C,\ov\de_i)\inn\ov\cM_{2,1}$ be the image of $x$  by forgetting all but the $i$-th marked point.
Choose a smooth chart $\ov\cV\!\subset\!\ov\cM_{2,1}$ centered at $\ov x$ and a local parameter
$\ov\ka$ on $\ov\cV$ such that
$$
 \ov\cW\cap\ov\cV=\{\ov\ka\eq 0\}.
$$

If $\ov x\inn\ov\cW$ and $\lr{\de_i}$ is not on any non-separating bridge,
then $\ov x$ is either in the main component of $\ov\cW$ and/or in the boundary components (I), (II) of~\cite[Proposition~3.12]{BCGGM}.
In any situation,
the local parameter $\ov\ka$,
along with all the modular parameters corresponding to the nodes of $\ov C$,
forms a subset of independent local parameters on $\ov\cV$.
Applying Lemma~\ref{Lm:Forgetful_local} inductively,
we see that $\ov\ka$ pulls back to a local parameter $\ka$ on $\cV$,
which, by the definition of $\ov\cW_{\ell;i}$,
locally defines $\ov\cW_{\ell;i}$:
$$
 \ov\cW_{\ell;i}\cap\cV=\{\ka\eq 0\}.
$$
In addition, Lemma~\ref{Lm:Forgetful_local} also implies that $\ka\!\sqcup\!\{\ze_q\}$ form a subset of independent local parameters on $\cV$.

If $\lr{\de_i}$ is on a maximal  non-separating bridge $\tn B[\fp,\fq]$,
then $\ov x$ belongs to the boundary component (III) of~\cite[Proposition~3.12]{BCGGM};
i.e.~$\ov C$ is the union of a connected subcurve $\ov C_1$ of arithmetic genus~1 and a smooth rational subcurve $\ov R$ containing $\ov\de_i$ such that $\ov C_1\!\cap\!\ov R\eq\{\ov\fp,\ov\fq\}$.
Let $\ze_{\ov\fp}, \ze_{\ov\fp}$ be corresponding modular parameters on $\ov\cV$.
The description of (III) of~\cite[Proposition~3.12]{BCGGM} implies that 
$$
 \ov\cW\cap\ov\cV\supset\{\ze_{\ov\fp}\eq\ze_{\ov\fq}\eq 0\}.
$$
This implies $\ov\ka\eq 0$ whenever $\ze_{\ov\fp}\eq\ze_{\ov\fq}\eq 0$,
so there exist $f,g\inn\Ga(\sO_{\ov\cV})$ such that
$$
 \ov\ka=f\ze_{\ov\fp}+g\ze_{\ov\fq}.
$$
Since $\ov\ka$ is a local parameter, shrinking $\ov\cV$ if necessary,
we see that one of $f$ and $g$ must be a unit,
so are both of them by symmetry.
Applying Lemma~\ref{Lm:Forgetful_local} inductively,
we see that $\ze_{\ov\fp}$ and $\ze_{\ov\fq}$ pull back to $\ze_{[\lr{\de_i},\fp]}$ and $\ze_{[\lr{\de_i},\fq]}$, respectively, each up to a unit.
This completes the last statement of Lemma~\ref{Lm:W_smoothness}.
\end{proof}

	We remark that if $\lr{\de_i}$ is on a maximal non-separating bridge $\tn B[\fp,\fq]$, sometimes it is convenient to use $f\ze_{\ov\fp}$ and $g\ze_{\ov\fq}$ as new modular parameters instead of $\ze_{\fp}$ and $\ze_{\fq}$, so that $\ka$ can be rewritten as
	\begin{equation*}%\label{Eqn:la'}
		\ka=\ze_{[z_{\lr{\de_i}}\,,\,\fp]}+\ze_{[z_{\lr{\de_i}}\,,\,\fq]}
	\end{equation*}
	in this situation.

\begin{defi}\label{Dfn:conjugate}
Two distinct points $\de_1$ and $\de_2$ on a smooth genus 2 curve $C$ are said to be \ts{conjugate} if
$$
 h^0\big(C,\sO_C(\de_1+\de_2)\big)=2.
$$
We denote by $\cK$ the locus of smooth genus 2 curves with a pair of conjugate marked points and
by $\ov\cK$ the closure of $\cK$ in $\ov\cM_{2,2}$.
For every two-marked stable curve $(C,\de_1,\de_2)\inn\ov\cK$, we say the marked points $\de_1$ and $\de_2$ are \ts{conjugate}.
\end{defi}

%Let $\cK \!\subset\! \overline{\cM}_{2,2}$ be the locus of (stable) genus 2 curves with two marked conjugate points.
Similar to the above discussions of $\ov\cW$, $\ov\cK$ is isomorphic to $\overline{\cM}_{0,7}/S_6$ as coarse moduli spaces, 
where the 7-th marking corresponds to the image of a pair of conjugate points. 
In particular, $\ov\cK$ is a smooth divisor in $\overline{\cM}_{2,2}$.
Moreover, given $(C,\de_1,\de_2)\inn\ov\cK$,
the possible locations of the two marked points are as follows.

\begin{lemm}
	\label{Lm:K_position}
	Given $(C,\de_1,\de_2)\inn\ov\cM_{2,2}$, we have the following:
	\begin{itemize}[leftmargin=*]
		\item Assume $\de_1$ is on a tail,
		which implies $\de_2$ is on the same tail by stability.
		Then, $(C,\de_1,\de_2)\inn\ov\cK$ if and only if  $\lr{\de_1}$ ($=\!\lr{\de_2}$) is a Weierstrass point of the core $F$ of $C$.
		\item Assume $\de_1$ is on a genus 1  inseparable component of $C$, and  $(C,\de_1,\de_2)\inn\ov\cK$. 
		Then, $\de_2$ is on the same genus 1 inseparable component.
		\item Assume $\de_1\inn F$ is a Weierstrass point. 
		Then, $(C,\de_1,\de_2)\inn\ov\cK$ if and only if there exists a  non-separating bridge (of $F$) containing both $\de_1$ and $\de_2$.
		\item 
		Assume $\de_1$ is on a separating bridge (of $F$), and  $(C,\de_1,\de_2)\inn\ov\cK$. 
		Then, $\de_1$ and $\de_2$ are on the same irreducible component.
	\end{itemize}
\end{lemm}
\begin{proof}
All the statements of Lemma~\ref{Lm:K_position} follow  immediately from \cite[Theorem 1.3]{BCGGM}.

Consider the first case when $\de_1$ and  $\de_2$ are on a tail $R$,
which implies $R$ is smooth.
In this case,
we further assume $F$ is smooth;
the sub-case when $F$ is singular follows from the closedness of $\ov\cK$ and $\ov\cW$.
By~\cite[Corollary~1.4]{BCGGM},
we see
$(C,\de_1,\de_2)\inn\ov\cK$
if and only if
there exists a pointed stable differential of type $(1,1)$ over $(C,\de_1,\de_2)$, satisfying the conditions in~\cite[Theorem 1.3]{BCGGM}.
Notice that on $R$, there always exists a (non-zero) meromorphic differential with simple zeros at $\de_1$ and $\de_2$,
as well as a pole at $q\!:=\!\lr{\de_1}$ whose order and residue are respectively $-4$ and 0.
Analyzing the five conditions of~\cite[Definitions 1.1 and 1.2]{BCGGM},
we conclude that $(C,\de_1,\de_2)\inn\ov\cK$
if and only if  there exists a (non-zero) holomorphic differential on $F$ with a zero of order 2 at $q$ (because $F$ is assumed to be smooth),
i.e.~$h^0(F,K_F\!-\!2q)\eq 1$,
which is equivalent to $(F,q)\inn\ov\cW$.

The proofs of the remaining cases of Lemma~\ref{Lm:K_position} are similar, hence are omitted.
\end{proof}

For $1\!\le\!i,j\!\le\!\ell$ with $i\!\ne\!j$, we set $\ov\cK_{\ell;i,j}\!\subset\!\ov\cM_{2,\ell}$ to be the pullback of $\ov\cK$ in $\ov\cM_{2,\ell}$ via the forgetful morphism that forgets all but the $i$-th and $j$-th marked points. 
If $x\eq(C,\de_1,\ldots,\de_\ell)\inn\ov\cK_{\ell;i,j}$,
we say $\de_i$ and $\de_j$ are \ts{conjugate}.

Parallel to Lemma~\ref{Lm:W_smoothness},
we have the following lemma for $\ov\cK_{\ell;i,j}$.

\begin{lemm}\label{Lm:K_smoothness}
Let $1\!\le\!i,j\!\le\!\ell$ with $i\!\ne\!j$, $x\eq(C,\de_1,\ldots,\de_\ell)\inn\ov\cM_{2,\ell}$, and $\cV\!\lra\!\ov\cM_{2,\ell}$ be a sufficiently small smooth chart containing $x$.
Let $\{\ze_q\}$ be a set of modular parameters (corresponding to all the nodes of~$C$) on $\cV$ as in~\S\ref{SubsecPhi},
and $\lr{\de_i}\inn F$ be as in (\ref{Eqn:lr}) for all $i$.
We then have the following.
\begin{enumerate}
[leftmargin=*]
\item 
	There exists $\ka\inn\Ga(\sO_\cV)$ such that
	$$\ov\cK_{\ell;i,j}\cap\cV=(\ka= 0).$$
	In other words,
$\ov\cK_{\ell;i,j}$ is a Cartier divisor of $\ov\cM_{2,\ell}$.
\item \label{Cond:K_param}
If $x\inn\ov\cK_{\ell;i,j}$ and $\lr{\de_i}$ and $\lr{\de_j}$ do not lie on any non-separating bridge,
then $\ov\cK_{\ell;i,j}$ is smooth at $x$.
In this case, $\ka\!\sqcup\!\{\ze_q\}$ form a subset of independent local parameters on $\cV$.
\item\label{Cond:K_nonsep_brid}
If $\lr{\de_i}$ lies on a maximal non-separating bridge $\tn B[\fp,\fq]$,
then $x\inn\ov\cK_{\ell;i,j}$ if and only if $\lr{\de_j}$ lies on $\tn B[\fp,\fq]$.
Moreover,
if 
$\lr{\de_i}, \lr{\de_j}\in \tn B[\fp,\fq]$,
then by taking two arbitrary smooth points $z_{i},z_{j}\inn\tn B[\fp,\fq]$ that are sufficiently close to $\lr{\de_i}$ and $\lr{\de_j}$, respectively,
	there then exist $f,g\inn\Ga(\sO^*_\cV)$ such that
	$$
	\ka=
	f\cdot\prod_{q\in N_{[z_i,\fp]}\cap N_{[z_j,\fp]}}\!\!\!\!\!\!\!\!\!\!\ze_q
	\ \ +\ g\cdot\prod_{q\in N_{[z_i,\fq]}\cap N_{[z_j,\fq]}}\!\!\!\!\!\!\!\!\!\!\ze_q.
	$$
\end{enumerate} 
\end{lemm}

The proof of Lemma~\ref{Lm:K_smoothness} is parallel to that of Lemma~\ref{Lm:W_smoothness}, hence is omitted.
Although
$\lr{\de_i}$ and $\lr{\de_j}$ are possibly on a separating bridge of $C$ by Lemma~\ref{Lm:K_position},
this fact does not affect the proof of the second statement of Lemma~\ref{Lm:K_smoothness}. 
What actually matters is that the local parameter (locally) defining $\ov\cK$  is independent of the modular parameters on $\ov\cM_{2,2}$,
unless the two marked points are on a non-separating bridge,
which is dealt with in the last statement of Lemma~\ref{Lm:K_smoothness}.

We conclude this subsection with a relation between $\ov\cW_{\ell;i}$'s and $\ov\cK_{\ell;i,j}$'s.

\begin{lemm}\label{Lm:KW}
Let $1\!\le\!i,j\!\le\!\ell$, $i\!\ne\!j$,
and $x\eq(C,\de_1,\ldots,\de_\ell)\inn\ov\cM_{2,\ell}$.
If $\de_i,\de_j$ are on the same (rational) tail,
then 
$$
 x\in\ov\cK_{\ell;i,j}
 \quad\Longleftrightarrow\quad
 x\in\ov\cW_{\ell;i}
 \quad\Longleftrightarrow\quad
 x\in\ov\cW_{\ell;j}.
$$
\end{lemm}

\begin{proof} 
	The statements follow from  the first statement of Lemma~\ref{Lm:K_position} via pullback.
\end{proof}

\begin{rema}\label{Rmk:D2_vs_M2l}
	As mentioned at the beginning of this subsection,
	all the statements for $\ov\cW_{\ell;i}$ and $\ov\cK_{\ell;i,j}$ can be applied to $\fM^{\rm div}_2$ verbatim.
	
	More precisely, a point
	$(C,D)\inn \fM^{\rm div}_{2}$ is allowed to have  smooth (tail) rational subcurves that each contains exactly one nodal point of $C$ and exactly one element of $D$,
	hence may not be a stable curve with marked points.
%	Nonetheless,
%	contracting such unstable (in the sense of $\ov\cM_{2,m}$) rational subcurves gives rise to a morphism
%	$$
%	 \tn{stab}:\,\fM^{\rm div}_{2}(m)\lra\ov\cM_{2,m}.
%	$$
%	We then define the Weierstrass and conjugate loci on $\fM^{\rm div}_2(m)$ via pullbacks of the corresponding loci on $\ov\cM_{2,m}$ with respect to $\tn{stab}$.
%	In other words,
%	for distinct $1\!\le\!j,k\!\le\!m$,
%	we say $\de_j$ is \ts{Weierstrass} (resp.~$\de_j$ and $\de_k$ are \ts{conjugate})
%	if so is its image (resp.~their images) in .
%	
%	We emphasize the morphism $\tn{stab}$ is smooth,
%	so a smooth chart $\cV\!\lra\!\fM^{\rm div}_{2}(m)$ yields a smooth chart $\cV\!\lra\!\ov\cM_{2,m}$.
%	In addition,
%	every node smoothing parameter $\ze_q$ on $\cV\!\lra\!\ov\cM_{2,m}$ is still a node smoothing parameter for the same node $q$ on $\cV\!\lra\!\fM^{\rm div}_{2}(m)$.	
	By writing $D\eq\{\de_1,\ldots,\de_m\}$ (recall $D$ is simple and effective), w.l.o.g.~we assume there exists $\ell\!\le\!m$ such that each $\de_i$, $1\!\le\!i\!\le\!\ell$, is on a smooth rational subcurve $R_i$ that contains exactly one nodal point $q_i$ of $C$ and satisfies $D\!\cap\!R_i\eq\{\de_i\}$.
	We denote by $(\ov C,\ov D\eq\{\ov\de_1,\ldots,\ov\de_m\})$ the image of $(C,D)$ in $\ov\cM_{2,m}$ after
	applying stabilization.
	Then, the aforementioned $R_i$'s are all contracted in $(\ov C,\ov D)$.
	
	For distinct $1\!\le\!j,k\!\le\!m$,
	we say $\de_j$ is \ts{Weierstrass} (resp.~$\de_j$ and $\de_k$ are \ts{conjugate})
	if so is $\ov\de_j$ (resp.~so are $\ov\de_j$ and $\ov\de_k$).
	It is straightforward that
	Lemmas~\ref{Lm:W_smoothness}, \ref{Lm:K_smoothness} and~\ref{Lm:KW} still hold for $\fM^{\rm div}_2$;
	c.f.~the proof of \cite[Lemma~4.2]{HN2} for details.
	
	In the remainder of this paper, when a property for the Weierstrass or conjugate loci on $\fM^{\rm div}_2$ is presented (e.g.~Corollary~\ref{Crl:str_homom_one_tail_chain_minor_12vs23} and Proposition~\ref{Prp:phi_key}~\ref{Part:kappa}),
	it suffices to prove the same property on $\ov\cM_{2,m}$.
\end{rema}

\subsection{Structural homomorphisms: general results}
\label{Subsec:phi_summary}

At this stage, we are ready to present the general results on the structural homomorphism 
	\begin{align*}
		\varphi:\;
		\rho_*\sM(\cD)\;
		\big(=\rho_*\sO_\cC(\cD+\cA-\cB)\big)
		\lra
		\rho_*\sO_\cA(\cA)
	\end{align*}
	in Proposition~\ref{Prp:phi_key}.
	This key proposition,
	along with Lemma~\ref{Lm:K_smoothness}, contains everything we need for diagonalizing $\varphi$ (considered as a $2\!\times\!m$ matrix):
	its entries, $2\!\times\!2$ minors, and the relations between them. 
	
	We continue with the notation from previous subsections of \S\ref{SecPhi}. 
	We still fix an arbitrary point of $\fM_2^{\rm div}$ and an affine smooth chart containing it:
	$$ z_0=\big(C\,,\,D=\de_1+\cdots+\de_m\big)\in\cV\subset\fM_2^{\rm div}.$$
	We always assume $\cV$ is sufficiently small.
	Recall that
	%for distinct $\!\le\!i,j\!\le m$,
	the zero locus of $\ka_{ij}\inn\Ga(\sO_\cV)$ consists of $z\inn\cV$ with $\cD_i(z)$ and $\cD_j(z)$ being a pair of conjugate points on $\cC_z$.
	Also recall that $\de_i\!\wedge\!\de_j$ refers to the node in~(\ref{product}), which
	intuitively is a node between the core $F$ of $C$ and both $\de_i$ and $\de_j$, and it is the ``farthest away'' from the core among such nodes.
	Finally, for each $\de_i$, recall $\lr{\de_i}$ is the ``projection'' of $\de_i$ onto the core $F$ of $C$ as in (\ref{Eqn:lr}).
	
	\begin{prop}\label{Prp:phi_key}
		With notation as above, we have the following.
		\begin{enumerate}[label=($\rm{SH}_{\arabic*}$),leftmargin=*]
			\item \label{Part:phi}
			Under suitable trivialization of $\rho\lsta\sM(\cD)$ and $\rho\lsta\sO_\cA(\cA)$,
			$\varphi$ takes the form
			\begin{align*}
				\varphi=\left[\begin{matrix}
					c_{11}\ze_{[\de_1,a_1]} & c_{12}\ze_{[\de_2,a_1]} & \cdots
					& c_{1i}\ze_{[\de_i,a_1]} & \cdots\\
					c_{21}\ze_{[\de_1,a_2]} & c_{22}\ze_{[\de_2,a_2]} & \cdots
					& c_{2i}\ze_{[\de_i,a_2]} & \cdots
				\end{matrix}\right].
			\end{align*}
			\item \label{Part:theta}
			For any distinct $1\!\le\!i,j\!\le\!m$,
			there exists $u\inn\Ga(\sO_\cV^*)$ such that
			the determinant
			$\dt{ij}\!:=\!\det\left[\begin{matrix}
				c_{1i} & c_{1j}\\ c_{2i} & c_{2j}\end{matrix}\right]$ satisfies
			\begin{align*}
				\dt{ij}=u\cdot\ka_{ij}\cdot\ze_{[\de_i\wedge\de_j]}.
			\end{align*}
			\item \label{Part:kappa}
			For any pairwise distinct $1\!\le\!i,j,k\!\le\!m$,
			there exist $v\inn\Ga(\sO_\cV^*)$ %(i.e.~$f_{i;j,k}$ is a unit)
			and $w\inn\Ga(\sO_\cV)$ so that
			$$
			\ka_{ik}=v\cdot\ka_{ij}+w,
			$$ and the following holds.
			\begin{enumerate}[label=$\bullet$,leftmargin=*]
				\item 
				If $\lr{\de_i}$, $\lr{\de_j}$ and $\lr{\de_k}$ all lie on a maximal non-separating bridge $\tn B[\fp,\fq]$ of the core of $C$,
				let $x_{i}$ and $x_j$ be smooth points  on the core $F$ that are sufficiently close to $\lr{\de_i}$ and $\lr{\de_j}$, respectively. 
				Then, there exist $f, g\inn\Ga(\sO^*_\cV)$ and $f',g'\inn\Ga(\sO_\cV)$ so that
				\begin{align*}
					\ka_{ij}
					&=f\cdot\prod_{q\in N_{[x_i,\fp]}\cap N_{[x_j,\fp]}}\!\!\!\!\!\!\!\!\!\!\ze_q
					\ \ +\ g\cdot\prod_{q\in N_{[x_i,\fq]}\cap N_{[x_j,\fq]}}\!\!\!\!\!\!\!\!\!\!\ze_q\quad,
					\\
					w
					&=f'\cdot\prod_{q\in N_{[\de_j,\fp]}\cap N_{[\de_k,\fp]}}\!\!\!\!\!\!\!\!\!\!\ze_q
					\ \ +\ 
					g'\cdot\prod_{q\in N_{[\de_j,\fq]}\cap N_{[\de_k,\fq]}}\!\!\!\!\!\!\!\!\!\!\ze_q\quad;
				\end{align*}
				moreover, 
				$\det\left[\begin{matrix}
					f & f'\\ g & g'
				\end{matrix}\right]\inn\Ga(\sO^*_\cV)$.
				
				\item 
				Otherwise, there exists $u'\inn\Ga(\sO^*_\cV)$ such that $w=u'\cdot\ze_{[\de_j\wedge\de_k]}$.
			\end{enumerate} 
		\end{enumerate}
	\end{prop}
	
	Here, \ref{Part:phi} is a restatement of Proposition~\ref{HomFirstOrder}.
	To establish~\ref{Part:theta} and~\ref{Part:kappa},
	we shall first analyze the entries and $2\!\times\!2$ minors of $\varphi$ along a maximal chain of smooth rational subcurves on a given tail in \S\ref{Subsec:using-ss} and \S\ref{Subsec:construct-rational-tails}, respectively;
	we will then complete the proof of Proposition~\ref{Prp:phi_key} in \S\ref{Subsec:phi_proof}.
	
	Before delving into the technical proofs in \S\ref{Subsec:using-ss}-\ref{Subsec:phi_proof},
	we present a simple application of Proposition~\ref{Prp:phi_key} in this subsection,
which reveals where $\ov M_2(\P^n,d)$ is smooth.
We follow the notation in \S \ref{subsect:localEq}.
\begin{prop}\label{Prp:genericSmoothness} 
	Assume $(C,D) \in \fM_2^{\rm div}$ satisfies one of the following: %its image in $\mwt$ lies in $\fM^\mn$ 
	\begin{itemize}[leftmargin=*]
		\item the core $F$ of $C$ is inseparable and $D\!\cap\!F$ contains a pair of non-conjugate points,
		\item $F$ is separable, hence contains two genus-one inseparable components $F_1$ and $F_2$ (see Definition~\ref{Dfn:nodal_curve}), and 
		$D\!\cap\!F_i \ne \emptyset$ for $i=1,2$.
	\end{itemize}
	Then by shrinking $\cV$ if necessary, we see that  $\cU\eq \cV \times_{\fM_{2}^{\rm div}} U$ is smooth.
	In particular, the main component of $\MPdd$ is generically smooth.
	When $d\!>\!2$,
	the main component is also of expected dimension.
\end{prop}
\begin{proof}  We can trivialize $
	\rho_*\sM(\cD)$ and $ \rho_* \sO_{\cA}(\cA)$ so that
	$$\sE_\cV \cong \cV \times (\AA^{d+1})^n.$$
	Let $w_j^i \in \Ao$ for all $1 \le i \le n$ and $0 \le j \le d$.
	Recall that by \eqref{0plusphi-0}, the homomorphism
	$``$rest$"$ of \eqref{restHom-0} can be identified with the
	homomorphism $$(0 \oplus \varphi)^{\oplus n}: (\sO_\cV \oplus \rho_*\sM(\cD))^{\oplus n} \lra (\rho_*
	\sO_{\cA}(\cA))^{\oplus n}. $$ 
	
	Suppose $F$ is inseparable and $D\!\cap\!F$ contains a pair of non-conjugate points.
	By Proposition \ref{Prp:phi_key}~\ref{Part:phi} and~\ref{Part:theta},
	$0 \oplus \varphi$ can be represented by
	$$0\oplus \varphi=[0, I_2, 0 \cdots, 0].$$  
	
	Suppose $F$ contains two genus-one inseparable components $F_1$ and $F_2$, and 
	$D\!\cap\!F_i \ne \emptyset$ for $i=1,2$.
	W.l.o.g.~we assume
	$$
	\de_1,\,a_1\in F_1,\qquad
	\de_2,\,a_2\in F_2.
	$$
	Therefore,
	$\ze_{[\de_1,a_1]}\eq\ze_{[\de_2,a_2]}\eq 1$ (see the sentence below (\ref{product})),
	whereas $\ze_{[\de_1,a_2]}$ and $\ze_{[\de_2,a_1]}$ vanish at $(C,D)$.
	%The structural homomorphism $\varphi$ is thus diagonalizable on $\cV$.
	By Proposition~\ref{Prp:phi_key}~\ref{Part:phi}, $0 \oplus \varphi$ can thus be represented by
	$$0\oplus \varphi=[0, I_2, 0 \cdots, 0].$$  Hence, in either case,  the equation $\Phi =0$ is equivalent to the following equations
	\beq\label{finalEq}
	w_1^i = w_2^i=0 \quad \hbox{for all $1 \le i \le n$},
	\eeq
	which obviously implies that $\cU$ is isomorphic to an open subset of $\cV \times (\AA^{d-1})^n$. Note that
	$\dim (\cV \times (\AA^{d-1})^n)= 3 + d + (d-1)n = d(n+1) -n+3$, which is the expected dimension of $\MPdd$ when $d\!>\!2$.
	The assertion then follows.
\end{proof}

All the sequential blowups  in~\S\ref{SecGlobal} will be performed over the loci of $\mwt$ or $\fdd$ {\it outside} the images
of the loci described in Proposition~\ref{Prp:genericSmoothness}
under the morphism $\MPdd \!\lra\! \mwt$ as in (\ref{MPddToWeight0}) and the local morphisms $\MPdd\!\supset\!U \!\lra\! \fdd$ as in (\ref{toP0}).

\subsection{Structural homomorhpisms: applying semi-stabilization}\label{Subsec:using-ss}

In this subsection, 
we aim to extract information of  the matrix~(\ref{explicitHom}) of the homomorphism $\varphi$.
The main result of this subsection,
	Proposition~\ref{Prp:key-tail_chain}, 
	provides a strengthened form (compared with Proposition~\ref{HomFirstOrder}) of  each row of the restriction $\hmr$ of the homomorphism~$\varphi$.
	This is the first step towards the proof of Proposition~\ref{Prp:phi_key}.

We fix $(C,D)\inn\fM_2^{\rm div}$ and an affine smooth chart $\cV\lra\fM_2^{\rm div}$ containing $(C,D)$.
We also fix a rational tail $T$ of $C$.
We  focus on a maximal chain $R \subset {T}$ of smooth rational
subcurves
$$R=R_1 \cup \cdots \cup R_h$$ of $C$
attached to a smooth point of the core $F \subset C$.
Recall that a bridge is always a part of the core $F$ of $C$,
whilst a rational tail meets the core $F$ only at one point (indeed a pivotal node) per definition.
In particular, the chain $R$ is not contained in any bridge.

We fix some 
notation.
First, we index the irreducible components of $R$ so that $R_1$ is attached to $F$, and $R_{{\mu}+1}$ is attached
to $R_{\mu}$. 
Let
\begin{equation}\label{e_q}
	q_1=F\cap R_1,\qquad
	q_{{\mu}+1}=R_{\mu}\cap R_{{\mu}+1}\quad
	\forall~1\!\le\!{\mu}\!\le\!h\!-\!1.
\end{equation}
%Recall that we write $\lr{x}\!\equiv\! q_1$ for all $x\inn R$.
In particular,
the node $q_1$ is a pivotal node.

Next, recall that $D$ is simple and effective.
We fix a sub-divisor of $D$ (i.e.~a subset of $D$):
$$
 \{\de_1,\cdots,\de_r\} \subset D.
$$
Since $\deg D\eq m$, we have $1\!\le\!r\!\le\!m$.
Throughout this subsection, we assume 
\begin{equation}\begin{split}\label{linearOrder}
	&
	\{\delta_1, \cdots,\delta_r\}\subset R,\\
	&
	\{\delta_1, \cdots,\delta_{r_1}\}\subset R_1\,,\quad
	\{\delta_{r_1+1},\cdots,\delta_{r_1+r_2}\}\subset R_2\,,\ \ 
	\cdots,\\
	&
	\{\delta_{r_1+\cdots+r_{h-1}+1}, \cdots,\delta_{r_1+\cdots+r_{h-1}+r_h}\eq\de_r\}\subset R_h\,.
\end{split}\end{equation}
This implies $r\eq r_1\!+\!\cdots\!+\!r_h$.
We emphasize that every $r_\mu$ is possibly zero.
Intuitively, the assumption (\ref{linearOrder}) means $\{\delta_1, \cdots,\delta_r\}$ is a subset of $D\!\cap\!R$,
and the larger the index of an element of this subset is,
the ``farther away'' it is from the core $F$.

%{\red Set $r_0=0$. (?)}
Let
\begin{equation}\label{e_index}
	\begin{CD}
		e\!:\{1,\cdots, r\}\lra\{1,\cdots,h\}\quad \\
		e(i)=\mu \;\; \tn{if $\delta_i \in R_\mu$ for some $1\le \mu \le h$},
		\;\; \forall \;1 \le i \le r.
		% \tn{such that} \quad
		% \delta_i\in R_{e(i)}~~\forall~1\!\le\!i\!\le\!r
	\end{CD}
\end{equation}
Thus, 
\begin{align}\label{Eqn:e(i)}
	\delta_i \in R_{e(i)}\ \ 
	\forall~1\!\le\!i\!\le\!r\quad\tn{and}\quad
	e(i)\le e(j)\ \ \forall~1\!\le\!i\!<\!j\!\le\!r.
\end{align}
\iffalse %%%%%%%%%%%%%%%%%%%%%%
For each $\de_i\inn D\!\cap\!R$,
we write 
\begin{equation}\label{e_p}
	p_{s} \equiv q_{\mu}
	\qquad\tn{if}\quad 
	\de_s\in R_{\mu}\,.
\end{equation}
\fi %%%%%%%%%%%%%%%%%%%%%%%%%%%
We denote by
\beq\label{phi-R}
 \hmr: \bigoplus_{1\le i \le r}
  \rho_* \sM (\cD_i) \lra \rho_* \sO_\cA(\cA)
\eeq
the restriction homomorphism.

%We  focus on the maximal  chain $R \subset C$ of smooth rational
%curves $R=R_1 \cup \cdots \cup R_h$ of $C$  attached to a smooth point of the core $F \subset C$.

\begin{rema}
	To avoid some possible confusion, we mention firmly here that 
	we typically index a member of
    $D$ or $\cD$ by an integer $1\le i$ (or $j$) $\le m$, e.g.~$\delta_i$ or $\cD_j$;
	we typically index a member of nodes on $R$ or its corresponding smooth rational component 
	%of rational chain
	by an integer $1 \le \mu \le h$,  e.g.~$q_\mu$ or $R_\mu$.
\end{rema}

Recall the modular parameter associated with the node $q_{\mu}$ is denoted by 
$\zeta_{q_{\mu}}$ {for all $1 \le \mu \le h$.}
Since $q_{\mu}$ is separating,
$$\cC\times_\cV(\zeta_{q_{\mu}}=0)=\cG_{\mu}\cup {\cR_{\ge {\mu}}}
$$
is a union of two families of nodal curves over $(\zeta_{q_{\mu}}\eq 0)$ intersecting at (i.e. glued along)
a section of nodes 
$$
\Sigma_{\mu}=\cG_{\mu}\cap {\cR_{\ge {\mu}}}
\qquad\tn{with}\quad
\Sigma_{\mu}\cap C=q_{\mu},\quad
\cR_{\ge {\mu}}\cap C= R_{\mu}\cup\cdots\cup R_h.$$
In other words,
%for every $C\inn(\zeta_{q_{\mu}}\eq 0)$,
$\cG_\mu\!\cap\!C$ and $\cR_{\ge\mu}\!\cap\!C$ are the two components obtained by normalizing $C$ at the node $q_\mu$,
and $\cR_{\ge\mu}$ is the component that, before the normalization, is contained in the tail $T$.

Let $\fM_{2,\sss}$ be the Artin stack of semi-stable genus two nodal curves. (A nodal curve is semi-stable if
its smooth rational subcurves contain at least two nodes of the nodal curve.) By (successively) contracting rational subcurves
that contain only one node of the ambient curves, we obtain a semi-stabilization
$\fM_2^{\rm div}\lra \fM_{2,\sss}$.
%We define
%$$\fM_{2,\sss}^{\rm div}=\fM_{2,\sss}\times_{\fM_2^{\rm div}}\fM_2^{\rm div}.
%$$
Possibly after shrinking and base changing $\cV$, we can assume that there is a smooth chart
$\underline{\cV}\to \fM_{2,\sss}$ with $\cY\to \underline{\cV}$ its universal family
so that the composite $\cV\to \fM_{2}^{\rm div}\to \fM_{2,\sss}$
{factors through a morphism $g: \cV \to  \underline{\cV}$}
together with a semi-stable contraction $\ti g$ as shown
\beq\label{sq2}
\begin{CD}
	\cV @>>> \fM_{2}^{\rm div} \\
	@VV{g}V  @VV{}V \\
	\underline{\cV} @>>> \fM_{2,\sss}
\end{CD}
, \quad\and  \ti g: \cC\lra \overline{\cC}=\cY\times_{\underline{\cV}}\cV.
\eeq
Note that $\ti g$ contracts all rational tails of the curves in the family $\cC\to\cV$.
Let 
$
\overline{\cD}_i=\ti g(\cD_i)$,
which are family of smooth divisors of $\overline{\cC}\to \cV$, and
let
$$
{{\cD_{[j]}}=\sum_{i=1}^j \cD_i}\sub \cC, \quad {\bcD_{[j]}}=\sum_{i=1}^j \bcD_i\sub \bcC\quad
\forall~1\le j\le r.
%\qquad
%\cD_{[0]}=\ov\cD_{[0]}=\emptyset\,.
$$

Let $\bar\rho: \bcC\eq\cY\!\times_{\ud\cV}\!\cV\to\cV$ be the projection to the second component.
By construction, the divisors $\cR_{\ge 1},\cdots,\cR_{\ge h}$
are contracted under the morphism $\ti g$, and
\begin{align*}
	\label{lift-D}
\ti g\upmo(\bcD_i)=\sum_{\mu=1}^{e(i)}\cR_{\ge\mu}+\cD_i\,,
\end{align*}
where $e$ is the index function~\eref{e_index}.

Recall that each $r_\mu\!\ge\!0$ is the number of points in $R_\mu \cap D$, satisfying
$\sum_{\mu=1}^h  {r_\mu}\eq r$.
For convenience, we set $$r_{[\mu]}:=r_1+\cdots+r_\mu\quad
	\forall~1\le\mu\le h,\qquad
	r_{[0]}:=0.$$
For every $1\!\le\!j\!\le\!r$,
let
\begin{equation}
	\label{Eqn:R_j}	
	\cR_{[j]}=
	\sum_{i=1}^j\sum_{\mu=1}^{e(i)}\cR_{\ge \mu}
	=
	\sum_{\mu=1}^{e(j)}\big(j-r_{[\mu-1]}\big)\cR_{\ge \mu}\,.
\end{equation}
As a particular case, we write
$$\cR:=\cR_{[r]}\quad\bigg(=\sum_{\mu=1}^h (r_\mu+\cdots+ r_h) \cR_{\ge \mu}\bigg).$$
Then, one calculates that for each $1\!\le\!i\!\le\!r$,
\beq\label{equal1}
\ti g\sta \sO_{\bcC}({\bcD_{[i]}}+\bcA-\bcB)=\sO_{\cC}(\cR_{[i]}+{\cD_{[i]}}+\cA-\cB).
\eeq
Thus, we have
\beq\label{basic-inc}
\rho\lsta \sO_{\cC}({\cD_{[i]}}+\cA-\cB)\mapright{\sub} \rho\lsta \sO_{\cC}(\cR_{[i]}+{\cD_{[i]}}+\cA-\cB)=\bar\rho\lsta \sO_{\bcC}({\bcD_{[i]}}+\bcA-\bcB).
\eeq
Here the identity follows from combining \eqref{equal1} and the identities
$$\rho\lsta \ti g\sta  \sO_{\bcC}({\bcD_{[i]}}+\bcA-\bcB)=
\bar\rho\lsta \ti g\lsta \ti g\sta  \sO_{\bcC}({\bcD_{[i]}}+\bcA-\bcB)=\bar\rho\lsta  \sO_{\bcC}({\bcD_{[i]}}+\bcA-\bcB),
$$
where the last equality holds since both $\cC$ and $\bcC$ are smooth, $\ti g: \cC\to\bcC$ is a
proper divisorial contraction.

As before, we write $a_s=F\cap \cA_s$, $a=a_1+a_2$, and $b=F\cap \cB$. Since they are in general position
(see Assumption~\ref{Ass:basic}), we have $H^1(\sO_F(rq_1+a-b))=0$,
where $q_1$ is as in (\ref{e_q}).
Thus $\bar\rho\lsta \sO_{\bcC}({\bcD_{[r]}}+\bcA-\bcB)$ is a free
$\Gamma(\sO_\cV)$-module. For convenience, we denote
$${\bf M}\defeq \Gamma(\rho\lsta \sO_{\cC}({\cD_{[r]}}+\cA-\cB))\ \sub \ \overline{{\bf M}}\defeq\Gamma(\bar\rho\lsta \sO_{\bcC}({\bcD_{[r]}}+\bcA-\bcB)).
$$

We now pick a basis of $\overline{{\bf M}}$.
%Let $x$ be a formal parameter of $F$ at $p_1$, i.e., a generator of the maximal ideal of
%$\hat \sO_{F,  p_1}$.
We start with the point $(C,D)\inn\cV$.
Since $a$ and $b$ are in general positions,
by  a simple vanishing argument, we see 
%%%%%%%%%%%%%%%%%%%%%
that
$ H^0(\sO_F(a-b))\eq H^1(\sO_F(a-b))\eq 0$. Hence, the restriction homomorphism
\begin{align}
	\label{Eqn:H0_isom}
H^0\big(\sO_F(r q_1 +a-b)\big) \lra
H^0\big(\sO_F(r  q_1 ) |_{r q_1}\big)
\end{align}
is an isomorphism. 
Moreover, for every $1\!\le\!j\!\le\!r$,
since $H^1(\sO_F(jq_1\!+\!a\!-\!b))\eq 0$,
we have the short exact sequence
\begin{align}\label{Eqn:H0_ses}
	0\to
	H^0\big(\sO_F(jq_1\!+\!a\!-\!b)\big)
	\to
	H^0\big(\sO_F(rq_1\!+\!a\!-\!b)\big)
	\to
	H^0\big(\sO_F(rq_1\!+\!a\!-\!b)|_{(r-j)q_1}\big)
	\to 0.
\end{align}
This determines a complete flag
\begin{align*}
	0=H^0\big(\sO_F(a\!-\!b)\big)
	\subset
	H^0\big(\sO_F(q_1\!+\!a\!-\!b)\big)
	\subset
	H^0\big(\sO_F(2q_1\!+\!a\!-\!b)\big)
	\subset \cdots\subset
	H^0\big(\sO_F(rq_1\!+\!a\!-\!b)\big)\,.
\end{align*}
By choosing
\begin{align*}
	s_i\in 
	H^0\big(\sO_F(iq_1\!+\!a\!-\!b)\big)\big\bsl H^0\big(\sO_F((i\!-\!1)q_1\!+\!a\!-\!b)\big)\qquad
	\forall~1\!\le\! i\!\le\! r,
\end{align*}
it is routine to show they form a basis $$s_1, \,\cdots, s_r$$
%$$\{s_1, \cdots,s_{r_1}; s_{r_1+1},\cdots,s_{r_1+r_2};s_{r_{h-1}+1}, \cdots, s_{r_{h-1}+r_h}\}$$
of the vector space $H^0\big(\sO_F(rq_1\!+\!a\!-\!b)\big)$,
satisfying for every $1\!\le\!j\!\le\!r$,
\begin{itemize}[leftmargin=*]
	\item $s_j$,  as an element of $H^0\big(\sO_F(rq_1\!+\!a\!-\!b)\big)$, has vanishing order $(r\!-\!j)$ at $q_1$,
	and
	\item 
	$s_1,\ldots,s_j$ forms a basis of 
	$H^0\big(\sO_F(jq_1\!+\!a\!-\!b)\big)$.
\end{itemize}
\iffalse %%%%%%%%%%%%%%%%%%%%%%%%%%%%%%%%%%%
Such a basis can be constructed by choosing
\begin{align*}
	s_i\in 
	H^0\big(\sO_F(iq_1\!+\!a\!-\!b)\big)\big\bsl H^0\big(\sO_F((i\!-\!1)q_1\!+\!a\!-\!b)\big)\qquad
	\forall~1\!\le\! i\!\le\! r.
\end{align*}
The vanishing orders of $s_i$ can be proved by
induction over $i$.
For the base case, the $j\eq 1$ case of (\ref{Eqn:H0_ses}) implies $\tn{ord}_{q_1}s_1\!\ge\!r\!-\!1$, 
thus $\tn{ord}_{q_1}s_1\eq r\!-\!1$ due to $H^0\big(\sO_F(a\!-\!b)\big)\eq 0$.
Now, assume $\tn{ord}_{q_1} s_i\eq r\!-\!i$ for $1\!\le\! i\!\le\! j$.
Then, 
on the one hand,
replacing $j$ by $j\!+\!1$ in (\ref{Eqn:H0_ses}),
we have $\tn{ord}_{q_1}s_{j+1}\!\ge\!r\!-\!j\!-\!1$;
on the other hand,
the isomorphism (\ref{Eqn:H0_isom}) implies \begin{align*}
	V':=
	\big\{\,s\in H^0\big(\sO_F(rq_1\!+\!a\!-\!b)\big):\,
	\tn{ord}_{q_1}s\ge r\!-\!j\,\big\}\quad
%	\big(\subset H^0\big(\sO_F(rq_1\!+\!a\!-\!b)\big)\,\big)
\end{align*}
is of rank $j$,
hence by the inductive assumption,
we have
$V'\eq H^0\big(\sO_F(jq_1\!+\!a\!-\!b)\big)$,
which implies $\tn{ord}_{q_1}s_{j+1}\!<\!r\!-\!j$.
Therefore, $\tn{ord}_{q_1}s_{j+1}\eq r\!-\!j\!-\!1$.
This completes the construction of $s_1,\ldots, s_r$.
\fi %%%%%%%%%%%%%%%%%%%%%%%%%%%%%%%%%%%%%%%%%

\begin{rema}\label{Rmk:van_ord}
Notice that 
for every $1\!\le\!i\!\le\!j\!\le\!r$,
$s_i$, as an element of $H^0\big(\sO_F(jq_1\!+\!a\!-\!b)\big)$,
has vanishing order $(j\!-\!i)$ at $q_1$.
Therefore, when studying the vanishing order of a given element (e.g.~$s_i$), we need to specify the module (e.g.~$H^0(\sO_F(jq_1\!+\!a\!-\!b))$ versus $H^0(\sO_F(rq_1\!+\!a\!-\!b))$) containing it. 
The same remark applies to the extensions of $s_1,\ldots,s_r$ to $\cV$ as follows.
\end{rema}

Next, for any  $1 \le \mu \le h$,
let $\cH_\mu=(\zeta_{q_\mu}=0)\sub \cV$ and $\cH=\bigcup_{\mu=1}^h\! \cH_\mu$.
Let
$$\bar\rho_\cH: \bcC_\cH\defeq \bcC\times_\cV\cH\lra \cH
$$
be the projection {and} $\bar \Sigma_\mu=\ti g(\Sigma_\mu)$, where
$\Sigma_\mu=\cG_\mu\cap {\cR_{\ge \mu}}$. For the same reason {as earlier},
$\overline{{\bf M}}_\cH\defeq \Ga\big((\bar\rho_\cH)\lsta\bl \sO_{\bcC_\cH}({\bcD_{[r]}}+\bcA-\bcB)\br\big)$ %\otimes_{\sO{\bcC}}\sO_{\bcC_\cH}\br
is a free $\Gamma(\sO_{\cH})$-module and has
a basis $\bar S_{\cH,1},\ldots,\bar S_{\cH, r}$ 
extending the basis $s_1,\ldots,s_r$,
satisfying for every $1\!\le\!j\!\le\!r$,
\begin{itemize}[leftmargin=*]
\item $\bar S_{\cH, j}$,
as an element of $\overline{{\bf M}}_\cH$,
has vanishing order 
$(r\!-\!j)$ along $\bar \Sigma_{\mu}$ for all $1\!\le\!\mu\!\le\!e(j)$, and 
\item $\bar S_{\cH,1},\ldots,\bar S_{\cH, j}$ forms a basis of $\Ga\big((\bar\rho_\cH)\lsta\bl \sO_{\bcC_\cH}({\bcD_{[j]}}+\bcA-\bcB)\br\big)$.
\end{itemize}

By shrinking $\cV$ if necessary, we can extend $\bar S_{\cH,i}$ to $\bar S_i\in \overline{{\bf M}}$ so that each $$\ov{\bf M}_j:=\Gamma\big(\bar\rho\lsta \sO_{\bcC}({\bcD_{[j]}}+\bcA-\bcB)\big),$$ as a free $\Gamma(\sO_\cV)$-module, is generated freely by $\bar S_1,\cdots,\bar S_j$.
Thus by \eqref{basic-inc}, this implies  $\ti g\sta\bar S_1,\cdots,\ti g\sta \bar S_j$ freely generate
the free $\Gamma(\sO_\cV)$-module
$\Gamma(\rho\lsta\sO_\cC(\cR_{[j]}+{\cD_{[j]}}+\cA-\cB))$ for all $1\!\le\!j\!\le\!r$.

\begin{lemm}\label{sublemma}
	Let 
	$$
	\zeta_{[\![\delta_i]\!]}=\prod_{\ell=1}^i\zeta_{[\delta_\ell]},\quad
	S_i=\zeta_{[\![{\delta_i}]\!]}\cdot \ti g\sta\bar S_i,\qquad
	1 \le i\le r.
	$$
	Then, each submodule 
	$${\bf M}_j:=\Ga\big(\rho_*\sO_{\cC}\big(\cD_{[j]}+\cA-\cB\big)\big)\subset\overline{{\bf M}}\ \big(=\ov{\bf M}_r\big) $$
	is a free  $\Gamma(\sO_\cV)$-module
	{of rank $j$} generated by
	$S_1,\cdots, S_j$.
	
	Particularly,
	${\bf M}$ ($=\!{\bf M}_r$) is a free  $\Gamma(\sO_\cV)$-module
	{of rank $r$} generated by
	$S_1,\cdots, S_r$.
\end{lemm}

\begin{proof}
	Throughout this proof, we fix an arbitrary $1\!\le\!j\!\le\!r$.
	
	We begin with more convention. For any $1 \le \mu \le h$,
	let $\cU_{\mu} \sub\cC$ be an affine open subset that contains the generic point $\xi_\mu$ of ${\cR_{\ge \mu}}$. Let
	$u_{\mu}\in \Gamma(\sO_{\cU_{\mu}})$ be so that ${\cR_{\ge \mu}}\cap \cU_{\mu}=(u_{\mu}=0)\sub \cU_{\mu}$.
	Since ${\cR_{\ge \mu}}\to\cH_{\mu}$
	is a family of rational curves and has general fibers isomorphic to $\Po$, we can
	find a rational function $v_{\mu}$ on $\cU_{\mu}\cap {\cR_{\ge \mu}}$ so that $v_{\mu}$ restricts to general fibers of
	${\cR_{\ge \mu}}\cap \cU_{\mu}\to \cH_{\mu}$ are birational maps to $\Ao$.
	We next let $\xi_{\mu}'$ be the generic point of $\bar\Sigma_{\mu}$. Since $\ti g(\xi_{\mu})=\xi_{\mu}'$, the field
	$K_{\mu}=\sO_{\xi_{\mu}}$ contains $K_{\mu}'=\sO_{\xi_{\mu}'}$ as its subfield. Because of our choice of $v_{\mu}$,
	we have $K_{\mu}=K_{\mu}'(v_{\mu})$.

	Let $\hat \sO_{\cU_{\mu}}$ be the formal completion of $\sO_{\cU_{\mu}}$ along
	$\xi_{\mu}$. Then $\hat \sO_{\cU_{\mu}}=K_{\mu}[\![ \hat u_{\mu}]\!]$,
	where $\hat u_{\mu}$ is the image of $u_{\mu}$ in $\hat \sO_{\cU_{\mu}}$.
	For any $f\in\Gamma(\sO_\cV)$, we denote by
	$\hat f$ (for ease of notation) its image in $\hat \sO_{\cU_{\mu}}$ via the pullback $\sO_\cV\to\sO_{\cU_{\mu}}$ and the completion map.
	
	We next pick an open $\cU_{\mu}'\sub \bcC$ that contains the generic point $\xi_{\mu}'$ of $\bar\Sigma_{\mu}$.
	By shrinking $\cU_{\mu}$ if necessary, we can assume $\ti g(\cU_{\mu})\sub \cU_{\mu}'$. 
	
	For every $1\!\le\!i\!\le\!j$ and $1\!\le\!\mu\!\le\!e(i)$, fix a trivialization
	$\sO_{\cU_{\mu}'}({\bcD_{[i]}}+\bcA-\bcB)\cong \sO_{\cU_{\mu}'}$, and form the induced trivialization
	\beq\label{tri-3}
	\sO_{\cU_{\mu}}(\cR_{[i]}+{\cD_{[i]}}+\cA-\cB)\cong \ti g\sta \sO_{\cU_{\mu}'}({\bcD_{[i]}}+\bcA-\bcB)
	\cong \sO_{\cU_{\mu}}.
	\eeq
	Using this trivialization, we can identify any section in $\Gamma(\sO_{\cU_{\mu}}(\cR_{[i]}+{\bcD_{[i]}}+\bcA-\bcB))$ with a
	regular function in $\Gamma(\sO_{\cU_{\mu}})$, thus obtaining its image in $\hat \sO_{\cU_{\mu}}$.
	We denote this process by
	\begin{align*}
		&\gamma\in \Gamma(\sO_{\cU_{\mu}}(\cR_{[i]}+ {\cD_{[i]}}+\cA-\cB))\\
		&\hspace{1in}\mapsto [\gamma]\in \sO_{\cU_{\mu}}(\cR_{[i]}+{{\cD_{[i]}}+\cA-\cB})\otimes_{\sO_{\cU_{\mu}}}\hat\sO_{\cU_{\mu}}\cong \hat\sO_{\cU_{\mu}} .
	\end{align*}
	For $1\!\le\!i\!\le\!j$ and $1\!\le\!\mu\!\le\!e(i)$ as above,
	by  Remark~\ref{Rmk:van_ord} and our choice of the vanishing order of 
	$\bar S_i$ along $\bar \Sigma_{\mu}$,
	we see that $\ti g\sta \bar S_i|{\cU_{\mu}}$, as an element of $\Gamma(\rho\lsta\sO_{\cU_\mu}(\cR_{[j]}+{\cD_{[j]}}+\cA-\cB))$, 
	has vanishing order $(j\!-\!i)$ along $\cR_{\ge \mu}$.
	So, there exists (nonzero) $\la_i\inn \hat\sO_{\cU_{\mu}}\eq K_{\mu}[\![  \hat u_{\mu}]\!]$ such that
	\begin{align}
		\label{Eqn:g*barS_i}
		[\ti g\sta \bar S_i]=\la_i\, (\hat u_{\mu})^{j-i}\qquad
		\tn{and}\qquad
		\hat u_{\mu}\not\mid \la_i.
	\end{align}
	
	%e also fix a trivialization $\sO_{\cU_{\blue \mu}}(\cR+{\blue \cD_{[r]}}+\cA-\cB)\cong \sO_{\cU_{\blue \mu}}$, thus
	%any section $e\in \Gamma(\sO_{\cU_{\blue \mu}}(\cR+{\blue \cD_{[r]}}+\cA-\cB))$ can be viewed as a regular function
	%on $\cU_{\blue \mu}$, thus has its image in $\hat \sO_{\cU_{\blue \mu}}$, also denoted by $[e]$.

	We are ready to prove the statement of Lemma~\ref{sublemma}.
				
	First, for every $1\!\le\!i\!\le\!j$,
	we show $S_i\inn {\bf M}_j$.
	Since ${\bf M}_i$ is naturally included in ${\bf M}_j$,
	it suffices to show $S_i\inn{\bf M}_i$.
	By \eqref{basic-inc}, this is equivalent to show
	\beq\label{F0}
	S_i|_{\cR_{[i]}}=0\in \Gamma\big(\rho\lsta\sO_{\cR_{[i]}}(\cR_{[i]}+{\cD_{[i]}}+\cA-\cB)\big).
	\eeq
		
	To establish (\ref{F0}),
	recall that {for every $1 \!\le\! \mu\! \le\! e(i)$,}  $\zeta_{[\![\delta_i]\!]}=\prod_{\ell=1}^i \zeta_{[\delta_\ell]}$ and $r_{[\mu-1]}\eq \sum_{\nu=1}^{\mu-1}r_\nu$ (which is 0 if $\mu\eq 1$).
	Thus,
	$\zeta_{[\![\delta_i]\!]}$ has vanishing order 
	$$
	\#\{1\le\ell\le i:e(\ell)\ge \mu\}=
	i-r_{[\mu-1]}\qquad
	\tn{along}~\cR_{\ge\mu}\,.
	$$
	In addition,
	by Remark~\ref{Rmk:van_ord} and our choice of the vanishing order of 
	$\bar S_i$ along $\bar \Sigma_{\mu}$,
	we see $\ti g\sta \bar S_i$, as an element of $\Gamma(\rho\lsta\sO_\cC(\cR_{[i]}+{\cD_{[i]}}+\cA-\cB))$,
	has vanishing order $0$ along $\cR_{\ge\mu}$ for all $1\!\le\!\mu\!\le\!e(i)$.
	Therefore,
	$S_i=\zeta_{[\![\delta_i]\!]}\cdot \ti g\sta \bar S_i$, as an element of $\Gamma(\rho\lsta\sO_\cC(\cR_{[i]}+{\cD_{[i]}}+\cA-\cB))$,
	has vanishing order $\big(i-r_{[\mu-1]}\big)$ along ${\cR_{\ge\mu}}$ for all $1\!\le\!\mu\!\le\!e(i)$.
	Taking (\ref{Eqn:R_j}) into consideration,
	we obtain (\ref{F0}).	
	
	We now prove that ${\bf M}_j$ is generated by $S_1,\ldots,S_j$. Take an arbitrary $S'\inn {\bf M}_j$, which is identified with its image in $\Gamma(\rho\lsta\sO_{\cC}(\cR_{[j]}+{\cD_{[j]}}+\cA-\cB))$ via the inclusion in~\eqref{basic-inc}.
	Using the identity in \eqref{basic-inc} and the fact that $\ov{\bf M}_j$ is a free $\Gamma(\sO_\cV)$-module
	generated freely by $\bar S_1,\ldots,\bar S_j$,  $S'$ can be uniquely expressed as
	$$S'=f_1\cdot\ti g\sta \bar S_1+\cdots+ f_j\cdot \ti g\sta \bar S_j, \quad f_i\in \Gamma(\sO_\cV).
	$$
	Since $S'\inn {\bf M}_j$, $S'|_{\cR_{[j]}}\eq 0$. In particular,  $(u_{\mu})^{j- r_{[\mu-1]}}$ 
	divides $S'|_{\cU_{\mu}}$ for all $1 \!\le \!\mu \!\le\! e(j)$.
	
	%
	%We will show that $\zeta_{[\![q_j]\!]}$ divides $f_j$ for all $j$.
	%
	%,
	%by \eqref{F0}, $F|_{\cU_i}$ is divisible by $u_i^{\bar r_i}$.
	
	We claim that for  every $1\!\le \!i\!\le\! j$,  every $1\!\le\!\mu\!\le\!e(i)$, and every positive integer $\alpha$,
	 $u^\alpha_{\mu}$ 
	divides $S'|_{\cU_{\mu}}$
	if and only if $f_i |_{\cU_{\mu}}$ has vanishing order at least $\max(0,\alpha\!-\!(j\!-\!i))$ along ${\cR_{\ge\mu}}$.
	The ``if'' direction holds because 
	each $f_i\cdot\ti g\sta \bar S_i$,
	as an element of $\Gamma(\rho\lsta\sO_\cC(\cR_{[i]}+{\cD_{[i]}}+\cA-\cB))$,
	has vanishing order 
	$$
	 \max\big(0,\alpha\!-\!(j\!-\!i)\big)+(j\!-\!i)
	 \ge \alpha\!-\!(j\!-\!i)+(j\!-\!i) =\al
	$$
	along $\cR_{\ge\mu}$ for any $1\!\le\!\mu\!\le\!e(i)$.
	
	Next, we prove the ``only if'' part.
	Taking their respective images in $\hat \sO_{\cU_{\mu}}$, 
	$u^\alpha_{\mu}$ dividing $S'|_{\cU_{\mu}}$
	implies that
	\beq\label{div}
	(\hat u_{\mu})^\alpha\quad \text{divides}\quad [S']=\hat f_1\cdot[\ti g\sta \bar S_1]+\cdots +\hat f_j\cdot[\ti g\sta \bar S_j]\in K_{\mu}[\![\hat u_{\mu}]\!].
	\eeq
	By replacing $v_{\mu}$ by $v_{\mu}\upmo$ if necessary, we can assume that
	$\hat \zeta_{q_{\mu}}=c_{\mu} \hat u_{\mu}v_{\mu}$, where $\zeta_{q_{\mu}}\inn\Ga(\sO_\cV)$ is the modular parameter corresponding to the node $q_{\mu}$, $\hat \zeta_{q_{\mu}}$ is its image in $\hat \sO_{\cU_{\mu}}$, and $c_{\mu}\ne 0\in K'_{\mu}$.
	Let $\alpha_i$ be the order of which $f_i$ is divisible by $\zeta_{q_{\mu}}$,
	then $\hat f_i=c_i'(\hat \zeta_{q_{\mu}})^{\alpha_i}$, where $c_i'\ne 0\in K'_{\mu}$.
	(The case $f_i=0$
	is trivially true and is ignored.)
	
	By (\ref{Eqn:g*barS_i}), we see
	\eqref{div} translates to that $(\hat u_{\mu})^\alpha$ divides
	$$\sum_{i=1}^j \hat f_i\cdot [\ti g\sta \bar S_i]=\sum_{i=1}^j c_i'(c_{\mu}\hat u_{\mu} v_{\mu})^{\alpha_i}\cdot \la_i (\hat u_{\mu})^{j-i}
	=\sum_{k\ge 0} v_{\mu}^k\sum_{1\le i\le j~\tn{s.t.}~\alpha_i=k}\!\!\!\!\!\!\!
	(c_i' (c_{\mu})^{k}\la_i)(\hat u_{\mu})^{k+j-i}
	$$
	in $K_{\mu}[\![\hat u_{\mu}]\!]=K_{\mu}'(v_{\mu})[\![{\hat u_{\mu}}]\!]$.  
	As $\hat u_{\mu}$ does not divide $c_i' (c_{\mu})^{k}\la_i$,
	this divisibility holds if and only if $(\hat u_{\mu})^\alpha$ divides each individual $v_{\mu}^k(\hat u_{\mu})^{k+j-i}$, which holds if
	and only if
	$\alpha\le \alpha_i+j-i$ for each $i$ where $f_i\ne 0$. This establishes the ``only if'' part, so the claim is proved.
	
	For every $1 \!\le \!i \!\le\! j$ and $1 \!\le \!\mu \!\le\! e(i)$, 
	recall we have shown that $(u_{\mu})^{j- r_{[\mu-1]}}$ 
	divides $S'|_{\cU_{\mu}}$.
	By applying the above claim to $\alpha=j- r_{[\mu-1]}$, we conclude that 
	$$\alpha_i\geq \big(j- r_{[\mu-1]}\big)-(j\!-\!i)\,=\,i- r_{[\mu-1]}.
	$$ 
	This proves that $f_i/\zeta_{[\![\delta_i]\!]}$, which is a meromorphic
	function on $\cV$, is regular over
	an open subset of $\cV$ containing the generic point of $\cH_{\mu}$, for any
	$1\!\le \!{\mu}\!\le\! e(i)$.
	Since $f_i/\zeta_{[\![\delta_i]\!]}$ is regular over $\cV-\bigcup_{\mu=1}^{e(i)}\! \cH_{\mu}$, by Hartogs Lemma, it is regular over $\cV$.
	
	This implies that $S'$ lies in the $\Gamma(\sO_\cV)$ span of $S_1,\cdots, S_j$.
	Since ${\bf M}_j$ is a rank~$j$ free $\Gamma(\sO_\cV)$-modules, it is freely generated by $S_1,\cdots, S_j$.
	This completes the proof of Lemma~\ref{sublemma}.
\end{proof}

	We are ready to present a strengthened form of the restriction homomorphism 
	$$
	 \hmr: \bigoplus_{1\le i \le r}
	 \rho_* \sM (\cD_i) \lra \rho_* \sO_\cA(\cA)
	$$ given in~(\ref{phi-R}).
%{\blue Further, we assume that the core $F$ is smooth.}
By Proposition \ref{HomFirstOrder}, possibly under  basis changes of ${\bf M}$ and
\eqref{tri-A}, we see that $\hmr$ takes the form
\begin{equation}\label{cij-2}
	A=\left[\begin{matrix} c_{11} \zeta_{[\delta_1,a_1]} &  c_{12}  \zeta_{[\delta_2,a_1]} & c_{13} \zeta_{[\delta_3,a_1]}& \cdots\\
		c_{21}  \zeta_{[\delta_1,a_2]} & c_{22}  \zeta_{[\delta_2,a_2]}
		& c_{23}\zeta_{[\delta_3,a_2]} & \cdots\\
	\end{matrix} \right],
\end{equation}
with $c_{ij} \in \Ga(\sO^*_\cV)$.

\begin{prop}\label{Prp:key-tail_chain}
	Assume $\de_1,\ldots,\de_r$ are on $R$ and ordered according to~(\ref{linearOrder}).
	After choosing suitable trivialization $$\bigoplus_{1\le i\le r}   \rho_* \sM (\cD_i) \cong \sO_\cV^{\,\oplus\,r}
	\qquad\tn{and}\qquad
	\rho_*\sO_{\cA_s}(\cA_s)\cong\sO_\cV,\quad
	s\eq 1,2,
	$$ %and \eqref{tri-A}, 
	the restriction homomorphism $\hmr$ also takes the form
	\begin{equation}\label{matrix0} 
		\hmr=
		\left[\begin{matrix}  \wh c_{11} \zeta_{[\delta_1,a_1]}&  \wh c_{12}
			\zeta_{[\delta_1,a_1]}\zeta_{[\delta_2]}
			& \wh c_{13}\zeta_{[\delta_1,a_1]}\zeta_{[\delta_2]}\zeta_{[\delta_3]}&\cdots\\
			\wh c_{21} \zeta_{[\delta_1,a_2]}&
			\wh c_{22}\zeta_{[\delta_1,a_2]}\zeta_{[\delta_2]}
			& \wh c_{23}\zeta_{[\delta_1,a_2]}\zeta_{[\delta_2]}\zeta_{[\delta_3]}&\cdots\\
		\end{matrix} \right] 
	\end{equation}
	where  $\wh c_{11} =c _{11}, \wh c_{21} = c_{21} \in \Gamma (\sO_\cV^*),$
	and $\wh c_{ij} \in \Gamma (\sO_\cV),  \forall  \;  j > 1$.
\end{prop}

\begin{proof}
	We follow the notation of Lemma \ref{sublemma}.
	Let $$b_{ij}= \bar S_j|_{\bar \cA_i} \in \Ga \big(\rho_* \sO_{\cA_i}(\cA_i)\big) \cong\Ga (\sO_\cV).$$
	Then under $\{S_1, \cdots, S_r\} $ and \eqref{tri-A}, $ \varphi^R$ takes the form
	\begin{equation}\label{cij-1}
		B=\left[\begin{matrix} b_{11} \zeta_{[\delta_1]}& b_{12}
			\zeta_{[\delta_1]}\zeta_{[\delta_2]}
			&b_{13}\zeta_{[\delta_1]}\zeta_{[\delta_2]}\zeta_{[\delta_3]}&\cdots\\
			b_{21} \zeta_{[\delta_1]}&
			b_{22}\zeta_{[\delta_1]}\zeta_{[\delta_2]}
			&b_{23}\zeta_{[\delta_1]}\zeta_{[\delta_2]}\zeta_{[\delta_3]}&\cdots\\
		\end{matrix} \right].
	\end{equation}
	Therefore, $B=A(\mu_{ij})$ for a matrix $(\mu_{ij}) \in GL(r,\sO_\cV)$.
	
	 Moreover, the choices of the sections $S_i$, $1\!\le\!i\!\le\!r$, in Lemma~\ref{sublemma} implies $(\mu_{ij})$ is upper triangular,
	 because with $\si_1,\ldots,\si_r$ denoting a basis of $\bf M$ that gives rise to the matrix $A$
	 (i.e.~each $\si_i$ is an nonzero element of $\Ga(\rho_*\sO_\cC(\cD_{i}\!+\!\cA\!-\!\cB))$),
	 we conclude from Lemma~\ref{sublemma} that for each $1\!\le\!j\!\le\!r$,
	 both $\si_1,\ldots,\si_j$ and $S_1,\ldots,S_j$ are bases of
	 ${\bf M}_j\!=\!
	 \Ga\big(\rho_*\sO_\cC(\cD_{[j]}\!+\!\cA\!-\!\cB)\big)$.
%	 In addition,
%	 notice the restriction of each $\si_j\inn \Ga(\rho_*\sM(\cD_j))$ to the central $(C,D)\inn\cV$ has vanishing order $(r\!-\!1)$ at $q_1$,
%	 whereas each $S_j(C,D)$ has vanishing order $(r\!-\!j)$ at $q_1$.
%	 Therefore,
%	 each $S_j$ is a linear combination of $\si_i$, $1\!\le\!i\!\le\!j$,
%	 and the coefficient of $\si_j$ is invertible on $\cV$.
		Multiplying each $b_{ij}$ by a unit if necessary,
		we may further assume the entries on the diagonal of $(\mu_{ij})$ are all~1.
	
	Let $x_{q_1} \in F$ be a smooth point of the curve $C$ that is sufficiently close to the node
	$q_1$.
	Since $\zeta_{[x_{q_1},a_1]} $ divides the first row of \eqref{cij-2}, it divides the first row of \eqref{cij-1} as well.
	Since $\zeta_{[x_{q_1},a_1]} $
	is coprime with $\zeta_{[\delta_j]}$ for all $1\!\le\!j\!\le\!r$, we conclude $\zeta_{[x_{q_1},a_1]} $ divides $b_{1j}$,
	thus $b_{1j}\eq \wh c_{1j} \zeta_{[x_{q_1},a_1]}$
    for some $\wh c_{1j}\inn\Ga(\sO^*_\cV)$.
    
    Similar arguments can be applied to the second row, 
	so we obtain
	\begin{equation}\label{e_bc}
		b_{sj} = \wh c_{sj} \zeta_{[x_{q_1},a_s]}
		\qquad
		\forall\ s\eq 1,2,\ \ 1\!\le\!j\!\le\!r.
	\end{equation}
	Note that  $\zeta_{[\delta_j,a_s]} = \zeta_{[x_{q_1},a_s]}  \zeta_{[\delta_j]}$.
	This brings both rows of
	\eqref{cij-1} to the desired form.
	
	It remains to check $\wh c_{11}$ and $\wh c_{21}$.
	Let $(\nu_{ij})=(\mu_{ij})^{-1}$. 
	Then,
	$(\nu_{ij})$ is also upper triangular with all diagonal entries equal to 1, and for $s\eq 1,2$ and $1\!\le\!i\!\le\!r$, we have
		\begin{align}\label{Eqn:c_vs_wh_c}
			c_{si} \zeta_{[\delta_i,a_s]} =
			\nu_{1i} \wh c_{s1}  \zeta_{[\delta_1,a_s]} + \nu_{2i} \wh c_{s2} \zeta_{[\delta_1,a_s]} \zeta_{[\delta_2]}+\cdots+
			\nu_{ii}\wh c_{si} \zeta_{[\delta_1,a_s]} \zeta_{[\delta_2]}\!\cdots\!\ze_{[\de_i]}.
		\end{align}
	Hence for $s\eq 1,2$, we have $c_{s1}= \nu_{11}\!\cdot\!\wh c_{s1} =\wh c_{s1}$  and $\wh c_{s1} \in \Gamma(\sO_\cV^*)$.
\end{proof}

\subsection{Structural homomorphisms: $2\times 2$ minors}\label{Subsec:construct-rational-tails}

We continue with the notation of \S\ref{Subsec:using-ss}. 
For conciseness, we let
$$ z_0=(C,D)\in\cV\subset\fM_2^{\rm div}$$
and always assume $\cV$ is sufficiently small.
	In this subsection,
	we still focus on $\de_i$, $1\!\le\!i\!\le\!r$, that are on a maximal chain of tail rational subcurves $R$,
	attached to the core $F$ of $C$ at the node $q_1$.
	
	Below we analyze the $2\!\times\!2$ minors of $\hmr$,
	which is the second step towards the proof of Proposition~\ref{Prp:phi_key}.
	
	For $s\eq 1,2$ and $1\!\le\!i\!\le\!r$, let
	$c_{si}$ and $\wh c_{si}$ be as in~(\ref{cij-2}) and (\ref{matrix0}),
	respectively.
	For $1\!\le\!i\!<\!j\!\le\!r$, we set
	\begin{align}\label{Eqn:minors}
		\dt{ij}= \det
		\left[\begin{matrix}
			c_{1i} &   c_{1j} \\
			c_{2i} &   c_{2j} \end{matrix} \right], \quad
		\wdt{ij}= \det
		\left[\begin{matrix}
			\wh c_{1i} &  \wh c_{1j} \\
			\wh c_{2i} &  \wh c_{2j} \end{matrix} \right]
		\quad\in \Ga(\sO_\cV) .
	\end{align}

	For distinct $1\!\le\!i\!,\!j\!\le\!r$, let $\cK_{ij} \subset \fM_2^{\rm div}$ be the locus
	of points $z$ such that $\cD_i(z)$ ($=\!\cD_i\!\cap\!\cC_z$) and $\cD_j(z)$ ($=\!\cD_j\!\cap\!\cC_z$) are
	%$(C',D'=\sum_{i}\delta_i')$ such that $\delta_1'$ and $\delta_2'$ be
	a pair of
	conjugate points. % on the curve $C'$.
	By Lemma \ref{Lm:K_smoothness}, $\cK_{ij}$ is a Cartier divisor locally given by $\ka_{ij}\eq 0$ for some $\ka_{ij}\in\Ga(\sO_\cV)$.
	All possible expressions of $\ka_{ij}$ are listed in Lemma~\ref{Lm:K_smoothness}.

	\begin{prop}\label{keyProp1}
		With notation as above, we have
		\begin{enumerate}[label=(R${}_{\arabic*}$),leftmargin=*]
			\item 
			\label{Cond:eta_12}
			there exists $u\inn\Ga(\sO^*_\cV)$ such that $$
			\dt{12}= \ze_{[\de_1]}\cdot\wdt{12}
			\qquad\tn{and}\qquad
			\wdt{12}=u\cdot\ka_{12};$$
			
			\item 
			\label{Cond:eta_13}
			there exists $v\inn\Ga(\sO_\cV)$ such that $$
			\dt{13}= \ze_{[\de_1]}\ze_{[\de_2]}
			\wdt{13}+v\!\cdot\!\dt{12};$$
			
			\item 
			\label{Cond:eta_13_ns}
			if $q_1$ is not on any non-separating bridge of $C$, then
			$$ \rank \left[\begin{matrix}
				\wh c_{11} & \wh c_{12}& \wh c_{13} \\
				\wh c_{21} & \wh c_{22} & \wh c_{23}\end{matrix} \right](z_0) =2,$$
			or equivalently,
			$\wdt{13}$ is invertible along $\cK_{12}$;
			
			\item 
			\label{Cond:eta_13_ns'}
			if $q_1$ is on a maximal non-separating bridge $\tn B[\fp,\fq]$ of~$C$, 
			with $x_{q_1}\inn F$ denoting a smooth point of the core $F$ that is sufficiently close to the node~$q_1$,
			then there exist $f_2,  g_2\inn\Ga(\sO^*_\cV)$ and $f_3, g_3\inn\Ga(\sO_\cV)$ such that
			$$
			\wdt{12}=f_2\ze_{[x_{q_1},\fp]}\!+\!g_2\ze_{[x_{q_1},\fq]},\quad
			\wdt{13}=f_3\ze_{[x_{q_1},\fp]}\!+\!g_3\ze_{[x_{q_1},\fq]},\quad
			\det\left[
			\begin{matrix}
				f_2 & f_3\\
				g_2 & g_3
			\end{matrix}
			\right]\in\Ga(\sO^*_\cV).
			$$
		\end{enumerate}
	\end{prop}
	
	To establish Proposition~\ref{keyProp1},
	we need the following lemma.

\begin{lemm}\label{Lm:str_homom_one_tail_chain_minor}
	With notation as in Proposition~\ref{keyProp1}, we have  
	\begin{enumerate}[label=(\arabic*),leftmargin=*]
		\item if the core $F$ of the curve $C$  contains at most one node and that node, if exists, is non-separating,  then
		\begin{enumerate}[label=(1.\alph*),leftmargin=*]
			\item 
			\label{Cond:eta_12_lm}
			$\wdt{12} (z_0)=0$  if and only if $q_1$ is Weierstrass, or equivalently, 
			if and only if $\delta_1$ and $\delta_2$ are conjugate;
			\item \label{Cond:eta_13_lm}
			$ \rank \left[\begin{matrix}
				\wh c_{11} & \wh c_{12}& \wh c_{13} \\
				\wh c_{21} & \wh c_{22} & \wh c_{23}\end{matrix} \right](z_0) =2$;
		\end{enumerate}
		
		\item \label{Cond:th_12} for any $z \inn\cV$ such that $\cC_z$ contains at most one node and that node, if exists, is non-separating, 
		\begin{enumerate}[label=(2.\alph*),leftmargin=*]
		\item \label{Cond:th_12_ns_lm}
		$\dt{12}(z)\eq 0$ if and only if $z \inn \cK_{12}\!\cap \!\cV$,
		%($m=\deg D$), 
		that is, $\cD_1(z)$ and $\cD_2(z)$
		are a pair of conjugate points;
		\item \label{Cond:th_13_ns_lm}
		$ \rank \left[\begin{matrix}
		c_{11} &  c_{12} &  c_{13} \\
		c_{21} &  c_{22} &  c_{23}\end{matrix} \right](z_0) =2$;
		\end{enumerate}
		
		\item \label{Cond:th_12_order} $\dt{12}$ vanishes  to the first order along $\cK_{12} \cap \cV$;
		
		\item \label{Cond:sep_nodal_lm}
			for any $z\inn\cV$ such that $\cC_z$ consists of two smooth genus-1 subcurves attached at a (separating) node,
		\begin{enumerate}[label=(4.\alph*),leftmargin=*]
			\item  \label{Cond:th_12_sep_nodal}
			if $z\!\not\in\!\cK_{12}$, then $\dt{12}(z)\!\ne\!0$;
			\item \label{Cond:th_13_sep_nodal}
			if $z\!\in\!\cK_{12}$, then $\dt{13}(z)\!\ne\!0$.
		\end{enumerate}
		%\item consequently, $\cW$ is smooth,  and $\cW \cap \cV$ coincides with $(\la=0)$ 
		%(scheme-theoretically).
	\end{enumerate}
\end{lemm}
\begin{proof}
	We continue to use the notation of \S\ref{Subsec:using-ss}.
	
	\vsp
	\ref{Cond:eta_12_lm}.
	Since $s_1, s_2 \in H^0(\sO_F(rq_1+a-b)) $ have vanishing order $r-1$ and $r-2$ respectively at $q_1$,
	one verifies directly that the image of $H^0(\sO_F(2q_1+a-b)) $ in $H^0(\sO_F(rq_1+a-b))$ (via the inclusion)
	is the subspace
	spanned by $s_1$ and $s_2$. By identifying the image of $H^0(\sO_F(2q_1+a-b)) $ with this subspace,  we obtain a homomorphism
	\begin{align*}
		&\begin{CD}H^0(\sO_F(2q_1+a-b)) @>{s_{12}}>> \kk^2
		\end{CD},\qquad\tn{where}
		\\
		&
		s_{12}=
		\left[\begin{matrix}
			s_1(a_1)\! &\! s_2(a_1) \\
			s_1(a_2)\! &\! s_2(a_2)
		\end{matrix} \right]
%		=\left[\begin{matrix}
%			\wh c_{11}\ze_{[q_1,a_1]} & \wh c_{12}\ze_{[q_1,a_1]}  \\
%			\wh c_{21}\ze_{[q_1,a_2]}  & \wh c_{22}\ze_{[q_1,a_2]}  \end{matrix} \right](z_0)
		=\left[\begin{matrix}
			\wh c_{11} & \wh c_{12} \\
			\wh c_{21} & \wh c_{22} \end{matrix} \right](z_0).
	\end{align*}
	The last equality above holds because $F$ is assumed inseparable.
	Thus,  the matrix $\left[\begin{matrix}
		\wh c_{11} & \wh c_{12} \\
		\wh c_{21} & \wh c_{22} \end{matrix} \right](z_0)$ is of rank 1 if and only if $h^0(\sO_F(2q_1-b))\eq 1$.
	Since $b$ is a general point, the evaluation homomorphism  at the point $b$,
	$H^0(\sO_F(2q_1)) \to \kk$, is surjective. By the exact sequence
	$$ 0 \lra H^0(\sO_F(2q_1-b)) \lra H^0(\sO_F(2q_1)) \lra \kk \lra 0,
	$$
	$h^0(\sO_F(2q_1-b))\eq 1$ is equivalent to $h^0(\sO_F(2q_1))\eq 2$, which is further equivalent to that $q_1$ is a Weierstrass point because $F$ either is smooth, or has  exactly one self-intersection and no other nodes; see the proof of~\cite[Proposition~3.12]{BCGGM} for the details of the latter case.
	Thus,
	$\wdt{12}(z_0)\eq 0$ if and only if $q_1$ is Weierstrass, 
	which by Lemma~\ref{Lm:KW}  is equivalent to  $\de_1$ and $\de_2$ being conjugate.
	%which implies $\de_1$ and $\de_2$ are conjugate.
	%We may further require $\ov S_j$ extending $s_j$, $j\eq 1,2$ 
	%so that $\wh\th_{12}$ vanishes along the the locus on which $\cD_1$ and $\cD_2$ are conjugate.
	%Note that this makes $\bar S_1$ or $\bar S_2$ no longer ``general'',
	%but does not affect Part~\ref{KeyPrpStatement0}.
	This proves Part~\ref{Cond:eta_12_lm}.
	%\fi
	
	\vsp
	\ref{Cond:eta_13_lm}.
	Likewise, one sees that
	the natural image of $H^0(\sO_F(3q_1+a-b)) $ in $H^0(\sO_F(rq_1+a-b)) $ is the subspace
	spanned by $s_1, s_2$ and $s_3$. Assume  $ \rank \left[\begin{matrix}
		\wh c_{11} & \wh c_{12}& \wh c_{13} \\
		\wh c_{21} & \wh c_{22} & \wh c_{23}\end{matrix} \right](z_0) \le 1$.
	By  arguments similar to the ones used in~\ref{Cond:eta_12_lm}, we obtain $h^0(\sO_F(3q_1-b))\ge 2$,
	and consequently $h^0(\sO_F(3q_1))\ge 3$, which is impossible by Riemann-Roch because $F$ either is smooth, or has  exactly one self-intersection and no other nodes.
	This completes the proof of Part~\ref{Cond:eta_13_lm}.
	
	\vsp
	\ref{Cond:th_12}.
	%To continue, we let $\mathring\cV \subset \cV$ be the open subset of $\cV$ 
	%consisting of $(C,D)$  such that $C$ is smooth. 
	%Recall from \S\ref{Subsec:using-ss},
	%$\ov S_j$ extending $s_j$, $j=1,2$.
	%Consider the evaluation at $\cA_1$ and $\cA_2$,
	%$$ \Gamma(\rho\lsta \sO_{\cC}(\ov S_1+\ov S_2+\cA-\cB)) \lra \Gamma(\rho\lsta \sO_{\cA}(\cA)).$$
	%We identify $\Gamma(\rho\lsta \sO_{\cC}(\ov S_1+\ov S_2+\cA-\cB))$ with its image 
	%in $\Gamma(\rho\lsta \sO_{\cC}({\cD_{[r]}}+\cA-\cB))$ (via the inclusion). 
	%Then, {\red this image is the subspace spanned by $S_1$ and $S_2$.}
	Let $z \in \cV$ be such that the curve $\cC_z$ either is smooth, or has  exactly one self-intersection and no other nodes. Then, %up to linear isomorphism, 
	%nonzero scalar multiple on the basis vectors
	%$S_1(z)$ and $S_2(z)$, we obtain that 
	the evaluation homomorphisms are given by
	\begin{align*}
		&
		\begin{CD}H^0(\sO_{\cC_z}( \cD_1(z)+\cD_2 (z)+\cA(z)-\cB(z))) @>{\Th_{12}(z)}>> \kk^2,
		\end{CD}\\
		&
		\begin{CD}H^0(\sO_{\cC_z}( \cD_1(z)+\cD_2(z)+\cD_3 (z)+\cA(z)-\cB(z))) @>{\Th_{13}(z)}>> \kk^2,
		\end{CD}
		\\
		&\tn{where}\qquad
		\Th_{12}(z)=\left[\begin{matrix}
			c_{11} &  c_{12} \\  c_{21} &  c_{22} \end{matrix} \right]({z}),\quad
		\Th_{13}(z)=\left[\begin{matrix}
			c_{11} &  c_{12} &  c_{13} \\  c_{21} &  c_{22} &  c_{23} \end{matrix} \right]({z}),
	\end{align*}
	respectively.
	Thus,
	\begin{align*}
		\dt{12} (z)=0\quad
		&\Longleftrightarrow\quad
		\tn{rank}\,\Th_{12}(z)= 1\\
		&\Longleftrightarrow\quad
		h^0(\sO_{\cC_z}(\cD_1(z)+\cD_2(z)-\cB(z)))=1;\\
		\tn{rank}\,\Th_{13}(z)\le 1\quad
	 	&\Longleftrightarrow\quad
	 	h^0(\sO_{\cC_z}(\cD_1(z)+\cD_2(z)+\cD_3(z)-\cB(z)))\ge 2.
	\end{align*}
	
	Since $\cB(z)$ is a general point on the  curve $\cC_z$, 
	the evaluation homomorphism  at the point $\cB(z)$,
	$H^0(\sO_{\cC_z}(\cD_1(z)+\cD_2(z))) \to \kk$, is surjective. By the exact sequence
	$$ 0 \to H^0(\sO_{\cC_z}(\cD_1(z)\!+\!\cD_2(z)\!-\!\cB(z))) \to H^0(\sO_{\cC_z}(\cD_1(z)\!+\!\cD_2(z))) \to \kk \to 0,
	$$
	we see that
	$h^0(\sO_{\cC_z}(\cD_1(z)+\cD_2(z)-\cB(z)))=1$ if and only if
	$h^0(\sO_{\cC_z}(\cD_1(z)+\cD_2(z))=2$,
	which is  equivalent to that $\cD_1(z)$ and $\cD_2(z)$ are a pair of conjugate points, again,
	%(a Weierstrass point if $\ov S_1(z)=\ov S_2(z)$),
	due to the same reason as in the proof of Part~\ref{Cond:eta_12_lm}.
	 
	Similarly,
	$h^0(\sO_{\cC_z}(\cD_1(z)+\cD_2(z)+\cD_3(z)-\cB(z)))\ge 2$ if and only if
	$h^0(\sO_{\cC_z}(\cD_1(z)+\cD_2(z)+\cD_3(z))\ge 3$,
	which is impossible
	due to the same reason as in the proof of Part~\ref{Cond:eta_13_lm}.
	This proves Part~\ref{Cond:th_12}.
	
	\vsp
	\ref{Cond:th_12_order}. 
	By Part~\ref{Cond:th_12},
	$\dt{12}/\ka_{12}$ is regular on the open subset of $\cV$ consisting of $z\inn\cV$ such that $\cC_z$ is smooth.
	Notice that
	Lemma~\ref{Lm:K_smoothness} implies $\ka_{12}$ is irreducible and coprime to any node-smoothing parameter,
	so $\dt{12}/\ka_{12}$ is regular on the entire $\cV$.
	To show Part~\ref{Cond:th_12_order},
	it thus suffices to prove that $\dt{12}$ vanishes  at the point $z$ to the first order for any $z \in \cK_{12}\cap \cV$ such that $\cC_z$ is smooth.

	Let $\Delta \subset \cV$ be a small curve through $z$, meeting $\cK_{12}\cap\cV$ transversely
	at $z$.
	Let $\cC_\Delta=\cC\times_\cV \Delta$, $\cD_{\Delta,i}=\cD_i \times_\cV \Delta$ for $i=1,2$,
	$\cA_\Delta=\cA\times_\cV \Delta$, and $\cB_\Delta=\cB\times_\cV \Delta$.
	Then, we obtain the evaluation homomorphism
	\begin{align*}
		&\begin{CD}\rho\lsta \sO_{\cC_\Delta}(\cD_{\Delta,1}+\cD_{\Delta,2}+\cA_\Delta-\cB_\Delta) @>{ \Th_{12}|_\Delta}>>\rho\lsta \sO_{\cA_\Delta}(\cA_\Delta) \\
		\end{CD},\\
		&\tn{where}\qquad\Th_{12}|_\Delta=
		\left.\left[\begin{matrix}
			c_{11} &  c_{12} \\  c_{21} &  c_{22} \end{matrix} \right]\right|_\Delta.
	\end{align*}
	By long exact sequence of cohomology and Riemann-Roch, the co-kernel of $\Th_{12}|_\Delta$ is a skyscraper sheaf with one-dimensional stalk at $z$. This implies that $\dt{12}|_\Delta$ vanishes at the point $z$
	at the first order,
	hence $\dt{12}$ vanishes along $\cK_{12}$ to the first order.
	
	\vsp
	\ref{Cond:sep_nodal_lm}.
		The proofs of~\ref{Cond:th_12_sep_nodal} and~\ref{Cond:th_13_sep_nodal} are both similar to that of Part~\ref{Cond:th_12_order}. 
		Given $z\inn \cV$ such that $\cC_z$ consists of two smooth genus 1 subcurves attached at a node $q(z)$,
		The node smoothing parameter on $\cV$ corresponding to $q(z)$ is denoted by $\ze_q$.
			
	\vsp	
		\ref{Cond:th_12_sep_nodal}.
		Assume that $z\!\not\in\!\cK_{12}$.
		Let $\Delta \subset \cV$ be a small curve through $z$ and disjoint from $\cK_{12}$, meeting $(\ze_q\eq 0)$ transversely at $z$.
		Mimicking the proof of Part~\ref{Cond:th_12_order}, we define  $\cC_\Delta$, $\cD_{\Delta,1}$, $\cD_{\Delta,2}$,
		$\cA_\Delta$, and $\cB_\Delta$ likewise.
		Then, we obtain the evaluation homomorphism
		\begin{align*}
			&\begin{CD}\rho\lsta \sO_{\cC_\Delta}(\cD_{\Delta,1}+\cD_{\Delta,2}+\cA_\Delta-\cB_\Delta) @>{ \ti\Th_{12}|_\Delta}>>\rho\lsta \sO_{\cA_\Delta}(\cA_\Delta) \\
			\end{CD},\\
			&\tn{where}\qquad\ti\Th_{12}|_\Delta=
			\left.\left[\begin{matrix}
				c_{11} &  c_{12} \\  c_{21}\ze_q &  c_{22}\ze_q \end{matrix} \right]\right|_\Delta;
		\end{align*}
		see Proposition~\ref{HomFirstOrder} for the factor $\ze_q$ in the second row of $\ti\Th_{12}|_\Delta$.
		By long exact sequence of cohomology and Riemann-Roch, the co-kernel of $\ti\Th_{12}|_\Delta$ is still a skyscraper sheaf with one-dimensional stalk at $z$. This implies that $\det\ti\Th_{12}|_\Delta$ vanishes at the point $z$
		to the first order,
		hence 
		$\det\ti\Th_{12}$ vanishes along $(\ze_q\eq 0)$ to the first order. 
		Notice that $\det\ti\Th_{12}=\ze_q\dt{12}$,
		which implies $\dt{12}$ is invertible on a neighborhood of $z$.
		
	\vsp
		\ref{Cond:th_13_sep_nodal}.
		Assume that $z\!\in\!\cK_{12}$.
		Since $q(z)$ is the only node of $\cC_z$,
		we see from Lemma~\ref{Lm:K_smoothness} that
		near $z$ the locus $\cK_{12}$ is a smooth Cartier divisor, meeting $(\ze_q\eq 0)$ transversely at $z$.
		We can thus take $\Delta \subset \cK_{12}$ to be a small curve through $z$, meeting $(\ze_q\eq 0)$ transversely at $z$.
		Let $\cC_\Delta$, $\cD_{\Delta,1}$, $\cD_{\Delta,2}$, $\cD_{\Delta,3}$,
		$\cA_\Delta$, and $\cB_\Delta$ be analogous to Part~\ref{Cond:th_12_sep_nodal}.
		Then, we obtain the evaluation homomorphism
		\begin{align*}
			&\begin{CD}\rho\lsta \sO_{\cC_\Delta}(\cD_{\Delta,1}+\cD_{\Delta,2}+\cD_{\Delta,3}+\cA_\Delta-\cB_\Delta) @>{ \ti\Th_{13}|_\Delta}>>\rho\lsta \sO_{\cA_\Delta}(\cA_\Delta) \\
			\end{CD},\\
			&\tn{where}\qquad\ti\Th_{13}|_\Delta=
			\left.\left[\begin{matrix}
				c_{11} &  c_{12} & c_{13} \\  c_{21}\ze_q &  c_{22}\ze_q & c_{23}\ze_q \end{matrix} \right]\right|_\Delta;
		\end{align*}
		once again see Proposition~\ref{HomFirstOrder} for the factor $\ze_q$ in the second row of $\ti\Th_{13}|_\Delta$.
		By long exact sequence of cohomology and Riemann-Roch, the co-kernel of $\ti\Th_{13}|_\Delta$ is still a skyscraper sheaf with one-dimensional stalk at $z$. 
		
		Meanwhile,
		notice that after an elementary row operation,
		$\ti\Th_{13}|_\Delta$ can be rewritten as
		$$
		\left.\left[\begin{matrix}
			c_{11} &  c_{12} & c_{13} \\  0 &  \dt{12}\ze_q & \dt{13}\ze_q \end{matrix} \right]\right|_\Delta.
		$$
		By Part~\ref{Cond:th_12_order} and the assumption $\De\!\subset\!\cK_{12}$,
		we have $\dt{12}|_\De$ is identically zero.
		Taking into account that $c_{11}$ is invertible,
		we conclude that the entry $\dt{13}\ze_q|_\Delta$ vanishes at the point $z$
		to the first order,
		hence 
		$\dt{13}\ze_q$ vanishes along $(\ze_q\eq 0)$ to the first order,
		which implies $\dt{13}$ is invertible on a neighborhood of $z$.
\end{proof}

	\begin{proof}[Proof of Proposition~\ref{keyProp1}]
	\ref{Cond:eta_12}.
	Taking $i\eq 2$ in~(\ref{Eqn:c_vs_wh_c}),
		we have
		\begin{align*}
		%	c_{s1} \zeta_{[\delta_1,a_s]} =
		%	\wh c_{s1}  \zeta_{[\delta_1,a_s]},\qquad
			c_{s2} \zeta_{[\delta_2,a_s]} =
			\nu_{12} \wh c_{s1}  \zeta_{[\delta_1,a_s]} +  \wh c_{s2} \zeta_{[\delta_1,a_s]} \zeta_{[\delta_2]}=
			\nu_{12} \wh c_{s1}  \zeta_{[\delta_1,a_s]} +  \wh c_{s2} \zeta_{[\delta_2,a_s]} \zeta_{[\delta_1]}.
		\end{align*}
		The last equality holds because 
		$\ze_{[\de_2,a_s]}\eq \ze_{[\de_1,a_2]}\ze_{[\de_2,\de_1]}$ and $\ze_{[\de_2]}\eq \ze_{[\de_1]}\ze_{[\de_2,\de_1]}$.
		Consequently, we have
		\begin{align*}
			c_{s2}=
			\ze_{[\de_1]}\wh c_{s2}+
			\frac{\nu_{12}}{\ze_{[\de_1,\de_2]}}\wh c_{s1}=
			\ze_{[\de_1]}\wh c_{s2}+
			\frac{\nu_{12}}{\ze_{[\de_1,\de_2]}} c_{s1},\qquad
			s\eq 1,2.
		\end{align*}
		The last equality holds because $\wh c_{s1}\eq c_{s1}$, $s\eq 1,2$, by Proposition~\ref{Prp:key-tail_chain}.
		Then, by computing $\dt{12}$ using the above, we obtain $\dt{12}\eq\ze_{[\de_1]}\wdt{12}$.
		
		To show $\wdt{12}/\ka_{12}\inn\Ga(\sO^*_\cV)$, we observe that Lemma~\ref{Lm:str_homom_one_tail_chain_minor}~\ref{Cond:th_12_order} implies
		$\dt{12}/\ka_{12}$ is regular on $\cV$.
	    In other words,
	    $\ze_{[\de_1]}\wdt{12}/\ka_{12}$ is regular on $\cV$.
		However, $\ze_{[\de_1]}$ and $\ka_{12}$ are coprime,
		hence we can write $\wdt{12}\eq u\!\cdot\!\ka_{12}$ for some $u\inn\Ga(\sO_\cV)$.
		
		To show $u\inn\Ga(\sO^*_\cV)$,
		we fix $z\inn\cV$.
		If $\cC_z$ contains at most one node and that node, if exists, is non-separating,
		then Lemma~\ref{Lm:str_homom_one_tail_chain_minor}~\ref{Cond:th_12} implies that $u(z)\!=\!0$ can possibly be true only if $z\inn\cK_{12}$.
		If $\cC_z$ contains one separating node $q(z)$ and no other nodes,
		there are two possibilities:
		either the core $F_z$ of $\cC_z$ is smooth, or $\cC_z$ consists of two smooth genus 1 subcurves attached at $q(z)$.
		By Lemma~\ref{Lm:str_homom_one_tail_chain_minor}~\ref{Cond:eta_12_lm} and~\ref{Cond:th_12_sep_nodal},
		we always conclude that $u(z)$ can possibly be 0 only if $z\inn\cK_{12}$.
		
		In sum, let $\cZ\!\subset\!\cV$ consist of all $z\inn\cV$ satisfying $\cC_z$ contains at least two nodes.
		The above paragraph ensures that
		$$
		u(z)\ne 0\qquad\forall~z\in\cV\bsl(\cK_{12}\cup\cZ).
		$$
		Since $\cZ$ is of codimension 2 and $\ka_{12}$ is irreducible,
		we have $\wdt{12}\eq \ti u\cdot \ka_{12}^\ell$  for some $\ti u\inn\Ga(\sO_\cV^*)$ and $\ell\inn\mathbb Z_{>0}$.
		By Lemma~\ref{Lm:str_homom_one_tail_chain_minor}~\ref{Cond:th_12_order},
		we then see $\ell\eq 1$.
		This completes the proof of Part~\ref{Cond:eta_12}.
	
	\vsp
	\ref{Cond:eta_13}.
	Taking $i\eq 1,2,3$ in~(\ref{Eqn:c_vs_wh_c}) and considering~(\ref{Eqn:e(i)}),
	we see
	\begin{align*}
		c_{s3}
		&=
		\ze_{[\de_1]}\ze_{[\de_2]}\wh c_{s3}+
		\frac{\nu_{23}\ze_{[\de_1]}}{\ze_{[\de_2,\de_3]}}\wh c_{s2}+
		\frac{\nu_{13}}{\ze_{[\de_1,\de_3]}}\wh c_{s1}
		\\
		&=
		\ze_{[\de_1]}\ze_{[\de_2]}\wh c_{s3}+
		\frac{\nu_{23}\ze_{[\de_1]}}{\ze_{[\de_2,\de_3]}}\wh c_{s2}+
		\frac{\nu_{13}}{\ze_{[\de_1,\de_3]}}c_{s1},\qquad
		s\eq 1,2.
	\end{align*}
	The last equality above holds because $\wh c_{11}\eq c_{11}$ and $\wh c_{11}\eq c_{11}$;
	c.f.~Proposition~\ref{Prp:key-tail_chain}.
	Therefore,
	$$\dt{13}= \ze_{[\de_1]}\ze_{[\de_2]}
	\wdt{13}+\frac{\nu_{23}\ze_{[\de_1]}}{\ze_{[\de_2,\de_3]}}\wdt{12}=
	\ze_{[\de_1]}\ze_{[\de_2]}
	\wdt{13}+\frac{\nu_{23}}{\ze_{[\de_2,\de_3]}}\dt{12}.$$
	The last equality above follows from Part~\ref{Cond:eta_12}.
	
	Notice that $\ze_{[\de_1]}\ka_{12}$, which equals $\dt{12}$ up to a unit by Part~\ref{Cond:eta_12}, and $\ze_{[\de_2,\de_3]}$ are coprime,
	hence $\nu_{23}/\ze_{[\de_2,\de_3]}\inn\Ga(\sO_\cV)$.
	This proves Part~\ref{Cond:eta_13}.
	
	\vsp
	\ref{Cond:eta_13_ns}.
	To see the two statements of~\ref{Cond:eta_13_ns} are equivalent,
	just observe that 
	\begin{align*}
	\rank\left[
	\begin{matrix}
		\wh c_{11} & \wh c_{12} & \wh c_{13}\\
		\wh c_{21} & \wh c_{22} & \wh c_{23}
	\end{matrix}\right](z)
	&=
	\rank\left[
	\begin{matrix}
		\wh c_{11} & \wh c_{12} & \wh c_{13}\\
		0 & \wdt{12} & \wdt{13}
	\end{matrix}\right](z)\\
	&=
	\rank\left[
	\begin{matrix}
		\wh c_{11} & \wh c_{12} & \wh c_{13}\\
		0 & \ka_{12} & \wdt{13}
	\end{matrix}\right](z).
	\end{align*}
	The last equality above follows from Part~\ref{Cond:eta_12}.
	
	When $q_1$ is not on any non-separating bridge of $C$, we conclude from Lemma~\ref{Lm:W_smoothness} that $\cK_{12}\!\cap\!\cV$ is a smooth Cartier divisor of $\cV$ (recall that $\cV$ is assumed to be sufficiently small).
	Our goal is to show $\wdt{13}(z)\!\ne\!0$ for all $z\inn\cK_{12}\!\cap\!\cV$.
	Below we fix $z\inn\cK_{12}\!\cap\!\cV$.
	
	If $\cC_z$ either is smooth, or contains exactly one node and that node is non-separating,
	we consider the evaluation homomorphism of
	$H^0\big(\sO_{\cC_z}(\cD_1(z)\!+\!\cD_2(z)\!+\!\cD_3(z)+\cA(z)\!-\!\cB(z)))$ to $\bk^2$. 
	Following an argument parallel to the proof of Lemma~\ref{Lm:str_homom_one_tail_chain_minor}, Part~\ref{Cond:eta_13_lm},
	we conclude that 
	$$\rank\left[
	\begin{matrix}
		c_{11} & c_{12} & c_{13}\\
		c_{21} & c_{22} & c_{23}
	\end{matrix}\right](z)=
	\rank\left[
	\begin{matrix}
		c_{11} & c_{12} & c_{13}\\
		0 & \dt{12} & \dt{13}
	\end{matrix}\right](z)=2.
	$$
	Since $\ka_{12}(z)\eq 0$,
	we have $\dt{12}(z)\eq 0$ and thus $\dt{13}(z)\!\ne\!0$.
	Taking Part~\ref{Cond:eta_13} into account,
	we see $\wdt{13}(z)\!\ne\!0$.
	
	If $\cC_z$ contains exactly one node $q(z)$ and that node is separating, once again
	there are two possibilities:
	either the core $F_z$ of $\cC_z$ is smooth, or $\cC_z$ consists of two smooth genus 1 subcurves attached at $q(z)$.
	In the former case,
	$\wdt{13}(z)\!\ne\!0$ because of
	Lemma~\ref{Lm:str_homom_one_tail_chain_minor}~\ref{Cond:eta_13_lm}.
	In the latter case,
	Lemma~\ref{Lm:str_homom_one_tail_chain_minor}~\ref{Cond:th_13_sep_nodal} implies $\dt{13}(z)\!\ne\!0$,
	which by Part~\ref{Cond:eta_13} further implies $\wdt{13}(z)\!\ne\!0$.
	
	In sum, let $\cZ\!\subset\!\cV$ consist of all $z\inn\cV$ such that $\cC_z$ contains at least two nodes.
	The previous two paragraphs ensure that
	$$
	\wdt{13}(z)\ne 0\qquad\forall~z\in\cK_{12}\bsl\cZ.
	$$
	Since $\ka_{12}$ is coprime to any node-smoothing parameter, 
	we see $\cZ\!\cap\!\cK_{12}$ is of codimension 2 in $\cK_{12}$,
	hence $\wdt{13}$ is invertible on $\cK_{12}$. 
	This establishes Part~\ref{Cond:eta_13_ns}.
	
	\vsp
	\ref{Cond:eta_13_ns'}.
	Assume $q_1$ is on a maximal non-separating bridge $\tn B[\fp,\fq]$. 
	By~Part~\ref{Cond:eta_12}, there exists $u\inn\Ga(\sO_\cV^*)$ such that
	$\wdt{12}\eq u\cdot\ka_{12}$.
	Since $\lr{\de_1}\eq\lr{\de_2}\eq q_1$ in this case,
	we choose a smooth point $x_{q_1}\inn\tn B[\fp,\fq]$ near $q_1$ and conclude from Part~\ref{Cond:K_nonsep_brid} of Lemma~\ref{Lm:K_smoothness} that there exist $f_2,g_2\inn \Ga(\sO_\cV^*)$ such that 
	\begin{align}
		\label{Eqn:whth12}
		\wdt{12}= f_2\ze_{[x_{q_1},\fp]}+g_2\ze_{[x_{q_1},\fq]}.
	\end{align}
	
	Below, we show the remaining statements of Part~\ref{Cond:eta_13_ns'} in the following steps.
	\begin{enumerate}[label=(${\tn R}_4$.\alph*),leftmargin=*]
		\item\label{Cond:R4-1}
		We first show~\ref{Cond:eta_13_ns'} can be reduced to the case when
		$\tn B[\fp,\fq]$
		is smooth.
		
		\item \label{Cond:R4-2}
		Under the assumption that $\tn B[\fp,\fq]$ is smooth, we then show  $\wdt{13}\eq u'\wdt{12}\!+\!v'\ze_\fp$ for some $u'\inn\Ga(\sO^*_\cV)$ and $v'\inn\Ga(\sO_\cV)$.
		
		\item \label{Cond:R4-3}
		Finally, we show $v'$ in \ref{Cond:R4-2} is invertible.
	\end{enumerate}
	The above three steps together imply \ref{Cond:eta_13_ns'}.
	
	\ref{Cond:R4-1}.
	Let $C_1\!\subset\!\tn B[\fp,\fq]$ be the irreducible component containing $q_1$. Reordering the points of $D\eq\de_1\!+\!\cdots\!\de_m$ if necessary, we assume $\de_{\ell+1},\ldots,\de_m$, for some $r\!\le\!\ell\!\le\!m$, be all the points of $D$ such that either
	they themselves or the pivotal nodes of the rational tails containing them lie on $\tn B[\fp,\fq]\bsl C_1$.
	If $m\eq\ell$, then the stability of $(C,D)$ implies $\tn B[\fp,\fq]\eq C_1$,
	so we assume $m\!>\!\ell$ below.
	
	Shrinking $\cV$ if necessary,
	we can find a small chart $\ov\cV$ of $\fM_2^{\rm div}(m\!-\!1)$,
	the connected component of $\fM_2^{\rm div}$ whose divisors are of degree $m\!-\!1$,
	such that the forgetful morphism $\varpi:\cV\!\lra\!\ov\cV$ that forgets the last marked point (corresponding to $\de_m$) is well-defined.
	With $\ov\rho\!:\ov\cC\to\ov\cV$ denoting the universal family,
	and $\ov\cD\!\subset\!\ov\cC$ denoting the tautological divisor,
	the forgetful morphism $\varpi$ leads to a contraction
	$\ti \varpi: \cC\!\lra\!\ov \cC$
	so that the diagram
	\begin{align}\label{Eqn:forgetful_diagram}
	\begin{CD}
		\cC @>{\ti\varpi}>> \ov\cC \\
		@VV{\rho}V  @VV{\ov\rho}V \\
		\cV @>{\varpi}>> \ov\cV
	\end{CD}
	\end{align}
	commutes.
	By writing $\overline{\cD}_i\eq\ti \varpi(\cD_i)$ for all $1\!\le\!i\!\le\!\ell$,
	the tautological divisor $\ov\cD\!\subset\!\ov\cC$ can then be written as $\ov\cD=\overline{\cD}_1+\cdots+\ov\cD_\ell.$
	
	Since none of $\cD_1,\ldots,\cD_r$, $\cA$, and $\cB$ meets the contracted locus of $\ti\varpi$, we have 
	$$\ti \varpi\upmo(\bcD_{i})\eq\cD_{i}\ \ \forall~1\!\le\!i\!\le\!r,\quad
	\ti \varpi\upmo(\ov\cA)\eq\cA,\quad\tn{and}\quad
	\ti \varpi\upmo(\ov\cB)\eq\cB.
	$$
	Hence
	$$
	\ti \varpi\sta \sO_{\bcC}({\bcD_{i}}\!+\!\bcA\!-\!\bcB)=\sO_{\cC}({\cD_{i}}\!+\!\cA\!-\!\cB)\ \ \forall~1\!\le\!i\!\le\!r,\quad
	\ti \varpi\sta \sO_{\bcA}(\bcA)=\sO_{\cA}(\cA).
	$$
	Therefore,
	\begin{align*}
	&
	\rho\lsta \sO_{\cC}({\cD_{i}}+\cA-\cB)=
	\rho\lsta \ti\varpi\sta\sO_{\ov\cC}(\ov{\cD}_{i}+\ov\cA-\ov\cB)=
	\varpi\sta\ov\rho\lsta \sO_{\ov\cC}(\ov{\cD}_{i}+\ov\cA-\ov\cB),\\
	&
	\rho\lsta \sO_{\cA}(\cA)=
	\rho\lsta\ti \varpi\sta \sO_{\bcA}(\bcA)=
	\varpi\sta\ov\rho\lsta \sO_{\bcA}(\bcA),
	\end{align*}
	where the last equality of each line follows from direct computation
	(for instance, see~\cite[Theorem~4.4]{SS}) since $\rho$ and $\ov\rho$ are both separated and locally proper.
	Consequently, the restriction homomorphism on the target
	$$
	 \ohmr:\bigoplus_{1\le i\le r}
	 \ov\rho_*\sO_{\ov\cC}(\ov\cD_i\!+\!\ov\cA\!-\!\ov\cB)
	 \lra \ov\rho_*\sO_{\bcA}(\bcA)
	$$
	exactly pulls back to the restriction homomorphism on the source
	$$
	\hmr:\bigoplus_{1\le i\le r}
	\rho_*\sO_{\cC}(\cD_i\!+\!\cA\!-\!\cB)
	\lra \rho_*\sO_{\cA}(\cA).
	$$
	
	Then,
	we apply the above forgetful morphism inductively so as to drop all the marked points labeled by $\ell\!+\!1,\ldots,m$ (corresponding to $\de_{\ell+1},\ldots,\de_{m}$),
	so particularly  $\tn B[\fp,\fq]\bsl C_1$ is contracted.
	Slightly abusing the notation,
	we still denote by $\ohmr$
	the resulting restriction homomorphism.
	If the statements of Part~\ref{Cond:eta_13_ns'} holds for $\ohmr$, then by applying Lemma~\ref{Lm:Forgetful_local} inductively,
	we see the statements of Part~\ref{Cond:eta_13_ns'} holds for the original $\hmr$.
	
	\ref{Cond:R4-2}.
	Assume $B[\fp,\fq]$ is smooth.
	Then, (\ref{Eqn:whth12}) can be rewritten as
	\begin{align}\label{Eqn:wdt12_ns}\wdt{12}=f_2\ze_\fp+g_2\ze_\fq.
	\end{align}
	With $\varphi_i$ as in~(\ref{varphi}),
	we consider
	$\varphi_1\!\oplus\!\varphi_3$.
	On the one hand, the determinant $\th_{13}$ remains unchanged because $c_{si}$ are determined by Proposition~\ref{HomFirstOrder} (hence independent of the choice of $S_i$'s in Lemma~\ref{sublemma});
	on the other hand,
	by applying Part~\ref{Cond:eta_12} of Proposition~\ref{keyProp1} to $\varphi_1\!\oplus\!\varphi_3$,
	we observe $\th_{13}$ should take the same form as $\th_{12}$ does in $\varphi_{\le r}$,
	so particularly it contains $\ka_{13}$ as a factor.
	Thus by Lemma~\ref{Lm:K_smoothness}, one can write
	\begin{align}\label{Eqn:th13_ns}
	\dt{13}=f'\ze_\fp+g'\ze_\fq
	\end{align}
	for some $f',g'\inn\Ga(\sO_\cV)$.
	Then, by~\ref{Cond:eta_13} and the fact that the node smoothing parameters are independent,
	we have
	$$
	\wdt{13}=f_3\ze_\fp+g_3\ze_\fq
	$$
	for some $f_3,g_3\inn\Ga(\sO_\cV)$.
	By setting $$\eta:=\det\left[\begin{matrix}
		f_3 & g_3\\ f_2 & g_2
	\end{matrix}\right],$$
	we can further write $\wdt{13}$ as 
	\begin{align}
		\label{Eqn:wdt13_ns}
		\wdt{13}=\frac{\eta}{g_2}\ze_\fp+\frac{g_3}{g_2}\wdt{12}.
	\end{align}
	
	\ref{Cond:R4-3}.
	It suffices to show~$\eta(z_0)\!\ne\!0$ because $\cV$ is assumed small.
	Consider the locus $$\cZ:=(\ze_{q_1}= 0,\,\ka_{12}= 0)\quad\subset\cV,$$
	which is smooth and contains $z_0$.
	Indeed, we claim
	$\eta|_\cZ$ is nowhere zero.
	
	To establish the above claim,
	consider the sub-locus $\cZ'\!\subset\!\cZ$ consisting of $z\inn\cZ$ such that the core $F_z$ of $\cC_z$ contains at most one node.
	Then,
	$F_z$ does not have any non-separating bridge (so w.l.o.g.~we assume $\ze_{\fp}(z)\!\ne\!0$),
	and the marked points corresponding to $\de_1,
	\ldots,\de_r$ are on a rational tail whose pivotal node (corresponding to $q_1$) is a Weierstrass point on $F_z$.
	By~\ref{Cond:eta_13_ns} and (\ref{Eqn:wdt13_ns}),
	we have $\eta(z)\!\ne\!0$.
	Therefore,
	$\eta|_\cZ$ is nowhere zero outside the codimension~2 subset $\cZ\bsl\cZ'$,
	hence is nowhere zero.
%%%%%%%%%%%%%%%%%%%%%%%%%%%%%%%%%
\end{proof}
	
We conclude this subsection with the following implications on the relations between
$\ka_{12}$, $\ka_{13}$ and $\ka_{23}$,
provided $\{\de_1,\de_2,\de_3\}$ is on a chain $R$ of tail rational subcurves of the center $z_0\eq(C,D)$ of $\cV$, ordered according to~(\ref{linearOrder}).

\begin{lemm}\label{Lm:str_homom_one_tail_chain_minor_12vs13}
	With notation as in Proposition~\ref{keyProp1}, there exist $v\inn\Ga(\sO_\cV^*)$ %(i.e.~$f_{i;j,k}$ is a unit)
	and $w\inn\Ga(\sO_\cV)$ so that
	$$
	\ka_{13}=v\cdot\ka_{12}+w,
	$$ and the following holds.
	\begin{enumerate}[label=(\alph*),leftmargin=*]
		\item \label{Part:k1213_general} 
		If $q_1$ does not lie on any non-separating bridge of the core of $C$, then there exists $u\inn\Ga(\sO^*_\cV)$ such that $w=u\cdot\ze_{[\de_2]}$.
		
		\item \label{Part:k1213_nsb}
		If $q_1$ lies on a maximal non-separating bridge $\tn B[\fp,\fq]$ of the core $F$ of $C$,
		let $x_{q_1}\inn F$ be an arbitrary smooth point  that is sufficiently close to the node~$q_1$.
		Then, there exist $f, g\inn\Ga(\sO^*_\cV)$ and $f', g'\inn\Ga(\sO_\cV)$ so that
		\begin{align*}
			\ka_{12}
			&=f\ze_{[x_{q_1},\fp]}+g\ze_{[x_{q_1},\fq]},\qquad
			w
			=f'\cdot\!\!\!\!\prod_{q\in N_{[\de_2,\fp]}}\!\!\!\!\!\ze_q
			\ +\ 
			g'\cdot\!\!\!\!\prod_{q\in N_{[\de_2,\fq]}}\!\!\!\!\!\ze_q;
		\end{align*}
		moreover,
		$\det\left[\begin{matrix}
			f & f'\\ g & g'
		\end{matrix}\right]\inn\Ga(\sO^*_\cV)$.
	\end{enumerate} 
\end{lemm}

\begin{proof}
W.l.o.g.~we assume $\ze_{[q_1,a_1]}$ divides $\ze_{[q_1,a_2]}$ evenly in this proof.
Moreover,
we assume $z_0\inn\cK_{12}$ 
and consequently $z_0\inn\cK_{13}$ (i.e.~$\ka_{12}(z_0)\eq \ka_{13}(z_0)\eq 0$),
for otherwise we may shrink $\cV$ if necessary so that $\ka_{12}$ and $\ka_{13}$ are both units on $\cV$, thus all statements of the corollary trivially hold.

We first fix trivialization $\rho\lsta\sO_{\cA_s}(\cA_s)\cong\sO_\cV$, $s\eq 1,2$.
Taking $r\eq 2$ in Proposition~\ref{keyProp1} and choosing suitable trivialization $\rho\lsta\cM(\cD_1)\oplus \rho\lsta\cM(\cD_2)\cong \sO_\cV^{\oplus 2}$ so that the first component of $\sO_\cV^{\oplus 2}$ corresponds to $\rho\lsta\cM(\cD_1)$,
we have
$$
 \varphi_{\le 2}=
 \left[
 \begin{matrix}
 	\wh c_{11}\ze_{[\de_1,a_1]} & 0\\
 	\wh c_{21}\ze_{[\de_1,a_2]} & \ka_{12}\ze_{[\de_1,a_2]}\ze_{[\de_2]}
 \end{matrix}\right]
 :\rho\lsta\cM(\cD_1)\oplus \rho\lsta\cM(\cD_2)\lra \rho\lsta\sO_{\cA}(\cA).
$$
Similarly,
by applying Proposition~\ref{keyProp1} to $\{\de_1,\de_3\}$ (i.e.~dropping $\de_2$ and all $\de_j$ with $j\!>\!3$) and choosing suitable trivialization $\rho\lsta\cM(\cD_1)\oplus \rho\lsta\cM(\cD_3)\cong \sO_\cV^{\oplus 2}$ so that the first component of $\sO_\cV^{\oplus 2}$ corresponds to $\rho\lsta\cM(\cD_1)$,
we can write the restriction of the structural homomorphism as
$$
\varphi_{1,3}=
\left[
\begin{matrix}
	\wh c_{11}\ze_{[\de_1,a_1]} & 0\\
	\wh c_{21}\ze_{[\de_1,a_2]} & \ka_{13}\ze_{[\de_1,a_2]}\ze_{[\de_3]}
\end{matrix}\right]
:\rho\lsta\cM(\cD_1)\oplus \rho\lsta\cM(\cD_3)\lra \rho\lsta\sO_{\cA}(\cA).
$$
We emphasize the coefficients $\wh c_{11}$ and $\wh c_{21}$ in $\varphi_{1,3}$ are the same as those in $\varphi_{\le 2}$.
Therefore, one can choose suitable trivialization
$\bigoplus_{i=1}^3\rho\lsta\cM(\cD_i)\cong \sO_\cV^{\oplus 3}$ and 
$\rho\lsta\sO_{\cA}(\cA)\cong\sO_\cV^{\oplus 2}$ so that
\begin{align*}
&\varphi_{\le 3}:
\bigoplus_{1\le i\le 3}\rho\lsta\cM(\cD_i)\lra \rho\lsta\sO_{\cA}(\cA),\\
&\varphi_{\le 3}=
\left[
\begin{matrix}
	\wh c_{11}\ze_{[\de_1,a_1]} & 0 & 0\\
	0 & \ka_{12}\ze_{[\de_1,a_2]}\ze_{[\de_2]} & \ka_{13}\ze_{[\de_1,a_2]}\ze_{[\de_3]}
\end{matrix}\right].
\end{align*}

Alternatively,
we can take  $r\eq 2$ in Proposition~\ref{keyProp1} and choose suitable trivialization
$\bigoplus_{i=1}^3\rho\lsta\cM(\cD_i)\cong \sO_\cV^{\oplus 3}$ and 
$\rho\lsta\sO_{\cA}(\cA)\cong\sO_\cV^{\oplus 2}$ so that
\begin{align*}
	&\varphi_{\le 3}:
	\bigoplus_{1\le i\le 3}\rho\lsta\cM(\cD_i)\lra \rho\lsta\sO_{\cA}(\cA),\\
	&\varphi_{\le 3}=
	\left[
	\begin{matrix}
		\wh c_{11}\ze_{[\de_1,a_1]} & 0 & 0\\
		0 & \ka_{12}\ze_{[\de_1,a_2]}\ze_{[\de_2]} & \wdt{13}\ze_{[\de_1,a_2]}\ze_{[\de_2]}\ze_{[\de_3]}
	\end{matrix}\right].
\end{align*}
The above two expressions of $\varphi_{\le 3}$ together imply there exist $v'\inn\Ga(\sO_\cV)$ and  $u'\inn\Ga(\sO^*_\cV)$ such that
$$
 \ka_{13}\ze_{[\de_1,a_2]}\ze_{[\de_3]}=
 v'\cdot\ka_{12}\ze_{[\de_1,a_2]}\ze_{[\de_2]}+
 u'\cdot\wdt{13}\ze_{[\de_1,a_2]}\ze_{[\de_2]}\ze_{[\de_3]}.
$$
By Lemma~\ref{Lm:K_smoothness},
$\ka_{12}$ is coprime with $\ze_q$ for any $q\inn N_{[\de_2,\de_3]}$ regardless of the position of the pivotal node $q_1$.
Taking into account that the node-smoothing parameters are independent,
we conclude that
\begin{align}\label{Eqn:ka_12_vs_13_ns}
	v:=\frac{v'}{\ze_{[\de_2,\de_3]}}\in\Ga(\sO_\cV)\qquad\tn{and}\qquad
	\ka_{13}=
	v\cdot\ka_{12}+
	u'\cdot\wdt{13}\ze_{[\de_2]}.
\end{align}

If $q_1$ does not lie on any non-separating bridge,
then Part \ref{Cond:eta_13_ns} of Proposition \ref{keyProp1} implies that $\wdt{13}$ is invertible.
Hence by writing $u\eq u'\!\cdot\!\wdt{13}\inn\Ga(\sO^*_\cV)$,
we have $\ka_{13}\eq 
v\!\cdot\!\ka_{12}\!+\!
u\!\cdot\!\ze_{[\de_2]}$.
Moreover,
Lemma~\ref{Lm:K_smoothness} implies $\ka_{12}$ and $\ka_{13}$, as well as their pullbacks under any forgetful morphism, are always smooth local parameters,
hence $v$ must be invertible.
This completes the proof of Part~\ref{Part:k1213_general}.

If $q_1$ is on a maximal non-separating bridge $\tn B[\fp,\fq]$,
then
Part \ref{Cond:eta_13_ns} of Proposition \ref{keyProp1} implies that
\begin{align*}
	&
	\ka_{12}=f\!\cdot\!\ze_{[x_{q_1},\fp]}+g\!\cdot\!\ze_{[x_{q_1},\fq]},\qquad
	\wdt{13}=f'\!\cdot\!\ze_{[x_{q_1},\fp]}+g'\!\cdot\!\ze_{[x_{q_1},\fq]},\\
	&
	\det\left[\begin{matrix}
		f & f'\\g & g'
	\end{matrix}\right]\in\Ga(\sO^*_\cV).
\end{align*}
Hence by~(\ref{Eqn:ka_12_vs_13_ns}),
we have
\begin{align*}
	&
	\ka_{13}=
	v\!\cdot\!f\!\cdot\!\ze_{[x_{q_1},\fp]}+
	v\!\cdot\!g\!\cdot\!\ze_{[x_{q_1},\fq]}+
	u'\cdot\!f'\!\cdot\!\ze_{[\de_2,\fp]}+
	u'\cdot\!g'\!\cdot\!\ze_{[\de_2,\fq]},\\
	&
	\det\left[\begin{matrix}
		f & u'\!\cdot\!f'\\g & u'\!\cdot\!g'
	\end{matrix}\right]\in\Ga(\sO^*_\cV).
\end{align*}
Finally,
by applying Lemma~\ref{Lm:K_smoothness} to $\ka_{13}$,
we see that $v$ must be invertible.
This completes the proof of Part~\ref{Part:k1213_nsb}.
\end{proof}

Lemma~\ref{Lm:str_homom_one_tail_chain_minor_12vs13} reveals the relation between $\ka_{12}$ and $\ka_{13}$,
provided $\{\de_1,\de_2,\de_3\}$ is on a chain $R$ of tail rational subcurves, ordered according to~(\ref{linearOrder}).
Indeed,
analogous relations between $\ka_{12}$ and $\ka_{23}$ and between  $\ka_{13}$ and $\ka_{23}$ also hold.
Such relations are summarized below.

Recall that for any smooth points  $\de_i$ and $\de_j$ on $R$, $\de_i\!\wedge\!\de_j$ refers to the node in~(\ref{product});
intuitively it is a node between the core and both $\de_i$ and $\de_j$, and it is the ``farthest away'' from the core among such nodes.

\begin{coro}\label{Crl:str_homom_one_tail_chain_minor_12vs23}
With notation as in Proposition~\ref{keyProp1},
let $(i,j,k)$ be an arbitrary permutation of $\{1,2,3\}$.
There then exist $v\inn\Ga(\sO_\cV^*)$ %(i.e.~$f_{i;j,k}$ is a unit)
and $g\inn\Ga(\sO_\cV)$ so that
$$
\ka_{ik}=v\cdot\ka_{ij}+w,
$$ and the following holds.
\begin{enumerate}[label=$\bullet$,leftmargin=*]
	\item 
	If $q_1$ does not lie on any non-separating bridge of the core of $C$, then there exists $u\inn\Ga(\sO^*_\cV)$ such that $w=u\cdot\ze_{[\de_j\wedge\de_k]}$.
	
	\item
	If $q_1$ lies on a maximal non-separating bridge $\tn B[\fp,\fq]$ of the core $F$ of $C$,
	let $x_{q_1}\inn F$ be an arbitrary smooth point  that is sufficiently close to the node~$q_1$.
	Then, there exist $f, g\inn\Ga(\sO^*_\cV)$ and $f',g'\inn\Ga(\sO_\cV)$ so that
	\begin{align*}
		\ka_{ij}
		&=f\cdot\prod_{q\in N_{[x_{q_1},\fp]}}\!\!\!\!\!\ze_q
		\ \ +\ g\cdot\prod_{q\in N_{[x_{q_1},\fq]}}\!\!\!\!\!\ze_q\ ,
		\\
		w
		&=f'\cdot\prod_{q\in N_{[\de_j,\fp]}\cap N_{[\de_k,\fp]}}\!\!\!\!\!\!\!\!\!\!\ze_q
		\ \ +\ 
		g'\cdot\prod_{q\in N_{[\de_j,\fq]}\cap N_{[\de_k,\fq]}}\!\!\!\!\!\!\!\!\!\!\ze_q\quad;
	\end{align*}
	moreover, 
	$\det\left[\begin{matrix}
		f & f'\\ g & g'
	\end{matrix}\right]\inn\Ga(\sO^*_\cV)$.
\end{enumerate} 
\end{coro}

\begin{proof}
Since $\ka_{ij}\eq\ka_{ji}$ for all $1\!\le\!i,j\!\le\!r$,
it suffices to show Corollary~\ref{Crl:str_homom_one_tail_chain_minor_12vs23}for the permutations $(1,2,3)$,
$(2,3,1)$ and $(3,1,2)$.
In addition,
the $(i,j,k)\eq(1,2,3)$ case is covered in Lemma~\ref{Lm:str_homom_one_tail_chain_minor_12vs13}.

If $(i,j,k)\eq(2,3,1)$ (i.e.~to find relations between $\ka_{12}$ and $\ka_{23}$),
notice that
by applying Proposition~\ref{keyProp1} to $\{\de_2,\de_3\}$ (i.e.~dropping $\de_1$ and all $\de_j$ with $j\!>\!3$) and choosing suitable trivialization $\rho\lsta\cM(\cD_2)\oplus \rho\lsta\cM(\cD_3)\cong \sO_\cV^{\oplus 2}$ so that the first component of $\sO_\cV^{\oplus 2}$ corresponds to $\rho\lsta\cM(\cD_2)$,
we can write the restriction of the structural homomorphism as
$$
\varphi_{2,3}=
\left[
\begin{matrix}
	 c_{12}\ze_{[\de_2,a_1]} & 0\\
	 c_{22}\ze_{[\de_2,a_2]} & \ka_{23}\ze_{[\de_2,a_2]}\ze_{[\de_3]}
\end{matrix}\right]
:\rho\lsta\cM(\cD_2)\oplus \rho\lsta\cM(\cD_3)\lra \rho\lsta\sO_{\cA}(\cA).
$$
The coefficients $c_{12}$ and $c_{12}$ in $\varphi_{2,3}$ are the same as in Proposition~\ref{HomFirstOrder}.	
Therefore, one can choose suitable trivialization
$\bigoplus_{i=1}^3\rho\lsta\cM(\cD_i)\cong \sO_\cV^{\oplus 3}$ and 
$\rho\lsta\sO_{\cA}(\cA)\cong\sO_\cV^{\oplus 2}$ so that
\begin{align*}
	&\varphi_{\le 3}:
	\bigoplus_{1\le i\le 3}\rho\lsta\cM(\cD_i)\lra \rho\lsta\sO_{\cA}(\cA),\\
	&\varphi_{\le 3}=
	\left[
	\begin{matrix}
		\wh c_{11}\ze_{[\de_1,a_1]} & 0 & 0\\
		0 & \ka_{12}\ze_{[\de_1,a_2]}\ze_{[\de_2]} & \ka_{23}\ze_{[\de_2,a_2]}\ze_{[\de_3]}
	\end{matrix}\right].
\end{align*}
Parallel to~(\ref{Eqn:ka_12_vs_13_ns}),
we can thus find $v\inn\Ga(\sO_\cV)$ and $u'\inn\Ga(\sO^*_\cV)$ so that
\begin{align*}
	\ka_{23}=
	v\cdot\ka_{12}+
	u'\cdot\wdt{13}\ze_{[\de_1]}.
\end{align*}
The remainder of the proof of the $(i,j,k)\eq(2,3,1)$ case is analogous to the proof of Lemma~\ref{Lm:str_homom_one_tail_chain_minor_12vs13} after~(\ref{Eqn:ka_12_vs_13_ns}),
hence is omitted.

If $(i,j,k)\eq(3,1,2)$ (i.e.~to find relations between $\ka_{13}$ and $\ka_{23}$),
we observe that it suffices to verify the statements after forgetting all the marked points except $\de_1,$ $\de_2$ and $\de_3$. (Recall locally we identify $\cV$ with a smooth chart of $\ov\cM_{2,m}$.)
Then the fact that $\ka_{ij}$'s are defined via pullbacks and Lemma~\ref{Lm:Forgetful_local} lead to the statements for the $(i,j,k)\eq(3,1,2)$ case.

To show the $(i,j,k)\eq(3,1,2)$ case after forgetting all the marked points except $\de_1,$ $\de_2$ and $\de_3$,
just notice that under the forgetful map,
either the images of $\de_1,$ $\de_2$ and $\de_3$ are on a smooth rational subcurve $C'$,
or the images of  $\de_2$ and $\de_3$ are on a smooth rational subcurve $C'$, which attaches to another smooth rational subcurve $C''$ that contains the image of $\de_1$ and attaches to the core at a pivotal node.
In either case,
however, the symmetry between $\de_2$ and $\de_3$,
as well as the statements for $\ka_{12}$ and $\ka_{23}$ that has already been established,
give rise to the relations between  $\ka_{13}$ and $\ka_{23}$.
\end{proof}

\subsection{Proof of Proposition~\ref{Prp:phi_key}}
\label{Subsec:phi_proof}

In this subsection, we complete the proof of Proposition~\ref{Prp:phi_key} using Propositions~\ref{Prp:key-tail_chain},~\ref{keyProp1} and Corollary~\ref{Crl:str_homom_one_tail_chain_minor_12vs23}.

\ref{Part:phi} is a restatement of Proposition~\ref{HomFirstOrder}.

To show \ref{Part:theta}, we divide the arguments into three cases, depending on the location of $\de_i$ and $\de_j$.

{\bf Case~2.a:} {\it $\de_i$ and $\de_j$ belong to a chain of tail rational subcurves.}
This case simply follows from replacing $\de_1$ and $\de_2$ by $\de_i$ and $\de_j$ in Proposition~\ref{keyProp1}~\ref{Cond:eta_12}.

{\bf Case~2.b:} {\it$\de_i$ and $\de_j$ belong to the same tail, but lie on different maximal chains of tail rational subcurves.}
We write the maximal chains as 
$$
R^i:=R^i_1\cup\cdots\cup R^i_{h_i}\qquad\tn{and}\qquad
R^j:=R^j_1\cup\cdots\cup R^j_{h_j},
$$ 
respectively,
whose rational subcurves are ordered in the same way as in \S\ref{Subsec:using-ss}.
Obviously, $R^i_1\eq R^j_1$.
Let
$$
 \wh\mu:=\max\{\mu\inn \mathbb Z_{>0}:\; 1\!\le\!\mu\!\le\!\min\{h_i,h_j\},~R^i_\mu\eq R^j_\mu\},
 \qquad
 q_{\wh\mu}:=q^i_{\wh\mu}=q^j_{\wh\mu},
$$
where the nodes $q^i_{\wh\mu}$ and $q^j_{\wh\mu}$ are as in~(\ref{e_q}).
It is straightforward that $q_{\wh\mu}\eq \de_i\!\wedge\!\de_j$.

Next, we apply an argument parallel to the proof of Proposition~\ref{keyProp1} \ref{Cond:eta_13_ns'}.
W.l.o.g.~we assume that 
\begin{align*}
	D\cap(R^i_{\wh\mu}\cup R^i_{\wh\mu+1}\cup\cdots\cup R^i_{h_i})
	=\de_i+\big(\de_{\ell+1}+\de_{\ell+2}+\cdots+\de_m\big)
\end{align*}
for some $\ell$ satisfying $\max\{i,j\}\!\le\!\ell\!\le\!m$.
We then forget the marked points $\de_{\ell+1},\ldots,\de_m$, take stabilization, and define the points $\big(\ov C(z),\ov D(z)\big)\inn\fM^{\rm div}_2$, the universal family $\ov\rho:\ov\cC\!\to\!\ov\cV$, the tautological divisor $\ov\cD\!\subset\!\ov\cC$, the forgetful map $\varpi:\cV\!\to\!\ov\cV$,
and the contraction $\ti\varpi:\cC\!\to\!\ov\cC$ analogously so that (\ref{Eqn:forgetful_diagram}) commutes.

In the current scenario,
notice that we still have 
$$
\varpi^{-1}(\ov\cD_k)=\cD_k\ \ \forall~1\!\le\!k\!\le\!\ell,\quad
\varpi^{-1}(\ov\cA)=\cA,\quad\tn{and}\quad
\varpi^{-1}(\ov\cB)=\cB.
$$
As in the proof of Proposition~\ref{keyProp1} \ref{Cond:eta_13_ns'},
we conclude that $\ov\varphi_{\le\ell}$ pulls back to $\varphi_{\le\ell}$.
Particularly, $\ov\th_{ij}$ pulls back to $\dt{ij}$.

Meanwhile, observe that on $\ov C(z_0)$,
the rational subcurves of the maximal chain $R^j$ are not contracted by $\ti\varpi$,
and the images $\ov\de_i$ and $\ov\de_j$ of $\de_i$ and $\de_j$ both belong this chain.
So by replacing $\varphi_{\le r}$, $\de_1$, and $\de_2$ in Proposition~\ref{keyProp1} with $\ov\varphi_{\le\ell}$, $\ov\de_i$, and $\ov\de_j$, respectively,
we have $$\ov\th_{ij}=\ov u\cdot\ze_{[\ov\de_i]}\cdot\ov\ka_{ij}
=\ov u\cdot\ze_{[q_{\wh\mu}]}\cdot\ov\ka_{ij}
\qquad\tn{with}\quad\ov u\in\Ga(\sO^*_{\ov\cV}).$$
Since $\ze_{[q_{\wh\mu}]}$ and $\ov\ka_{ij}$ pull back to $\ze_{[\de_i\wedge\de_j]}$ and $\ka_{ij}$, respectively,
Case B of~\ref{Part:theta} is thus established.

{\bf Case~2.c:} {\it either $\de_i$ and $\de_j$ belong to different tails, or at least one of them is on the core of $C$.}
In this case,
notice that $\ze_{[\de_i\wedge\de_j]}$ is set to be 1 (see~the sentence below~(\ref{Eqn:wedge})).
Also notice the statements~\ref{Cond:th_12_ns_lm}, \ref{Cond:th_12_order}, and~\ref{Cond:th_12_sep_nodal} of Lemma~\ref{Lm:str_homom_one_tail_chain_minor} remain valid by replacing $\de_1$ and $\de_j$ with $\de_i$ and $\de_j$,
whereas~\ref{Cond:eta_12_lm} can be replaced by the following:
$\dt{ij}(z_0)\eq 0$ if and only if $\de_i$ and $\de_j$ are conjugate.
Mimicking the proof of Part~\ref{Cond:eta_12} of Proposition~\ref{keyProp1},
we then observe that $\dt{ij}$ still takes the form $u\!\cdot\!\ka_{ij}$ for some $u\inn\Ga(\sO_\cV)$,
and such $u$ is still nowhere vanishing on $\cV$ except possibly on a codimension 2 subset,
hence $u$ is invertible.
This completes the proof of~\ref{Part:theta}.

To establish the statements in~\ref{Part:kappa},
first observe that if one of $\ka_{ik}$ and $\ka_{ij}$ is invertible on $\cV$,
then $\lr{\de_i}$, $\lr{\de_j}$, and $\lr{\de_k}$ cannot be on a common non-separating bridge,
so one can always choose invertible functions $v$ and $u'$ on $\cV$ so that
$$
 \ka_{ik}=v\cdot\ka_{ij}+u'\cdot\ze_{[\de_j\wedge\de_k]}
$$
holds. So hereafter we assume \begin{align}\label{Eqn:ka_assump}
	\ka_{ik}(z_0)=\ka_{ij}(z_0)= 0.
\end{align}
Moreover,
since $\ov\cK_{ik}$ and $\ov\cK_{ij}$ are defined using pullbacks via the forgetful morphisms,
it suffices to justify~\ref{Part:kappa} by forgetting all the remaining elements of $D$ (recall we consider $D$ a finite set of points).
In other words,
we assume 
$$
D=\{\de_i,\de_j,\de_k\}
$$
and treat $(C,D)$ as a point in $\ov{\cM}_{2,3}$.

Below we divide the argument into five cases.

{\bf Case 3.a:} {\it $\de_i$, $\de_j$ and $\de_k$ belong to the same tail.}
Notice that in this scenario, there exists a (unique) maximal chain of tail rational subcurves containing these three points.
This case simply follows from replacing $\de_1$, $\de_2$ and $\de_3$ by $\de_i$, $\de_j$ and $\de_k$ in Corollary~\ref{Crl:str_homom_one_tail_chain_minor_12vs23}.

{\bf Case 3.b:} {\it $\de_k$ is on the core $F$ of $C$, whereas $\de_i$ and $\de_j$ belong to the same tail.}
In this scenario,
we have 
\begin{align*}
	\de_k=\lr{\de_k}\qquad\tn{and}\qquad
	\lr{\de_i}=\lr{\de_j}\ =:q_i\,.
\end{align*}
Let $x_i\inn F$ be a smooth point that is sufficiently close to the node $q_i$.
Notice that the assumption~(\ref{Eqn:ka_assump}) only hold when $q_i$ and $\de_k$ are on a maximal non-separating bridge, denoted by $\tn B[\fp,\fq]$.
Notice that $\tn B[\fp,\fq]$ consists of either one or two (rational) subcurves.

If $\tn B[\fp,\fq]$ consists of two rational subcurves $R_\fp$ and $R_\fq$, respectively containing $\fp$ and $\fq$,
we denote by $\fn$ the node $R_\fp\!\cap\!R_\fq$.
W.l.o.g.~we assume $\de_k\inn R_\fp$ and $q_i\inn R_\fq$.
Then by Lemma~\ref{Lm:K_smoothness},
there exist $f,g,\ti f,\ti g\inn\Ga(\sO^*_\cV)$ such that
\begin{align*}
 \ka_{ij}=f\ze_\fn\ze_\fp+g\ze_\fq,\qquad
 \ka_{ik}=\ti f\ze_\fp+\ti g\ze_\fq.
\end{align*}
Therefore,
$$
 \ka_{ik}=\tfrac{\ti g}{g}\ka_{ij}+
 f'\cdot\ze_\fq,\qquad
 \tn{where}\quad
 f':=\ti f-\tfrac{f\cdot\ti g}{g}\ze_{\fn}.
$$
Since $\ti f$ is invertible, $\ze_{\fn}(z_0)\eq 0$ and $\cV$ is sufficiently small,
we see
$w\eq f'\!\cdot\!\ze_\fp+ 0\!\cdot\ze_\fq$,
$f'$ is invertible, and so is $\det\left[\begin{matrix}
	f & f'\\ g & 0
\end{matrix}\right]$.

If $\tn B[\fp,\fq]$ consists of only one rational subcurve $R_0$ (thus $\fp, \fq, q_i\inn R_0$),
there exist $f,g,\ti f,\ti g\inn\Ga(\sO^*_\cV)$ such that
\begin{align*}
	\ka_{ij}=f\ze_\fp+g\ze_\fq,\qquad
	\ka_{ik}=\ti f\ze_\fp+\ti g\ze_\fq.
\end{align*}
Therefore,
$$
\ka_{ik}=\tfrac{\ti g}{g}\ka_{ij}+
f'\cdot\ze_\fq,\qquad
\tn{where}\quad
f':=\ti f-\tfrac{f\cdot\ti g}{g}.
$$
It remains to show $f'$ is invertible.

To this end,
we mimic the last two paragraphs of the proof of Proposition~\ref{keyProp1} \ref{Cond:eta_13_ns'}.
More precisely,
if the core $F$ is inseparable,
by Lemma~\ref{Lm:K_smoothness}, $\cK_{ij}$ is still a smooth Cartier divisor of $\cV$,
so we can take a small curve $\De\!\subset\!\cK_{ij}$ through $z_0$,
meeting $(\ze_\fq\eq 0)$ (and hence $(\ze_\fp\eq 0)$ because of the expression of $\ka_{ij}$ above) transversely at $z_0$
so that the node $q_i$ is not smoothed out along $\De$.
Let $\cC_\Delta$, $\cQ_{\Delta,i}$, 
$\cD_{\De,k}$,
$\cA_\Delta$, and $\cB_\Delta$ be the analogues of those in the proof of Proposition~\ref{keyProp1} \ref{Cond:eta_13_ns'}.
Then, we obtain the evaluation homomorphism
\begin{align*}
	&\begin{CD}\rho\lsta \sO_{\cC_\Delta}(2\cQ_{\De,i}+\cD_{\De,k}+\cA_\Delta-\cB_\Delta) @>{ \Th_{i,k}|_\Delta}>>\rho\lsta \sO_{\cA_\Delta}(\cA_\Delta) \\
	\end{CD},\\
	&\tn{where}\qquad\Th_{i,k}|_\Delta=
	\left.\left[\begin{matrix}
		c_{1i} & \wh c_{1j} & c_{1k} \\  c_{2i} & \wh c_{2j} & c_{2k} \end{matrix} \right]\right|_\Delta.
\end{align*}
which after an elementary row operation can be rewritten as
$$
\left.\left[\begin{matrix}
	c_{11} & \wh c_{12} & c_{13} \\  0 &  0 & f'\ze_\fq \end{matrix} \right]\right|_\Delta.
$$
Following the argument in the penultimate paragraph of Proposition~\ref{keyProp1} \ref{Cond:eta_13_ns'},
we see $f'\ze_\fq|_\Delta$ vanishes at $z_0$
to the first order, hence $f'(z_0)\!\ne\!0$ and $f'$ is invertible on $\cV$ (which is assumed small).

If the core $F$ contains a separating node,
we consider the locus $\cZ:=(\ze_{q_i}\eq \ka_{ij}\eq 0)$,
which is smooth and contains $z_0$.
From the last paragraph we see $\eta(z)\!\ne\!0$ for all $z\inn\cZ$ satisfying the core of $\cC_z$ is inseparable.
For any $z\inn\cZ$ satisfying that the core of $\cC_z$ contains only one node and that node is separating,
$\cQ_i(z)$ is a Weierstrass point by Lemma~\ref{Lm:KW},
so $\cD_k(z)$ cannot be conjugate to $\cD_i(z)$.
Thus, $\ka_{ik}(z)\!\ne\!0$, so $\eta(z)\!\ne\!0$.
This implies $\eta$ is nowhere zero on $\cZ$ except possibly on a codimension~2 subset of $\cZ$,
hence is invertible on $\cZ$ and thus on $\cV$.
This completes the proof of Case 3.b.

{\bf Case 3.c:} {\it $\de_j$ is on the core $F$ of $C$, whereas $\de_i$ and $\de_k$ belong to the same tail.}
This case is parallel to Case 3.b due to the symmetry between $\de_j$ and $\de_k$ in the statements of~\ref{Part:kappa}.

{\bf Case 3.d:} {\it $\de_i$ is on the core $F$ of $C$, whereas $\de_j$ and $\de_k$ belong to the same tail.}
In this case, notice that
\begin{align*}
	\de_i=\lr{\de_i}\qquad\tn{and}\qquad
    \lr{\de_j}=\lr{\de_k}\ =:q_j.
\end{align*}
Let $x_j\inn F$ be a smooth point sufficiently close to the node $q_j$.

\begin{itemize}[leftmargin=*]
	\item If $\de_i$ and $q_j$ are on a maximal non-separating bridge $\tn B[\fp,\fq]$,
	then by Lemma~\ref{Lm:K_smoothness}, there exist $f,g,\ti f,\ti g\inn\Ga(\sO^*_\cV)$ such that
	\begin{align}\label{Eqn:ka_ij_ka_ik}
		\ka_{ij}= f\ze_\fp+g\ze_\fq,\qquad\ka_{ik}= \ti f\ze_\fp+\ti g\ze_\fq,
	\end{align}
	regardless of the number of irreducible components of $\tn B[\fp,\fq]$.
	Mimicking the $(i,j,k)\eq(2,3,1)$ case of the proof of Corollary~\ref{Crl:str_homom_one_tail_chain_minor_12vs23},
	we apply~\ref{Part:theta} and choose suitable trivialization
	$\rho\lsta\cM(\cD_i)\!\oplus\!\rho\lsta\cM(\cD_j)\!\oplus\!\rho\lsta\cM(\cD_k)\cong \sO_\cV^{\oplus 3}$ and 
	$\rho\lsta\sO_{\cA}(\cA)\cong\sO_\cV^{\oplus 2}$ so that the structural homomorphism 
	\begin{align*}
		&\varphi:\;
		\rho\lsta\cM(\cD_i)\oplus\rho\lsta\cM(\cD_j)\oplus\rho\lsta\cM(\cD_k)\lra \rho\lsta\sO_{\cA}(\cA),
	\end{align*}
	can be written as
	\begin{align*}
		&\varphi=
		\left[
		\begin{matrix}
			c_{1i}\ze_{[\de_i,a_1]} & 0 & 0\\
			0 & \ka_{ij}\ze_{[\de_j,a_2]} & \ka_{jk}\ze_{q_j}\ze_{[\de_k,a_2]}
		\end{matrix}\right].
	\end{align*}
	Alternatively,
	still by~\ref{Part:theta},
	we can choose other suitable trivialization so that $\varphi$ takes the form
	\begin{align*}
		\varphi=
		\left[
		\begin{matrix}
			c_{1i}\ze_{[\de_i,a_1]} & 0 & 0\\
			0 & \ka_{ij}\ze_{[\de_j,a_2]} & \ka_{ik}\ze_{[\de_k,a_2]}
		\end{matrix}\right].
	\end{align*}
	In addition, we have $\ze_{[\de_j,a_2]}\eq \ze_{[\de_k,a_2]}$.
	Consequently, there exist $\nu\inn\Ga(\sO_\cV)$ and $\la\inn\Ga(\sO^*_\cV)$ so that
	\begin{align}\label{Eqn:ka_ij_ik_jk}
		\ka_{ik}=
		\la\cdot\ze_{q_j}\ka_{jk}+\nu\cdot\ka_{ij}.
	\end{align}
	By~(\ref{Eqn:ka_ij_ka_ik}) and the fact that $\ze_{q_j}(z_0)\eq 0$,
	we see $\nu\inn\Ga(\sO^*_\cV)$.
	Moreover,
	Case 3.b above implies there exist $\la',\nu'\inn\Ga(\sO^*_\cV)$ so that
	$$
	 \ka_{jk}=\la'\ka_{ik}+\nu'\ze_\fq.
	$$
	Combining the above two equalities,
	we conclude that there exist $v,g'\inn\Ga(\sO^*_\cV)$ so that
	$$
	 \ka_{ik}=v\cdot\ka_{ij}+g'\ze_{q_j}\ze_\fq.
	$$
	
	\item 
	If $\de_i$ is not on any non-separating bridge,
	then the assumption~(\ref{Eqn:ka_assump}) implies $q_j$ is not on any non-separating bridge either,
	which further implies $\ka_{jk}(z_0)\!\ne\!0$.
	Therefore,
	(\ref{Eqn:ka_ij_ik_jk}) takes a simpler form:
	$$
	 \ka_{ik}=\la\cdot\ze_{q_j}+\nu\cdot\ka_{ij}
	$$ 
	for some $\la\inn\Ga(\sO^*_\cV)$ and $\nu\inn\Ga(\sO_\cV)$.
	The symmetry between $\de_j$ and $\de_k$ in Case 3.d also guarantees that $\nu\inn\Ga(\sO^*_\cV)$.
\end{itemize}

{\bf Case 3.e:} {\it $\de_i$, $\de_j$ and $\de_k$ are all on the core $F$ of $C$.}
In this scenario, we have
$$
\de_i=\lr{\de_i},\quad
\de_j=\lr{\de_j},\quad
\de_k=\lr{\de_k}.
$$
Once again
the assumption~(\ref{Eqn:ka_assump}) only hold when the three points are on a common maximal non-separating bridge $\tn B[\fp,\fq]$.
Depending on the number of the smooth rational subcurve(s) of $\tn B[\fp,\fq]$ and the distribution of $\de_i$, $\de_j$ and $\de_k$,
the proof of Case 3.e is analogous to, but simpler than, the proof of either Case 3.b or Case 3.d.
We omit further details.

\section{Sequential modular blowups of $\fM_2^{\rm wt}$ and  $\fdd$}\label{global}
\label{SecGlobal}

The key idea to resolve $\ov M_2(\PP^n,d)$ is to globally blow up $\fdd$  to obtain
$\widetilde \fM_2^{\rm div}$ so that
the derived object ${\bf R} \pi_*\ff^* \sO_{\Pn}(k)$ becomes locally diagonalizable 
in the sense of Definition \ref{dObject} upon pulling back to any
$$\ti \cU_{U,H}:=U\times_{f_{U,H};\,\fdd} \widetilde \fM_2^{\rm div}.$$
The fact that these $\ti \cU_{U,H}$ glue to form  $\widetilde M_2(\PP^n,d)/\ov M_2(\PP^n,d)$ is relatively straightforward,
which will be handled in \S\ref{SecLocalEqns}.
As soon as the global sequential blowups of $\fdd$
are rigorously described, the verification of the local
diagonalizability of the object ${\bf R} \pi_*\ff^* \sO_{\Pn}(k)$, or equivalently,
the diagonalizability of the structural homomorphisms $\varphi$ %of \eqref{restHom-0})
 is local.

As mentioned in the introduction, there are three rounds of sequential blowups,
denoted by $(\rd_1), (\rd_2)$, and $(\rd_3)$.
The first and third rounds each contain several phases, which are written as $(\rd_i\ph_j)$.

The first two rounds $(\rd_1)$ and $(\rd_2)$, as well as the first two phases $(\rd_3\ph_1)$ and $(\rd_3\ph_2)$ of the third round, can be introduced and performed on $\fM_2^{\rm wt}$.
After $(\rd_3\ph_2)$,
we pull them back to $\fdd$, and then describe the remaining phases of sequential blowups.

%The global sequential blowups of $\fM_2^{\rm wt}$ and $\fP_2$ are as described in
%the introduction. It consists of three rounds; within each round, there are many steps. Roughly speaking, 
\vsp
The global blowup process is honestly guided by the  desire to  make the 
local structural homomorphism 
\begin{equation}\label{phi-adhoc}
\varphi=\left[\begin{matrix}
	c_{11}\ze_{[\de_1,a_1]} & c_{12}\ze_{[\de_2,a_1]} & \cdots
	& c_{1i}\ze_{[\de_i,a_1]} & \cdots\\
	c_{21}\ze_{[\de_1,a_2]} & c_{22}\ze_{[\de_2,a_2]} & \cdots
	& c_{2i}\ze_{[\de_i,a_2]} & \cdots
\end{matrix}\right].
\end{equation}
as in Proposition \ref{Prp:phi_key}~\ref{Part:phi} diagonalizable in the sense of Definition \ref{DfnDiag}.

We focus on  one of the two rows of $\varphi$ first, although any blowup will inevitably affect the other row.
But in any case, the purpose of the first round $(\rd_1)$ of blowups is to make  one row,
say the first row,
diagonalized. 
This is parallel to the blowups in \cite{VZ08} and \cite{HL10},
although the blowup centers are somewhat more involved.
Then, the task of the remaining rounds of the sequential blowups is to make the other 
(say, the second) row
and hence the matrix~\eqref{phi-adhoc} diagonalized.

More precisely,
when $(\rd_1)$ terminates,
we denote the final stack by $\ti\fM^{\rd_1}$.
Every point of $\Mw$ has a neighborhood~$\cV$ so that the
pullback $\ti\varphi^{\rd_1}$ of  \eqref{phi-adhoc} to $\ti\cV^{\rd_1}=\cV\times_{\Mw}\ti\fM^{\rd_1}$ 
has one row that contains an element that divides all other entries of $\ti\varphi^{\rd_1}$.
However, $\ti\varphi^{\rd_1}|_{\ti\cV^{\rd_1}}$ may
{\it not} be locally diagonalizable (c.f.~Definition~\ref{DfnDiag}) due to the existence of 
\noindent (1): some distinctive directions in the exceptional divisors obtained in $(\rd_1)$ and/or
\noindent (2): the Weierstrass and conjugate points of genus 2 curves.

The second round, $(\rd_2)$, solves the former case;
the third round, $(\rd_3)$, solves the latter.
In the following subsections, we precisely describe the three rounds of sequential blowups.

\begin{rema}\label{Rmk:m}
As mentioned in \S\ref{SecIntro},
throughout this paper,
we fix an arbitrary positive integer~$m$. For any point $(C, \bw) \in \mwt$,
$m$ corresponds to the total weight of $(C, \bw)$;
for any point $(C, D) \in \fM^{\rm div}_2$,
$m=\deg D$. Further, when applying to ${\bf R}\pi_*\ff^* \sO_{\Pn}(k)$ over $\MPdd$,
$m=kd$.
\end{rema}

	In \S\ref{SecGlobal},
	we describe the global blowup centers of $\ti\fM_2^{\rm div}/\fM_2^{\rm div}$ phase by phase.
	Later in \S\ref{SecChangeOfPhi},
	we will show each step of each phase is a smooth blowup on a case-by-case basis.

\subsection{The first round $(\rd_1)$}\label{rd1}

The blowup centers in $(\rd_1)$ are the closures the following  strata
\begin{equation}\label{strata}
 \fM_{(1,k)},~
 \fM_{(2,k)},~
 \fM_{(3,k)},~
 \fM_{(4,k)},~
 \fM_{(5,k)},\qquad
 k\inn\mathbb Z_{>0},
\end{equation}
of $\mwt$.
Recall we denote by $F$ the core of a curve $C$.
For each $k$, 
\begin{itemize}[leftmargin=*]
\item
$(C,\textbf{w})\inn\fM_{(1,k)}$ 
if $F$ is a smooth genus 2 curve of weight 0 and the tails of $C$ are $k$ smooth rational curves of positive weights.

\item
$(C,\textbf{w})\inn\fM_{(2,k)}$ 
if $F$ consists of two smooth genus 1 curves of weight 0 and one smooth rational curve of positive weight, and the tails of $C$ are $(k\!-\!1)$ smooth rational curves of positive weights;

\item
$(C,\textbf{w})\inn\fM_{(3,k)}$ 
if $F$ consists of one smooth genus 1 curve of weight 0 and one smooth rational curve of positive weight, and the tails of $C$ are $(k\!-\!1)$ smooth rational curves of positive weights;

\item 
$(C,\textbf{w})\inn\fM_{(4,k)}$ 
if $F$ consists of one smooth genus 1 curve of weight 0 and one smooth genus 1 curve of positive weight, and the tails of $C$ are $(k\!-\!1)$ smooth rational curves of positive weights;

\item
$(C,\textbf{w})\inn\fM_{(5,k)}$ 
if $F$ is a smooth genus 2 curve of weight 1 and the tails of $C$ are $k$ smooth rational curves of positive weights.
\end{itemize}

In Figure~\ref{figBlowup1},
we illustrate $\fM_{(i,k)}$ with $k\!\le\!3$.
An unshaded irreducible component indicates the weight is~0.

The closure $\ov\fM_{(i,k)}$ of $\fM_{(i,k)}$ in $\fM_2^{\rm wt}$ can be described as
the images of  natural node-identifying surjective immersions from smooth domains.
Given a finite set $S$,
let $\ov\cM_{g;S}$ be the moduli space of  genus $g$ stable curves whose marked points are indexed by $S$. 
Let 
$${\fM}_{g,S}^{\tn{wt}}$$ be the Artin stack of stable pairs $(C,\textbf{w})$ of genus $g$ nodal curves $C$ whose marked points are indexed by $S$ and weights $\textbf{w}\inn H^2(C;\mathbb Z)$ satisfying  $\textbf{w}(\Si)\!\ge\!0$ for all irreducible $\Si\!\subset\!C$.
Here $(C,\textbf{w})$ is said to be \ts{stable} if any smooth rational (resp.~genus 1)  irreducible component of weight 0 contains at least three (resp.~one) nodal and/or marked points.
Particularly, $\mwt\eq\fM^{\tn{wt}}_{2,\emptyset}$. 
Each connected component $${\fM}_{g,S}^{\tn{wt}}(\ell)$$
of ${\fM}_{g,S}^{\tn{wt}}$ is determined by the total weight $\ell$ of any element.

For conciseness, throughout this article,
we write
\begin{align}
	\label{e_lrbr}
	\lrbr{k}:=[1,k]\cap\mathbb Z\qquad
	\forall~k\in\mathbb Z_{>0}.
\end{align}
With notation as above, we define
\begin{equation*}\begin{split}
 \wh{\fM}_{(1,k)}
 &:=
 \ov{\cM}_{2,k}\times
 \prod_{i=1}^k{\fM}_{0,\{i\}}^{\tn{wt}},
 \\
 \wh{\fM}_{(2,k)}
 &:=
 \bigsqcup_{\ell=1}^k\Big(
 \ov{\cM}_{1,\lrbr{\ell-1}\sqcup\{k+1\}}\times\ov\cM_{1,(\lrbr{k}\bsl\lrbr{\ell})\sqcup\{k+2\}}\times
 {\fM}_{0,\{k+1,k+2\}}^{\tn{wt}}\times
 \!\!\prod_{i\in\lrbr{k}\bsl\{\ell\}}\!\!\!\!{\fM}_{0,\{i\}}^{\tn{wt}}\Big),
 \\
 \wh{\fM}_{(3,k)}
 &:=
 \ov{\cM}_{1,k+1}
 \times
 {\fM}_{0,\{k,k+1\}}^{\tn{wt}}
 \times
 \prod_{i=1}^{k-1}{\fM}_{0,\{i\}}^{\tn{wt}},
 \\
 \wh{\fM}_{(4,k)}
 &:=
 \ov{\cM}_{1,k}
 \times
 {\fM}_{1,\{k\}}^{\tn{wt}}
 \times
 \prod_{i=1}^{k-1}{\fM}_{0,\{i\}}^{\tn{wt}},
 \\
 \wh{\fM}_{(5,k)}
 &:=
 {\fM}_{2,\lrbr{k}}^{\tn{wt}}(1)\times
 \prod_{i=1}^k{\fM}_{0,\{i\}}^{\tn{wt}}.
\end{split}
\end{equation*}
Here, each $\{i\}$ is the set with $i$ as its unique element; it does not mean $i$ unordered marked points.
For each $(i,k)\inn\lrbr{5}\!\times\!\mathbb Z_{>0}$,
there is a natural node-identifying immersion
$$
 \iota_{(i,k)}:
 \wh\fM_{(i,k)}\lra\ov\fM_{(i,k)}
 \subset\mwt
$$
obtained by identifying the marked points that share the same index.
Such $\iota_{(i,k)}$ is surjective onto $\ov\fM_{(i,k)}$ and descends to the quotient 
\begin{equation}\label{e_oviota}
 \ov\iota_{(i,k)}:\,
 \wh{\fM}_{(i,k)}/S_{(i,k)}
 \lra\ov\fM_{(i,k)}\subset\mwt,
\end{equation}
where $S_{(i,k)}$ is the symmetric group of the dual graph of $\fM_{(i,k)}$.
Notice that $\ov\iota_{(i,k)}$ is generally not an isomorphism to its image;
c.f.~\cite[Figure~3]{VZ08} with the genus~1 component replaced by a genus~2 curve, as well as the example at the end of Example~\ref{Eg:G1}.
Thus $\ov\fM_{(i,k)}$ is generally not smooth.
%Thus, $\ov\fM_{(i,k)}$ is generally singular.

Let
\begin{equation}\label{e_row}
\begin{split}
 \ov\fM_{(i)}:=
 \bigcup_{k\ge 1}\ov\fM_{(i,k)},
 \quad
 &\fM_{(i)}^{\mn}:=
 \ov\fM_{(i)}\big\bsl\big(
 \bigcup_{h=1}^{i-1}\ov\fM_{(h)}
 \big),~~
 1\!\le\!i\!\le\!4;
 \quad
 \fM^{\mn}:=
 \mwt\,\big\bsl\big(\bigcup_{i=1}^4\ov\fM_{(i)}\big).
\end{split}
\end{equation}
In particular, $\fM_{(1)}^\mn\eq\ov\fM_{(1)}$. 
Notice that $\ov\fM_{(i,k)}\!\cap\!\fM_{(j,\ell)}\eq\emptyset$	whenever $(j,\ell)\!>\!(i,k)$ (w.r.t.~the lexicographical order),
so for each $1\!\le\!i\!\le\! 4$,
we have $\fM^\mn_{(i)}\!\supset\!\bigsqcup_{k\ge 1}\fM_{(i,k)}$.
Then we obtain a partition of $\mwt$:
$$
 \mwt=\fM_{(1)}^\mn\sqcup\fM_{(2)}^\mn\sqcup \fM_{(3)}^\mn\sqcup\fM_{(4)}^\mn\sqcup \fM^\mn.
$$
Particularly, $\fM_{(5,k)}\!\subset\!\fM^{\mn}$ for all $k\!\ge\!1$.

In ($\rd_1$), we blow up 
$\mwt$ along (the proper transforms of) $\ov\fM_{(i,k)}$ with respect to the lexicographical order and obtain
$$\widetilde\fM^{\rd_1} \lra \mwt.$$
More concretely, we start with $\ti\fM^{\rd_1\ph_1\st_0}=\mwt$. Inductively, suppose
$\ti\fM^{\rd_1\ph_i\st_{k-1}}$ is constructed for some $i, k > 0$,  we then take
$$\ti\fM^{\rd_1\ph_i\st_k} \lra \ti\fM^{\rd_1\ph_i\st_{k-1}}$$
to be the blowup of $\ti\fM^{\rd_1\ph_i\st_{k-1}}$ along the proper transform of $\ov\fM_{(i,k)}$.
We call $\ti\fM^{\rd_1\ph_i\st_k}$ the resulting blowup stack in \ts{Round 1}
$(\rd_1)$, \ts{Phase} $i$ $(\ph_i)$, and \ts{Step} $k$ $(\st_k)$.
Here, to reconcile notation, 
for $i\!>\!1$, we set
$\ti\fM^{\rd_1\ph_i\st_{0}}$ to be the stack obtained when Phase $i\!-\!1$ is complete.

Recall the connected components of $\Mw$ are determined by the total weights of the curves,
thus the stability of $\Mw$ (c.f.~\S\ref{sheafStructures}) implies the 
process of sequential blowups $(\rd_1)$ terminates after finitely many steps.
 
\begin{rema}[The order of the blowups in ($\rd_1$)]\label{Rmk:r1_order}
The reason we choose the blowup centers of ($\rd_1$) to be~the proper transforms of $\ov\fM_{(i,k)}$ is based on the way the node-smoothing parameters appear in the structural homomorphism (\ref{phi-adhoc}).
As mentioned earlier in this subsection,
$\ov\fM_{(i,k)}$ are generally not smooth in $\mwt$.
The order of the blowups
are thus chosen so that in each step, 
we always have a {\it smooth} blowup,
%hence the resulting stack is still covered by affine smooth charts, 
on which the pullbacks of~(\ref{phi-adhoc}) can be systematically analyzed.

As mentioned at the beginning of \S\ref{SecGlobal}, we will prove
the smoothness of the blowups of ($\rd_1$) in \S\ref{SecChangeOfPhi}.
Here, we provide the general idea;
see Example~\ref{Eg:r1_intersection} below for a concrete example.

Within each phase of ($\rd_1$),
the order of the blowups resembles that of \cite{VZ08} and \cite{HL10}.
The order among the phases of ($\rd_1$) are based on the following observations.
\begin{itemize}[leftmargin=*]
\item 
First, observe 
\begin{align}
\label{e_M2M3}
 \ov\fM_{(2)}\!\cap\!\ov\fM_{(3)}\subset\ov\fM_{(1)},
\end{align}
hence ($\rd_1\ph_1$) should be exerted before ($\rd_1\ph_2$) and ($\rd_1\ph_3$).
When ($\rd_1\ph_1$) terminates, the proper transforms of $\ov\fM_{(2)}$ and $\ov\fM_{(3)}$ are disjoint.
Then, it will make no difference if we perform ($\rd_1\ph_3$) prior to ($\rd_1\ph_2$).

\item 
Next, notice 
\begin{align*}
	\ov\fM_{(2)}\subset\ov\fM_{(4)}.
\end{align*}
In particular, some singularities of $\ov\fM_{(4)}$ are contained in $\ov\fM_{(2)}$;
for instance, $\ov\fM_{(4,1)}$ has a normal crossing singularity at a general point of $\ov\fM_{(2,1)}$.
So ($\rd_1\ph_2$) should  be carried out prior to ($\rd_1\ph_4$).

\item 
Similarly,
($\rd_1\ph_5$) should be performed after $(\rd_1\ph_1)$.
For instance, $\ov\fM_{(5,1)}$ contains a general point of $\ov\fM_{(1,2)}$ whose tails are both of weight 1 as a normal crossing singularity.

\item 
It is worth mentioning that it is feasible to put ($\rd_1\ph_4$) before ($\rd_1\ph_3$),
which is indeed the order chosen in \cite{HN2}, although the resulting resolution of $\ov M_2(\P^n,d)$ will be different from the one constructed in this paper.
It is also possible to perform $(\rd_1\ph_5)$ before any $(\rd_1\ph_i)$, $2\!\le\!i\!\le\!4$,
and obtain a different resolution of $\ov M_2(\P^n,d)$.
\end{itemize}
\end{rema}

\begin{rema}[The reason for ($\rd_1\ph_5$)]\label{rem:r1p5}
As mentioned in the introduction,
the last phase of the first round, $(\rd_1\ph_5)$, is  solely to treat the boundary components of $\MPdd$.
As all the modular blowups are
performed on $\mwt$ and $\fdd$,   more or less, we are forced to include the loci whose general points have smooth core subcurves of weight~1,
even though the images of  the {\it primary} component of $\MPdd$, under either the morphism $\MPdd \!\lra\! \mwt$ as in (\ref{MPddToWeight0}) or the local morphisms $\MPdd\!\supset\!U \!\lra\! \fdd$ as in (\ref{toP0}) miss such loci.
%the case of  smooth cores with weight 1 is included for the purpose of uniformity of presentation.
\end{rema}

The following example suggests how various $\ov\fM_{(i,k)}$ intersect,
and how the blowups of ($\rd_1$) affect them locally.

\begin{figure}[htb]
	\begin{center}
		\begin{tikzpicture}
			\def\g1{
				(-1,0) ellipse (1 and 0.5)
				(-1.4,0)..controls(-1,0.1)..(-0.6,0)
				(-0.6,0)..controls(-1,-0.1)..(-1.4,0)
				(-0.5,0.05)--(-0.6,0)
				(-1.4,0)--(-1.5,0.05)
			}
			\def\halfg1{
				(-.32,0)..controls(-.32,-.1) and (-.4,-.2)..(-.6,-.2)
				(-.6,-.2)..controls(-.8,-.2) and (-1,-.25)..(-1,-.35)
				(-.6,-.5)..controls(-.8,-.5) and (-1,-.45)..(-1,-.35)
				(-.6,-.5)..controls(-.25,-.5) and (0,-.35)..(0,0)
				(-.32,0)..controls(-.32,.1) and (-.4,.2)..(-.6,.2)
				(-.6,.2)..controls(-.8,.2) and (-1,.25)..(-1,.35)
				(-.6,.5)..controls(-.8,.5) and (-1,.45)..(-1,.35)
				(-.6,.5)..controls(-.25,.5) and (0,.35)..(0,0)
			}
			\draw\g1;
			\draw[xshift=-1cm]
			(-1.4,0) circle (.4cm)
			;
			\draw[xshift=-2.8cm]
			\halfg1;
			\draw[xshift=-4.8cm,xscale=-1]
			\halfg1;
			
			\draw[fill=black!26] 
			(-2.4,.8) circle (.4cm)
			(-5.2,0) circle (.4cm)
			(-1,.9) circle (.4cm)
			;
			
			\draw
			(1,-.4) node[right] {{$x\eq (C,\textbf w)$}}
			(-2.05,0) node[right] {\tiny{$p$}}
			(-2.85,0) node[right] {\tiny{$q$}}
			(-3.8,.4) node[above] {\tiny{$r$}}
			(-3.8,-.4) node[below] {\tiny{$s$}}
			(-4.75,0) node[left] {\tiny{$c$}}
			(-1,.45) node[above] {\tiny{$a$}}
			(-2.4,.35) node[above] {\tiny{$b$}}
			;			
		\end{tikzpicture}
	\end{center}
	\caption{An example of the intersection of various $\ov\fM_{(i,k)}$}\label{Fig:r1_intersection}
\end{figure}

\begin{exam}\label{Eg:r1_intersection}
Let $x\eq (C,\textbf w)\inn\mwt$ be as in Figure~\ref{Fig:r1_intersection},
where the shaded components are of positive weights, and the not shaded ones are of weight 0.
It is a direct check that
\begin{align*}
	x\in 
	\ov\fM_{(1,3)}\cap 
	\ov\fM_{(2,3)}\cap 
	\ov\fM_{(3,3)}\cap 
	\ov\fM_{(4,2)}\cap 
	\ov\fM_{(4,3)}\,;
\end{align*}
see Figure~\ref{figBlowup1} for illustration of the $\ov\fM_{(i,k)}$ above.
In fact, if any of the shaded component is of weight 1, then $x\inn\ov\fM_{(5,2)}$ as well;
for simplicity, we assume no component is of weight 1 in this example.

Let $\cV\!\lra\!\mwt$ be a sufficiently small smooth chart containing $x$, and $\ze_e$, $e\eq a,b,c,p,q,r,s$, be the corresponding node-smoothing parameters.
Then,
\begin{align*}
	&
	\ov\fM_{(1,3)}\cap \cV=
	\{\ze_a\eq\ze_b\eq\ze_c\eq 0\},\qquad
	\ov\fM_{(2,3)}\cap \cV=
	\{\ze_p\eq\ze_q\eq\ze_a\eq\ze_c\eq 0\},\\
	&
	\ov\fM_{(3,3)}\cap \cV=
	\{\ze_r\eq\ze_s\eq\ze_a\eq\ze_b\eq 0\},\qquad
	\ov\fM_{(4,2)}\cap \cV=
	\{\ze_p\eq\ze_a\eq 0\}\cup 
	\{\ze_q\eq\ze_c\eq 0\},\\
	&
	\ov\fM_{(4,3)}\cap \cV=
	\{\ze_q\eq\ze_a\eq\ze_b\eq 0\}\cup 
	\{\ze_p\eq\ze_b\eq\ze_c\eq 0\};
\end{align*}
any other $\ov\fM_{(i,k)}$ does not meet $x$, hence is disjoint from $\cV$ (for $\cV$ is assumed small).
By direct computation,
we obtain
\begin{align*}
	\big(\ov\fM_{(2,3)}\!\cap\!\ov\fM_{(3,3)}\!\cap\! \cV\big)\subset 
	\big(\ov\fM_{(1,3)}\!\cap\! \cV\big),\qquad
	\big(\ov\fM_{(2,3)}\!\cap\! \cV\big)
	\subset 
	\big(\ov\fM_{(4,2)}\!\cap\!\cV\big).
\end{align*}
Moreover,
\begin{itemize}[leftmargin=*]
	\item after blowing up $\cV$ along $\ov\fM_{(1,3)}\!\cap\!\cV$, the proper transforms of $\ov\fM_{(2,3)}\!\cap\!\cV$ and $\ov\fM_{(2,3)}\!\cap\!\cV$ become disjoint;
	\item after blowing up $\cV$ along $\ov\fM_{(1,3)}\!\cap\!\cV$ then along the proper transform of $\ov\fM_{(2,3)}\!\cap\!\cV$, the proper transforms of the two irreducible components of $\ov\fM_{(4,2)}\!\cap\!\cV$ become disjoint, hence the proper transform of $\ov\fM_{(4,2)}\!\cap\!\cV$ becomes smooth;
	\item after blowing up $\cV$ along $\ov\fM_{(1,3)}\!\cap\!\cV$, the proper transforms of the two irreducible components of $\ov\fM_{(4,3)}\!\cap\!\cV$ become disjoint, hence the proper transform of $\ov\fM_{(4,3)}\!\cap\!\cV$ becomes smooth.
\end{itemize}
\end{exam}

\subsection{The second round $(\rd_2)$}\label{rd2} 

In this subsection, we describe the blowups of $(\rd_2)$,
whose blowup centers lie in the proper transforms
of the exceptional divisors obtained in $(\rd_1\ph_1)$.
Therefore, we start with analyzing these exceptional divisors.
The key observation is in any step ($\rd_1\ph_1\st_{k}$) of ($\rd_1\ph_1$),
the blowup center $\ov\fM_{(1,k)}^{\rd_1\ph_1\st_{k-1}}$,
which is the proper transform of $\ov\fM_{(1,k)}$ in $\ti\fM^{\rd_1\ph_1\st_{k-1}}$,
has its normal bundle equal to a direct sum of $k$ line bundles,
each corresponds to the smoothing of the corresponding node.

To proceed,
we use the following terminology from~\cite[Section 3.1]{VZ08}.
For an arbitrary stack $\ov\fM$,
denote by $TC\ov\fM$ its tangent cone,
which may be related to the intrinsic normal cone of Behrend-Fantechi \cite{BF}.
If $X$ is a smooth stack,
a morphism $\iota_X\!:X\!\lra\!\ov\fM$ is an \ts{immersion} if its differential
$$
\tn d\iota_X:\,
TX\lra\iota_X^*TC\ov\fM
$$
is injective at every point of $X$. Let
$$
\cN_{\iota_X}:=
\iota_X^*TC\ov\fM\big/
\tn{Im}\tn d\iota_X
$$
be the \ts{normal cone} of $\iota_X$ in $\ov\fM$. 

For $k\!\ge\!1$,
the normal cone of the immersion $\iota_{(1,k)}\!:\wh{\fM}_{(1,k)}\!\lra\!\mwt$ is the direct sum of~$k$ line bundles:
\begin{equation}
	\label{e_whLki}
	\cN_{\iota_{(1,k)}}=\bigoplus_{i=1}^{k}
	\Big(\pr_1^*\mathcal{L}_{k;i}\!\otimes\!\pr_{2;i}^*\mathfrak{L}\Big)
	:=
	\bigoplus_{i=1}^{k} \wh L_{k;i}
	\lra
	\wh\fM_{(1,k)},
\end{equation}
where $\mathcal{L}_{k;i}$ and $\mathfrak L$ are respectively the universal tangent line bundle at the marked point of $\ov{\cM}_{2,k}$ labeled by $i$ and the marked point of ${\fM}_{0,\{i\}}^{\tn{wt}}$, 
and $\pr_1\!:\wh\fM_{(1,k)}\!\to\!\ov\cM_{2,k}$ and $\pr_{2;i}\!:\wh\fM_{(1,k)}\!\to\!{\fM}_{0,\{i\}}^{\tn{wt}}$, $1\!\le\!i\!\le\!k$, are the projections onto components.

The immersion
$\ov\iota_{(i,k)}$ in~\eref{e_oviota} induced by $\iota_{(i,k)}$ is generally not an isomorphism to its image.
For $j\!<\!k$, the immersion $\iota_{(1,k)}$ induces an immersion and the corresponding quotient:
\begin{equation}\label{e_iota}
	\begin{split}
		\iota_{(1,k)}^{\rd_1\ph_1\st_j}
		&:\,
		\wh\fM_{(1,k)}^{\rd_1\ph_1\st_j}\lra
		\ov\fM_{(1,k)}^{\rd_1\ph_1\st_j}\subset\ti\fM^{\rd_1\ph_1\st_j}\,,
		\\
		\ov\iota_{(1,k)}^{\rd_1\ph_1\st_j}
		&:\,
		\wh\fM_{(1,k)}^{\rd_1\ph_1\st_j}\big/S_{(1,k)}\lra
		\ov\fM_{(1,k)}^{\rd_1\ph_1\st_j}\subset\ti\fM^{\rd_1\ph_1\st_j}\,.
	\end{split}
\end{equation}
The domain $\wh\fM_{(1,k)}^{\rd_1\ph_1\st_j}$ is constructed by applying~\cite[Lemma 3.3.(1)]{VZ08} repeatedly.
It is analogous to the smooth variety introduced in \cite[Section 2.3]{VZ08}.
To be precise, for an arbitrary nonempty finite set $S$,
let $\cM_{2,S;j}\!\subset\!\ov\cM_{2,S}$ be comprised of marked curves that have exactly $j$ special points (pivotal nodes and/or elements of $S$) on its core,
and $\ov\cM_{2,S;j}$ be its closure in $\ov\cM_{2,S}$.
We blow up $\ov\cM_{2,S}$ successively along the proper transforms of 
	$$
	\ov\cM_{2,S;1},\ldots,\ov\cM_{2,S;|S|-1}.
	$$ 
Let $\ov\cM_{2,S}^{\,\st_j}$ be the blowup of $\ov\cM_{2,S}$ after the $j$-th step.
For $S\eq\lrbr{k}$ and $1\!\le\!j\!\le\!{k\!-\!1}$,
let
\begin{equation}
	\label{e_PTfM}
	\wh{\fM}_{(1,k)}^{\rd_1\ph_1\st_j}
	:=
	\ov{\cM}_{2,k}^{\,\st_j}\times
	\prod_{i=1}^k{\fM}_{0,\{i\}}^{\tn{wt}},
\end{equation}
(which will turn out to be smooth by Lemma~\ref{LmLocalLoci}).
%It is straightforward that~\eref{e_iota} is well-defined.
For $j\eq 0$, we set $\iota_{(i,k)}^{\rd_1\ph_1\st_0}\!:=\!\iota_{(i,k)}.$

The immersion
$\ov\iota_{(1,k)}^{\,\rd_1\ph_1\st_{k-1}}$ in~\eref{e_iota} is an embedding.
This follows from~\cite[Lemma~3.3.(2)]{VZ08} repeatedly; c.f.~\cite[\S4.3 (I13)]{VZ08}.
The normal cone of
$
\iota_{(1,k)}^{\rd_1\ph_1\st_{k-1}}
$
is still a direct sum of $k$ line bundles:
\begin{equation}\label{e_nsplit}
	\cN_{\iota_{(1,k)}^{\rd_1\ph_1\st_{k-1}}}
	%=\bigoplus_{\ell=1}^k \wh L_{k;\ell}^{\,\rd_1\ph_1\st_{k-1}}
	=\bigoplus_{j=1}^k L_{k;j} 
	\lra
	\wh\fM_{(1,k)}^{\,\rd_1\ph_1\st_{k-1}}\,;
\end{equation}
c.f.~\cite[Lemma~3.5]{VZ08}.
Each line bundle $L_{k;\ell}$ above is the pullback of $\wh L_{k;\ell}$ twisted by some exceptional divisors of previous steps.
The immersion $\iota_{(1,k)}^{\rd_1\ph_1\st_{k-1}}$ induces an immersion
\begin{equation}\label{e_iotanormalcone}
	\iota_{(1,k)}^{\varepsilon}\!:
	\P\cN_{\iota_{(1,k)}^{\rd_1\ph_1\st_{k-1}}}
	\!\lra\! \cE_{(1,k)}:=\P\cN_{\ti\fM^{\,\rd_1\ph_1\st_{k-1}}}{\ov\fM_{(1,k)}^{\,\rd_1\ph_1\st_{k-1}}}\!\subset\!\ti\fM^{\,\rd_1\ph_1\st_{k}},
\end{equation}
which determines an isomorphism
\begin{equation}\label{e_oviotanormalcone}
	\ov\iota_{(1,k)}^{\,\ve}\!:
	\P\cN_{\ov\iota_{(1,k)}^{\,\rd_1\ph_1\st_{k-1}}}
	\!\lra \!\cE_{(1,k)}.
\end{equation}
Notice that $\cE_{(1,k)}$ is exactly the exceptional divisor obtained in ($\rd_1\ph_1\st_{k}$).

We are ready to describe the blowup centers of ($\rd_2$),
which lie in the proper transforms of $\cE_{(1,k)}$.
Let $m$ be as in Remark~\ref{Rmk:m}.
Consider the index set
\begin{equation*}%\label{e_Dlm}
	\begin{split}
		\fD_1
		&=
		\big\{\,
		\big(k,j,\si \big):\,
		(k,j)\inn\mathbb Z\!\times\!\mathbb Z,~
		1\!\le\!j\!\le\!k,~
		\si\!:\{2,\cdots,j\}\!\lra\!\mathbb Z\!\cap\![2,+\infty)
		\,\big\}.
	\end{split}
\end{equation*}
Here, if $j\eq 1$, then the domain of $\si$ is empty.
The subscript of $\fD_1$ indicates there is one special direction, given by $1\inn\lrbr k$, in $\cE_{(1,k)}$.

Fix $\ord\eq(k,j,\si)\inn\fD_1$.
For every $\ell\inn\mathbb Z_{>0}$,
let
\begin{align}
	\label{Eqn:Mwt0_ell_bubbles}
\fM_{0,S;\ud\ell}^{\tn{wt}}\subset\fM_{0,S}^{\tn{wt}}
\end{align}
be the closed subset whose general point consists of a smooth rational subcurve $C_0$ of weight 0 that contains all the marked points, as well as $\ell$ positively weighted unmarked smooth rational subcurves all attached to $C_0$.
Then, we set
\begin{gather}\label{e_tnMord}
	\wh\nM_\ord = 
	\ov\cM_{2,k}\times
	\fM_{0,\{1\}}^{\tn{wt}}\times
	\prod_{i=2}^j\,
	\fM_{0,\{i\};\ud{\si(i)}}^{\tn{wt}}\times
	\prod_{i=j+1}^k\!
	\fM_{0,\{i\}}^{\tn{wt}}\,.
\end{gather}
We denote by
\begin{equation}\label{e_Mord}
	\wh\fM_\ord
	\subset
	\wh\fM_{(1,k)}
\end{equation}
the image of $\wh\nM_\ord$ in $\wh\fM_{(1,k)}$
obtained by 
\begin{itemize}[leftmargin=*]
	\item
	identifying the $i$-th marked point of $\ov\cM_{2,k}$ with
	the unique marked point of $\fM_{0,\{i\};\ud{\si(i)}}^{\tn{wt}}$ for every $2\!\le\!i\!\le\!j$, and
	\item 
	identifying the $i$-th marked point of $\ov\cM_{2,k}$ with the unique marked point of $\fM_{0,\{i\}}^{\tn{wt}}$ for $i\eq 1$ and every $j\!+\!1\!\le\!i\!\le\!k$.
\end{itemize}

For  $\ord\eq(k,j,\si)$ as above,
with $\wh\fM_{(1,k)}^{\,\rd_1\ph_1\st_{k-1}}$ as in~\eref{e_iota},
we denote by
$$
\wh\fM_\ord^{\,\rd_1\ph_1\st_{k-1}}
\subset \wh\fM_{(1,k)}^{\,\rd_1\ph_1\st_{k-1}}
$$ 
the proper transform of $\wh\fM_\ord$ in $\wh\fM_{(1,k)}^{\,\rd_1\ph_1\st_{k-1}}$.
%We are now ready to define the substacks whose proper transforms will be the blowup loci of Round 2.
%For $\ord\inn\tn{Ad}(d)$, set
Set
\begin{equation} \label{e_zk}
	\begin{split}
		&\wh X_\ord:=
		\P\Big(\bigoplus_{i=1}^j L_{k;i}\Big)\Big|_{\wh\fM_\ord^{\rd_1\ph_1\st_{k-1}}}
		\subset
		\P\cN_{\iota_{(1,k)}^{\rd_1\ph_1\st_{k-1}}}\,,
		\qquad 
		X_{\ord}:=
		\iota_{(1,k)}^{\ve}\big(\wh X_\ord\big)
		%=\ov\iota_{(1,k)}^{\,\ve}\big(\wh X_\ord/S_{(1,k)}\big)
		\subset \cE_{(1,k)}\,.
	\end{split}
\end{equation}
If $j\eq 1$,
then $\wh X_\ord$ is isomorphic to $\wh\fM_\ord^{\,\rd_1\ph_1\st_{k-1}}$.

Intuitively, a general point of
$\wh X_\ord$ satisfies the following criteria:
\begin{itemize}[leftmargin=*]
	\item 
	for $j\!+\!1\!\le\!i\!\le\!k$, the $i$-th component of the fibers of $\P\cN_{\iota_{(1,k)}^{\rd_1\ph_1\st_{k-1}}}$ is identically 0;
	\item for $2\!\le\!i\!\le\!j$,
	the $i$-th tail of the underlying curve $C$ consists of a smooth rational subcurve $C_i$ of weight 0, attaching to the core of $C$, as well as $\si(i)$ positively weighted smooth rational subcurves attaching to $C_i$.
\end{itemize}

Notice that $\sum_{i=2}^j\si(i)\!\ge\!(j\!-\!1)$ for every $(k,j,\si)\inn\fD_1$.
So, for every $1\!\le\!k\!\le\!k'$,
we set
\begin{equation}\label{e_Xkk'}
	\fD_1(k,k')=\big\{\,(k,j,\si)\inn\fD_1:
	\,
	k\!-\!(j\!-\!1)\!+\!\sum_{i=2}^j\si(i)\eq k'\,\big\},\qquad
	X_{k,k'}\eq\!\!
	\bigcup_{\ord\in\fD_1(k,k')}\!\!\!\!\!\!
	X_\ord\,.
\end{equation}
(Particularly, each
$\fD_1(k,k)$ consists of a unique element $(k,1,\si\!:\emptyset\!\to\!\mathbb Z\!\cap\![2,+\infty)$.)
The proper transforms of the above $X_{k,k'}$ will be the blowup centers of $(\rd_2)$.
Some of these loci are illustrated in Figure~\ref{figBlowup2},
where for conciseness, $\cE_{(1,k)}$ is written as $\cE_k$,
%the set $S$ is omitted as the data is encoded in~$\ud{\ell}$, 
and
the proper transform $\wh\fM_\ord^{\,\rd_1\ph_1\st_{k-1}}$ is abbreviated as $\wh\fM_\ord^\bullet$.

\begin{rema}
		The reason to introduce the index sets $\fD_1$ and $\fD_1(k,k')$, particularly the map~$\si$, so as to describe blowup centers of $(\rd_2)$, solely comes from the expressions of structural homomorphisms $\varphi$.
		In Example~\ref{Eg:abcd} of \S\ref{Subsec:blowup_examples}, for instance,
		the second locus in (\ref{Eqn:S_ne_empty}) corresponds to 
		$X_{\rho}$ with $\rho\eq(2, 2, \si(2)\eq 2)$.
\end{rema}

Next, we describe the order of the sequential blowups within $(\rd_2)$.
Let the set
$$
\{\,
(k,k')\in\mathbb Z\!\times\!\mathbb Z:\,
1\!\le\!k\!\le\!k'\,\}
$$
be endowed with the following  order:
\begin{align}\label{Eqn:Ad_1_order}
	(k,k')<(\ell,\ell')\qquad\iff\qquad
	\tn{either}\quad k\!>\!\ell,\quad\tn{or}\quad k\eq\ell~\tn{and}~k'\!<\!\ell'.
\end{align}
In other words,
(\ref{Eqn:Ad_1_order}) is  inherited from the lexicographic order on $\mathbb Z^{\rm op}\!\times\!\mathbb Z$,
where $\mathbb Z^{\rm op}$ is $\mathbb Z$ with the opposite order.

We blow up 
$\widetilde\fM^{\rd_1}$ along the proper transforms of $X_{k,k'}$, $1\!\le\!k\!\le\!k'$, with respect to the  order (\ref{Eqn:Ad_1_order}).
More precisely,
for every $\ord\inn\fD_1(k,k')$,
notice that the total weight of any point of $\wh{\tn M}_\ord$ is at least $k'$,
hence on the connected component of $\ti\fM^{\rd_1}$ whose underlying weighted curves are of weight $m$,
the relevant blowup centers of $(\rd_2)$ are indexed by $1\!\le\!k\!\le\!k'\!\le\!m$.
For every $(k,k')$ with $1\!\le\!k\!\le\!k'\!\le\!m$, 
we denote by $(k,k')\!-\!1\eq(h,h')$ its immediate predecessor
with respect to (\ref{Eqn:Ad_1_order}) that still satisfies $1\!\le\!h\!\le\!h'\!\le\!m$.
The stack obtained after Step $(k,k')\!-\!1$ is written as $\widetilde\fM^{\rd_2\st_{(k,k')-1}}$.
Then, inductively,
we blow up $\widetilde\fM^{\rd_2\st_{(k,k')-1}}$ along the proper transform of $X_{k,k'}$ and obtain
$\widetilde\fM^{\rd_2\st_{(k,k')}}$.
In this way, we obtain
$$\widetilde\fM^{\rd_2} \lra \widetilde\fM^{\rd_1}.$$ 
These are the sequential blowups in $(\rd_2)$.
This procedure is illustrated in Figure~\ref{figBlowup2}.
The proper transforms of $X_{1,k'}$ are not included in Figure~\ref{figBlowup2} because $X_{1,1}$ is isomorphic to the proper transform of $\ov\fM_{(1,1)}$,
which is a Cartier divisor,
while the remaining $X_{1,k'}$ are all empty.

Since on each connected component of $\ti\fM^{\rd_1}$,
the blowups in $(\rd_2)$ is indexed by a finite set $\{(k,k'):1\!\le\!k\!\le\!k'\!\le\!m\}$,
we see $(\rd_2)$ terminates after finitely many steps.

\begin{rema}
There are two reasons for choosing the order (\ref{Eqn:Ad_1_order}).
The first is to make sure the blowup centers of $(\rd_2)$ are all smooth.
Indeed,
if we fix $k$ and consider $k'\!\ge\! k$,
then the blowup centers, i.e.~the proper transforms of $X_{k,k'}$, intersect in a way similar to those of ($\rd_1\ph_1$).
Second, the reverse order on the first index $k$
is  due to the expressions of the structural homomorphisms~$\varphi$.
In Example~\ref{Eg:abcd''} of \S\ref{Subsec:blowup_examples}, one can see why the lexicographic order inherited from $\mathbb Z\!\times\!\mathbb Z$ would fail for diagonalizing~$\varphi$.

Alternatively, it is possible to use the order inherited from $\mathbb Z^{\rm op}\!\times\!\mathbb Z^{\rm op}$ in $(\rd_2)$,
which is equivalent to the order used in~\cite{HN2}.
\end{rema}

The following example shows how $X_{k,k'}$ with various $k'$ intersect, and how ($\rd_2$) affect them locally;
also see Examples~\ref{Eg:abcd} and~\ref{Eg:abcd'} in \S\ref{Subsec:blowup_examples} for an example of $X_{2,2}\!\cap\!X_{2,3}$.
In addition, Example~\ref{Eg:abcd'} includes an example of $X_{2,3}\!\cap\!X_{3,3}$.

\begin{exam}
	\label{Eg:r2_inters}
Let $x=(C,\textbf w)$ be as in Figure~\ref{Fig:r2_inters},
where the weight of each shaded component is no less than 2, while the not shaded components are of weight 0.

\begin{figure}[htp]
	\begin{center}
		\begin{tikzpicture}
			\def\g2left{
				(0,0.8) arc (90:270:1.6 and 0.8)
				(-1.04,0.08)--(-0.88,0)
				..controls (-0.64,-0.12)..(-0.4,0)
				--(-0.24,0.08)
				(-0.88,0)..controls (-0.64,0.12)..(-0.4,0)
			}
			
			\draw \g2left
			[xscale=-1] \g2left;
			
			\draw
			(.8,1.13) circle (0.4)
			(-.8,1.13) circle (0.4)
			(-.8,1.93) circle (0.4);
			
			\draw[fill=black!42]
			(-.8,2.63) circle (0.3)
			(-1.5,1.13) circle (0.3)
			(-1.5,1.93) circle (0.3)
			(.8,1.83) circle (0.3)
			(1.5,1.13) circle (0.3)
			(-1.9,0) circle (0.3);
			\draw 
			(-.8,1.4) node {\tiny{$e$}}
			(-1.05,1.13) node {\tiny{$d$}}
			(-.8,2.2) node {\tiny{$g$}}
			(-1.05,1.93) node {\tiny{$f$}}
			(0,-.5) node {\tiny{$F$}}
			(-1.45,0) node {\tiny{$a$}}
			(-.64,.56) node {\tiny{$b$}}
			(.7,.56) node {\tiny{$c$}}
			(.8,1.38) node {\tiny{$p$}}
			(1.03,1.13) node {\tiny{$q$}};
		\end{tikzpicture}
	\end{center}
	\caption{The underlying curve $C$ in Example~\ref{Eg:r2_inters}}\label{Fig:r2_inters}
\end{figure}

Let $\cV\!\lra\!\mwt$ be a small affine chart containing $x$, and $\ze_\bullet$,
$\bullet\eq a,b,c,d,e,f,g,p,q$, be respectively the node smoothing parameters.
In ($\rd_1\ph_1$),
since $\ov\fM_{(1,k)}\!\cap\!\cV\eq\emptyset$ for $k\eq 1,2$,
the locus
\begin{align*}
	\ov\fM_{(1,3)}\!\cap\!\cV=
	\{\ze_a\eq\ze_b\eq\ze_c\eq 0\}
\end{align*}
is blown up first. After ($\rd_1\ph_1\st_3$),
consider a lift $\ti x$ of $x$ lying in the chart $\ti\cV_{\ti x}$ given by 
$$
 \ze_a=\ve_a,\qquad
 \ze_b=u_b\ve_a,\qquad
 \ze_c=\ve_a\wc\ze_c
$$
where $u_b$ is invertible, while $\wc\ze_c$, the proper transform of $\ze_c$, is not invertible.

It is direct to check the remaining blowups of $(\rd_1\ph_1)$ does not affect $\ti\cV_{\ti x}$,
and 
\begin{alignat*}{2}
	&\ti x\in 
	X_{\ord_1}\!\cap\! X_{\ord_2}\!\cap\! X_{\ord_3}\!\cap\! X_{\ord_4},\qquad
	\tn{where}&&\\
	&
	\ord_1=(3,2,\si(2)\eq 2)\in \fD_1(3,4),\qquad&&
	\ord_2=(3,2,\si(2)\eq 3)\in \fD_1(3,5),
	\\&
	\ord_3=(3,3,\si(2)\eq 2, \si(3)\eq 2)\in \fD_1(3,5),\qquad&&
	\ord_4=(3,3,\si(2)\eq 3, \si(3)\eq 2)\in \fD_1(3,6).
\end{alignat*}
So $\ti x$ lies in the intersection of $X_{3,k'}$, $k'\eq 4,5,6$;
the remaining $X_{k,k'}$ does not contain $\ti x$, hence does not meet $\ti\cV_{\ti x}$.

Locally on $\ti\cV_{\ti x}$,
we have
\begin{alignat*}{2}
	&
	\ti x\in 
	\{\ve_a\eq \wc\ze_c\eq \ze_d\eq\ze_e\eq\ze_f\eq\ze_g\eq\ze_p\eq\ze_q\eq 0\},
	&&\\
	&
	X_{\ord_1}\cap\ti\cV_{\ti x}=
	\{\ve_a\eq \wc\ze_c\eq\ze_d\eq\ze_e\eq 0\},
	\qquad&&
	X_{\ord_2}\cap\ti\cV_{\ti x}=
	\{\ve_a\eq \wc\ze_c\eq\ze_d\eq\ze_f\eq\ze_g\eq 0\},\\
	&
	X_{\ord_3}\cap\ti\cV_{\ti x}=
	\{\ve_a\eq \ze_d\eq\ze_e\eq\ze_p\eq\ze_q\eq 0\},\qquad
	&&X_{\ord_4}\cap\ti\cV_{\ti x}=
	\{\ve_a\eq \ze_d\eq\ze_f\eq\ze_g\eq\ze_p\eq\ze_q\eq 0\}.
\end{alignat*}
The blowups of $(\rd_2)$ first blow up $\ti\cV_{\ti x}$ along $X_{\ord_1}\!\cap\!\ti\cV_{\ti x}$,
then along the proper transform of 
$(X_{\ord_2}\!\cup\!X_{\ord_3})\!\cap\!\ti\cV_{\ti x}$,
and finally along the proper transform of 
$X_{\ord_4}\!\cap\!\ti\cV_{\ti x}$.
By direct computation,
we have
\begin{align*}
	\big(X_{\ord_2}\cap X_{\ord_3}\cap\ti\cV_{\ti x}\big)
	\,\subset\, \big(X_{\ord_1}\cap\ti\cV_{\ti x}\big),
\end{align*}
and after blowing up $\ti\cV_{\ti x}$ along $X_{\ord_1}\!\cap\!\ti\cV_{\ti x}$,
the proper transforms of $X_{\ord_1}\!\cap\!\ti\cV_{\ti x}$ and $X_{\ord_1}\!\cap\!\ti\cV_{\ti x}$ become disjoint from each other and are both smooth.
It is also a direct check that the proper transform of $X_{\ord_4}\!\cap\!\ti\cV_{\ti x}$ is smooth after any of the above blowups.
\end{exam}

After $(\rd_2)$ is complete, the only issue to achieve
the diagonalizability of the structural homomorphism lies on the  second row 
which involves the proper transforms of  node-smoothing  and/or exceptional parameters and 
the local parameters for the proper transforms of  Weierstrass or conjugate loci:
the latter are the proper transforms of
$\ka_{ij}$
as characterized in Propositions \ref{keyProp1} and \ref{Prp:phi_key} 
	(also see Lemmas 
	\ref{Lm:W_smoothness} and \ref{Lm:K_smoothness}).

There are four phases of sequential blowups in $(\rd_3)$,
which we will describe in the next four subsections, separately. 

\subsection{The first phase of the third round $(\rd_3\ph_1)$}\label{rd3ph1} 
%In this subsection,
%we assume the fixed integer $m$ in Remark~\ref{Rmk:m} satisfies $m\!\ge\!2$.

In ($\rd_3\ph_1$), we consider the index set
\begin{equation*}
	\begin{split}
		\fD_2
		=
		\big\{\,
		\big(k,j,\si \big):\,
		(k,j)\inn\mathbb Z\!\times\!\mathbb Z,~
		2\!\le\!j\!\le\!k,~
		\si\!:\{3,\cdots,j\}\!\lra\!\mathbb Z\!\cap\![2,+\infty)
		\,\big\}.
	\end{split}
\end{equation*}
Here, if $j\eq 2$, then the domain of $\si$ is empty.
The subscript of $\fD_2$ indicates there are two special directions, given by $1,2\inn\lrbr k$, in $\cE_{(1,k)}$.

For every $\ord\eq (k,j,\si)\inn\fD_2$,
recall
$\ov\cK_{k;1,2}$ denotes the locus in $\ov\cM_{2,k}$ whose first two marked points are conjugate.
%As in \S\ref{Subsec:WC},
By Lemma \ref{Lm:K_smoothness},
it is a Cartier divisor of $\ov\cM_{2,k}$.
Also, for every $\ell\inn\mathbb Z_{>0}$ and every finite set $S$, recall $\ov\fM_{0,S;\ud\ell}^{\tn{wt};+}$ is given in (\ref{Eqn:Mwt0_ell_bubbles}).
We set
$$
\wh{\tn{K}}_\ord
:=
\ov\cK_{k;1,2}\times
\fM_{0,\{1\}}^{\tn{wt}}\times
\fM_{0,\{2\}}^{\tn{wt}}\times
\prod_{i=3}^j\,
\fM_{0,\{i\};\ud{\si(i)}}^{\tn{wt}}\times
\prod_{i=j+1}^k\!
\fM_{0,\{i\}}^{\tn{wt}}\,,
$$
Mimicking the construction of $\wh\fM_\ord$ in (\ref{e_Mord}), we denote by
\begin{equation}\label{e_Kord}
	\wh\fK_\ord
	\subset
	\wh\fM_{(1,k)}
\end{equation}
the image of $\wh{\tn K}_\ord$ in $\wh\fM_{(1,k)}$ obtained
by  $\wh{\tn K}_\ord$ as follows.
\begin{itemize}[leftmargin=*]
	\item 
	identifying the $i$-th marked point of $\ov\cK_{k;1,2}$ with the unique marked point of $\fM_{0,\{i\};\ud{\si(i)}}^{\tn{wt}}$ for every $3\!\le\!i\!\le\!j$,
	\item 
	identifying the $i$-th marked point of $\ov\cK_{k;1,2}$ with the unique marked point of $\fM_{0,\{i\}}^{\tn{wt}}$ for $i\eq 1,2$ and every $j\!+\!1\!\le\!i\!\le\!k$.
\end{itemize}

For $\ord\eq (k,j,\si)$ as above, with  $\wh\fM_{(1,k)}^{\,\rd_1\ph_1\st_{k-1}}$ as in~\eref{e_iota}, we denote by
$$
\wh\fK_\ord^{\,\rd_1\ph_1\st_{k-1}}\,
\subset\, \wh\fM_{(1,k)}^{\,\rd_1\ph_1\st_{k-1}}
$$ 
the proper transform of $\wh\fK_\ord$ in $\wh\fM_{(1,k)}^{\,\rd_1\ph_1\st_{k-1}}$.
Let
\begin{equation*}
	\begin{split}
		&
		\wh K_\ord:=
		\P\Big(\bigoplus_{i=1}^j
		L_{k;i}\Big)\Big|_{\wh\fK_\ord^{\rd_1\ph_1\st_{k-1}}}\,
		\subset \P\cN_{\iota_{(1,k)}^{\rd_1\ph_1\st_{k-1}}},
		\qquad
		K_\ord:= 
		\iota_{(1,k)}^{\ve}\big(\wh K_\ord\big)\,
		\subset\cE_{(1,k)}.
	\end{split}
\end{equation*} 
Intuitively, a general point of
$\wh K_\ord$ satisfies the following criteria:
\begin{itemize}[leftmargin=*]
	\item for $j\!+\!1\!\le\!i\!\le\!k$, the $i$-th component of the fibers of $\P\cN_{\iota_{(1,k)}^{\rd_1\ph_1\st_{k-1}}}$ is identically 0;
	\item for $3\!\le\!i\!\le\!j$,
	the $i$-th tail of the underlying curve $C$ consists of a smooth rational subcurve $C_i$ of weight 0, attaching to the core of $C$, as well as $\si(i)$ positively weighted smooth rational subcurves attaching to $C_i$;
	\item 
	the first two tails of the underlying curve $C$ are attached to the core of $C$ at conjugate points.
\end{itemize}

Notice that $\sum_{i=3}^j\si(i)\!>\!(j\!-\!2)$ for every $(k,j,\si)\inn\fD_2$,
so for every $2\!\le\!k\!\le\!k'$,
we set
\begin{align*}
	\fD_2(k,k')=\big\{\,(k,j,\si)\inn\fD_2:
	\,
	k\!-\!(j\!-\!2)\!+\!\sum_{i=3}^j\si(i)\eq k'\,\big\},\qquad
	K_{k,k'}\eq\!\!
	\bigcup_{\ord\in\fD_2(k,k')}\!\!\!\!\!\!
	K_\ord\,.
\end{align*}
In $(\rd_3\ph_1)$,
we blow up $\ti\fM^{\rd_2}$ successively along the proper transforms of 
$K_{k,k'}$, $2\!\le\!k\!\le\!k'$, with respect to the order inherited from the {\it usual} lexicographic order on $\mathbb Z\!\times\!\mathbb Z$, 
and obtain
$$\ti\fM^{\rd_3\ph_1} \lra \ti\fM^{\rd_2}.$$
For every $\ord\inn\fD_2(k,k')$, the total weight of every point of $\wh{\tn K}_\ord$ is at least $k'$,
hence on the connected component of $\ti\fM^{\rd_2}$ whose underlying weighted curves are of weight $m$, the relevant blowup centers of $(\rd_3\ph_1)$ are indexed by $2\!\le\!k\!\le\!k'\!\le\!m$,
hence $(\rd_3\ph_1)$ terminates after finitely many steps on each connected component of $\ti\fM^{\rd_2}$.

%\begin{rema}
%	In ($\rd_3\ph_1$), it is also possible to blow up according to the order (\ref{Eqn:Ad_1_order}) instead of the usual lexicographic order, which will  eventually lead to a different resolution of $\ov M_2(\P^n,d)$.
%	We choose the usual lexicographic order here only because it is conceptually more straightforward.
%\end{rema}

\subsection{The second phase of the third round $(\rd_3\ph_2)$}\label{rd3ph2}

%Let $m$ be as in Remark~\ref{Rmk:m} and $\fD_1(k,k')$ be as in \ref{e_Xkk'}.

The blowup centers of $(\rd_3\ph_2)$ are within the proper transforms of the exceptional divisors $\cE_{k,k'}$, $1\!\le\!k\!\le\!k'$, obtained in $(\rd_2)$,
so we begin with analyzing these $\cE_{k,k'}$.

First, given $\ord\eq(k,j,\si)\inn\fD_1$,
for every $2\!\le\!i\!\le\!j$,
notice there is a natural immersion
\begin{align*}
	\ov\cM_{0,\si(i)+1}\times\prod_{h=1}^{\si(i)}\fM_{0,\{(i,h)\}}^{\tn{wt}}
	\lra \fM_{0,\{i\};}^{\tn{wt}}
\end{align*}
(where each $\{(i,h)\}$ is the set containing the pair $(i,h)$ as its unique element),
given by identifying the $h$-th marked point of $\ov\cM_{0,\si(i)+1}$ with the unique marked point of $\fM_{0,\{(i,h)\}}^{\tn{wt}}$ for all $1\!\le\!h\!\le\!\si(i)$,
and then labeling the last marked point of $\ov\cM_{0,\si(i)+1}$ by the number $i$.
Such immersions together give rise to an immersion
\begin{align}\label{Eqn:tnM_to_fM}
	\ov{\tn M}_\ord :=
	\ov\cM_{2,k}\times
	\fM_{0,\{1\}}^{\tn{wt}}\times
	\prod_{i=2}^j
	\Big(\ov\cM_{0,\si(i)+1}\!\times\!\prod_{h=1}^{\si(i)}\fM_{0,\{(i,h)\}}^{\tn{wt}}\Big)\!\times\!
	\prod_{i=j+1}^k\!\!
	\fM_{0,\{i\}}^{\tn{wt}}
	\lra \wh{\tn M}_\ord
	\lra \wh\fM_\ord,
\end{align}
where $\wh{\tn M}_\ord$ and $\wh\fM_\ord$ are respectively as in (\ref{e_tnMord}) and (\ref{e_Mord}), and the last arrow is the node-identifying by the sentence containing~(\ref{e_Mord}).
Mimicking (\ref{e_PTfM}),
we replace $\ov\cM_{(2,k)}$ with $\ov\cM_{(2,k)}^{\st_{k-1}}$ in the above construction of $\ov{\tn M}_\ord$ to obtain $\ov{\tn M}_\ord^{\rd_1\ph_1\st_{k-1}}$,
along with the immersion
\begin{align}\label{Eqn:tnM_to_fM_ord}
	\ov{\tn M}_\ord^{\rd_1\ph_1\st_{k-1}}
	\lra 
	\wh\fM_\ord^{\rd_1\ph_1\st_{k-1}}.
\end{align}
With $\wh X_\ord$ and $X_\ord$ as in (\ref{e_zk}), this gives rise to 
\begin{align}\label{Eqn:tnX_ord}
	\ov{\tn X}_\ord\stackrel{\nu_\ord}{\lra}
	\wh X_\ord\quad \Big(\stackrel{\iota_{(1,k)}^{\ve}}{\lra}
	X_\ord\Big),
\end{align}
where $\ov{\tn X}_\ord$ denotes the pullback of $\wh X_\ord$ via (\ref{Eqn:tnM_to_fM_ord}),
and $\nu_\ord$ is the natural morphism.

With notation as above, let
\begin{equation}\label{e_iotak}
	\begin{split}
		\iota_\ord:=
		\iota_{(1,k)}^{\ve}\circ\nu_\ord
		&:\,\ov{\tn X}_\ord
		\lra X_\ord
		\subset \ti\fM^{\,\rd_1\ph_1\st_{k}},
		\\
		\ov\iota_\ord:=
		\ov\iota_{(1,k)}^{\ve}\circ\nu_\ord
		&:\,\ov{\tn X}_\ord/G_\ord
		\lra X_\ord
		\subset \ti\fM^{\,\rd_1\ph_1\st_{k}},
	\end{split}
\end{equation}
where $\iota_{(1,k)}^{\ve}$ and $\ov\iota_{(1,k)}^{\ve}$ are as in~\eref{e_iotanormalcone} and~\eref{e_oviotanormalcone}, respectively, and $G_\ord$ is the symmetry group of the dual graph of a general point of $\ov{\tn M}_\ord$.
With the line bundles $L_{k;i}$ as in~\eref{e_nsplit},
for every $j\!\le\!k$,
let
$$
{\gamma}_{k,j}
\lra
\P\Big(\!\bigoplus_{i=1}^j L_{k;i}\Big)
$$
be the tautological line bundle.
By~\eref{e_nsplit} and~\eref{e_zk},
the normal cone of $
\iota_{\ord}
$ can be written explicitly as a sum of line bundles:
\begin{align}\label{Eqn:N_r2}
	\cN_{\iota_{\ord}}
	\simeq
	\nu_\ord^*\gamma_{k,j}
	\oplus
	\bigoplus_{i=2}^j
	\bigoplus_{h=1}^{\si(i)}
	\wh L_{\ord;(i,h)}
	\oplus
	\bigoplus_{
		i=j+1}^k\!\!
	\nu_\ord^*\big(
	\gamma_{k, j}^{\chk}
	\otimes
	L_{k;i}
	\big)\,,
\end{align}
where each $\wh L_{\ord;(i,h)}$ is the line bundle corresponding to the smoothing of the node labeled by $(i,h)$; these line bundles are analogous to those in~\eref{e_whLki}.
\iffalse %%%%%%%%%%%%%%%%%%
We set
$$
\wh L_{\ord;1}:=\nu_\ord^*\gamma_{k,j},\quad
\wh L_{\ord;i}:= 
\nu_\ord^*\big(
\gamma_{k, j}^{\chk}
\otimes
L_{k;i}
\big)~\ \forall\,j\!+\!1\!\le\!i\!\le\!k.
$$
Then $\cN_{\iota_\ord}\eq\bigoplus_{i\in\si(\ord)}\!\wh L_{\ord;i}.$
\fi %%%%%%%%%%%%%%%%%%%%%%

For every $k\!\le\!k'\!\le\!m$,
recall $(k,k')\!-\!1$ denotes the immediate predecessor of $(k,k')$ with respect to the order~(\ref{Eqn:Ad_1_order}).
For every $\ord\inn\fD_1(k,k')$
(see (\ref{e_Xkk'}) for notation),
we observe after $(\rd_2\st_{(k,k')-1})$,
the immersion $\iota_{\ord}$ as in (\ref{e_iotak}) induces the immersion
$$
\iota_{\ord}^{\rd_2\st_{(k,k')-1}}:\,
\ov{\tn X}_{\ord}^{\rd_2\st_{(k,k')-1}}
\lra
X_\ord^{\rd_2\st_{(k,k')-1}}
\subset\cE_{(1,k)}^{\rd_2\st_{(k,k')-1}} 
\subset\ti\fM^{\rd_2\st_{(k,k')-1}},
$$
where $\ov{\tn X}_{\ord}^{\rd_2\st_{(k,k')-1}}$ is obtained by blowing up $\ov{\tn X}_{\ord}$ successively along some smooth substacks analogously to the construction of $\wh\fM^{\rd_1\ph_1\st_j}_{(1,k)}$ in~\eref{e_PTfM}.
The key fact is that the normal cone of $\iota_{\ord}^{\rd_2\st_{(k,k')-1}}$ is still a sum of line bundles:
\begin{equation}\label{e_nstillsplit}
	\cN_{\iota_{\ord}^{\rd_2\st_{(k,k')-1}}}\!
	=
	L_{\ord;1}
	\oplus
	\bigoplus_{i=2}^j
	\bigoplus_{h=1}^{\si(i)}
	L_{\ord;(i,h)}
	\oplus
	\bigoplus_{i=j+1}^k\!\!
	L_{\ord;i}\,,
\end{equation}
%c.f.~\cite[Lemma~3.5]{VZ08}.
where each line bundle is the pullback of the corresponding line bundle in (\ref{Eqn:N_r2}) twisted by some exceptional divisors of previous steps.
Furthermore,
the immersion $\iota_{\ord}^{\rd_2\st_{(k,k')-1}}$ determines
$$
\ov\iota_{\ord}^{\rd_2\st_{(k,k')-1}}:\,
\ov{\tn X}_{\ord}^{\rd_2\st_{(k,k')-1}}/G_\ord
\lra
X_\ord^{\rd_2\st_{(k,k')-1}}
% \subset\cE_{(1,k)}^{\rd_2\st_{(k,k')-1}} 
\subset\ti\fM^{\rd_2\st_{(k,k')-1}},
$$
which is isomorphic to its image.
Thus,
the immersion
$$
\iota_{\ord}^\ve:
\P\cN_{\iota_{\ord}^{\rd_2\st_{(k,k')-1}}}
\lra
\cE_{k,k'}
$$
determines an isomorphism 
$$
\ov\iota_{\ord}^\ve:
\P\cN_{\ov\iota_{\ord}^{\rd_2\st_{(k,k')-1}}}
\lra
\cE_{k,k'}.
$$
In sum, we obtain the desired form of $\cE_{k,k'}.$

Next, we describe the blowup center of $(\rd_3\ph_2)$ within $\cE_{k,k'}$'s.
Notice every $\ord\eq(k,j,\si)\inn\fD_1(k,k')$ determines a set
\begin{align*}
	\aleph(\ord):=
	\big\{\,
	(j',\al,\be):~
	&j'\inn\{j,\cdots,k\},~
	\al:\{2,\cdots,j\}\!\lra\!\mathbb Z_{\ge 0}~\tn{s.t.}~\al\!\le\!\si,\\
	&
	\be:
	\{j\!+\!1,\cdots,j'\}\!\sqcup\!
	\{(i,h):2\!\le\!i\!\le\!j,\,1\!\le\!h\!\le\!\al(i)\}\!\lra\![2,+\infty)\!\cap\!\mathbb Z\,
	\big\}\,.
\end{align*} 
%Here, if $j'\eq j$,
%then $\{j\!+\!1,\cdots,j'\}$ is empty; similarly, if $\al(i)\eq 0$ for some $2\!\le\!i\!\le\!j$, then the set $\{(i,h):1\!\le\!h\!\le\!\al(i)\}$ is empty.
For every $\vt\eq(j',\al,\be)\inn\aleph(\ord)$ and $2\!\le\!i\!\le\!j$,
by 
\begin{itemize}[leftmargin=*]
	\item identifying the $h$-th marked point of $\ov\cM_{0,\si(i)+1}$ with the unique marked point of $\fM_{0,\{(i,h)\};\ud{\be(i,h)}}^{\tn{wt}}$ for each $1\!\le\!h\!\le\!\al(i)$,
	\item then identifying the $h$-th marked point of $\ov\cM_{0,\si(i)+1}$ with the unique marked point of $\fM_{0,\{(i,h)\}}^{\tn{wt}}$ for each $\al(i)\!+\!1\!\le\!\ell\!\le\!\si(i)$,
	\item and finally labeling the last marked point of $\ov\cM_{0,\si(i)+1}$ by the number $i$,
\end{itemize} we obtain a substack 
\begin{align*}
	\fM_{0,\{i\};\vt}^{\tn{wt}}\subset
	\fM_{0,\{i\};\ud{\si(i)}}^{\tn{wt}},\qquad
	2\!\le\!i\!\le\!j.
\end{align*}
Intuitively,
a general point of 
$\fM_{0,\{i\};\vt}^{\tn{wt}}$ has a unique smooth rational subcurve $C_0$ of weight 0 that contains the unique marked point,
$\si(i)$ smooth rational subcurves $C_1,\ldots,C_{\si(i)}$ that all attach to $C_0$,
among which the first $\al(i)$ are of weight 0 and the last $\si(i)\!-\!\al(i)$ are positively weighted,
as well as positively weighted smooth rational subcurves $C_{(h,\ell)}$, $1\!\le\!h\!\le\!\al(i)$, $1\!\le\!\ell\!\le\!\be(i,h)$,
such that for each $1\!\le\!h\!\le\!\al(i)$, $C_{(h,1)},\ldots,C_{(h,\be(i,h))}$ are attached to $C_i$.

For $\ord\eq(k,j,\si)$ and $\vt\eq (j',\al,\be)$ as above,
recall
$\ov\cW_{k;1}$ denotes the locus in $\ov\cM_{2,k}$ whose first marked point is Weierstrass, which by Lemma \ref{Lm:W_smoothness}
is a Cartier divisor of $\ov\cM_{2,k}$.
We then set
$$
\wh{\tn W}_{\ord,\vt}=
\ov\cW_{k;1}\times
\fM_{0,\{1\}}^{\tn{wt}}\times
\prod_{i=2}^j
\fM_{0,\{i\};\vt}^{\tn{wt}}\times
\prod_{i=j+1}^{j'}\!\!
\fM_{0,\{i\};\ud{\be(i)}}^{\tn{wt}}\times
\prod_{i=j'+1}^k\!\!
\fM_{0,\{i\}}^{\tn{wt}}\quad
\subset \wh{\tn M}_\ord,
$$
where $\wh{\tn M}_\ord$ is as in~\eref{e_tnMord}.
Via the immersion (\ref{Eqn:tnM_to_fM}),
we denote the preimage of $\wh{\tn W}_{\ord,\vt}$ in $\ov{\tn M}_\ord$ by $$\ov{\tn W}_{\ord,\vt}\subset\ov{\tn M}_\ord\,.$$
The proper transform of $\ov{\tn W}_{\ord,\vt}$ after $(\rd_1\ph_1\st_{k-1})$ is written as
$\ov{\tn W}_{\ord,\vt}^{\rd_1\ph_1\st_{k-1}}$.

For the above $\ord$ and $\vt$,
with $\ov{\tn X}_\ord$ as in (\ref{Eqn:tnX_ord}),
let
$$
\ov{\tn X}_{\tn w;\ord,\vt}:=\ov{\tn X}_\ord|_{\ov{\tn W}_{\ord,\vt}^{\rd_1\ph_1\st_{k-1}}};
$$
its proper transform in $\ov{\tn X}_{\ord}^{\rd_2\st_{(k,k')-1}}$ is denoted by 
$\ov{\tn X}_{\tn w;\ord,\vt}^{\rd_2\st_{(k,k')-1}}$.
With the line bundles $L_{\ord;i}$ and $L_{\ord;(i,h)}$ as in (\ref{e_nstillsplit}),
we set
\begin{equation}
	\begin{split}
		\ov{\tn W}_{\ord,\vt}
		&:=
		\P\big(
		L_{\ord;1}
		\oplus\,
		\bigoplus_{i=2}^j
		\bigoplus_{h=1}^{\al(i)}
		L_{\ord;(i,h)}
		\oplus\!
		\bigoplus_{i=j+1}^{j'}\!\!
		L_{\ord;i}\,
		\big)
		\Big|_{\ov{\tn X}_{\tn w;\ord,\vt}^{\rd_2\st_{(k,k')-1}}}
		\subset\P\cN_{\iota_{\ord}^{\rd_2\st_{(k,k')-1}}},\\
		W_{\ord,\vt}
		&:=
		\iota_\ord^\ve\big(
		\ov{\tn W}_{\ord,\vt}
		\big)=
		\ov\iota_\ord^\ve\big(
		\ov{\tn W}_{\ord,\vt}/G_\ord
		\big)
		\subset\cE_{k,k'}.
	\end{split}
\end{equation}

Notice that 
for every $\ord\eq(k,j,\si)\inn\fD_1(k,k')$ and $\vt\eq(j',\al,\be)\inn\aleph(\ord)$,
we have
\begin{align*}
	\Big(\sum_{i=2}^j\sum_{h=1}^{\al(i)}\be(i,h)\Big)
	+\Big(\sum_{i=j+1}^{j'}\!\!\be(i)\Big)
	\ge 
	\Big(\sum_{i=2}^j {\al(i)}\Big)+
	(j'-j ).
\end{align*}
Therefore,
for every $k''\!\ge\!k'$ $(\ge\!k\!\ge\!1)$,
we set
\begin{align*}
	&\fD_{\tn w}(k,k',k'')=
	\Big\{\,
	(\ord,\vt)\!:~
	\ord\inn\fD_1(k,k'),\,
	\vt\inn\aleph(\ord),\\
	&\hspace{1.66in}
	k-(j'\!-\!1)+
	\sum_{i=2}^j\Big(\si(i)\!-\!\al(i)\!+\!\sum_{h=1}^{\al(i)}\be(i,h)\Big)
	+
	\sum_{i=j+1}^{j'}\!\!\be(i)
	= k''\,\Big\},
	\\
	&
	W_{k,k',k''}=\!\!
	\bigcup_{(\ord,\vt)\in\fD_{\tn w}(k,k',k'')}\!\!\!\!\!\!\!\!\!\!\!
	W_{\ord,\vt}.
\end{align*}
The proper transforms of these $W_{k,k',k''}$ will be the blowup centers of $(\rd_3\ph_2)$.

\begin{rema}
	The reason to introduce the index set  $\fD_{\tn w}(k,k',k'')$ once again comes from the expressions of structural homomorphisms $\varphi$.
	In Example~\ref{Eg:abcd'} of \S\ref{Subsec:blowup_examples}, for instance,
	the second locus in (\ref{Eqn:S^dot_nonempty}) corresponds to 
	$W_{\ord,\vt}$ with $\big(\ord\eq(2, 1, \emptyset),\vt\eq(2,\emptyset,\be(2)\eq 2)\big)\inn
	\fD_{\tn w}(2,2,3)$
	(where $\emptyset$ refers to the empty map).
\end{rema}

In $(\rd_3\ph_2)$,
we blow up $\ti\fM^{\rd_3\ph_1}$ successively along the proper transforms of
$
W_{k,k',k''},
$
$1\!\le\!k\!\le\!k'\!\le\!k''$,
with respect to the order inherited from the {\it usual} lexicographic order on $\mathbb Z\!\times\!\mathbb Z\!\times\!\mathbb Z$, and obtain
$$\ti\fM^{\rd_3\ph_2} \lra \ti\fM^{\rd_3\ph_1}.$$
On the connected component of $\ti\fM^{\rd_3\ph_1}$ whose underlying weighted curves are of weight $m$, the relevant blowup centers of $(\rd_3\ph_2)$ are indexed by the finite set $1\!\le\!k\!\le\!k'\!\le\!k''\!\le\!m$,
hence the process of 
the sequential blowups $(\rd_3\ph_2)$ terminates after finitely many steps.

\subsection{The third phase of the third round $(\rd_3\ph_3)$}\label{rd3ph3}

The blowup loci of $(\rd_3\ph_3)$ lie in the stack~$\fdd$ over $\mwt$ 
instead of the stack $\mwt$ itself;
see \S\ref{sheafStructures} for definition.
Let
\begin{align*}
	\mathfrak H\subset \fdd
\end{align*}
be the closed substack consisting of all $(C,D)\inn\fdd$ such that $\deg D\eq 2$, and the two points of $D$ are conjugate.
Particularly, $\mathfrak H$ is a Cartier divisor of $\fdd$ by
Lemma \ref{Lm:K_smoothness}.

For  any $k\!\ge\!0$,
let~$\cH_k$ be the closed substack of $\fdd$ whose {\it general} points
are pairs $(C,D)$ such that with $F$ denoting the core of $C$, $(F,D\!\cap\!F)\inn\mathfrak H$ and $C$ has $k$ tails attached to $F$. 
After $(\rd_3\ph_2)$, let
\begin{align}\label{Eqn:r3p3_blowup_center}
	\ti\fM^{{\rm div},\rd_3\ph_2}:=\fdd\times_{\mwt}\ti\fM^{\rd_3\ph_2},\qquad
	\cH_k^{\rd_3\ph_2}:=\PT_{\ti\fM^{{\rm div},\rd_3\ph_2}}(\cH_k),\quad k\!\ge \!0,
\end{align} 
i.e.~$\cH_k^{\rd_3\ph_2}$ is the proper transform of $\cH_k$ in $\ti\fM^{{\rm div},\rd_3\ph_2}$ for every $k\!\ge\!0$.
In $(\rd_3\ph_3)$,
we blow up the stack $\ti\fM^{{\rm div},\rd_3\ph_2}$ successively along the proper transforms of 
$
\cH_k^{\rd_3\ph_2}
$
with $k\!\ge\!0$, starting from $k\eq 0$, and obtain
$$ 
\ti\fM^{{\rm div},\rd_3\ph_3} \lra \ti\fM^{{\rm div},\rd_3\ph_2}.$$
The stability requirement of $\fdd$ implies that on each connected component of $\ti\fM^{{\rm div},\rd_3\ph_2}$,
the process of 
the sequential blowups $(\rd_3\ph_3)$ terminates after finitely many steps.

\subsection{The fourth phase of the third round $(\rd_3\ph_4)$}\label{rd3ph4} 

The process of $(\rd_3\ph_4)$ is analogous to the round $(\rd_2)$,
but the blowup loci lie in the proper transforms of~$\fdd$ instead of~$\mwt$.

Notice that the main difference between $\ov\fM_{(1)}$ and $\ov\fM_{(5)}^{\rd_1\ph_4}$ in~\eref{e_row} is the weight of the core curve.
Thus, the exceptional divisors obtained in the phases $(\rd_1\ph_1)$ and~$(\rd_1\ph_5)$ share similar properties.
In particular,
we define
$$
X'_\ord\subset\cE_{(5,k)},
\qquad
\ord=(k,j,\si)\in\fD_1,
$$
to be parallel to $X_\ord$ in~\eref{e_zk}.
For each $\ord\eq(k,j,\si)\inn\fD_1$ and every $x\inn X'_\ord,$
we denote by $F_x$ the core of the underlying curve $C_x$, by $q_x$ the node labeled by~$1$, and by $p_x\eq\lr{q_x}$ the pivotal node corresponding to $q_x$.
Let 
\begin{equation*}
	\begin{split}
		\cQ_\ord\subset
		\ti \fM^{{\rm div},\rd_1\ph_5\st_{k}}
		:=
		\fdd\!\times_{\fM_2^{\wt}}\!
		\ti\fM^{\rd_1\ph_5\st_{k}} ,\qquad
		\ord=(k,j,\si)\in\fD_1,
	\end{split}
\end{equation*}
be the closed substack of $\ti \fM^{{\rm div},\rd_1\ph_5\st_{k}}$ whose {\it general} points are pairs $(x,D)$ satisfying
$$
x\in X'_\ord,\qquad
\big(\,F_x,\,(D\!\cap\!{F_x})+p_x\,\big)\in\mathfrak H,$$
where $\mathfrak H$ is as in \S\ref{rd3ph3}.

In $(\rd_3\ph_4)$,
we blow up $\ti\fM^{{\rm div},\rd_3\ph_3}$ successively along the proper transforms of \begin{equation}\label{rd3ph4-pi++}
	\cQ_{k,k'}=\!\!\bigcup_{\ord\in\fD_1(k,k')}
	\!\!\!\!\!\!\cQ_\ord\ \ \big(\subset\ti \fM^{{\rm div},\rd_1\ph_5\st_{k}}\big),
	\qquad
	1\!\le\!k\!\le\!k',
\end{equation}
with respect to the order inherited from the {\it usual} lexicographic order on $\mathbb Z\!\times\!\mathbb Z$,
and obtain
$$ \ti\fM_2^{\rm div}:=\ti\fM^{{\rm div},\rd_3\ph_4} \lra \ti\fM^{{\rm div},\rd_3\ph_3}.$$
The stability of $\fdd$ once again implies that on each connected component of $\ti\fM^{{\rm div},\rd_3\ph_3}$,
$(\rd_3\ph_4)$ terminates after finitely many steps.
%It will be shown in Corollary~\ref{CrlR3P4Smooth} that the blowup centers of $(\rd_3\ph_4)$ are all smooth.

\subsection{Examples of modular blowups}\label{Subsec:blowup_examples}
%Examples should be as simple as possible and at least include:

We conclude \S\ref{SecGlobal} with a set of examples illustrating the sequential modular blowups.

\begin{exam}\label{Eg:ab}    
	We begin with an example that appears simple enough yet still possibly involves all three rounds of the blowups.
	
	Let $z_0=(C,D)\inn\fM^{\rm div}_2$ be such that $C$ consists of a smooth core  $F$ and two smooth rational subcurves $R_a$ and $R_b$ attached to $F$ at
	$q_a$ and $q_b$, respectively,
	and $D$ satisfies
	$$
	 D\cap R_a=\{\de_1,\ldots,\de_{\ell-1}\},\quad
	 D\cap R_b=\{\de_\ell,\ldots,\de_m\},\quad
	 D\cap F=\emptyset,
	$$
	where $1\!<\!\ell\!\le\!m$.
	Particularly, $D\cap R_a$ and $D\cap R_b$ are both nonempty.	
	An illustration of $C$ is provided in Figure~\ref{Fig:ab},
	where we follow the convention of Figure~\ref{figBlowup2} to shade the irreducible components of $C$ that are positively weighted (i.e.~the intersections with $D$ are nonempty).
	The same convention applies to the remaining figures in \S\ref{Subsec:blowup_examples}.
	
	\begin{figure}[htp]
		\begin{center}
			\begin{tikzpicture}
			\def\g2left{
				(0,0.8) arc (90:270:1.6 and 0.8)
				(-1.04,0.08)--(-0.88,0)
				..controls (-0.64,-0.12)..(-0.4,0)
				--(-0.24,0.08)
				(-0.88,0)..controls (-0.64,0.12)..(-0.4,0)
			}
			
			\draw \g2left
			[xscale=-1] \g2left;
			\draw[fill=black!42]
			(-.8,1.13) circle (0.4)
			(.8,1.13) circle (0.4);
			\draw 
			(-.8,1.13) node {$R_a$}
			(.8,1.13) node {$R_b$}
			(0,-.5) node {$F$}
			(-.64,.5) node {$q_a$}
			(.7,.5) node {$q_b$};
			\end{tikzpicture}
		\end{center}
		\caption{The curve $C$ in Example~\ref{Eg:ab}}\label{Fig:ab}
	\end{figure}
	
	By Proposition~\ref{Prp:phi_key}~\ref{Part:phi},
	locally on a small affine chart $\cV\!\lra\!\fM^{\rm div}_2$ containing the point $z_0$,
	we have
	$$\varphi =\left[\begin{matrix}  
		c_{11} \zeta_a &\cdots & c_{1,\ell-1} \zeta_a &  c_{1\ell} \zeta_b &\cdots & c_{1m} \zeta_b
		\\
		c_{21} \zeta_a &\cdots & c_{2,\ell-1} \zeta_a &  c_{2\ell} \zeta_b &\cdots & c_{2m} \zeta_b
	\end{matrix} \right],
	$$
	where $\ze_a$ and $\ze_b$ are the node smoothing parameters (i.e.~modular parameters) corresponding to $q_a$ and $q_b$, respectively, and $c_{ij}$ are invertible functions on $\cV$.
	
	In $(\rd_1\ph_1)$,
	$\cV$ is blown up along the locus $(\ze_a\eq \ze_b\eq 0)$.
	After $(\rd_1\ph_1)$,
	up to switching the symbols $a$ and $b$,
	the pullback $\ti\varphi^{\rd_1\ph_1}$ of $\varphi$ locally takes the form
	$$
	\ve
	\left[\begin{matrix}  
		c_{11}  &\cdots & c_{1,\ell-1} &  c_{1\ell} \wc\zeta_b &\cdots & c_{1m} \wc\zeta_b
		\\
		c_{21} &\cdots & c_{2,\ell-1} &  c_{2\ell} \wc\zeta_b &\cdots & c_{2m} \wc\zeta_b
	\end{matrix} \right],
	$$
	where $\ve$ is the local parameter corresponding to the exceptional divisor, and $\wc\ze_b$ is the proper transform of $\ze_b$.
	In other words, up to a multiplicative unit, $\ze_a$ (resp.~$\ze_b$) pulls back to $\ve$ (resp.~$\ve\wc\ze_b$).
	
	Recall that $\ka_{ij}$, $1\!\le\!i\!<\!j\!\le\!m$, are the local parameters on $\cV$ whose zero loci consists of $z\inn\cV$ such that $\cD_i(z)$ and $\cD_j(z)$ are conjugate points on $\cC_z$.
	By Proposition~\ref{Prp:phi_key}~\ref{Part:theta},
	one can apply elementary row and column operations to the above matrix of $\ti\varphi^{\rd_1\ph_1}$ and obtain
	\begin{align*}
	\ve
	\left[\begin{matrix}  
		1 & 0 & 0 &\cdots & 0  & 0 &  0 &\cdots & 0
		\\
		0 & \wc\ka_{12}\!\cdot\!\ve & \wc\ka_{13}\!\cdot\!\ve & \cdots & \wc\ka_{1,\ell-1}\!\cdot\!\ve &  \wc\ka_{1\ell}\!\cdot\! \wc\zeta_b & \wc\ka_{\ell,\ell+1}\!\cdot\!\ve\!\cdot\!\wc\zeta_b^2 &\cdots & \wc\ka_{\ell m}\!\cdot\!\ve\!\cdot\!\wc\zeta_b^2
	\end{matrix} \right],
	\end{align*}
	where $\wc\ka_{ij}$ refers to the proper transform of $\ka_{ij}$,
	which is equivalent to its pullback in this case.
	Notice that depending on the value of $\ell$ and $m$,
	some columns above may not exist.
	
	Below, we analyze $\ti\varphi^{\rd_1\ph_1}$ depending on the location of $q_a$ and $q_b$ on the core $F$ of $C$.
	
	\vsp
	
	{\it Case A: $q_a$ is in general position.} This means $q_a$ is neither Weierstrass nor conjugate to $q_b$. 
	Hence $\ka_{1i}$ with $2\!\le\!i\!\le\!\ell$ are all invertible and so are their pullbacks.
	Therefore, the matrix of $\ti\varphi^{\rd_1\ph_1}$, after further elementary column operations, can be rewritten as 
	$$
	\ve
	\left[\begin{matrix}  
		1 & 0 & 0 &\cdots & 0  & 0 &  0 &\cdots & 0
		\\
		0 & \ve & 0 & \cdots & 0 &  \wc\zeta_b & 0 &\cdots & 0
	\end{matrix} \right].
	$$
	
	If $\ell\eq 2$ (i.e.~$|D\!\cap\!R_a|\eq 1$), then the second column $[0,\ve]^T$ does not exist and the matrix is already diagonalized.
%	If $\wc\ze_b$ is invertible, the matrix is also diagonalized.
	
	If $\ell\!\ge\!3$ (i.e.~$|D\!\cap\!R_a|\!\ge\! 2$),
	we observe that the remaining phases of $(\rd_1)$ does not affect this chart,
	and the blowups in $(\rd_2)$ that may affect this chart has the center $X_{2,2}$ (c.f.~Figure~\ref{figBlowup2}), which locally is given by	$(\ve\eq \wc\ze_b\eq 0)$.
	Therefore, after $(\rd_2)$, the pullback $\ti\varphi^{\rd_2}$ of $\varphi$ becomes diagonalized.
	
	\vsp 
	
	{\it Case B: $q_a$ and $q_b$ are a pair of conjugate points on the core $F$.}
	Since $F$ is smooth, neither $q_a$ nor $q_b$ is a Weierstrass point of $F$.
	Lemma~\ref{Lm:KW} then implies $\ka_{1i}$ with $2\!\le\!i\!\le\!\ell\!-\!1$ and $\ka_{\ell k}$ with $\ell\!+\!1\!\le\!k\!\le\!m$ are all invertible and so are their proper transforms.
	Therefore, the matrix of $\ti\varphi^{\rd_1\ph_1}$, after further elementary column operations, can be rewritten as 
	$$
	\ve
	\left[\begin{matrix}  
		1 & 0 & 0 &\cdots & 0  & 0 & 0 & 0 &\cdots & 0
		\\
		0 & \ve & 0  & \cdots & 0 &  \wc\ka_{1\ell}\wc\zeta_b & \ve\wc\ze_b^2 & 0 &\cdots & 0
	\end{matrix} \right].
	$$
	
	If $\ell\eq m\eq 2$ (i.e.~$|D\!\cap\!R_a|\eq|D\!\cap\!R_b|\eq 1$),
	then only the first and the $\ell$-th columns exist, and the matrix takes the form
	$$
	\left[\begin{matrix}  
		1 & 0
		\\
		0 &  \wc\ka_{1\ell} \wc\ze_b
	\end{matrix} \right],
	$$ 
	which is already diagonalized.
	
	If $2\eq \ell\!<\! m$ (i.e.~$1\eq|D\!\cap\!R_a|\!<\!|D\!\cap\!R_b|$),
	then the second column $[0,\ve]^T$ does not exist and the matrix takes the form
	$$
	\left[\begin{matrix}
		\ve & 0\\
		0 & \ve\wc\ze_b
	\end{matrix}\right]
	\left[\begin{matrix}  
		1 & 0 &  0& 0 &\cdots & 0
		\\
		0 &  \wc\ka_{1\ell} & \ve\wc\ze_b & 0 &\cdots & 0
	\end{matrix} \right].
	$$
	The blowups in $(\rd_1\ph_2)$-$(\rd_1\ph_4)$ do not affect the chart.
	In $(\rd_1\ph_5)$, the locus $(\wc\ze_b\eq 0)$ is blown up, which does not yield anything new because the blowup center is a smooth Cartier divisor.
	The blowups in $(\rd_2)$, $(\rd_3\ph_2)$ and $(\rd_3\ph_3)$ also do not affect the chart either,
	whereas in $(\rd_3\ph_1)$ and $(\rd_3\ph_4)$,
	the blowup centers are locally given by
	$$(\wc\ka_{1\ell}=\ve= 0)\qquad\tn{and}\qquad
	(\wc\ka_{1\ell}=\wc\ze_b= 0),
	$$
	respectively.
	It is easy to check that after $(\rd_3\ph_4)$,
	the pullback $\ti\varphi^{\rd_3\ph_4}$ of $\varphi$ becomes diagonalized.
		
	If $\ell\!\ge\!3$ (i.e.~$|D\!\cap\!R_a|\!\ge\!2$),
	then the second column $[0,\ve]^T$ exists and the matrix of $\varphi^{\rd_1\ph_1}$, after another column addition, takes the form
	$$
	\ve
	\left[\begin{matrix}  
		1 & 0 & 0 &\cdots & 0  & 0 & 0 &\cdots & 0
		\\
		0 & \ve & 0  & \cdots & 0 &  \wc\ka_{1\ell}\wc\zeta_b & 0 &\cdots & 0
	\end{matrix} \right].
	$$
	The blowups in $(\rd_1\ph_2)$-$(\rd_1\ph_5)$ and $(\rd_3\ph_2)$-$(\rd_3\ph_4)$ do not affect (the pullback of) the chart.
	In $(\rd_2)$ and $(\rd_3\ph_1)$,
	the blowup centers are locally given by
	$$(\ve= \wc\ze_b=0)\qquad\tn{and}\qquad
	\tn{PT}(\ve=\wc\ka_{1\ell}= 0),
	$$
	respectively,
	where $\tn{PT}$ refers to the proper transform.
	So after $(\rd_3\ph_1)$,
	the pullback $\ti\varphi^{\rd_3\ph_1}$ of $\varphi$ becomes diagonalized.
	
	\vsp 
	
	{\it Case C: $q_a$ is a Weierstrass point on the core $F$.}
	Since $F$ is smooth, $q_a$ and $q_b$ cannot be conjugate.
	Lemma~\ref{Lm:KW} then implies $\ka_{1\ell}$ and $\ka_{\ell k}$ with $\ell\!+\!1\!\le\!k\!\le\!m$ are all invertible and so are their proper transforms.
	Taking Proposition~\ref{Prp:phi_key} \ref{Part:kappa} into consideration, we see that the matrix of $\ti\varphi^{\rd_1\ph_1}$, after further elementary column operations, can be rewritten as 
	$$
	\ve
	\left[\begin{matrix}  
		1 & 0 & 0 & 0 &\cdots & 0  & 0 & 0 &\cdots & 0
		\\
		0 & \wc\ka_{12}\ve & \ve^2 & 0  & \cdots & 0 &  \wc\zeta_b & 0 &\cdots & 0
	\end{matrix} \right].
	$$
	
	If $\ell\eq 2$ (i.e.~$|D\!\cap\!R_a|\eq 1$), then the second and third columns $[0,\wc\ka_{12}\ve]^T$ and $[0,\ve^2]^T$ do not exist, so the matrix is already diagonalized.
	
	If $\ell\!\ge\! 3$ (i.e.~$|D\!\cap\!R_a|\!\ge\! 2$),
	notice that the blowups in $(\rd_1\ph_2)$-$(\rd_1\ph_5)$ and $(\rd_3\ph_3)$-$(\rd_3\ph_4)$ do not affect (the pullback of) the chart. 
	The blowups in $(\rd_2)$ may affect the chart only when $\wc\ze_b$ is not invertible (otherwise the matrix is already diagonalized),
	in which case the relevant blowup center is
	$(\ve\eq \wc\ze_b\eq 0)$.
	After $(\rd_2)$,
	on the new chart given by $\wc\ze_b\eq\ve'$ and $\ve\eq\ve'\wc\ve$ (where $\ve'\eq 0$ defines the exceptional divisor obtained in $(\rd_2)$),
	the matrix becomes diagonalized.
	On the new chart given by $\ve\eq\ve'$ and $\wc\ze_b\eq\ve'\wc\ze_b'$,
	the matrix can be written as
	$$
	\left[\begin{matrix}
		\ve' & 0\\
		0 & (\ve')^2
	\end{matrix}\right]
	\left[\begin{matrix}  
		1 & 0 & 0 & 0 &\cdots & 0  & 0 & 0 &\cdots & 0
		\\
		0 & \wc\ka_{12} & \ve' & 0  & \cdots & 0 &  \wc\zeta_b' & 0 &\cdots & 0
	\end{matrix} \right].
	$$
	We remark that the third column $[0,\ve']^T$ of the right matrix above does not exist if $\ell\eq 3$ (i.e.~$|D\!\cap\!R_a|\!=\! 2$).
		
	The blowups in $(\rd_3\ph_1)$ do not affect this new chart locally.
	In $(\rd_3\ph_2)$ (and $(\rd_3\ph_3)$ if $\ell\eq 3$),
	the blowup center is locally given by
	$$(\wc\ka_{12}=\ve'= \wc\ze_b'=0)\qquad\big(\tn{and}\quad
	\tn{PT}(\wc\ka_{12}=\wc\ze_b'= 0)\quad\tn{if}~\ell\eq 3\big),
	$$
	where $\tn{PT}$ refers to the proper transform.
	So after $(\rd_3\ph_2)$ \big($(\rd_3\ph_3)$ if $\ell\eq 3$\big),
	the pullback of $\varphi$ becomes diagonalized.
\end{exam}

\begin{exam}\label{Eg:abcd}  
	Let $z_0=(C,D)$, where $C$ is made of a smooth core curve $F$
	with weight zero and two smooth rational tails $T_1$ and $T_2$ with positive weights attached to $F$ at
	$q_a$ and $q_b$, respectively.  Further, $T_1$ is made of  a single smooth rational curve $R_a$;
	$T_2$ is made of three rational curves $R_b,$ $R_c$ and $R_d$ such that $R_b$ is attached to $F$ and
	$R_c$ and $R_d$ are attached to $R_b$ at two distinct points $q_c$ and $q_d$, respectively;
	see Figure~\ref{Fig:abcd}.
	
	\begin{figure}[htp]
		\begin{center}
			\begin{tikzpicture}
				\def\g2left{
					(0,0.8) arc (90:270:1.6 and 0.8)
					(-1.04,0.08)--(-0.88,0)
					..controls (-0.64,-0.12)..(-0.4,0)
					--(-0.24,0.08)
					(-0.88,0)..controls (-0.64,0.12)..(-0.4,0)
				}
				
				\draw \g2left
				[xscale=-1] \g2left;
				\draw
				(.8,1.13) circle (0.4);
				\draw[fill=black!42]
				(-.8,1.13) circle (0.4)
				(.8,1.93) circle (0.4)
				(1.6,1.13) circle (0.4);
				\draw 
				(-.8,1.13) node {\tiny{$R_a$}}
				(.66,1) node {\tiny{$R_b$}}
				(.8,1.93) node {\tiny{$R_c$}}
				(1.6,1.13) node {\tiny{$R_d$}}
				(0,-.5) node {\tiny{$F$}}
				(-.64,.5) node {\tiny{$q_a$}}
				(.7,.5) node {\tiny{$q_b$}}
				(.8,1.38) node {\tiny{$q_c$}}
				(1.03,1.13) node {\tiny{$q_d$}};
			\end{tikzpicture}
		\end{center}
		\caption{The curve $C$ in Example~\ref{Eg:abcd}}\label{Fig:abcd}
	\end{figure}
	
	We assume that {\it $q_a$ and $q_b$ are in general positions on $F$}. In other words, they are not conjugate points,
	and neither is Weierstrass.
	
	By Proposition~\ref{Prp:phi_key}~\ref{Part:phi}, locally on some affine chart $\cV$ containing the point $z_0$, we have
	%\begin{equation} %\label{phi-adhoc}
	$$\varphi =\left[\begin{matrix}  
		a_{11} \zeta_a & c_{11} \zeta_b\zeta_c &  d_{11} \zeta_b\zeta_d &\cdots
		\\
		a_{21} \zeta_a& c_{21} \zeta_b\zeta_c &  d_{21} \zeta_b\zeta_d  &\cdots\\
	\end{matrix} \right]\,,
	$$  %\end{equation}
	where $\zeta_a$ (resp.~$\zeta_b, \zeta_c, \zeta_d$) is the node-smoothing parameter for $q_a$ (resp.~$q_b,$ $q_c,$ $q_d$), and $a_{ij}$, $c_{ij}$ and $d_{ij}$ are invertible functions on $\cV$. 
	The omitted entries each can also be divided by $\ze_a$, $\ze_b\ze_c$, or $\ze_b\ze_d$.
	Moreover, by Proposition~\ref{Prp:phi_key}~\ref{Part:theta},
	the vanishing orders of $2\!\times\!2$ minors of $\varphi$ can be higher:
	under the assumption of this example, we have
	\begin{align*}
		\det\left[\begin{matrix}
			a_{11} & c_{11}\\
			a_{21} & c_{21}
		\end{matrix}\right],~
		\det\left[\begin{matrix}
			a_{11} & d_{11}\\
			a_{21} & d_{21}
		\end{matrix}\right]
		\in\Ga(\sO_\cV^*),\qquad
		\det\left[\begin{matrix}
			c_{11} & d_{11}\\
			c_{21} & d_{21}
		\end{matrix}\right]\Big/\ze_{b}
		\in\Ga(\sO_\cV^*)
		.
	\end{align*}
	In addition, if $
	R_a$ has weight greater than one, the $2\!\times\!2$ minors corresponding to the points of $D\!\cap\!R_a$ may need to be taken into consideration,
	which contains $\zeta_a^2$ as a factor. Similarly, the term $\zeta_b^2\zeta_c^2$ 
	(resp.~$\zeta_b^2\zeta_d^2$) may need to be taken into consideration if  $R_c$ 
	(resp.~$R_d$) has  weight greater than one.

Here, we point out the following. 
In $(\rd_1\ph_1)$, for any $z_0=(C,\bw) \in \mwt$ whose core is of weight 0,
the farther the separating nodes are away from the core, the later the corresponding parameters 
appear in the local equations of (the proper transforms of) the blowup centers. For instance, in the above example,
for the nodes $q_a$ and $q_b$ that are closest to the core, locally at $z_0$,
$$(\zeta_a=\zeta_b=0)$$ defines the first blowup center $\ov \fM_{(1,2)} \cap \cV$.
After this blowup, on the chart given by $\ze_b\eq\ve_b$ and $\ze_a\eq\ve_b\wc\ze_a$, the locus
$$(\wc\zeta_a= \zeta_c=\zeta_d=0)$$ locally defines the 
next blowup center, the proper transform of $\ov \fM_{(1,3)}$.
%Upon blowing up this center, $\vaphi$  becomes diagonalizable on all charts.
This generalizes to all separating nodes that are farther from the core, if any.

	%For an example with a tail consisting of a chain  of two smooth rational  curves, we refer the reader to 
	Back to the current example, the rooted tree of this example is identical to that of
	Example 5.17 of \cite{HL10}.
	The calculation within each phase of $(\rd_1)$ is also similar to  \cite{HL10}.
	 
	More precisely,
	assume that $R_a$, $R_c$ and $R_d$ all have 
	weight one.
	Then, using similar calculations of Example 5.17 of \cite{HL10} 
	and then applying Proposition \ref{Prp:phi_key},
	one finds that after $(\rd_1)$, 
	up to switching the symbols  $c$ and $d$,
	the pullback $\ti\varphi^{\rd_1}$ of $\varphi$ may take one of the following forms:
	\begin{align*}
		&
		\left[\begin{matrix}
			\ve_a^{\rd_1\ph_1} & 0\\
			0 & \ve_a^{\rd_1\ph_1}\ve_b^{\rd_1\ph_5}\ve_c^{\rd_1\ph_5}
		\end{matrix}\right]
		\left[\begin{matrix}  
			1 & 0 &  0
			\\
			0 &  1  &  \wc\zeta_d \\
		\end{matrix} \right],\ \ 
		&&
		\left[\begin{matrix}
			\ve_a^{\rd_1\ph_1} & 0\\
			0 & \ve_a^{\rd_1\ph_1}\ve_c^{\rd_1\ph_5}
		\end{matrix}\right]
		\left[\begin{matrix}  
			1 & 0 &  0
			\\
			0 &  1  &  \wc\zeta_d \\
		\end{matrix} \right],
		\\
		&
		\left[\begin{matrix}
			\ve_b^{\rd_1\ph_1}\ve_a^{\rd_1\ph_1}  & 0\\
			0 &  \ve_b^{\rd_1\ph_1}\ve_a^{\rd_1\ph_1}\ve_c^{\rd_1\ph_5}
		\end{matrix}\right]
		\left[\begin{matrix}  
			1& 0 & 0
			\\
			0 &  1 &  \wc\zeta_d \\
		\end{matrix} \right],
		&
		&
		\ve_b^{\rd_1\ph_1}  \ve_c ^{\rd_1\ph_1}
		\left[\begin{matrix}  
			0 & 1 &  0  \\
			1& 0 & \ve_b^{\rd_1\ph_1}\wc\zeta_d
		\end{matrix} \right],
		\\
		&
		\ve_b^{\rd_1\ph_1}  \ve_c ^{\rd_1\ph_1}
		\left[\begin{matrix}  
			0 & 1 &  0 \\
			\wc\zeta_a& 0 & \ve_b^{\rd_1\ph_1}
		\end{matrix} \right],
		&&
		\left[ 
		\begin{matrix}
			\ve_b^{\rd_1\ph_1}  \ve_c ^{\rd_1\ph_1} & 0\\
			0 & \ve_b^{\rd_1\ph_1}  \ve_c ^{\rd_1\ph_1}\ve_a^{\rd_1\ph_5}
		\end{matrix}
		\right]
		\left[\begin{matrix}  
			0 & 1 &  0 \\
			1& 0 &  \ve_b^{\rd_1\ph_1}\wc\zeta_d 
		\end{matrix} \right],
		\\
		&\tn{or}\quad 
		\left[ 
		\begin{matrix}
			\ve_b^{\rd_1\ph_1}  \ve_c ^{\rd_1\ph_1} & 0\\
			0 & \ve_b^{\rd_1\ph_1}  \ve_c ^{\rd_1\ph_1}\ve_d^{\rd_1\ph_5}
		\end{matrix}
		\right]
		\left[\begin{matrix}  
			0 & 1 &  0 \\
			\wc\zeta_a& 0 &  \ve_b^{\rd_1\ph_1}
		\end{matrix} \right],
		&&
	\end{align*}
	where $\ve$ are local parameters corresponding to the proper transforms of the exceptional divisors, the superscripts of $\ve$ indicate the corresponding phases in ($\rd_1$),
	and
	$\wc\zeta_a$ and $\wc\zeta_d$ are proper transforms of 
	$\zeta_a$ and $\zeta_d$, respectively.
	Among the above seven possibilities of $\ti\varphi^{\rd_1}$,
	five are already diagonalized;
	in either of the remaining two cases,
	the only blowup center of ($\rd_2$) that may  affect  $\ti\varphi^{\rd_1}$ is locally defined by
	\begin{align*}
		\big(\ve_b^{\rd_1\ph_1}=\wc\ze_a=0\big),
	\end{align*}
	hence the pullback of $\varphi$ after ($\rd_2$) becomes diagonalized.
	
	Next, consider the case when at least one of $R_a$, $R_b$ and $R_c$, say $R_a$, has 
	weight greater than one. In such a case, by Proposition~\ref{Prp:phi_key}~\ref{Part:theta}, entries containing $\zeta_a^2$ may occur 
	in $\varphi$.
	For instance, 
	one finds that after $(\rd_1\ph_1)$ (hence after $(\rd_1)$), 
	%up to switching the symbols  $c$ and $d$,
	on the chart given by $\ze_a\eq\ve_a$ and $\ze_b\eq\ve_a\wc\ze_b$,
	the pullback of $\varphi$ takes the following form:
	$$\ve_a \left[\begin{matrix}  
		1 & 0 & 0 &  0 & 0& \cdots
		\\
		0 & \ve_a &  \wc\zeta_b  \zeta_c  &  \wc\zeta_b  \zeta_d  & 0 & \cdots \\
	\end{matrix} \right].$$
	The blowup in ($\rd_2$) that may affect this chart has the centers locally defined by
	\begin{align}\label{Eqn:S_ne_empty}
	(\ve_a=\wc\ze_b=0)\qquad
	\tn{and}\qquad\tn{PT}(\ve_a=\ze_c=\ze_d=0),
	\end{align}
	where $\tn{PT}$ refers to the proper transform. 
	Globally the two loci correspond to $X_{2,2}$ and $X_{2,3}$, respectively; see Figure~\ref{figBlowup2}.
	(The derived tree in this case is labeled by (3) in Figure~\ref{figDerivedTFMRs}.)
	Then, following similar calculations as in $(\rd_1)$,
	one finds that after $(\rd_2)$, 
	the pullback of $\varphi$ becomes diagonalized.
	
	Similarly, one can find that after $(\rd_2)$, 
	the pullback of $\varphi$ becomes diagonalized on other charts.
\end{exam}

\begin{exam}\label{Eg:abcd'}  
	Let $z_0=(C,D)$ be the same as in Example~\ref{Eg:abcd} with only one difference: {\it we assume that $q_a$ is a Weierstrass point of the core $F$ of $C$.} Since $F$ is assumed smooth, $q_a$ and $q_b$ cannot be conjugate.
We further assume $D\!\cap\!R_a\eq\{\de_1,\ldots,\de_{k}\}$ with $k\!\ge\!3$.

After $(\rd_1)$, 
%up to switching the symbols  $c$ and $d$,
on the chart given by $\ze_a\eq\ve_a$ and $\ze_b\eq\ve_a\wc\ze_b$,
the pullback of $\varphi$ takes the following form:
$$\ve_a \left[\begin{matrix}  
	1 & 0 & 0 & 0 & 0 & 0& \cdots
	\\
	0 & \wc\ka_{12}\ve_a & \ve_a^2 & \wc\zeta_b  \zeta_c  &  \wc\zeta_b  \zeta_d  & 0 & \cdots \\
\end{matrix} \right],$$
where $\wc\ka_{12}$ is the proper transform of $\ka_{12}$ as in Example~\ref{Eg:ab}.
The blowup in ($\rd_2$) that may affect this chart still has the centers locally defined by
\begin{align*}
	(\ve_a=\wc\ze_b=0)\qquad
	\tn{and}\qquad\tn{PT}(\ve_a=\ze_c=\ze_d=0).
\end{align*}
Following similar calculations as in Examples~\ref{Eg:ab} and~\ref{Eg:abcd},
one finds that after $(\rd_2)$, 
on the new chart given by $\ve_a\eq\ve'$ and $\wc\ze_b\eq\ve'\wc\ze_b'$,
the pullback of $\varphi$ takes the form
$$
\left[\begin{matrix}
	\ve' & 0\\
	0 & (\ve')^2
\end{matrix}\right]
\left[\begin{matrix}  
	1 & 0 & 0 & 0 & 0 & 0& \cdots
	\\
	0 & \wc\ka_{12} & \ve' & \wc\zeta_b'  \zeta_c  &  \wc\zeta_b'  \zeta_d  & 0 & \cdots \\
\end{matrix} \right].$$
The blowup in ($\rd_3\ph_2$) that may affect this chart  has the centers locally defined by
\begin{align}\label{Eqn:S^dot_nonempty}
	(\wc\ka_{12}=\ve'=\wc\ze_b'=0)\qquad
	\tn{and}\qquad\tn{PT}(\wc\ka_{12}=\ve'=\ze_c=\ze_d=0).
\end{align}
Therefore,
after $(\rd_3\ph_2)$,
the pullback of $\varphi$ becomes diagonalized.
\end{exam}

\begin{exam}\label{Eg:abcd''}  
	Let $z_0=(C,D)$ be the same as in Example~\ref{Eg:abcd} with only one difference: {\it we assume that $q_a$ and $q_b$ are conjugate points on the core $F$ of $C$.} Since $F$ is assumed smooth, neither $q_a$ nor $q_b$ is Weierstrass.
	We further assume $|D\!\cap\!R_\bullet|\!\ge\!3$ for $\bullet\eq a,c,d$.
	Moreover, we assume $\de_1\inn R_a$, $\de_2\inn R_b$ and $\de_3\inn R_c$.
	
	After $(\rd_1)$, 
	%up to switching the symbols  $c$ and $d$,
	on the chart given by $$\ze_a=\ve_1\ve_2,\qquad
	\ze_b=\ve_1,\qquad
	\ze_c=\ve_1\ve_2\wc\ze_c,\qquad\tn{and}\qquad
	\ze_d=\ve_1\ve_2\wc\ze_d,
	$$
	(i.e.~locally one first blows up along $(\ze_a\eq\ze_b\eq 0)$, then on the chart given by $\ze_b\eq\ve_1$ and $\ze_a\eq\ve_1\wc\ze_a$, one blows up along $(\wc\ze_a\eq\ze_c\eq\ze_d\eq 0)$ and considers the new chart given by $\wc\ze_a\eq \ve_2$, $\ze_c\eq\ve_2\wc\ze_c$ and $\ze_d\eq\ve_2\wc\ze_d$,)
	the pullback of $\varphi$ takes the following form:
	\begin{align}\label{Eqn:Eg_abcd''}
		\ve_1\ve_2 \left[\begin{matrix}  
		1 & 0 & 0 & 0 & 0 &  \cdots
		\\
		0 & \ve_1\ve_2 & \wc\ka_{12}\wc\ze_c & \wc\ka_{13}\wc\ze_d  & 0 & \cdots \\
	\end{matrix} \right],
	\end{align}
	where $\wc\ka_{ij}$ is the pullback of $\ka_{ij}$ as in Example~\ref{Eg:ab}.
	Proposition~\ref{Prp:phi_key}~\ref{Part:kappa} implies that 
	$$
	 \ka_{13}=f'\ka_{12}+g'\ze_b,
	 \qquad
	 \Longrightarrow
	 \qquad
	 \wc\ka_{13}=f\wc\ka_{12}+g\ve_1,
	$$
	where $f'$, $g'$, $f$ and $g$ are some units.
	
	The blowup in ($\rd_2$) that may affect this chart still has the centers locally defined by
	\begin{align*}
		(\ve_2=\wc\ze_c=\wc\ze_d=0)\qquad
		\tn{and}\qquad\tn{PT}(\ve_1=\wc\ze_c=\wc\ze_d=0).
	\end{align*}
	Globally, these two loci correspond to the substacks $X_{3,3}$ and $X_{2,3}$ of $\ti\fM^{\rd_2}$;
	see (\ref{e_Xkk'}) for notation and Figure~\ref{figBlowup2} for illustration.
	(The derived tree in this case is labeled by (1) in Figure~\ref{figDerivedTFMRs}.)
	
	Following the order~(\ref{Eqn:Ad_1_order}),
	we first blow up along $(\ve_2=\wc\ze_c=\wc\ze_d=0)$.
	
	On the chart given by $\ve_2\eq\ve'$,
	$\wc\ze_c\eq\ve'\wc\ze_c'$ and $\wc\ze_d\eq\ve'\wc\ze_d'$, the second row of the pullback of $\varphi$ can be rewritten as 
	$$
	\ve_1(\ve')^2\big[\ 0\ \ \ \ve_1\ \ \  \wc\ka_{12}\wc\ze_c'\ \ \  \wc\ka_{13}\wc\ze_d'\ \ \ \cdots\ \big]
	\ =\ 
	\ve_1(\ve')^2\big[\ 0\ \ \ \ve_1\ \ \  \wc\ka_{12}\wc\ze_c'\ \ \  (f\wc\ka_{12}+g\ve_1)\wc\ze_d'\ \ \ \cdots\ \big].
	$$
	The remaining blowups in $(\rd_2)$, as well as those in $(\rd_3\ph_1)$,
	that may affect this chart have centers locally given by
	$$
	\tn{PT}(\ve_1=\wc\ze_c=\wc\ze_d=0)=(\ve_1=\wc\ze_c'=\wc\ze_d'=0)\qquad
	\tn{and}\qquad
	\tn{PT}(\ve_1=\wc\ka_{12}=0),
	$$
	respectively.
	Therefore, the pullback of $\varphi$ is diagonalized after $(\rd_3\ph_1)$.
		
	On the chart given by $\wc\ze_c\eq\ve'$,
	$\ve_2\eq\ve'\wc\ve_2$ and $\wc\ze_d\eq\ve'\wc\ze_d'$, the second row of the pullback of $\varphi$ can be rewritten as 
	$$
	\ve_1(\ve')^2\big[\ 0\ \ \ \ve_1\wc\ve_2\ \ \  \wc\ka_{12}\ \ \  \wc\ka_{13}\wc\ze_d'\ \ \ \cdots\ \big]
	\ =\ 
	\ve_1(\ve')^2\big[\ 0\ \ \ \ve_1\wc\ve_2\ \ \  \wc\ka_{12}\ \ \  \ve_1\wc\ze_d'\ \ \ \cdots\ \big].
	$$
	The proper transform of $(\ve_1=\wc\ze_c=\wc\ze_d=0)$ does not affect this chart,
	whereas the relevant blow up centers in $(\rd_3\ph_1)$ are
	$$
	(\wc\ve_2=\wc\ze_d'=\wc\ka_{12}= 0)\qquad
	\tn{and}\qquad
	\tn{PT}(\ve_1=\wc\ka_{12}=0),
	$$
	respectively.
	Therefore, the pullback of $\varphi$ is diagonalized after $(\rd_3\ph_1)$.
	
	The argument on the chart given by $\wc\ze_d\eq\ve'$,
	$\ve_2\eq\ve'\wc\ve_2$ and $\wc\ze_c\eq\ve'\wc\ze_c'$ is parallel.
	
	\vsp
	At the end of this example,
	we remark that in $(\rd_2)$, if the order (\ref{Eqn:Ad_1_order}) were replaced by the usual lexicographic order on $\mathbb Z\!\times\!\mathbb Z$,
	then after the entire three-round blowups terminate, the pullback of $\varphi$ would fail to be diagonalizable on some chart.
	To see this,
	we blow up $X_{2,3}$ prior to $X_{3,3}$ in this example.
	Locally,
	this means the locus $(\ve_1\eq \wc\ze_c\eq \wc\ze_d\eq 0)$ is blown up before $(\ve_2\eq \wc\ze_c\eq \wc\ze_d\eq 0)$.
	Then, on the chart given by $\ve_1\eq\ve'$, $\wc\ze_c\eq\ve'\wc\ze_c'$ and $\wc\ze_d\eq\ve'\wc\ze_d'$, the second row of the pullback of (\ref{Eqn:Eg_abcd''}) can be rewritten as 
	$$
	\ve_2(\ve')^2\big[\ 0\ \ \ \ve_2\ \ \  \wc\ka_{12}\wc\ze_c'\ \ \  (f\wc\ka_{12}+g\ve')\wc\ze_d'\ \ \ \cdots\ \big].
	$$
	After blowing up $\tn{PT}(\ve_2\eq \wc\ze_c\eq \wc\ze_d\eq 0)=
	(\ve_2\eq \wc\ze_c'\eq \wc\ze_d'\eq 0)$,
	on the chart given by
	$\wc\ze_c'\eq\ve''$,
	$\ve_2\eq\ve''\wc\ve_2$ and $
	\wc\ze_d'\eq\ve''\wc\ze_d''$,
	the second row of the pullback of $\varphi$ can be further rewritten as 
	$$
	\ve_2(\ve'\ve'')^2\big[\ 0\ \ \ \wc\ve_2\ \ \  \wc\ka_{12}\ \ \  \ve'\wc\ze_d''\ \ \ \cdots\ \big].
	$$
	The problem is that the locus $(\wc\ve_2\eq\ve'\eq\wc\ka_{12}\eq 0)$,
	which must be blown up so as to simplify the matrix above, does not correspond to any blowup center in $(\rd_3)$.
	Indeed, such loci lie deeply in the intersections of the exceptional divisors from previous steps,
	so it seems formidable to organize them in a systematic way should such blowups be included.
	However,
	by introducing the order (\ref{Eqn:Ad_1_order}) in $(\rd_2)$, such issues can be avoided nicely.
\end{exam}

\begin{exam}
	Let $z_0\eq (C,D)\inn\fM_2^{\tn{div}}$ be such that the core $F$ of $C$ is comprised of a smooth rational subcurve $B$ and a smooth genus 1 subcurve $C_2$, meeting at two points $\fp$ and $\fq$, 
	the tail of $C$ is a smooth rational subcurve $C_a$ attached to $C_2$ at $q_a$,
	and $D$ satisfies
	$$
	D\!\cap\!B= \{\de_1,\de_2,\de_3\},\qquad
	D\!\cap\!C_2= \emptyset,\qquad
	D\!\cap\!C_a\ne \emptyset.
	$$
	Particularly, $B$ is a (maximal) non-separating bridge $\tn B[\fp,\fq]$.
	Such $(C,D)$ lies over a general point in $\ov\fM_{(3,2)}$; see Figure~\ref{figBlowup1} for illustration.
	
	By Proposition~\ref{Prp:phi_key},
	under suitable trivialization, 
	$\varphi$ on a chart $\cV$ containing $(C,D)$ can be written as
	$$
	\left[
	\begin{matrix}
		1 & 0 & 0 & 0 & 0 &\cdots\\
		0 &  f\ze_\fp\!+\!g\ze_\fq & f'\ze_\fp\!+\!g'\ze_\fq & \ze_a & 0 &\cdots
	\end{matrix}
	\right],
	$$
	where $\ze_\fp$ and $\ze_\fq$ are node smoothing parameters corresponding to $\fp$ and $\fq$,
	and $f$, $f'$, $g$, and $g'$ are invertible functions on $\cV$ satisfying 
	\begin{align}\label{Eqn:det_31}
		\det\left[
		\begin{matrix}
			f & f'\\ g&g'
		\end{matrix}\right]
		\in\Ga(\sO^*_\cV).
	\end{align}
	
	%Since $x\inn\fM_{(3)}^\mn$, the blowups up to $(\rd_1\ph_2)$ do not affect $\varphi$ near $x$.
	In $(\rd_1\ph_3)$,
	locally the blowup center is given by 
	$(\ze_\fp\eq\ze_\fq\eq\ze_a\eq 0)$,
	corresponding to (the proper transform of)
	$\ov\fM_{(3,2)}$.
	When $(\rd_1\ph_3)$ terminates,
	let $y$ be a fixed lift  of $x$.
	
	On the chart given by $\ze_a\eq\ve$,
	$\ze_\fp\eq\ve\wc\ze_\fp$ and $\ze_\fq\eq\ve\wc\ze_\fq$,
	the pullback of $\varphi$ is obviously diagonalizable near $y$.
	
	On the chart given by $\ze_\fp\eq\ve$,
	$\ze_a\eq\ve\wc\ze_a$ and $\ze_\fq\eq\ve\wc\ze_\fq$, the pullback of $\varphi$ can be rewritten as
	$$
	\left[
	\begin{matrix}
		1 & 0\\
		0 & \ve
	\end{matrix}
	\right]
	\left[
	\begin{matrix}
		1 & 0 & 0 & 0 & 0 &\cdots\\
		0 & f\!+\!g\wc\ze_\fq & f'\!+\!g'\wc\ze_\fq & \wc\ze_a &0 &\cdots
	\end{matrix}
	\right]\,.
	$$
	By~(\ref{Eqn:det_31}),
	$f\!+\!g\wc\ze_\fq$ and $f'\!+\!g'\wc\ze_\fq$ cannot vanish simultaneously,
	hence  the pullback of $\varphi$ is diagonalizable near $y$.
\end{exam}

\begin{exam}\label{Eg:ns_bridge}   
	Let $z_0=(C,D)$ be such that $C\eq F$ consists of a smooth genus one curve $C'$
	having weight zero and an inseparable bridge $\tn B[\fp,\fq]$ made of three smooth 
	rational curve $R_a$, $R_b$ and $R_c$ that are all positively weighted.
	We assume that $R_a$ is attached to the genus one curve at $\fp$ and
	$R_c$ is attached to the genus one curve at $\fq$. Further, we write
	$q_a\eq R_a\!\cap\!R_b$ and $q_c\eq R_b\!\cap\!R_c$;
	see Figure~\ref{Fig:ns_bridge}.
	
	\begin{figure}[htb]
		\begin{center}
			\begin{tikzpicture}
			\def\1a{
				(0,0.8)..controls (-0.9,0.8) and (-1.5,0.36)..
				(-1.5,0)..controls (-1.5,-0.36) and (-0.9,-0.8)..
				(0,-0.8)..controls (0.96,-0.8) and (1.36,-0.42)..
				(1.36,-0.36)..controls (1.36,-0.3) and (1.3,-0.2)..
				(1.2,-0.2)..controls (1.1,-0.2) and (0.96,-0.36)..
				(0.76,-0.36)..controls (0.56,-0.36) and (0.3,-0.2)..
				(0.3,0)..controls (0.3,0.2) and (0.56,0.36)..
				(0.76,0.36)..controls (0.96,0.36) and (1.1,0.2)..
				(1.2,0.2)..controls (1.3,0.2) and (1.36,0.3)..
				(1.36,0.36)..controls (1.36,0.42) and (0.96,0.8)..
				(0,0.8)
				(-0.88,0)..controls (-0.64,0.12)..(-0.4,0)
				(-0.4,0)..controls (-0.64,-0.12)..(-0.88,0)
				(-1.04,0.08)--(-0.88,0)
				(-0.4,0)--(-0.24,0.08)
			}
			\draw \1a;
			\draw[fill=black!42] 
			(1.6,-.42) circle (0.24)
			(1.6,.42) circle (0.24)
			(1.832379,0) circle (0.24);
			\draw 
			(1.6,.42) node {\tiny{$R_a$}}
			(1.832379,0) node {\tiny{$R_b$}}
			(1.6,-.42) node {\tiny{$R_c$}}
			(0,-.5) node {\tiny{$C'$}}
			(2,.35) node {\tiny{$q_a$}}
			(2,-.32) node {\tiny{$q_c$}}
			(1.3,.6) node {\tiny{$\fp$}}
			(1.3,-.6) node {\tiny{$\fq$}};
			\end{tikzpicture}
		\end{center}
		\caption{The curve $C$ in Example~\ref{Eg:ns_bridge}}\label{Fig:ns_bridge}
	\end{figure}
	
	By Lemma \ref{Lm:K_smoothness}, using a point from each of $D\!\cap\!R_a, D\!\cap\!R_b$ and $D\!\cap\!R_c$,
	we have
	$$ \left[\begin{matrix}  
		1 & 0 &  0 &  \cdots
		\\
		0 &   f_2 \zeta_\fp + g_2 \zeta_c\zeta_\fq &  f_3\zeta_\fp + g_3 \zeta_\fq  & \cdots \\
	\end{matrix} \right],$$
	where $\ze_\fp$, $\ze_\fq$ and $
	\ze_c$ are respectively the node-smoothing parameters corresponding to $\fp$, $\fq$ and $q_c$, while $f_2$, $f_3$, $g_2$ and $g_3$ are all units.
	In $(\rd_1\ph_3)$, this point lies in the blowup center locally defined by $(\zeta_\fp\eq\zeta_\fq\eq 0)$.
	Then, one calculates and finds that after $(\rd_1\ph_3)$, the pullback of $\varphi$ on any chart
	takes one of the following two forms:
	$$
	\left[
	\begin{matrix}
		1 & 0\\ 0 & \ve_\fp
	\end{matrix}\right]
	\left[\begin{matrix}  
		1& 0 & \cdots
		\\
		0 &  1 &  \cdots \\
	\end{matrix} \right] 
	\qquad\tn{or}\qquad
	\left[
	\begin{matrix}
		1 & 0\\ 0 & \ve_\fq
	\end{matrix}\right]
	\left[\begin{matrix}  
		1& 0 & 0 &  \cdots
		\\
		0 &  \wc\ze_\fp &  1 &  \cdots \\
	\end{matrix} \right],$$
	where $\wc\ze_\fp$ is the proper transform of $\ze_\fp$.
	The above calculations generalize to all cases of $\ov \fM_{(3,1)}$ (c.f.~Figure~\ref{figBlowup1}):
	only the two nodes $\fp$ and $\fq$
	in the maximal bridge $\tn B[\fp,\fq]$ play deciding  roles in the process of the blowup;
	the remaining node-smoothing parameters, such as $\zeta_a$ and $\zeta_c$ in the above example, do not
	play any decisive roles,
	even though they may appear in $\varphi$.
\end{exam}

%For simplicity and more directly to the point, in the examples below, every rational tail is just made of a smooth rational curve.

%{\blue $\rd_1$. main example, $[o[a,b[c,d]]]$ where $C_o$ is smooth, no $\cK, \cW$.
	
	%$\rd_2$. main example, $[o[a,b[c,d]]]$ where $C_o$ is smooth, no $\cK, \cW$.
	
	%$\rd_3\ph_1$. $[o[a,b[c,d]]]$ where $C_o$ is smooth, with $\cK, \cW$.
	
	%$\rd_3\ph_2$. $C_o$ is not smooth.
	
	%$\rd_3\ph_3$}

\begin{exam}\label{Eg:sep_core}
	Let $z_0=(C,D)$ be such that the core $F$ of $C$ is made of  two smooth genus one curves
	$F_1$ and $F_2$, each of which is of weight zero, and the tails of $C$ consists of two smooth rational curves $R_a$ and $R_b$, positively weighted and attached to $F_1$ and $F_2$ at points $q_a$ and $q_b$, respectively.
	We write $q=F_1 \cap F_2$;
	see Figure~\ref{Fig:sep_core}.
	
	\begin{figure}[htb]
		\begin{center}
			\begin{tikzpicture}
			\def\g1{
				(-1,0) ellipse (1 and 0.5)
				(-1.4,0)..controls(-1,0.1)..(-0.6,0)
				(-0.6,0)..controls(-1,-0.1)..(-1.4,0)
				(-0.5,0.05)--(-0.6,0)
				(-1.4,0)--(-1.5,0.05)
			}
			
			\draw \g1;
			\draw[xshift=2cm] \g1;
			\draw[fill=black!42] 
			(-1,0.9) circle (0.4);
			\draw[fill=black!42] 
			(1,0.9) circle (0.4);
				\draw 
				(-1,0.9) node {\tiny{$R_a$}}
				(1,0.9) node {\tiny{$R_b$}}
				(-1,-.3) node {\tiny{$F_1$}}
				(1,-.3) node {\tiny{$F_2$}}
				(-1,.35) node {\tiny{$q_a$}}
				(-.12,-.02) node {\tiny{$q$}}
				(1,.35) node {\tiny{$q_b$}};
			\end{tikzpicture}
		\end{center}
		\caption{The curve $C$ in Example~\ref{Eg:sep_core}}\label{Fig:sep_core}
	\end{figure}
	
	In such a case, $q_a$ and $q_b$ cannot be conjugate. We assume neither of them
	is Weierstrass. Further, we assume that  the weights of $R_a$ and $R_b$ are both  greater than one.
	
	Then by Proposition~\ref{Prp:phi_key},  locally on some affine chart $\cV$ containing the point $z_0$, we have
	%\begin{equation} %\label{phi-adhoc}
	$$
	\varphi =
	\left[\begin{matrix}  
		c_{11} \zeta_a & 0 & \cdots & d_{11}\zeta_q \zeta_b & \zeta_q  \zeta_b^2 & \cdots \\
		c_{21} \zeta_q \zeta_a &  \zeta_q \zeta_a^2 & \cdots & d_{21} \zeta_b & 0 &\cdots\\
	\end{matrix} \right]
	$$  %\end{equation}
	where $\zeta_q$  is the node-smoothing parameter for the separating node $q$,
	$\zeta_a$ (resp.~$\zeta_b$) is the node-smoothing parameter for $q_a$ (resp. $q_b$),
	and $c_{ij}$ and $d_{ij}$ are some units.

	After $(\rd_1)$, on any small affine chart over $z_0$, 
	either the pullback of $\varphi$ is already diagonalized, or else, 
	up to switching the symbols  $a$ and $b$,
	the pullback of $\varphi$ may take the form 
	$$
	\left[\begin{matrix}  
		\ve_a  & 0\\
		0 & \ve_a \ve_q
	\end{matrix} \right]
	\left[\begin{matrix}  
		1 &0 &0 &\cdots \\
		0 & \ve_a & \wc\zeta_b  & \cdots\\
	\end{matrix} \right] $$  %\end{equation}
	where $\ve_a$ and $\ve_q$ are exceptional parameters from $(\rd_1\ph_1)$ and $(\rd_1\ph_4)$
	(corresponding to $q_a$ and $q$), respectively, and $\wc\zeta_b$ is the proper transform of $\zeta_b$.
	
	As in Case A of Example~\ref{Eg:ab},  one sees that after $(\rd_2)$, the pullback of	the above matrix becomes diagonalizable on any small chart over $z_0$.
\end{exam}

\begin{exam} \label{Eg:core_2}
	Let $z_0=(C,D)$ be such that $C$ is made of a smooth core curve $F$
	satisfying $F \cap D\eq\{\de_1, \de_2\}$ 
	and one smooth rational curve $R$ attached to $F$ at a point $q \ne \de_1, \de_2$;
	see Figure~\ref{Fig:core_2}.
	
	\begin{figure}[htb]
		\begin{center}
			\begin{tikzpicture}
			\def\g2{
				(0,0.8) arc (90:450:1.6 and 0.8)
				%next, hole 1
				(0.4,0)..controls (0.64,0.12)..(0.88,0)
				(0.88,0)..controls (0.64,-0.12)..(0.4,0)       
				(0.24,0.08)--(0.4,0)
				(0.88,0)--(1.04,0.08)
				%next, hole 2
				(-0.4,0)..controls (-0.64,-0.12)..(-0.88,0)
				(-0.88,0)..controls (-0.64,0.12)..(-0.4,0)       
				(-0.24,0.08)--(-0.4,0)
				(-0.88,0)--(-1.04,0.08)
			}
			\draw[fill=black!42] 
			\g2
			(0,1.2) circle (0.4);
			\filldraw 
			(-1.25,.35) circle (1pt)
			(-1.25,-.35) circle (1pt);
			\draw 
			(0,1.2) node {\small{$R$}}
			(0,-.5) node {\small{$F$}}
			(0,.6) node {\tiny{$q$}}
			(-1.38,.6) node {\tiny{$\de_1$}}
			(-1.38,-.55) node {\tiny{$\de_2$}};
			\end{tikzpicture}
		\end{center}
		\caption{The point $(C,D)$ in Example~\ref{Eg:core_2}}\label{Fig:core_2}
	\end{figure}

	When $\de_1, \de_2$ are not conjugate, $z_0$ is already a smooth point of $\fM^{\rm div}_2$.
	So we study the case when $\de_1, \de_2$ are conjugate.
	
	By Proposition~\ref{Prp:phi_key},
	locally on some affine chart $\cV$ containing the point $z_0$, we have
	%\begin{equation} %\label{phi-adhoc}
	$$\varphi =\left[\begin{matrix}  
		c_{11} & c_{12} & c_{13} \zeta &\cdots \\
		c_{21} & c_{22} & c_{23} \zeta &\cdots\\
	\end{matrix} \right]
	$$  %\end{equation}
	where $\zeta$  is the node-smoothing parameter for the node $q$.
	By Proposition~\ref{Prp:phi_key}~\ref{Part:theta},
	under a suitable new basis, this matrix can be represented as
	$$ \left[\begin{matrix}  
		1 & 0 & 0 & \cdots \\
		0 & \ka_{12} & \zeta &\cdots\\
	\end{matrix} \right] $$
	where $\ka_{12}\eq 0$ defines the locus where $\cD_1(z)$ and $\cD_2(z)$ are conjugate points on the curve $\cC_z$.
	
	In  $(\rd_3\ph_3)$, 
	$z_0$ lies in the blowup center locally given by $(\ka_{12}\eq\ze\eq 0)$,
	so after $(\rd_3\ph_3)$, one sees that the pullback of
	the above matrix becomes diagonalizable on any small chart over $z_0$.
\end{exam}

\section{Local diagonalizability of the structural homomorphisms on $\ti\fM_2^{\rm div}$}
\label{SecChangeOfPhi}

\subsection{Mains statement and the structure of its proof}
\label{SubsecChangeofPhiStatement}
Throughout \S\ref{SecChangeOfPhi},
$\ti\fM_2^{\rm div}$ denotes the final stack after the three-round blowups and $\pi\!:\ti\fM_2^{\rm div}\!\to\!\fdd$ the projection. 
The following commutative diagram shows the relation between the relevant stacks,
where the horizontal arrow is given by (\ref{Eqn:div_to_wt}).
\begin{center}
	\begin{tikzpicture}
		\draw
		(4,1) node {$\mwt$}
		(1.5,1) node {$\fM_2^{\rm div}$}
		(1.5,2.5) node {$\ti\fM_2^{\rm div}$}
		%(-1.5,1) node {$\cV$}
		%(-1.5,2) node {$\ti\fV$}
		%(-3,1) node {$(C,D)\in$}
		;
		
		\draw[->,>=stealth']
		(1.5,2.15) to node[left]{\small{$\pi$}} (1.5,1.3);
		\draw[->,>=stealth']
		(1.9,1) to
		%node[above]{\scriptsize{(\ref{Eqn:div_to_wt})}} 
		(3.5,1);
		%\draw[->,>=stealth']
		% (0.4,.7) to (1.1,.3);
		%\draw[->,>=stealth']
		% (-1.5,1.7) to (-1.5,1.3);
		%\draw[->,>=stealth']
		% (-1.2,2) to (1.1,2);
		%\draw[->,>=stealth']
		% (-1.2,1) to (-0.55,1);
		%\draw[->,>=stealth']
		(0.5,1) to (1.1,1);
		\draw[->,>=stealth']
		(1.85,2.4) to node[right,above] {\small{$\varpi$}} (3.55,1.3);
		
		\iffalse
		\draw[xshift=5cm]
		(1.5,0) node {$V$}
		(0,1) node {$\cV$}
		(1.5,1) node {$\fV$}
		(1.5,2) node {$\ti\fV$}
		(0,2) node {$\ti\cV$}
		;
		
		\draw[->,>=stealth',xshift=5cm]
		(1.5,1.7) to node[left]{$\pi$} (1.5,1.3);
		\draw[->,>=stealth',xshift=5cm]
		(1.5,.7) to  (1.5,.3);
		\draw[->,>=stealth',xshift=5cm]
		(0.3,.7) to (1.2,.3);
		\draw[->,>=stealth',xshift=5cm]
		(0,1.7) to (0,1.3);
		\draw[->,>=stealth',xshift=5cm]
		(.3,2) to (1.2,2);
		%\draw[->,>=stealth']
		% (-1.2,1) to (-0.55,1);
		\draw[->,>=stealth',xshift=5cm]
		(0.3,1) to (1.2,1);
		\draw[->,>=stealth',xshift=5cm]
		(1.75,1.7) to [bend left=30] node[right] {$\varpi$} (1.75,0.3);
		\fi
	\end{tikzpicture}	
\end{center}

In this section, we investigate the pullback $\ti\varphi$ of the structural homomorphism~$\varphi$ of (\ref{varphi}) locally.
The main statement is as follows.
	
\begin{prop}\label{PrpChangeofPhi}
	For every $\ti x\inn\ti\fM_2^{\tn{div}}$,
	there exists an affine smooth chart~$\ti\cV_{\ti x}\!\to\!\ti\fM_2^{\tn{div}}$ containing $\ti x$ so that the pullback $\ti\varphi$ of %the structural homomorphism
	$\varphi$ to $\ti\cV_{\ti x}$ takes the form
	$$
	\left[\begin{matrix} z_1   &  0 & 0& \cdots & 0 \\
		0  & z_2 &  0 & \cdots & 0
	\end{matrix} \right]
	\qquad\tn{with}\quad
	z_1|z_2\in\Ga\big(\sO_{\ti\cV_{\ti x}}\big),$$
	where $z_1$ and $z_2$ are both products of some smooth parameters corresponding to (the proper transforms of) the exceptional divisors of the sequential blowups $\ti\fM_2^{\tn{div}}\!\to\!\fM_2^{\tn{div}}$.	
	Consequently, $\ti\varphi$ is diagonalizable.
	%{\red In particular, $\ti\varphi$ is locally diagonalizable.}	
\end{prop}

Recall in~\eref{e_row},
there is a partition of $\mwt$ into five disjoint substacks:
$$
\mwt=
\fM_{(1)}^{\mn}\sqcup
\fM_{(2)}^{\mn}\sqcup
\fM_{(3)}^{\mn}\sqcup
\fM_{(4)}^{\mn}\sqcup
\fM^{\mn}.
$$
Proposition~\ref{PrpChangeofPhi} will be proved 
depending on which of the above substacks contains $\varpi(\ti x)$.
More concretely,
throughout \S\ref{SecChangeOfPhi},
we fix an arbitrary $\ti x\inn\ti\fM_2^{\tn{div}}$ and write
\begin{align*}
	\wh x=\big(\,C,\,D= \de_1+\cdots+\de_m\,\big)
	=\pi(\ti x)\in\fM_2^{\rm div},\qquad
	x=\big(C,c_1(D)\big)=\varpi(\ti x)\in\mwt.
\end{align*}
Based on the topology of $F$ and the locations of the points in~$D$,
we divide the proof of Proposition~\ref{PrpChangeofPhi} into the following cases.
\begin{enumerate}[leftmargin=*,label=Case~\Alph*]
	\item \label{Case:5}
	\!\!\!. $x\in\fM^\mn$, satisfying one of the following:
	\begin{enumerate}[leftmargin=*,label=\arabic*]
		\item \label{Case:5.1}
		\!\!\!. the core $F$ of $C$ is separable;
		\item \label{Case:5.2}
		\!\!\!. $F$ is inseparable and $\deg D\!\cap\!F\!\ge\!3$;
		\item \label{Case:5.3}
		\!\!\!. $F$ is inseparable, $\deg D\!\cap\!F\!=\!2$, and the elements of $D\!\cap\!F$ are not conjugate to each other;
		\item \label{Case:5.4}
		\!\!\!. $F$ is inseparable, $\deg D\!\cap\!F\!=\!2$, and the elements of $D\!\cap\!F$ are conjugate to each other;
		\item \label{Case:5.5}
		\!\!\!. $F$ is inseparable and $\deg D\!\cap\!F\!=\!1$.
	\end{enumerate}
	\item \label{Case:4}
	\!\!\!. $x\in\fM^\mn_{(4)}$, satisfying one of the following:
	\begin{enumerate}[leftmargin=*,label=\arabic*]
		\item \label{Case:4.1}
		\!\!\!. $\deg D\!\cap\!F\!\ge\!3$;
		\item \label{Case:4.2}
		\!\!\!. $\deg D\!\cap\!F\!=\!2$ and the elements of $D\!\cap\!F$ are not conjugate to each other;
		\item \label{Case:4.3}
		\!\!\!. $\deg D\!\cap\!F\!=\!2$ and the elements of $D\!\cap\!F$ are  conjugate to each other;
		\item \label{Case:4.4}
		\!\!\!. $\deg D\!\cap\!F\!=\!1$.
	\end{enumerate}
	\item \label{Case:3}
	\!\!\!. $x\in\fM^\mn_{(3)}$, satisfying one of the following:
	\begin{enumerate}[leftmargin=*,label=\arabic*]
		\item \label{Case:3.1}
		\!\!\!. $F$ is inseparable and $\deg D\!\cap\!F\!\ge\!3$;
		\item \label{Case:3.2}
		\!\!\!. $F$ is inseparable and $\deg D\!\cap\!F\!=\!2$;
		\item \label{Case:3.3}
		\!\!\!. $F$ is inseparable and $\deg D\!\cap\!F\!=\!1$;
		\item \label{Case:3.4}
		\!\!\!. $F$ is separable.
	\end{enumerate}
	\item \label{Case:2}
	\!\!\!. $x\in\fM^\mn_{(2)}$.
	\iffalse %%%%%%%%%%%%%%%%%%%%%%%%%%%%%%%%
	satisfying one of the following:
	\begin{enumerate}[leftmargin=*,label=\arabic*]
		\item \label{Case:2.1}
		\!\!\!. $\deg D\!\cap\!F\!\ge\!3$;
		\item \label{Case:2.2}
		\!\!\!. $\deg D\!\cap\!F\!=\!2$ and the elements of $D\!\cap\!F$ are not conjugate to each other;
		\item \label{Case:2.3}
		\!\!\!. $\deg D\!\cap\!F\!=\!2$ and the elements of $D\!\cap\!F$ are  conjugate to each other;
		\item \label{Case:2.4}
		\!\!\!. $\deg D\!\cap\!F\!=\!1$.
	\end{enumerate}
	\fi %%%%%%%%%%%%%%%%%%%%%%%%%%%%%%%%
	\item \label{Case:1}
	\!\!\!. $x\in\fM^\mn_{(1)}$.
\end{enumerate} 
We remark that \ref{Case:1} is the most complicated case.
Certain notation and terminology need to be introduced before dividing it into sub-cases,
so we postpone it until \S\ref{Subsec:Case_E}.

\begin{rema} 
	Let $\fN$  be the image of $\ov M_2(\P^n,d)$ in $\fM_2\wt$
	under the morphism (\ref{MPddToWeight0}).
	Then, $\fN$ is neither open nor closed in $\fM_2\wt$.
	For instance, consider  $\fM_{(i,j)}\!\subset\!\fM_2\wt$ as in (\ref{strata});
	see Figure~\ref{figBlowup1} for illustration.
	\begin{itemize}[leftmargin=*]
		\item Every $x\inn\fM_{(2,1)}$ with weight-1 core,
		which occurs in \ref{Case:2}, is contained in $\fN$.
		However, every open neighborhood $\cV_x$ of $x$ contains points of $\fM_{(4,1)}$ with weight-1 core,
		which occurs in \ref{Case:4}.\ref{Case:4.4} and is not contained in $\fN$.
		Therefore, $\fN$ is not open.
		\item Every $x\inn\fM_{(4,3)}$ with weight-1 core, which occurs in \ref{Case:4}.\ref{Case:4.4}, is not contained in $\fN$,
		but every  open neighborhood $\cV_x$ of $x$ contains points of $\fN$ whose domain curves are smooth,
		hence  $\fM_2\wt\bsl \fN$ is not open.
	\end{itemize}
	Likewise, the preimage  $\fN^{\rm div}$ of $\fN$ is neither open nor closed in $\fM_2^{\rm div}$.
	Therefore,
	although in Proposition~\ref{PrpChangeofPhi}, $\cE_\cV$ could be replaced by its restriction to $\cV\!\cap\!\fN^{\rm div}$,
	it would be difficult to construct a blowup $\ti \fN^{\rm div}\!\to\!\fN^{\rm div}$ and to analyze the pullbacks of the structural homomorphisms.
	This is why in Proposition~\ref{PrpChangeofPhi}, we choose the base stack to be the entire $\fM_2^{\rm div}$,
	which is smooth and has local parameters that are suitable for describing the structural homomorphisms, 
	even though the points of $\fM_2\wt$ satisfying
	\ref{Case:5}.\ref{Case:5.5}, 
	\ref{Case:4}.\ref{Case:4.4}, or
	\ref{Case:3}.\ref{Case:3.3}
	are not contained in $\fN$.
\end{rema}

In \S\ref{Subsec:Case_A_simple}-\ref{Subsec:Case_E_last},
we will prove Proposition~\ref{PrpChangeofPhi} case by case.

\subsection{Proof of \ref{Case:5}.\ref{Case:5.1}-\ref{Case:5.3}}
\label{Subsec:Case_A_simple}
We begin with the three simplest sub-cases of \ref{Case:5} above.

\begin{prop}
	\label{Prp:phi_M5_simple}
	Proposition~\ref{PrpChangeofPhi} holds in \ref{Case:5}.\ref{Case:5.1}, \ref{Case:5}.\ref{Case:5.2}, and \ref{Case:5}.\ref{Case:5.3}.
\end{prop}

\begin{proof}
	Let 
	$
	\cV\to\fM_2^{\rm div}
	$ be a small affine smooth chart containing~$\wh x$.
	
	In \ref{Case:5}.\ref{Case:5.1},
	let $F_1$ and $F_2$ be the two  genus 1 inseparable components 
	of~$F$.
	W.l.o.g.~we assume
	$$
	\de_1,\,a_1\in F_1,\qquad
	\de_2,\,a_2\in F_2.
	$$
	%By Proposition~\ref{PrpPhi}, $\varphi_{11}$ and $\varphi_{22}$ do not vanish on $\cV$,
	%whereas $\varphi_{12}$ and $\varphi_{21}$ vanish at $x$.
	Then, by (the proof of)  Proposition \ref{Prp:genericSmoothness},
	the structural homomorphism $\varphi$ is diagonalizable on $\cV$.
	
	In~\ref{Case:5}.\ref{Case:5.2},
	observe that an inseparable $F$ can still be reducible: e.g., a smooth rational curve attached to
	two distinct points of a smooth genus one curve. 
	Nonetheless, there always exist $\de,\de'\inn D\!\cap\! F$
	that are not conjugate to each other,
	for otherwise all the elements of $D\!\cap\!F$ would lie on a non-separating bridge of $F$,
	which would imply $x\inn\fM_{(3)}^\mn$.
	In addition, $F$ being inseparable implies that $\ze_{[\de,a_s]},\ze_{[\de',a_s]}\inn\Ga(\sO^*_\cV)$ for $s\eq 1,2$.
	Hence by (the proof of)  Proposition \ref{Prp:genericSmoothness},
	%Part~\ref{PartNCTwoTails} of Corollary~\ref{CrlTwoTails},
	$\varphi$ is diagonalizable on~$\cV$.

	In~\ref{Case:5}.\ref{Case:5.3},
	the argument from the preceding paragraph
	%in~\ref{CaseMn2} 
	still holds verbatim.
	
	Finally, observe $x$ is away from the blowup centers of $\ti\fM_2^{\tn{div}}/\fM_2^{\tn{div}}$,
	hence we can take $\ti x\eq x$ and $\ti\cV_{\ti x}\eq\cV$,
	which is an affine smooth chart.
\end{proof}

In the remaining cases,
$\varphi$ is always not diagonalizable on any neighborhood of $\wh x$,
so we will study how each phase of the three-round blowups affect the pullback of $\varphi$ on a case-by-case basis.

\subsection{Local parameters for blowups}
\label{SubsecBlowups}
We continue with the setup in \S\ref{SubsecChangeofPhiStatement} and fix a small affine smooth chart $\cV\!\to\!\Md$ containing $\wh x$.

In order to tackle~\ref{Case:5}.\ref{Case:5.4},
when $F$ is inseparable and $D\!\cap\!F$ consists of two  conjugate points,
which we w.l.o.g.~assume to be $\de_1$ and $\de_2$,
we first observe that $\de_1$ cannot be conjugate to any $\de\inn D\bsl F$,
for otherwise $\de_1$ and hence $\de_2$ would be on  a non-separating bridge,
which would imply $x\inn\fM_{(3)}^\mn$.
Thus, by Proposition~\ref{Prp:phi_key},
after suitable elementary row and column operations, $\varphi$ can be written as
\begin{equation}
	\label{e_varphiMnCs4'}
	\varphi=\left[
	\begin{matrix} 
		1 & 0 & 0 & \cdots & 0\\
		0 & \ka_{12} & \ze_{[\de_3]} & \cdots & \ze_{[\de_m]}\\
	\end{matrix} 
	\right]\,.
\end{equation} 
(Recall $(\ka_{12}\eq 0)\!\subset\!\cV$ defines the locus where $\cD_1(z)$ and $\cD_2(z)$ are conjugate,
and $\ze_{[\de_i]}$ is the product of the node-smoothing parameters $\ze_q$ for all the nodes between $\de_i$ and $F$.)
Since
$\deg D\!\cap\!F\eq 2$,
the modular blowups up to the end of $(\rd_3\ph_2)$ do not affect $\cV$.
We thus do not distinguish $\cV$ and its pullback $\ti\cV^{\rd_3\ph_2}$ when $(\rd_3\ph_2)$ completes.

Recall $N(C)$ denotes the set of the nodes of $C$.
By Lemma \ref{Lm:K_smoothness},
%By Part~\ref{PartCTwoTails} of Corollary~\ref{CrlTwoTails},
$\ka_{12}$ and $\ze_q$, $q\inn N(C)$,
form a subset of a system of local parameters on $\cV$.
In order to analyze the pullback of the second row of (\ref{e_varphiMnCs4'}),
we need to study how $\ka_{12}$ and $\ze_q$'s change in ($\rd_3\ph_3$).

To this end, we introduce certain notation and terminology, which will be assumed throughout \S\ref{SecChangeOfPhi}.

\vsp
Given a stack $\fM$ and a closed substack $Z\!\subset\!\fM$,
let $\ti\fM$ be the blowup of $\fM$ along $Z$, $\pi\!:\ti\fM\!\to\!\fM$ be the projection,
and $\cE$ be the exceptional divisor.
For every $\fN\!\subset\!\fM$,
we denote its \ts{proper transform} and \ts{pullback} in $\ti\fM$ by 
$$
\wc\fN=\tn{PT}(\fN) :=\ov{\pi^{-1}(\fN\bsl Z)}\qquad\tn{and}\qquad
\ti\fN:=\pi^{-1}(\fN),
$$
respectively.
Here, $\bar{~}$ denotes the closure. 

Assume $\fM$ has an affine smooth chart $\cV\!\to\!\fM$.
In addition, assume $\ze_1,\ldots,\ze_r$ is a system of local parameters on $\cV$, and 
$$
Z\!\cap\cV=
\big\{\ze_1=\cdots=\ze_\ell=0\big\}\,.
$$
Then, by writing
\begin{equation}\label{e_pullbackz}
	\ti \ze_i:= \ze_i\circ\pi\in\Ga\big(\sO_{\pi^{-1}(\cV)}\big)
	\qquad
	\forall~i\inn\lrbr r,
\end{equation}
every closed point $\ti x\inn\cE\!\cap\!\pi^{-1}(\cV)$ is in the form
we have
$$
\ti x
=\big(\,0,\ldots,0,\ti \ze_{\ell+1},\ldots,
\ti \ze_r
\,;\,[u_1,\ldots,u_\ell]\,\big),
$$
where $[u_1,\ldots,u_\ell]$ denotes the homogeneous coordinate of $\PP^{\ell-1}$.
Let
\begin{equation}\label{e_dom'}
	\Delta_{\ti x}=
	\big\{k\inn\lrbr{\ell}:u_k\!\ne\!0\big\}
	\subset\lrbr{\ell}.
\end{equation}
Obviously, $\De_{\ti x}\!\ne\!\emptyset$.

\begin{lemm}\label{LmBlowupSimpleFact}
	With $\ti x$ as above,
	there exists an affine smooth chart $\ti\cV_{\ti x}\!\subset\!\pi^{-1}(\cV)$ containing $\ti x$ such that
	$$
	\ti \ze_i\,|\,\ti \ze_j\quad\tn{and}\quad
	\tn{PT}\{\ze_j\eq 0\}\!\cap\!\ti\cV_{\ti x}=
	\big\{{\ti \ze_j}/{\ti \ze_i}= 0\big\}
	\qquad
	\forall~i\inn\De_{\ti x},~j\inn\lrbr{\ell}.
	$$
	Moreover, if $i\inn\De_{\ti x}$ and $j\inn\lrbr{\ell}\bsl\De_{\ti x},$
	then $\ti \ze_j/\ti \ze_i$ is a local parameter on $\ti\cV_{\ti x}$,
	vanishing at $\ti x$;
	if $i,j\inn\De_{\ti x}$,
	then $\ti\ze_j/\ti \ze_i$ does not vanish at $\ti x$, and $\ti \ze_j/\ti \ze_i-\big(\big(\ti \ze_j/\ti \ze_i\big)(\ti x)\big)$ is a local parameter on $\ti\cV_{\ti x}$.
\end{lemm}

\begin{proof}
	Every point in $\ti\cV_{\ti x}$ is in the form $\big(\ti \ze_1,\ldots,\ti \ze_r;[u_1,\ldots,u_\ell]\big)$ so that 
	$$\ti \ze_i\,u_j=\ti \ze_j\,u_i\qquad\forall~i,j\in\lrbr{\ell}.$$
	Shrinking $\ti\cV_{\ti x}$ if necessary, 
	we may assume~$u_i$ does not vanish on $\ti\cV_{\ti x}$ for all $i\inn\De_{\ti x}$.
	Thus,
	$\ti \ze_j\eq(u_j/u_i)\ti \ze_i$
	for all $j\inn\lrbr{\ell}$.
	This  leads to the two statements in the display.
	The last statement follows from a direct check.
\end{proof}

Let $\ti\cV_{\ti x}$ be as in Lemma~\ref{LmBlowupSimpleFact}.
For each $i\inn\De_{\ti x}$,
the zero locus of $\ti z_i$ on $\ti\cV_{\ti x}$ is the same as $\cE\!\cap\!\ti\cV_{\ti x}$.
We select an arbitrary $i\inn\De_{\ti x}$ and set 
$$
\ve=\ti \ze_i|_{\ti\cV_{\ti x}}\in\Ga\big(\sO_{\ti\cV_{\ti x}}\big).
$$ 

\begin{coro}\label{CrlCoord}
	Let
	$\ti\cV_{\ti x}$ be the affine smooth chart in Lemma~\ref{LmBlowupSimpleFact}.
	Then,
	\begin{equation}\label{e_prz}
		\ve\,;\quad
		\tfrac{\ti \ze_j}{\ve}\!-\!
		\big(\tfrac{\ti \ze_j}{\ve}(\ti x)\big),\ j\inn\De_{\ti x}\bsl \{i\}\,;\quad
		\wc \ze_j\!:=\!\tfrac{\ti \ze_j}{\ve},~j\inn\lrbr{\ell}\bsl\De_{\ti x}\,;\quad\tn{and}\quad
		\wc\ze_k\!:=\!\ti \ze_k,~k\inn\lrbr{r}\bsl\lrbr \ell
	\end{equation}
	form a system of local parameters on $\ti\cV_{\ti x}$.
	Moreover, $\ve$ and all $\wc\ze_j$, $j\inn\lrbr\ell\bsl\De_{\ti x}$, vanish at~$\ti x$,
	whereas $\ti\ze_j/\ve$ does not vanish at $\ti x$ for any $j\inn\De_{\ti x}$.
\end{coro}

\begin{proof}
	By Lemma~\ref{LmBlowupSimpleFact},
	$\ve$ and $\wc \ze_j$, $j\inn\lrbr{\ell}\bsl\De_{\ti x}$ can be considered as local parameters.
	In addition, it follows from a direct check that
	on $\pi^{-1}(\cV)$,
	$\ti \ze_k$ is a local parameter satisfying
	\begin{equation*}
		\{\ti \ze_k\eq 0\}=
		\tn{PT}\{\ze_k\eq 0\}\qquad
		\forall\,k\inn\lrbr{r}\bsl\lrbr\ell.
	\end{equation*}	
	Therefore, every regular function in (\ref{e_prz}) is a local parameter.
	It is straightforward that these local parameters are independent, i.e.~they form a subset of a system of local parameters.
	
	To see the last statement, notice the display above (\ref{e_dom'}) shows $\ti\ze_i(\ti x)\eq 0$ for the chosen $i\inn\De_{\ti x}$,
	i.e.~$\ve(\ti x)\eq 0$.
	In addition, for every $j\inn\lrbr{\ell}$,
	the last statement follows from Lemma~\ref{LmBlowupSimpleFact}.
\end{proof}

We point out that $\ti\cV_{\ti x}$ contains other local parameters describing the positions of points in
$$
\big\{\,
\ti x'\in\cE\!\cap\!\ti\cV_{\ti x}:\,
\De_{\ti x'}\eq\De_{\ti x}\,
\big\}~\subset\cE\!\cap\!\ti\cV_{\ti x}.
$$
We  will always denote by $\ti \ze_j$, $j\inn\lrbr{r}$, the \ts{pullback} of $\ze_j$ as in~\eref{e_pullbackz},
and by $\wc \ze_j$,  $j\inn\lrbr{r}\bsl\De_{\ti x}$, the \ts{proper transform} of $\ze_j$ as in~\eref{e_prz}.

\subsection{Locally tree-compatible blowups}
\label{SubsecTree}

Once again we go back to \ref{Case:5}.\ref{Case:5.4}.
In the second row of (\ref{e_varphiMnCs4'}),
observe $\ze_{[\de_i]}$ divides $\ze_{[\de_j]}$ whenever $N_{[\de_i]}\!\subset\!N_{[\de_j]}$,
so we only need to study those $N_{[\de_i]}$ that are {\it minimal} with respect to inclusion among all $N_{[\de_j]}$, $3\!\le\!j\!\le\! m$.
Also observe that in the dual graph (in the usual sense; c.f.~\cite[\S23.4]{MirSym}) of $\wh x$,
after contracting (in the usual sense of graph theory) the edges corresponding to the core nodes, we obtain a rooted tree $\tau'$ whose root corresponds to the core of $C$.
After further contracting every edge $e'$ of $\tau'$ that has a vertex carrying at least one marked point and lying between $e'$ and the root,
the resulting rooted tree~$\tau$ has the following property:
each path of $\tau$ connecting the root and a leaf corresponds to some minimal $N_{[\de_i]}$ aforementioned,
and vice versa.
Therefore, in this subsection, we introduce certain terminology about rooted trees that are needed for the pullbacks of $\varphi$.

Recall in the usual language of the graph theory, a \ts{tree} refers to a connected graph whose first Betti number is 0, and a \ts{rooted tree} is a tree along with a selected vertex known as the \ts{root}.
On the set $E$ of the edges of a rooted tree $\tau$,
consider the partial order $\preceq$ given by $e\!\preceq\!e'$  if any path containing the root and $e$ must contain $e'$ as well.
We call $\preceq$ the \ts{tree order} of $\tau$.
Notice that any elements $e'$ and $e''$ of $E$ are comparable if and only if there exists $e\inn E$ such that $e\!\preceq\!e'$ and $e\!\preceq\!e''$ hold simultaneously.

Conversely, any poset $(E,\preceq)$ satisfying the above property, up to isomorphisms of posets, uniquely determines a rooted tree $\tau$ whose set of the edges is $E$ and the corresponding tree order is $\preceq$.
The maximal and minimal elements of $E$ are connected to the root and the \ts{leaves}
of $\tau$, respectively.

The poset description turns out to be more convenient for the sequential blowups,
hence we utilize the following definition of rooted trees throughout the paper,
which is identical to that used in \cite{HN2}.

\begin{defi}\label{Dfn:Rooted_tree}
	A \ts{rooted tree} is a finite (possibly empty) partially ordered set (or poset) $$\tau=(E,\preceq),$$ satisfying for any $e',e''\inn E$,
	\begin{align}\label{Eqn:tree_order}
		\big\lgroup
		e'\ \tn{and}\ e''\ \tn{are~comparable}\big\rgroup\quad
		\Longleftrightarrow\quad
		\big\lgroup \,
		\exists\, e\in E\ \ \tn{s.t.}\ \ e\preceq e'\ \tn{and}\ e\preceq e''\big\rgroup.
	\end{align}
	We call the elements of $E$ the \ts{edges} of $\tau$, and the order $\preceq$ the \ts{tree order} of $\tau$. 
	When the context is clear,
	we identify $\tau$ with $E$ and simply write $e\inn \tau$ to refer to an edge of $\tau$.
	
	The set $\max(\tau)$ of the maximal elements of $\tau$ is called the \ts{root} of $\tau$. 
	Each minimal element of $\tau$ is called a \ts{leaf} of $\tau$.
	The set of the leaves of $\tau$ is denoted by $\min(\tau)$.
	
	A \ts{root-to-leaf} path is a linearly ordered subset of $\tau$ that is maximal with respect to inclusion.
	
	The edge-less rooted tree $(\emptyset,-)$ is said to be \ts{trivial} and denoted by $\tau_\bullet$.
	
	The set of all the rooted trees is denoted by $\bT$.
	
	Two rooted trees are said to be \ts{isomorphic} if they are isomorphic as posets.
\end{defi}

\begin{rema}\label{Rmk:rooted_tree}
	Although we consider the rooted trees as posets instead of graphs,
	it is convenient to illustrate them in the usual sense of the graph theory;
	c.f.~Figure~\ref{Fig:rooted_tree}.
	The root as per Definition~\ref{Dfn:Rooted_tree} is exactly the set of the incident edges of the  root vertex,
	which is labeled as $o$ (if needed).
	When the context is clear, the root vertex can also be called the root for conciseness.
	
	We emphasize the trivial rooted tree $\tau_\bullet$, when visualized as a graph, is the edge-less graph that has exactly one vertex, instead of the empty graph.
\end{rema}

\begin{figure}[htp]
	\begin{center}
		\begin{tikzpicture}
			\filldraw
			(0,0) circle (1.2pt)
			(0,.5) circle (1.2pt)
			(-.3,0) circle (1.2pt)
			(-.6,0) circle (1.2pt)
			(.3,0) circle (1.2pt)
			(.9,0) circle (1.2pt)
			(.3,1) circle (1.2pt)
			(.6,.5) circle (1.2pt);
			\draw
			(0,0)--(0,.5)
			(-.3,0)--(.3,1)--(.9,0)
			(-.6,0)--(0,.5)--(.3,0);
			\draw
			(0,.75) node {\tiny{$a$}}
			(-.31,.16) node {\tiny{$c$}}
			(-.5,.25) node {\tiny{$b$}}
			(-.09,.16) node {\tiny{$d$}}
			(.3,.3) node {\tiny{$f$}}
			(.92,.25) node {\tiny{$z$}}
			(.62,.75) node {\tiny{$g$}}
			(.3,1.15) node {\tiny{$o$}};
			\draw
			(2,1) node[right] {\tiny{$\tau=\{a,b,c,d,f,g,z\}$}}
			(5,1) node[right] {\tiny{satisfying $b,c,d,f\prec a;\ \ z\prec g.$}}
			(2,.5) node[right] {\tiny{$\Xi(\tau)=\{\fE,\fF,\mathfrak G,\mathfrak H\},$}}
			(5,.5) node[right] {\tiny{where $\fE:=\{a,g\},\ \fF:=\{a,z\},\ \mathfrak G:=\{b,c,d,f,g\},\ \mathfrak H:=\{b,c,d,f,z\}$,}}
			(5,0) node[right] {\tiny{satisfying $\mathfrak H\prec \fF\prec\fE;$\ \ $\mathfrak H\prec \mathfrak G\prec\fE.$}}
			;
		\end{tikzpicture}		
	\end{center}
	\caption{A rooted tree $\tau$}\label{Fig:rooted_tree}		
\end{figure}

For every $E'\!\subset\!\tau$,
let
\begin{align}\label{Eqn:E_R}
	(E')^{\tn R}=\bigcup_{e'\in E'}\{\,e\inn\tau\,:\,e\,\tn R\, e'\,\}\qquad
	\tn{with}\quad \tn R=\,\prec,\,\preceq,\,\succ,\,\tn{and}\,\succeq.
\end{align}
For instance,
$(E')^\prec$ consists of $e\inn\tau$ satisfying $e\!\prec\!e'$ for {\it some} $e'\inn E'$ (not necessarily all $e'\inn E'$).
In Figure~\ref{Fig:rooted_tree},
for example,
$\{a,g\}^\prec\eq\{b,c,d,f,z\}$ and $\{b,z\}^\succeq\eq\{a,b,g,z\}$.

The following notion plays a key role in the local equations of the blowup centers of any phase.

\begin{defi}
	\label{Dfn:transverse_sections}
	Let $\tau\inn\bT$.
	A nonempty subset $\fE\!\subset\!\tau$ is called a \ts{transverse section} of $\tau$ if
	it meets any of the following two equivalent conditions: 
	\begin{itemize}[leftmargin=*]
		\item $\fE$ meets every root-to-leaf path of $\tau$ at exactly one edge;
		\item $\fE$ is a maximal subset of incomparable edges of $\tau$,
		i.e.~every pair of distinct edges of $\fE$ are incomparable, and every edge of $\tau$ is comparable with an edge of  $\fE$.
	\end{itemize}
	The set of all the transverse sections of $\tau$ is denoted by $\Xi(\tau)$.
\end{defi}

The set $\Xi(\tau)$ is always nonempty as long as $\tau\!\ne\!\tau_\bullet$,
because $\max(\tau)\inn\Xi(\tau)$ and $\min(\tau)\inn\Xi(\tau)$ always hold. 

The tree order on $\tau$ induces a partial order, still denoted by $\preceq$,
on $\Xi(\tau)$ such that for every $\fE,\fE'\inn\Xi(\tau)$,
\begin{align}\label{Eqn:transverse_sections_order}
	\lgroup\,\fE\prec \fE'\,\rgroup\quad\Longleftrightarrow\quad
	\lgroup\, \fE\ne \fE'\,\rgroup\ \ \tn{and}\ \ 
	\lgroup\,\fE\subset(\fE')^\preceq\,\rgroup\,.
\end{align}
If $\tau\!\ne\!\tau_\bullet$, then
the greatest and least element of $\Xi(\tau)$ are respectively $\max(\tau)$ and $\min(\tau)$.

For example,
in Figure~\ref{Fig:rooted_tree},
we have
\begin{align*}
	&\Xi(\tau)=\big\{\fE,\fF,\mathfrak G,
	\mathfrak H\big\},\qquad
	\tn{where}\\
	&\fE:=\{a,g\},\ \ 
	\fF:=\{a,z\},\ \ 
	\mathfrak G:=\{b,c,d,f,g\},\ \ 
	\mathfrak H:=\{b,c,d,f,z\}.
\end{align*}
The partial order (\ref{Eqn:transverse_sections_order}) is given by $\mathfrak H\!\prec\! \fF\!\prec\!\fE$ and $\mathfrak H\!\prec\! \mathfrak G\!\prec\!\fE$ in this case.

%We next describe a local criterion for sequences of blowups,
%which will appear repeatedly  in the remainder of this paper.
Let $\tau$ be a rooted tree and $\cV$ be an affine smooth chart of a stack $\fM$.
If there exists a system of local parameters on $\cV$ as follows:
\begin{equation}\label{e_ze}
	\big\{\,
	\ze_e\inn\Ga\big(\sO_{{\cV}}\big):\,
	e\inn\tau\,
	\big\}\,\sqcup \,
	\big\{\,\ze_s\inn\Ga\big(\sO_{{\cV}}\big):\,s\inn S'\,
	\big\}
	\,,
\end{equation}
then $\big\{\ze_e\inn\Ga\big(\sO_{{\cV}}\big):
e\inn\tau\big\}$ is called a \ts{$\tau$-labeled subset of local parameters} on $\cV$.
In this case,
we set
\begin{equation}\label{e_z[v]}
	\ze_{[e]}:=
	\prod_{e'\succeq e}\ze_{e'}\qquad\forall~e\in\tau.
\end{equation}
%As per our convention,
%$\Edg([o,v])$ denotes the subset of all edges in the path $[o,v]$.
Every  $E\!\subset\!\tau$ determines a closed locus $X_E^\cV$ in $\cV$ by
$$
X_E^\cV=\{\,
\ze_e\eq 0:\,e\inn E
\,\}\quad\subset\cV.
$$
When the context is clear, we may simply write it as $X_E$.

With notation as above,
let
\begin{equation}
	\label{e_blowup}
	\pi:\ti\fM\lra \fM
	\qquad
	\tn{and}\qquad
	\pi_{(k)}:\ti\fM_{(k)}\lra\fM
\end{equation}
be the blowup of~$\fM$ successively along the proper transforms of a sequence of closed substacks 
$$
Z_1,Z_2,\cdots\,
\subset
\fM
$$
and the same blowup of $\fM$ after the $k$-th step, respectively.
We may call $\pi$  a sequential blowup to emphasize it is comprised of a sequence of blowups.

\begin{defi}\label{DfnGaAdm}
	Let $\tau\inn\bT$ and $\pi:\ti\fM\!\to\!\fM$ be as in~\eref{e_blowup}.
	Then, $\pi$ is said to be \ts{$\tau$-compatible on~$\cV$} if
	there exists a partition of $\Xi(\tau)$:
	$$
	\Xi(\tau)
	=\bigsqcup_{k\ge 1}
	\Xi_k(\tau)\,,
	$$
	and a $\tau$-labeled subset of local parameters on $\cV$ such that
	\begin{enumerate}[label=(C\arabic*),leftmargin=*]
		\item \label{ConditionLocalLoci} for every $k\!\ge\!1$,
		$$
		Z_k\cap\cV=\bigcup_{\fE\in\Xi_k(\tau)}\!\!\!X_{\fE}^\cV\;;
		$$
		\item \label{ConditionOrder} 
		if $\fE'\inn \Xi_{k'}(\tau)$, $\fE''\inn \Xi_{k''}(\tau)$, and $\fE'\!\succ\!\fE''$,
		then $k'\!<\!k''$.
	\end{enumerate}
\end{defi}

As $\Xi(\tau)$ is a finite set,
there only exist {\it finitely} many $k\inn\mathbb Z_{\ge 0}$ with $Z_k\!\cap\!\cV\!\ne\!\emptyset$.
%\ref{ConditionLocalLoci} implies that if $\tn{ET}(\ga)_k\eq\emptyset$ for some $k$
%(i.e.~$n_k\eq 0$ as a convention),
%then $Z_k$ is disjoint from $\cV$ because $\cZ_\emptyset\eq\emptyset$.
%Therefore, adding (resp.~deleting) arbitrarily many blowup centers that are disjoint from $\cV$ to (resp.~from) an existing sequence of blowups does not affect the $\ga$-compatibility on $\cV$.
%In~\ref{ConditionOrder},
%the indices $k'$ and $k''$ are uniquely determined by $E'$ and $E''$, respectively.

Locally, the blowup centers are naturally given by the unions of some $X_\fE^\cV$ as in \ref{ConditionLocalLoci}. Example~\ref{Eg:G1} below will demonstrate this point.

The condition~\ref{ConditionOrder} has three immediate conclusions.
First, a $\tau$-compatible  blowup of $\cV$ must begin with $X_{\max(\tau)}$, unless $\tau$ is trivial (i.e.~edge-less).
%$$
% Z_{k_0}\!\cap\!\cV\eq\cZ_{E_o}\qquad
% \tn{and}\qquad
% Z_k\!\cap\!\cV\eq\emptyset\quad
% \forall~k\!<\!k_0.
%$$
%As before, $E_o$ refers to the collection of minimal edges.
Second, in each $\Xi_k(\tau)$, distinct elements are always not comparable.
The last is the following lemma that shows every $\tau$-compatible blowup on $\cV$ is always a {\it smooth} blowup of $\cV$,
although certain blowup centers $Z_k\!\cap\!\cV$ may be singular a priori (in fact, $Z_k\!\cap\!\cV$ is singular whenever $|\Xi_k(\tau)|\!\ge\!2$).
This is a crucial observation that guarantees the final stack $\ti\fM_2^{\rm div}$ proposed in Theorem 1 is a smooth stack.

\begin{lemm}
	\label{LmLocalLoci}
	Let $\pi\!:\ti\fM\!\to\!\fM$ be $\tau$-compatible on~$\cV$ as in Definition~\ref{DfnGaAdm}.
	Then for each $k\!\ge\!1$,
	the proper transforms of the irreducible components $X_\fE^\cV$ of  $Z_{k}\!\cap\!\cV$ after the $(k\!-\!1)$-th step are all smooth and pairwise disjoint.
	Therefore, the proper transform of $Z_k\!\cap\cV$ after the $(k\!-\!1)$-th step is smooth, hence the pullback of $\cV$ after each step is smooth.
\end{lemm}

\begin{proof}
	For distinct $\fE, \fE'\inn\Xi_{k}(\tau)$,
	$\fE$ and $\fE'$ are not comparable by~\ref{ConditionOrder}.
	Let $$\fF=\max(\fE\cup\fE'),$$ 
	which is nonempty because $\fE$ and $\fE'$ are nonempty as transverse sections.
	Distinct elements of~$\fF$ are incomparable because they are maximal elements of $\fE\!\cup\!\fE'$.
	Finally,
	for every $e'\inn\tau$,
	there exist $e\inn\fE$  comparable with $e'$,
	as well as $e''\inn\max(\fE\!\cup\!\fE')$ satisfying $e\!\preceq\!e''$ (because $\fE\!\cup\!\fE'$ is finite).
	Therefore,
	if $e'\!\preceq\!e$, then $e'\!\preceq\!e''$;
	if $e'\!\succ\!e$,
	then 
	$e'$ and $e''$ are still comparable because of (\ref{Eqn:tree_order}).
	In sum, $\fF$ satisfies the latter criterion (hence both criteria) of Definition~\ref{Dfn:transverse_sections},
	hence $\fF\inn\Xi(\tau)$.
	Indeed, $\fF$ is the least common successor of $\fE$ and $\fE'$ with respect to the order~\eref{Eqn:transverse_sections_order}.
	
	Since $\fF\!\subset\!\fE\!\cup\!\fE'$,
	we have
	$$ X_{\fE}\cap X_{\fE'}=X_{\fE\cup\fE'}\subset X_\fF.
	$$
	(Here, we drop the superscript $\cV$ of $X_\fE$'s for conciseness.)
	The restriction of the normal bundle of $X_\fF$ in $\cV$ to $X_\fE\!\cap\!X_{\fE'}$ can be written as
	$$
	\Big(\!
	\bigoplus_{e\in\fF\cap\fE\cap\fE'}\!\!\!\!\!\!\!\big(\sO_\cV(X_{e})\big|_{X_\fE\cap X_{\fE'}}\big)\!
	\Big)
	\oplus
	\Big(\!
	\bigoplus_{e\in\fF\cap(\fE\bsl\fE')}\!\!\!\!\!\!\!\big(\sO_\cV(X_{e})\big|_{X_\fE\cap X_{\fE'}}\big)\!
	\Big)
	\oplus
	\Big(\!
	\bigoplus_{e\in\fF\cap(\fE'\bsl\fE)}\!\!\!\!\!\!\!\!\big(\sO_\cV(X_{e})\big|_{X_\fE\cap X_{\fE'}}\big)\!
	\Big),
	$$
	where the leftmost direct sum is in the normal direction of both $X_{\fE}$ and $X_{\fE'}$, while
	the middle (resp.~rightmost) sum is in the normal direction of $X_{\fE}$ (resp.~$X_{\fE'}$) and tangent direction of $X_{\fE'}$ (resp.~$X_{\fE}$).
	
	Let $h\!\ge\!1$ be such that $X_\fF\!\subset\! Z_{h}$.
	By~\ref{ConditionOrder},  $h\!<\!k$.
	The last paragraph implies the proper transforms of $X_\fE$ and $X_{\fE'}$ becomes disjoint after $X_\fF$ is blown up in the $h$-th step.
	The smoothness of the proper transforms of $X_\fE$ and $X_{\fE'}$ after step $h$ follows from direct computation;
	see~\cite[Lemma~2.3]{VZ08} for more details.	
	
	Since we have shown the blowup center in each step is smooth, the last statement of Lemma~\ref{LmLocalLoci} holds immediately.
\end{proof}

For each $k\!\ge\!$ and $\fE\inn\Xi_k(\tau)$,
the exceptional divisor obtained by blowing up $\wc X_\fE^\cV$ is written as
\begin{equation}\label{e_excdiv}
	\cE_\fE\subset\pi_{(k)}^{-1}(\cV).
\end{equation}
By Lemma~\ref{LmLocalLoci},
these $\cE_\fE$, $\fE\inn\Xi_k(\tau)$, are disjoint.
Moreover, by a direct check, we see each $\cE_\fE$ is transverse to the proper transforms of $X_{\fE'}^\cV$ for all $\fE'\inn\Xi_{k'}(\tau)$, $k'\!>\!k$.
Thus, the proper transform of $\cE_E$ in $\pi^{-1}(\cV)$ is the same as its pullback, i.e.
\begin{equation}
	\label{e_wcEtiE}
	\wc\cE_E=\ti\cE_E\ \ \big(\subset\pi^{-1}(\cV)\big)\qquad
	\forall~\fE\inn\Xi(\tau).
\end{equation}
Finally, we comment that the notions of $\cE_\fE$ and $\wc\cE_\fE$ are intrinsic to $\cV$ and $\tau$. They are independent of the choice of the $\tau$-compatible blowup on $\cV$.

\begin{exam}\label{Eg:G1}
	An example of locally tree-compatible blowups is the the blowup $\ti\fM_1^{\tn{wt}}$
	of the Artin stack $\fM_1^{\tn{wt}}$ in~\cite{HL10},
	which is performed successively along the proper transforms of the closed loci
	$\Theta_k$, $k\eq 1,2,\cdots$.
	Here, a general point of each $\Th_k$ consists of a weight-0 genus 1 core and $k$ positively weighted tails. 
	In other words, $\Th_k$'s are the genus 1 counterparts of  $\ov\fM_{(1,k)}$'s in Figure~\ref{figBlowup1}.
	
	For every $x\eq(C,\mathbf{ w})\inn\fM_1^\tn{wt}$,
	let $\tau$ be its terminally weighted tree in the sense of~\cite[Paragraph 3.16]{HL10}.
	Then, there exists an affine smooth chart $\cV\!\to\!\fM_1^{\rm wt}$ containing $x$ so that the set of node-smoothing parameters
	$\{\ze_e\}_{e\in\tau}$ is a $\tau$-labeled subset of local parameters on $\cV$.
	Each $\Th_k$ satisfies
	$$
	\Th_k\cap \cV=\!
	\bigcup_{
		\begin{subarray}{c}
			\fE\in\Xi_k(\tau)
		\end{subarray}
	}\!\!\!
	X_\fE^\cV\,,\qquad
	\tn{where}\quad\Xi_k(\tau)=\{\fE\inn\Xi(\tau):|\fE|\eq k\}\,.
	$$
	The stability of $\fM_g^{\rm wt}$ implies for every $\fE,\fE'\inn\Xi(\tau)$ with $\fE\!\succ\!\fE'$,
	we have $|\fE|\!<\!|\fE'|$.
	It is thus a direct check that the blowup successively along the proper transforms of $\Th_1,\Th_2,\cdots$ is $\tau$-compatible on $\cV$,
	hence the sequential blowup in~\cite{HL10} is locally tree-compatible.
	
	We point out it is due to the stability of $\fM_g^{\rm wt}$  that the above partition of $\Xi(\tau)$ by the cardinals of the transverse sections actually leads to a locally $\tau$-compatible blowup on $\cV$.
	Generally, given an arbitrary rooted tree $\tau'$,
	there may exist $\fE,\fE'\inn\Xi(\tau')$ satisfying $\fE\!\succ\!\fE'$ yet $|\fE|\eq|\fE'|$;
	see the leftmost diagram of Figure~\ref{figTree}.
	
	\begin{figure}[htp]
		\begin{center}
		\begin{tikzpicture}
			\draw (0,0)--(1.5,1.5);
			\draw (0,0)--(-.75,.75);
			\draw (-.75,.75)--(-1.5,1.5);
			%\draw (-.75,.75)--(-.5,1.5);
			\filldraw (0,0) circle (2pt)
			(.75,.75) circle (2pt)
			(-.75,.75) circle (2pt)
			(1.5,1.5) circle (2pt)
			(-1.5,1.5) circle (2pt)
			%(-.5,1.5) circle (2pt)
			%(-1.5,1.5) circle (2pt)
			;
			\draw (0,0) node[right] {$o$}
			%(.75,.75) node[right] {$b$}
			%(-.75,.75) node[right] {$a$}
			%(1.5,1.5) node[right] {$d$}
			%(0,1.5) node[right] {$c$}
			%(-.5,1.5) node[right] {$g$}
			%(-1.5,1.5) node[left] {$f$}
			(.3,.3) node[right] {$e_b$}
			(-.25,.3) node[left] {$e_a$}
			(1.05,1.05) node[right] {$e_d$}
			(-1,1.05) node[left] {$e_f$}
			%(-.7,1.1) node[right] {$e_g$}
			%(-1.05,1.1) node[left] {$e_f$}
			;
			
			\draw[xshift=5cm] (0,0)--(1.5,1.5);
			\draw[xshift=5cm] (0,0)--(-1.5,1.5);
			\draw[xshift=5cm] (.75,.75)--(.5,1.5);
			\draw[xshift=5cm] (-.75,.75)--(-.5,1.5);
			\filldraw[xshift=5cm]
			(0,0) circle (2pt)
			(.75,.75) circle (2pt)
			(-.75,.75) circle (2pt)
			(1.5,1.5) circle (2pt)
			(.5,1.5) circle (2pt)
			(-.5,1.5) circle (2pt)
			(-1.5,1.5) circle (2pt);
			\draw[xshift=5cm]
			(0,0) node[right] {$o$}
			%(.75,.75) node[right] {$b$}
			%(-.75,.75) node[right] {$a$}
			%(1.5,1.5) node[right] {$d$}
			%(.5,1.5) node[right] {$c$}
			%(-.5,1.5) node[right] {$g$}
			%(-1.5,1.5) node[left] {$f$}
			(.3,.3) node[right] {$e_b$}
			(-.25,.3) node[left] {$e_a$}
			(1.1,1.1) node[right] {$e_d$}
			(.75,1.1) node[left] {$e_c$}
			(-.7,1.1) node[right] {$e_g$}
			(-1.05,1.1) node[left] {$e_f$}
			;	
		\end{tikzpicture}	
		\end{center}
		\caption{Rooted trees}\label{figTree}
	\end{figure}
	For a concrete example,
	consider the tree~$\tau$ illustrated in the rightmost diagram of Figure~\ref{figTree}.
	The sequential blowup of $\fM_1^{\tn{wt}}$ determines a partition
	\begin{equation}\label{Eqn:Xi_G1}
		\Xi(\tau)=
		\big\{\fE_2\!=\!\{e_a,e_b\}\big\}\sqcup
		\big\{\fE_{3}\!=\!\{e_a,e_c,e_d\},\,\fE_3'\!=\!\{e_b,e_f,e_g\}\big\}\sqcup
		\big\{\fE_4\!=\!\{e_f,e_g,e_c,e_d\}\big\}	
	\end{equation}
	such that
	\begin{gather*}
		\Th_2\!\cap\!\cV
		=X_{\fE_2}^\cV,\quad
		\Th_3\!\cap\!\cV
		= X_{\fE_3}^\cV\!\cup\!X_{\fE_3'}^\cV,\quad
		\Th_4\!\cap\!\cV
		= X_{\fE_4}^\cV,\quad
		\fE_2\succ\fE_3,\fE_3'\succ\fE_4,
	\end{gather*}
	thus~\ref{ConditionLocalLoci} and~\ref{ConditionOrder} in Definition~\ref{DfnGaAdm} are satisfied.
	Notice that $\Th_3\!\cap\!\cV\eq X_{\fE_3}^\cV\!\cup\!X_{\fE_3'}^\cV$ has normal crossing singularities at $X_{\fE_3}^\cV\!\cap\!X_{\fE_3'}^\cV$ $\big(=\! X_\tau^\cV\big)$.
	Nonetheless,
	after blowing up along $X_{\fE_2}^\cV$,
	the proper transforms of $X_{\fE_3}^\cV$ and $X_{\fE_3'}^\cV$ are smooth and meet the exceptional divisor
	$\cE_{\fE_2}$ at the loci given by $\{[u_a,u_b]\eq[0,1]\}$ and $\{[u_a,u_b]\eq[1,0]\}$, respectively,
	hence they become disjoint.
	Here, $[u_a,u_b]$ denotes the homogeneous coordinate on $\P^1$ satisfying $\ti\ze_{e_a}u_b\eq\ti\ze_{e_b}u_a$.
	
	As this example reveals, given a blowup center $Z_k$ and a chart $\cV$, $Z_k\!\cap\!\cV$ is possibly a union of $X_\fE^\cV$ for some $\fE\inn\Xi(\tau)$. This explains \ref{ConditionLocalLoci} of Definition~\ref{DfnGaAdm}. 
	Nonetheless, this example also demonstrates
	that the proper transform of each blowup center $Z_k$ becomes smooth  after the $(k\!-\!1)$-th step,
	which is proved in Lemma~\ref{LmLocalLoci}.
\end{exam}

\begin{exam}
	In Definition~\ref{DfnGaAdm}, \ref{ConditionLocalLoci} implies
	the partition of $\Xi(\tau)$ is solely determined by the sequence of the blowup centers $Z_k$, $k\!\ge\!1$.
	As suggested by the following example, however, the same blowup $\ti\fM/\fM$ may be achieved by choosing a different sequence of blowup centers,
	which may yield a different partition of $\Xi(\tau)$.
	
	Let $\cV\!\to\!\fM_1^{\rm wt}$ and $\tau$ be as in Example~\ref{Eg:G1}.
	In this example,
	we take the base stack $\fM$ of  Definition~\ref{DfnGaAdm} to be $\cV$ instead of $\fM_1^{\rm wt}$, and consider $\ti\cV/\cV$, the restriction of the blowup $\ti\fM_1^{\tn{wt}}/\fM_1^{\tn{wt}}$ to $\cV$.
	In Example~\ref{Eg:G1}, we already know
	$\ti\cV/\cV$ can be achieved by blowing up $\cV$ along the proper transforms of $\Th_2\!\cap\!\cV$,
	$\Th_3\!\cap\!\cV$, and $\Th_4\!\cap\!\cV$ successively,
	which gives the partition (\ref{Eqn:Xi_G1}) of $\Xi(\tau)$.
	On the other hand, notice
	$\ti\cV/\cV$ can also be achieved by blowing up $\cV$ successively along the proper transforms of $X_{\fE_2}^\cV$,
	$X_{\fE_3}^\cV$, $X_{\fE_3'}^\cV$, and $X_{\fE_4}^\cV$,
	where the four transverse sections are the same as in (\ref{Eqn:Xi_G1}).
	This also determines a partition of $\Xi(\tau)$ given by 
	\begin{align*}
		\Xi(\tau)=\{\fE_2\}\sqcup\{\fE_3\}\sqcup\{\fE_3'\}\sqcup \{\fE_4\},
	\end{align*}
	which is different from (\ref{Eqn:Xi_G1}).
	
	It is straightforward that the above two sequential blowups are both $\tau$-compatible on $\cV$ and indeed yield the same $\ti\cV/\cV$.
	As $\fE_3$ and $\fE_3'$ are not comparable, this example particularly shows	the converse of \ref{ConditionOrder} need not to be true.
\end{exam}

At the end of this subsection,
we describe a sequential blowup induced from a $\tau$-compatible sequential blowup on $\cV$.
This is related to the modular blowups in $(\rd_3)$ and is needed for proving many cases of Proposition~\ref{PrpChangeofPhi}, including \ref{Case:5}.\ref{Case:5.4}.

\vsp
For every rooted tree $\tau\eq(E,\preceq)$, 
let $\ex(\tau)$ be the rooted tree
\begin{gather*}
	\ex(\tau)=\big(\,E\sqcup\{e_\ex\}\,,\,\preceq\,\big),
\end{gather*}
where $\preceq$ is the relation on $\ex(\tau)$ given by the tree order on $\tau$ as well as the following:
every $e\inn E$ is {\it not} comparable with $e_\ex$.
It is a direct check that $\preceq$ is a tree order as per Definition~\ref{Dfn:Rooted_tree},
and $e_\ex$ is both maximal and minimal.
The rooted tree $\ex(\tau)$ is uniquely determined by $\tau$ (up to an isomorphism of rooted trees).

Intuitively,
$\ex(\tau)$ is obtained from $\tau$ by \ts{grafting} a new vertex to the root via a new  edge $e_\ex$.
An example is illustrated in Figure~\ref{Fig:ga_+}.
	
\begin{figure}[htp]
	\begin{center}
		\begin{tikzpicture}
			\draw 
			(0,0)--(1.5,1.5)
			%(0,0)--(-.75,.75)
			(.75,.75)--(0,1.5);
			\filldraw 
			(0,0) circle (2pt)
			(.75,.75) circle (2pt)
			%(-.75,.75) circle (2pt)
			(1.5,1.5) circle (2pt)
			(0,1.5) circle (2pt)
			;
			\draw 
			(0,0) node[left] {$o$}
			%(.75,.75) node[right] {$b$}
			%(-.75,.75) node[right] {$a$}
			%(1.5,1.5) node[right] {$d$}
			%(0,1.5) node[right] {$c$}
			(.3,.3) node[right] {$e_b$}
			%(-.25,.3) node[left] {$e_a$}
			(1.05,1.05) node[right] {$e_d$}
			(.5,1.05) node[left] {$e_c$}
			;
			
			\draw[xshift=5cm]
			(0,0)--(1.5,1.5)
			(0,0)--(-.75,.75)
			%(0,0)--(-1.5,0)
			(.75,.75)--(0,1.5);
			\filldraw [xshift=5cm]
			(0,0) circle (2pt)
			(.75,.75) circle (2pt)
			(-.75,.75) circle (2pt)
			(1.5,1.5) circle (2pt)
			(0,1.5) circle (2pt)
			%(-1.5,0) circle (2pt)
			;
			\draw [xshift=5cm]
			(0,0) node[left] {$o$}
			%(.75,.75) node[right] {$b$}
			%(-.75,.75) node[right] {$a$}
			%(1.5,1.5) node[right] {$d$}
			%(0,1.5) node[right] {$c$}
			%(-1.5,0) node[above] {{$v_+$}}
			(.3,.3) node[right] {$e_b$}
			%(-.75,-.1) node[above] {{$e_\ex$}}
			(-.3,.3) node[left] {$e_\ex$}
			(1.05,1.05) node[right] {$e_d$}
			(.5,1.05) node[left] {$e_c$}
			;
			\draw 
			(0,-.3) node[below] {$\tau$}
			(5,-.3) node[below] {$\ex(\tau)$}
			;
		\end{tikzpicture}
	\end{center}
	\caption{An example of grafting}\label{Fig:ga_+}
\end{figure}

Assume $\pi\!:\ti\fM\!\to\!\fM$ as in (\ref{e_blowup}) is $\tau$-compatible on $\cV$.
A blowup $$\pi_\ex:\ti\fM_\ex\lra\fM$$
successively along the proper transforms of closed substacks
$Z_{\ex,k},$ $k\eq 1,2,\cdots$, of $\fM$ is called a \ts{grafted blowup} with respect to $\pi$ on $\cV$ if there exists a local parameter $\ze_{\ex}$
so that $\{\ze_e\!:e\inn\tau\}\!\sqcup\!\{\ze_{\ex}\}$ is a subset of a system of local parameters on $\cV$, and
$$
Z_{\ex,k}\cap\cV
=
\bigcup_{\fE\in\Xi_k(\tau)}
\!\!\!X_{\fE\sqcup\{e_\ex\}}^\cV
\qquad
\forall~k\ge 1.
$$

Observe that for each $\fE\inn\Xi(\tau)$, we have
$\fE\!\sqcup\!\{e_\ex\}\in\Xi(\ex(\tau))$;
conversely, every $\fE'\inn\Xi(\ex(\tau))$ must contain $e_\ex$, and $\fE'\bsl\{e_\ex\}\in\Xi(\tau)$.
This leads to the following statement immediately.

\begin{lemm}\label{LmSupplementary}
	Let $\pi\!:\ti\fM\!\to\!\fM$ and $\pi_\ex\!:\ti\fM_\ex\!\to\!\fM$ be two sequential blowups,
	and $\cV\!\to\!\fM$ be a small affine smooth chart.
	If $\pi$ is $\tau$-compatible on~$\cV$, and $\pi_\ex$ is a grafted blowup with respect to $\pi$ on~$\cV$,
	then $\pi_\ex$ is $\ex(\tau)$-compatible on $\cV$.
\end{lemm}

\subsection{Root-to-leaf sequences, dominant edges, and the proof of \ref{Case:5}.\ref{Case:5.4}}
\label{SubsecDom}
We continue with the notation from \S\ref{SubsecTree}.
Let $\pi\!:\ti\fM\!\to\!\fM$ be the blowup in (\ref{e_blowup}) and $\tau\inn\mathbf T$.
Assume $\pi$ is $\tau$-compatible on~$\cV$ as per Definition~\ref{DfnGaAdm}.
For every $\fE\inn\Xi(\tau)$, we still use $
\wc\cE_\fE$ to denote 
the proper transform of the exceptional divisor $\cE_\fE$ in~\eref{e_excdiv},
which by~\eref{e_wcEtiE}, is the same as the pullback of $\cE_\fE$.

In this subsection, we fix $\wh x\inn X_\tau^\cV\!\subset\!\cV$ (i.e.~$\ze_e(\wh x)\eq 0$ for all $e\inn\tau$) as well as a lift $\ti x$ of $\wh x$ in $\ti\cV\eq\pi^{-1}(\cV).$
If $\tau\eq\tau_\bullet$, which implies $\Xi(\tau)\eq\emptyset$, then the blowup centers do not meet $\cV$,
so the blowup of $\cV$ is equal to $\cV$.

If $\tau\!\ne\!\tau_\bullet$, 
then 
\begin{align}
	\label{Eqn:RL_seq}
	\ov \bE:=\big\{\,\fE\inn\Xi(\tau):\,\ti x\inn\wc\cE_\fE\,\big\}
\end{align}
is nonempty.

\begin{lemm}
	\label{Lm:RL_union}
	Assume $\tau\!\ne\!\tau_\bullet$.
	Then, we can order the elements of $\ov\bE$ so that $\ov\bE\eq\{\fE_1,\ldots,\fE_\cht\}$, satisfying
	\begin{equation}\label{e_ascendingE}
		\fE_1\succ\cdots\succ \fE_\cht
		%,\qquad	\fE_\cht\cap\min(\tau)\ne\emptyset,
		\qquad
		\tn{and}\qquad
		\bigcup_{i\in\lrbr k}\! \fE_i=\fE_k^\succeq\quad\forall~k\in\lrbr{\cht}.
	\end{equation} 
\end{lemm}

Hereafter, we call $\ov \bE$ the \ts{root-to-leaf sequence (RLS)} of $\ti x$, 
and treat $\ov\bE$ both as a subset of $\Xi(\tau)$ and as a finite sequence of the elements of $\Xi(\tau)$, should no ambiguity occur.
The notion of RLS can be generalized to the $\tau\eq\tau_\bullet$ case,
in which 
we set $\cht\eq 0$ and call $\emptyset$ the RLS of $\ti x$.

\begin{proof}[Proof of Lemma~\ref{Lm:RL_union}]
	We will begin with an inductive construction of a sequence $\fE_1,\ldots,\fE_\cht$ satisfying $\ti x\inn\bigcap_{i\in\lrbr\cht}\wc\cE_{\fE_i}$ as well as (\ref{e_ascendingE}).
	We will then prove $\ti x\!\notin\!\wc\cE_\fE$ for any other $\fE\inn\Xi(\tau)$.
	
	First, notice any $\tau$-compatible blowup on $\cV$ must begin with the blowup $\cV$ along $X_{\max(\tau)}^\cV$.
	More precisely,
	in Definition~\ref{DfnGaAdm},
	by setting $k_1:=\min\{k\!\ge\!1:\Xi_k(\tau)\!\ne\!\emptyset\}$,
	we see $\Xi_{k_1}(\tau)$ contains the root $\max(\tau)$ as its unique element,
	because $\max(\tau)$ is the greatest element of $\Xi(\tau)$.
	By writing $$\fE_1=\max(\tau),$$
	we have $\fE_1\eq\fE_1^\succeq$.
	In addition, since $x\inn X_\tau^\cV\!\subset\!X_{\fE_1}^\cV$,
	we have $\ti x\inn\ti\cE_{\fE_1}$, which is equivalent to $\ti x\inn\wc\cE_{\fE_1}$
	by (\ref{e_wcEtiE}).
	
	Next, assume we have constructed $\fE_1\!\succ\!\cdots\!\succ\!\fE_k$ satisfying $\ti x\inn\bigcap_{i\in\lrbr{k}}\!\wc\cE_{\fE_i}$ and $\bigcup_{i\in\lrbr{k}}\fE_i\eq\fE_{k}^\succeq$ for some $k\!\ge\!1$.
	We denote by $h_k$ the unique step of the sequential blowup such that
	$
	\fE_k\inn\Xi_{h_k}(\tau).$
	Recall $\pi_{(h_k)}\!:\ti\fM_{(h_k)}\!\to\!\fM$ denotes the blowup after the $h_k$-th step.
	Let $\ti x_{k}$ be the image of $\ti x$ in $\ti\fM_{(h_k)}$.
	By the inductive hypothesis,
	we have $\ti x_{k}\inn\cE_{\fE_k}$.
	We thus define $$\De_{\ti x,k}\subset \fE_k$$ analogously to~\eref{e_dom'},
	with $\ti x$ and $\lrbr{\ell}$ in~\eref{e_dom'} replaced by $\ti x_{k}$ and $\fE_k$, respectively.
	\begin{itemize}[leftmargin=*]
		\item 
		If $\De_{\ti x,k}\!\cap\!\min(\tau)\!\ne\!\emptyset$,
		we simply set $\cht\eq k$ and $\ov\bE\eq\{\fE_1,\ldots,\fE_k\}$.
		
		\item 
		If $\De_{\ti x,k}\!\cap\!\min(\tau)\!=\!\emptyset$,
		then for every $e\inn \De_{\ti x,k}$,
		the set $\{e\}^\prec$ $\big(=\!\{e'\inn\tau:e'\!\prec\! e\}\big)$ is nonempty.
		Let
		\begin{align}\label{Eqn:E_k+1}
			\fE_{k+1}:=
			\big(\fE_k\bsl \De_{\ti x,k}\big)\sqcup\!\bigsqcup_{e\in \De_{\ti x,k}}\!\!\!\max\big(\{e\}^\prec\big).
		\end{align}
		We claim this is the  $(k\!+\!1)$-th term of the proposed sequence $\fE_1,\ldots,\fE_\cht$.
		Indeed, by direct check, we have
		\begin{align*}
			\fE_{k+1}\in\Xi(\tau),\qquad
			\fE_{k+1}\prec\fE_k,\qquad\tn{and}\qquad
			\bigcup_{i\in\lrbr{k+1}}\!\!\!\fE_i
			=\fE_k^\succeq\!\cup\!\fE_{k+1}=\fE_{k+1}^\succeq.
		\end{align*}		
		To see $\ti x\inn\ti\cE_{\fE_{k+1}}$,
		notice on a small neighborhood $\ti\cV_{\ti x_k}$ of  $\ti x_{k}$,
		the proper transform $\wc X_{\fE_{k+1};(h_k)}$ of $X_{\fE_{k+1}}$ is given by
		\begin{align*}
			\wc X_{\fE_{k+1};(h_k)}\cap\ti\cV_{\ti x_k}=
			\big\{\,\wc\ze_{k;e}=0:\,
			e\inn\fE_{k+1}\,\big\},
		\end{align*}
		where the local parameters $\wc\ze_{k;e}$ are analogous to (\ref{e_prz}).
		By the last statement of  Corollary~\ref{CrlCoord},
		we have $\wc\ze_{k;e}$ for all $e\inn\fE_k\bsl\De_{\ti x,k}$;
		in addition,
		for every $e\inn\De_{\ti x,k}$ and $e'\inn\max(\{e\}^\prec)$,
		we have $\wc\ze_{k;e'}\eq\ti \ze_{k;e'}\eq 0$,
		where $\ti\ze_{k;e'}$ is the pullback of $\ze_{e'}$ to $\ti \cV_{\ti x_k}$.
		In sum, we obtain $\ti x_k\inn\wc X_{\fE_{k+1};(h_k)}$,
		which implies $\ti x\inn \ti\cE_{\fE_{k+1}}$.
	\end{itemize}
	
	It remains to show for every  $\fE\inn\Xi(\tau)\bsl\ov\bE$, we have $\ti x\!\notin\!\wc\cE_\fE$.
	To see this,
	we claim there exists $j\inn\lrbr{\cht}$ satisfying
	\begin{align}\label{Eqn:fE_j}
		\fE_j\succeq\fE\qquad\tn{and}\qquad
		\De_{\ti x,j}\cap\fE\ne\emptyset.
	\end{align}
	Indeed, the set $\{\fE'\inn\ov\bE:\fE'\!\succeq\!\fE\}$ is nonempty because $\fE_1\eq\max(\tau)\inn\ov\bE$;
	we take $\fE_j$ to be the least element (w.r.t.~the order (\ref{e_ascendingE})) of this set.
	Then, $\fE_j\!\succeq\!\fE$ but $\fE_{j+1}\!\not\succeq\!\fE$. 
	\begin{itemize}[leftmargin=*]
		\item If $j\!<\!\cht$, then $\De_{\ti x,j}\eq \fE_{j}\bsl\fE_{j+1}$.
		So if $\De_{\ti x,j}\!\cap\!\fE\!=\!\emptyset$,
		then by (\ref{Eqn:E_k+1}),
		we would have $\fE_{j+1}\!\succeq\!\fE$,
		contradicting the fact $\fE_{j+1}\!\not\succeq\!\fE$.
		Therefore, $\De_{\ti x,j}\!\cap\!\fE\!\ne\!\emptyset$.
		
		\item If $j\!=\!\cht$, notice the inductive construction of $\fE_1,\ldots,\fE_\cht$ implies $\De_{\ti x,\cht}\!\cap\!\min(\tau)\!\ne\!\emptyset$.
		For any $e'\inn\De_{\ti x;\cht}\!\cap\!\min(\tau)$,
		we have $e'\inn\fE$ because $e'$ is minimal and $\fE\!\preceq\!\fE_j$.
		Therefore, $\De_{\ti x,j}\!\cap\!\fE\!\ne\!\emptyset$.
	\end{itemize}
	
	Now that (\ref{Eqn:fE_j}) is established,
	consider the aforementioned lift $\ti x_j$ of $x$ and its neighborhood $\ti\cV_{\ti x_j}$. 
	On $\ti\cV_{\ti x_j}$,
	the pullback of $X_{\fE}^\cV$ takes the form
	\begin{align}\label{Eqn:X_pullback}
		\big\{\,
		\ve_j\eq 0;~
		\wc\ze_{j;e}\eq 0~\forall~e\inn \fE\bsl \De_{\ti x,j}
		\,\big\}
		\quad
		\subset\;\{\ve_j\eq 0\}
		=\cE_{\fE_j}\cap\ti\cV_{\ti x_j},
	\end{align}
	where the local parameters $\ve_j$ and $\wc\ze_{j;e}$ are analogous to (\ref{e_prz}).	
	Therefore,
	the proper transform of $X_{\fE}^\cV$ is empty on $\ti\cV_{\ti x_j}$,
	hence does not contain $\ti x_j$, the image of $\ti x$ on $\ti\fM_{(h_j)}$.
	Therefore, $\ti x\!\notin\!\wc\cE_\fE$.
\end{proof}

The inductive construction of  $\fE_1,\ldots,\fE_\cht$ in the proof of Lemma~\ref{Lm:RL_union} particularly implies
$$
\De_{\ti x,\cht}\cap\min(\tau)\ne\emptyset
\qquad
\big(\Longrightarrow \fE_
\cht\cap\min(\tau)\ne\emptyset\big)\,.$$

\begin{defi}\label{DfnDominant}
	The edges in $$
	\De_{\ti x}:=\bigsqcup_{i\in\lrbr \cht}\!
	\De_{\ti x,i}\qquad\tn{and}\qquad
	\De^{\min}_{\ti x}:=
	\De_{\ti x}\cap\min(\tau)
	\ \big(=\De_{\ti x,\cht}\cap\min(\tau)\big)$$ are respectively called the \ts{dominant edges} and \ts{dominant leaves} of $\ti x$.
\end{defi}

Here, recall the leaves of a rooted tree refer to the minimal edges as per Definition~\ref{Dfn:Rooted_tree}.

For every $k\inn\lrbr{\cht}$,
the exceptional divisor $\cE_{\fE_k}$  is the zero locus of a local parameter $\ve_k$ on an affine smooth chart $\ti\cV_{\ti x_k}$ containing $\ti x_{k}$.
The proper transform 
$\wc\cE_{\fE_k}$ is thus the zero locus of a local parameter $\wc\ve_k$ on an affine smooth chart $\ti\cV_{\ti x}$ containing $\ti x$.

\begin{lemm}\label{LmCoordPT}
	Let $\tau$, $\cV$, and $\pi\!:\ti\fM\!\to\!\fM$ be as in Lemma~\ref{LmLocalLoci},
	$\ti x\inn\pi^{-1}(X_\tau^\cV)$,
	and $\ov\bE\eq\{\fE_1,\ldots,\fE_\cht\}$ be the RLS of $\ti x$.
	Then,
	\begin{enumerate}
		[label=(\arabic*),leftmargin=*]
		
		\item \label{Claim1CoordPT}
		for each $i\inn\lrbr{\cht}$,
		$\ve_i$ can be taken as the pullback $\ti 
		\ze_{\se_i}\inn\Ga\big(\sO_{\ti\cV_{\ti x_i}}\big)$ of $\ze_{\se_i}\inn\Ga(\sO_{\cV})$ with an arbitrary edge $\se_i\inn \De_{\ti x,i}$;
		
		\item \label{Claim2CoordPT}
		for each $i\inn\lrbr{\cht}$, 
		$\wc\ve_i$ can be taken as the pullback $\ti\ve_i$ of $\ve_i$;
		
		\item \label{Claim3CoordPT}
		%for every $\ti x\inn\pi^{-1}(\cZ_\ga)$,
		there is an affine smooth chart
		$\ti\cV_{\ti x}\!\subset\!\pi^{-1}(\cV)$ containing $\ti x$ such that
		\begin{align*}
		&\wc\ve_1,\ldots,\wc\ve_\cht\,;\qquad
		\frac{\ti\ze_e}{\prod_{i\in\lrbr\cht\,\tn{s.t.}\,e\in\fE_i}\wc\ve_i}-
		\Big(\frac{\ti\ze_e}{\prod_{i\in\lrbr\cht\,\tn{s.t.}\,e\in\fE_i}\wc\ve_i}(\ti x)\Big),\ \  e\inn\De_{\ti x}\bsl\{\se_1,\cdots,\se_\cht\}\,;
		\\
		&
		\wc \ze_e,\ \ e\inn \fE_\cht\bsl\De_{\ti x,\cht}\,;\qquad 
		\wc\ze_e\eq\ti \ze_e,\ \ e\inn \fE_\cht^\prec\,;\qquad
		\wc\ze_s\eq\ti\ze_s,\ \ s\inn S'\,;
		\end{align*}
		form a system of local parameters on $\ti\cV_{\ti x}$,
		where $\ze_s$, $s\inn S'$, are as in (\ref{e_ze});
		
		\item \label{Claim4CoordPT}
		$\wc\ve_i$, $i\inn\lrbr\cht$, and
		$\wc\ze_e$, $e\inn\big((\fE_\cht\bsl\De_{\ti x,\cht})\!\sqcup\!\fE_\cht^\prec\big)$ all vanish at $\ti x$,
		whereas $\ti\ze_e\big/\big(\prod_{i\in\lrbr\cht\,\tn{s.t.}\,e\in\fE_i}\!\wc\ve_i\big)$ does not vanish at $\ti x$ for any $e\inn\De_{\ti x}\bsl\{\se_1,\ldots,\se_\cht\}$.
	\end{enumerate}
\end{lemm}

\begin{proof}
	The statements~\ref{Claim1CoordPT}, \ref{Claim3CoordPT}, and~\ref{Claim4CoordPT} can be deduced from Corollary~\ref{CrlCoord}.
	The statement~\ref{Claim2CoordPT} follows from~\eref{e_wcEtiE}. 
\end{proof}

The next statement plays a crucial rule in analyzing the change of the structural homomorphism $\varphi$ throughout \S\ref{SecChangeOfPhi}.
Let $\tau$, $\cV$, $\pi$, $\ti x$, $\ov\bE$, and $\ti\cV_{\ti x}$ be as in Lemma~\ref{LmCoordPT}.
With the functions $\ze_{[e]}$ as in~\eref{e_z[v]},
we write $$\ti \ze_{[e]}:= \pi^*\big(\ze_{[e]}\big)
=
\prod_{e'\succeq e}\ti\ze_{e'}~
\in\Ga(\sO_{\ti\cV_{\ti x}})\qquad
\forall~e\in\tau.$$

\begin{prop}\label{PrpDominating}
	%For every $\ti x\in\pi^{-1}(X_\tau^\cV)$
	%with the ascending sequence $\ov\bE\eq(E_1,\ldots,E_\cht)$,
	With notation as above,
	there exist 
	%an affine smooth chart $\ti\cV_{\ti x}\!\subset\!\pi^{-1}(\cV)$ containing~$\ti x$, as well as 
	invertible functions $u_e\inn\Ga\big(\sO^*_{\ti\cV_{\ti x}}\big)$, $e\inn\fE_{\cht}$, such that 
	\begin{gather*}
		\ti \ze_{[e]}
		= u_e\cdot
		\wc\ve_1\cdots
		\wc\ve_\cht,
		%\in\Ga(\sO_{\ti\cV_{\ti x}}),
		\qquad
		\big(\,\tn{resp.}\ \ 
		{\ti \ze_{[e]}}
		=u_e\cdot
		\wc\ve_1\cdots
		\wc\ve_\cht\cdot
		\wc \ze_{e}\,\big)
	\end{gather*}
	for every $e\inn\De_{\ti x,\cht}$ (resp.~$e\inn\fE_\cht\bsl\De_{\ti x,\cht}$).
	Particularly,  $\ti \ze_{[e]}\big|\,\ti \ze_{[e']}$ for all $e\inn\De^{\min}_{\ti x}$ and  $e'\inn\min(\tau)$.
\end{prop}

\begin{proof}
	By (\ref{Eqn:E_k+1}),
	we have $\De_{\ti x,k}\eq\fE_{k}\bsl\fE_{k+1}$ for any $k\inn\lrbr{\cht\!-\!1}$.
	Along with the equality in (\ref{e_ascendingE}),
	this implies
	\begin{align*}
		\fE_\cht^\succeq=
		\big(\fE_\cht\bsl\De_{\ti x,\cht}\big)\sqcup\bigsqcup_{k\in\lrbr\cht}\!\!\De_{\ti x,k}\,.
	\end{align*}
	Hence for every $e\inn\fE_\cht^\succeq$,
	by applying Lemma~\ref{LmCoordPT} and the middle equation in~\eref{e_prz} repeatedly,
	we obtain
	\begin{align*}
		\ti \ze_{e}=
		\begin{cases}
			u_e'\cdot\prod_{i\in\lrbr{\cht}~\tn{s.t.}~e\in\fE_i}\wc\ve_i
			&
			\tn{if}~e\inn\bigsqcup_{k\in\lrbr\cht}\De_{\ti x,k},\\
			u_e'\cdot\wc\ze_e\cdot\prod_{i\in\lrbr{\cht}~\tn{s.t.}~e\in\fE_i}\wc\ve_i
			&
			\tn{if}~e\inn \fE_\cht\bsl\De_{\ti x,\cht},
		\end{cases}
	\end{align*}
	where $u_e'\inn\Ga(\sO^*_{\ti\cV_{\ti x}})$.
	This gives rise to the formulae of all $\ti\ze_{[e]}$, $e\inn\fE_\cht$.
	The last statement then follows from the fact that $\De_{\ti x}^{\min}\!\ne\!\emptyset$.
\end{proof}

\begin{rema}
	%In Example~\ref{Eg:G1},
	%we observe that the desingularization of $\ov\fM_1(\P^n,d)$ is locally tree-compatible.
	The statements of Proposition~\ref{PrpDominating} are consistent with the displayed formulae in the paragraph after~\cite[(5.23)]{HL10},
	but are more precise.
	In~\cite{HL10},
	it suffices to know the last statement of Proposition~\ref{PrpDominating}.
\end{rema}

\vsp
Now, we are ready to prove \ref{Case:5}.\ref{Case:5.4}.

\begin{prop}
	\label{Prp:phi_M5_core_wt2}
	Proposition~\ref{PrpChangeofPhi} holds in \ref{Case:5}.\ref{Case:5.4}.
\end{prop}

\begin{proof}
	At the beginning of \S\ref{SubsecBlowups},
	we already observe the blowups in $(\rd_1\ph_1)$-$(\rd_3\ph_2)$ do not affect $\cV$,
	hence we can identify $\cV$ with its pullback $\ti\cV^{\rd_3\ph_2}$.
	The structural homomorphism takes the form of (\ref{e_varphiMnCs4'}),
	where $\ka_{12}$ and $\ze_q$, $q\inn N(C)$,
	form a subset of a system of local parameters on $\cV$.
	Moreover,
	we only need to study the case when
	\begin{align*}
		m\ge 3\,,
	\end{align*}
	for otherwise (\ref{e_varphiMnCs4'}) is already diagonalized,
	and $(\rd_3\ph_3)$-$(\rd_3\ph_4)$ do not affect $\cV$ either.
	
	Next, we interpret the rooted tree of the first paragraph of \S\ref{SubsecTree} using the language of Definition~\ref{Dfn:Rooted_tree}.
	Intuitively, the underlying set of the rooted tree is the union of those $N_{[\de]}$ with $\de\inn D$ satisfying $\de$ is on a tail and there is no other $\de'\inn D$ on the same tail that is closer to the core $F$ than $\de$.
	Precisely,
	we set
	\begin{equation}\begin{split}
		\label{Eqn:tau_CaseA.4}
		&
		D_{\im}:=\big\{\,\de\inn D\bsl F:\,
		N_{[\de]}~\tn{is~minimal~w.r.t.~inclusion~among~all}~N_{[\de']},\,\de'\inn D\bsl F\,\big\},
		\\
		&
		E:=\!\bigcup_{\de\in D_{\im}}\!\!\!N_{[\de]}\ \  \big(\subset N(C)\big).
	\end{split}
	\end{equation}
	Here, the subscript of $D_{\im}$ is short for ``inclusion-minimal''; see Definition~\ref{Dfn:inc_min} below.
	The set $E$ is the underlying set of the proposed rooted tree.

	Recall we have assumed $D\!\cap\!F\eq\{\de_1,\de_2\}$ and $m\!\ge\!3$.
	Since $\de_3,\ldots,\de_m$ are all on tails, we have $N_{[\de_i]}\!\ne\!\emptyset$ for any $3\!\le\!i\!\le\!m$,
	thus $E\!\ne\!\emptyset$.
	On the set $E$,
	consider the relation $\preceq$ given by $e\!\preceq\!e'$ if and  only if any connected subcurve of $C$ containing $F$ and $e$ must contain $e'$.
	The fact that $F$ is of arithmetic genus 2 guarantees $\preceq$ satisfies (\ref{Eqn:tree_order}), i.e.
	\begin{align}\label{Eqn:tree_A.4}
		\tau_2:=(E,\preceq)
	\end{align}
	is a rooted tree,
	whose root-to-leaf paths are exactly $N_{[\de]}$, $\de\inn D_{\im}$.
	In Figure~\ref{Fig:CaseA.4},
	an example of such constructed $\tau_2$ of a point $\wh x$ satisfying the hypothesis of \ref{Case:5}.\ref{Case:5.4} is provided.
	
	\begin{figure}[htp]
		\begin{center}
			\begin{tikzpicture}
				\def\g2left{
					(0,0.8) arc (90:270:1.6 and 0.8)
					(-1.04,0.08)--(-0.88,0)
					..controls (-0.64,-0.12)..(-0.4,0)
					--(-0.24,0.08)
					(-0.88,0)..controls (-0.64,0.12)..(-0.4,0)
				}
				\def\crs{
					(-.03,-.03)--(.03,.03)
					(-.03,.03)--(.03,-.03)
				}
				
				\draw[thick] \g2left
				[xscale=-1] \g2left;
				
				\draw[thick]
				(.8,1.13) circle (0.4)
				(-.8,1.13) circle (0.4)
				(-.8,1.93) circle (0.4);
				
				\draw[thick]%[fill=black!42]
				(-.8,2.63) circle (0.3)
				(-1.5,1.13) circle (0.3)
				(-1.5,1.93) circle (0.3)
				(.8,1.83) circle (0.3)
				(1.5,1.13) circle (0.3)
				(-1.9,0) circle (0.3);
				
				\draw [xshift=1.35cm,yshift=.3cm]
				\crs;
				\draw [xshift=1.35cm,yshift=-.3cm]
				\crs;
				\draw [xshift=1.7cm,yshift=1.15cm]
				\crs;
				\draw [xshift=.95cm,yshift=1.95cm]
				\crs;
				\draw [xshift=.65cm,yshift=1.95cm]
				\crs;
				\draw [xshift=-.65cm,yshift=2.8cm]
				\crs;
				\draw [xshift=-.95cm,yshift=2.8cm]
				\crs;
				\draw [xshift=-.8cm,yshift=2.58cm]
				\crs;
				\draw [xshift=-1.65cm,yshift=1.85cm]
				\crs;
				\draw [xshift=-1.5cm,yshift=2cm]
				\crs;
				\draw [xshift=-.6cm,yshift=1.85cm]
				\crs;
				\draw [xshift=-1.6cm,yshift=1.3cm]
				\crs;
				\draw [xshift=-2.02cm,yshift=.1cm]
				\crs;
				\draw [xshift=-2.02cm,yshift=-.1cm]
				\crs;
				\draw [xshift=2.2cm,yshift=2.6cm]
				\crs;
				
				\draw[<->,>=stealth]
				(1.6,.33)..controls (1.8,.15) and (1.8,-.15)..(1.6,-.33);
				
				\draw
				(-.8,1.4) node {\tiny{$e$}}
				(-1.05,1.13) node {\tiny{$d$}}
				(-.8,2.2) node {\tiny{$g$}}
				(-1.05,1.93) node {\tiny{$f$}}
				(0,-.5) node {\tiny{$F$}}
				(-1.45,0) node {\tiny{$h$}}
				(-.64,.58) node {\tiny{$r$}}
				(.7,.56) node {\tiny{$c$}}
				(.8,1.38) node {\tiny{$p$}}
				(1.03,1.13) node {\tiny{$q$}}
				(1.75,0) node[right] {\tiny{conjugate}}
				(2.2,2.6) node[right] {\tiny{:~points of $D$}}
				(2,-.7) node[right] {\small{$\wh x\eq(C,D)$}};
				
				\filldraw[xshift=7.5cm]
				(0,2) circle (1.2pt)
				(0,1) circle (1.2pt)
				(1,1) circle (1.2pt)
				(-1,1) circle (1.2pt)
				(-.4,0) circle (1.2pt)
				(.2,0) circle (1.2pt)
				(.8,0) circle (1.2pt)
				(1.4,0) circle (1.2pt);
				\draw[xshift=7.5cm]
				(.8,0)--(1,1)
				(0,2)--(0,1)
				(-1,1)--(0,2)--(1,1)--(1.4,0)
				(.2,0)--(0,1)--(-.4,0);
				\draw[xshift=7.5cm]
				(-.5,1.5) node[left] {\tiny{$h$}}
				(.08,1.5) node[left] {\tiny{$r$}}
				(.5,1.5) node[right] {\tiny{$c$}}
				(-.2,.5) node[left] {\tiny{$d$}}
				(.05,.5) node[right] {\tiny{$e$}}
				(.95,.5) node[left] {\tiny{$p$}}
				(1.2,.5) node[right] {\tiny{$q$}}
				(0,2) node[above] {\tiny{$o$}}
				(0,-.7) node {\small{$\tau_2$}};
			\end{tikzpicture}
		\end{center}
		\caption{An example of the rooted tree $\tau_2$ in \ref{Case:5}.\ref{Case:5.4}}\label{Fig:CaseA.4}
	\end{figure}
	
%	For each $e\inn\min(\tau_2)$, there exists $3\!\le\!i\!\le\!m$ such that $N_{[\de_i]}\eq\{e\}^\succeq$;
%	conversely,
%	for every $3\!\le\!i\!\le\!m$,
%	there exists $e\inn\min(\tau_2)$ such that $\{e\}^\succeq\!\subset\! N_{[\de_i]}$.
	
	As described in \S\ref{rd3ph3},
	the blowup centers of $(\rd_3\ph_3)$ locally are given by
	\begin{equation}\begin{split}
		\label{Eqn:r3p3_tree_cmptb}
		&\cH_k^{\rd_3\ph_2}\!\cap\ti\cV^{\rd_3\ph_2}
		=
		\cH_k\cap\cV
		=\!\bigcup_{\fE\in\Xi_k(\tau_2)}\!\!\!\!
		\{\,\ka_{12}\eq 0;~
		\ze_e\eq 0~\forall~e\inn\fE\,\},
		\\
		&
		\tn{where}\qquad
		 \Xi_k(\tau_2)=\{\fE\inn\Xi(\tau_2):|\fE|\eq k\}.
	\end{split}\end{equation}
	It is thus a direct check that on $\cV$, $(\rd_3\ph_3)$ is a grafted blowup with respect to a $\tau_2$-compatible blowup of $\cV$,
	hence by Lemma~\ref{LmSupplementary},
	$(\rd_3\ph_3)$ is $\ex(\tau_2)$-compatible on $\cV$, where the grafted edge corresponds to $\ka_{12}$.
	Then, by Proposition~\ref{PrpDominating},
	after $(\rd_3\ph_3)$, we have
	\begin{itemize}[leftmargin=*]
		\item either 
		$\ti\ka_{12}$ divides all $\ti\ze_{[e]}$, $e\inn\min(\tau_2)$, hence divides all $\ti\ze_{[\de_i]}$, $3\!\le\!i\!\le\!m$;
		\item or there exists $e\inn\De^{\min}_{\ti x}\,\big(\subset\!\min(\tau_2)\big)$ such that $\ti\ze_{[e]}$ divides $\ti\ka_{12}$ as well as all $\ti\ze_{[e']}$, $e'\inn\min(\tau_2)$,
		hence there exists $3\!\le\!i\!\le\!m$ such that $\ti\ze_{[\de_i]}$ divides $\ti\ka_{12}$ as well as all $\ti\ze_{[\de_j]}$, $3\!\le\!j\!\le\!m$.
	\end{itemize}
	In sum, the pullback of the matrix (\ref{e_varphiMnCs4'}) becomes diagonalized after  $(\rd_3\ph_3)$ finishes.
	
	Finally, by Lemma~\ref{LmLocalLoci},
	we see the pullback $\ti\cV^{\rd_3\ph_3}$ of $\cV$ is smooth.
	In addition, since the blowup centers of $(\rd_3\ph_4)$ lie in the exceptional divisors of $(\rd_1\ph_5)$,
	and $(\rd_1\ph_5)$ does not affect $\cV$,
	we conclude that the pullback of $\cV$ in $\ti\fM_2^{\rm div}$ is equal to $\ti\cV^{\rd_3\ph_3}$,
	hence is smooth.
\end{proof}

We remark that in (\ref{Eqn:tau_CaseA.4}),
the reason for choosing {\it minimal} $N_{[\de]}$'s is the same as that for introducing the {\it terminally weighted trees} in \cite[\S3]{HL10} (except here the core is of weight 2 instead of~0):
if $N_{[\de]}\!\subset\! N_{[\de']}$,
then $\ze_{[\de]}$ divides $\ze_{[\de']}$ in the equation $(\ref{e_varphiMnCs4'})$,
hence $\ti\ze_{[\de]}$ divides $\ti\ze_{[\de']}$.
In order to distinguish the minimality with respect to inclusion from other types of minimality, e.g.~with respect to the tree order on a rooted tree,
we introduce the following notion for later use.

\begin{defi}
	\label{Dfn:inc_min}
	Given a set $I$ of sets,
	we say $N\inn I$ is  \ts{inclusion-minimal} if it is minimal with respect to the inclusions of subsets among all $N'\inn I$.
\end{defi}

\subsection{Proof of~\ref{Case:5}.\ref{Case:5.5}, Part~I}
\label{Subsec:Case5.5}
We continue with the setup in \S\ref{SubsecChangeofPhiStatement} and fix a small affine smooth chart $\cV\!\to\!\Md$ containing $\wh x\eq(C,D)$.
Recall the image of $\wh x$ in $\fM_2\wt$ is denoted by $x$.

Now we aim to prove~\ref{Case:5}.\ref{Case:5.5} of Proposition~\ref{PrpChangeofPhi},
when $F$ is inseparable and $D\!\cap\!F$ consists of one point,
which w.l.o.g.~is assumed to be $\de_1$.
%The remaining $\de_i$'s are thus all on tails.
Such a point $\wh x\inn \Md$ is possibly the image of a point of the boundary of $\ov M_2(\P^n,d)$,
e.g.~when the core $F$ of $C$ consists three smooth rational subcurves $C_1$, $C_2$ and $C_3$,
satisfying $C_1\!\cap\!C_2\eq\{e_1,e_2\}$,
$C_3\!\cap\!C_2\eq\{e_3,e_4\}$,
$e_1,\ldots,e_4$ are all distinct,
$D\!\cap\!C_1\eq D\!\cap\!C_3\eq\emptyset$ (there are tails attached to $C_1$ and $C_3$ by stability),
and $D\!\cap\!C_2\eq\{\de_1\}$.
As mentioned in Remark~\ref{rem:r1p5},
such $\wh x$ must be considered so to resolve the entire $\ov M_2(\P^n,d)$.

By Proposition~\ref{Prp:phi_key},
the structural homomorphism can be written as
\begin{equation}\label{e_Mn5}
	\varphi=\left[\;
	\begin{matrix} 
		1 & 0 &\cdots&0\\
		0&
		\ka_{12}\ze_{[\de_2]}
		& \cdots & \ka_{1m}\ze_{[\de_m]}
	\end{matrix}\;
	\right]\,.
\end{equation}
Since $x\inn\fM^\mn$ and $\deg D\!\cap\!F\eq 1$,
the phases $(\rd_1\ph_1)$-$(\rd_1\ph_4)$ do not affect $\cV$.
Moreover,
we only need to study the case when 
\begin{align*}
	m\ge 3\,,
\end{align*}
for otherwise (\ref{e_Mn5}) is already diagonalized, and
$(\rd_1\ph_5)$-$(\rd_3\ph_4)$ do not affect $\cV$ either.

By defining the subsets $D_{\im}\!\subset\!D$ and $E\!\subset\!N(C)$ exactly in the same way as (\ref{Eqn:tau_CaseA.4}),
and defining the relation $\preceq$ on $E$ given by $e\!\preceq\!e'$ if and  only if any connected subcurve of $C$ containing $F$ and $e$ must contain $e'$,
we see
\begin{align}\label{Eqn:tree_A.5}
	\tau_1:=(E,\preceq)
\end{align}
is a rooted tree,
whose root-to-leaf paths are exactly $N_{[\de]}$, $\de\inn D_{\im}$.
Although the constructions of $\tau_1$ here and $\tau_2$ in (\ref{Eqn:tree_A.4}) are analogous, we emphasize $D_{\im}\!\subset\!\{\de_2,\ldots,\de_\ell\}$ for $\tau_1$,
whereas 
$D_{\im}\!\subset\!\{\de_3,\ldots,\de_\ell\}$ for $\tau_2$.
An example of $\tau_1$ is illustrated in Figure~\ref{Fig:CaseA.5_tau}.

\begin{figure}[htb]
	\begin{center}
		\begin{tikzpicture}
			\def\g2left{
				(0,0.8) arc (90:270:1.6 and 0.8)
				(-1.04,0.08)--(-0.88,0)
				..controls (-0.64,-0.12)..(-0.4,0)
				--(-0.24,0.08)
				(-0.88,0)..controls (-0.64,0.12)..(-0.4,0)
			}
			\def\crs{
				(-.03,-.03)--(.03,.03)
				(-.03,.03)--(.03,-.03)
			}
			
			\draw[thick] \g2left
			[xscale=-1] \g2left;
			
			\draw[thick]
			(.8,1.13) circle (0.4)
			(-.8,1.13) circle (0.4)
			(-.8,1.93) circle (0.4);
			
			\draw[thick]%[fill=black!42]
			(-.8,2.63) circle (0.3)
			(-1.5,1.13) circle (0.3)
			(-1.5,1.93) circle (0.3)
			(.8,1.83) circle (0.3)
			(1.5,1.13) circle (0.3)
			(-1.9,0) circle (0.3);
			
			%\draw [xshift=1.35cm,yshift=.3cm]
			\crs;
			\draw [xshift=1.25cm,yshift=-.3cm]
			\crs;
			\draw [xshift=1.7cm,yshift=1.15cm]
			\crs;
			\draw [xshift=.95cm,yshift=1.95cm]
			\crs;
			\draw [xshift=.65cm,yshift=1.95cm]
			\crs;
			\draw [xshift=-.65cm,yshift=2.8cm]
			\crs;
			\draw [xshift=-.95cm,yshift=2.8cm]
			\crs;
			\draw [xshift=-.8cm,yshift=2.58cm]
			\crs;
			\draw [xshift=-1.65cm,yshift=1.85cm]
			\crs;
			\draw [xshift=-1.5cm,yshift=2cm]
			\crs;
			\draw [xshift=-.6cm,yshift=1.85cm]
			\crs;
			\draw [xshift=-1.6cm,yshift=1.3cm]
			\crs;
			\draw [xshift=-2.02cm,yshift=.1cm]
			\crs;
			\draw [xshift=-2.02cm,yshift=-.1cm]
			\crs;
			\draw [xshift=2.2cm,yshift=2.6cm]
			\crs;
			
			%\draw[<->,>=stealth]
			%(1.6,.33)..controls (1.8,.15) and (1.8,-.15)..(1.6,-.33);
			
			\draw
			(-.8,1.4) node {\tiny{$e$}}
			(-1.05,1.13) node {\tiny{$d$}}
			(-.8,2.2) node {\tiny{$g$}}
			(-1.05,1.93) node {\tiny{$f$}}
			(0,-.5) node {\tiny{$F$}}
			(-1.45,0) node {\tiny{$h$}}
			(-.64,.58) node {\tiny{$r$}}
			(.7,.56) node {\tiny{$c$}}
			(.8,1.38) node {\tiny{$p$}}
			(1.03,1.13) node {\tiny{$q$}}
			%(1.75,0) node[right] {\tiny{conjugate}}
			(2.2,2.6) node[right] {\tiny{:~points of $D$}}
			(2,-.7) node[right] {\small{$\wh x\eq(C,D)$}};
			
			\filldraw[xshift=7.5cm]
			(0,2) circle (1.2pt)
			(0,1) circle (1.2pt)
			(1,1) circle (1.2pt)
			(-1,1) circle (1.2pt)
			(-.4,0) circle (1.2pt)
			(.2,0) circle (1.2pt)
			(.8,0) circle (1.2pt)
			(1.4,0) circle (1.2pt);
			\draw[xshift=7.5cm]
			(.8,0)--(1,1)
			(0,2)--(0,1)
			(-1,1)--(0,2)--(1,1)--(1.4,0)
			(.2,0)--(0,1)--(-.4,0);
			\draw[xshift=7.5cm]
			(-.5,1.5) node[left] {\tiny{$h$}}
			(.08,1.5) node[left] {\tiny{$r$}}
			(.5,1.5) node[right] {\tiny{$c$}}
			(-.2,.5) node[left] {\tiny{$d$}}
			(.05,.5) node[right] {\tiny{$e$}}
			(.95,.5) node[left] {\tiny{$p$}}
			(1.2,.5) node[right] {\tiny{$q$}}
			(0,2) node[above] {\tiny{$o$}}
			(0,-.7) node {\small{$\tau_1$}};
		\end{tikzpicture}
	\end{center}
	\caption{An example of the rooted tree $\tau_1$ in \ref{Case:5}.\ref{Case:5.5}}\label{Fig:CaseA.5_tau}
\end{figure}

Mimicking the argument of the proof of Proposition~\ref{Prp:phi_M5_core_wt2},
we see $(\rd_1\ph_5)$ is $\tau$-compatible on~$\cV$,
with the partition $\Xi(\tau_1)\eq\bigsqcup_k\Xi_k(\tau_1)$ still given by $\Xi_k(\tau_1)=\{\fE\inn\Xi(\tau_1):|\fE|\eq k\}$.
Lemma~\ref{LmLocalLoci} then implies the pullback $\ti\cV^{\rd_1\ph_5}$ of $\cV$ is smooth.

After $(\rd_1\ph_5)$,
we fix an arbitrary lift $y\inn \ti\cV^{\rd_1\ph_5}$ of $\wh x$, as well as a small neighborhood $\cV_y$ of $y$ in $\ti\cV^{\rd_1\ph_5}$.
%whose RLS is denoted by $\ov \bE\eq \{\fE_1,\ldots,\fE_\cht\}$.
By Definition~\ref{DfnDominant}, there exists $i\!\ge\!2$ such that $N_{[\de_i]}\!\subset\!\De_y$,
i.e.~every node lying between~$\de_i$ and the core $F$ is dominant;
such $\de_i$ is naturally contained in $D_{\im}$.
For convenience,
hereafter we  write
\begin{align}\label{Eqn:I_y}
	D(\De_y):=
	\big\{\,\de\inn D\bsl F:\,
	N_{[\de]}\!\subset\!\De_y
	\big\}\quad
	\big(=
	\big\{\,\de_i\inn D:\,
	i\!\ge\!2,~
	N_{[\de_i]}\!\subset\!\De_y
	\big\}\,\big)\,.
\end{align}

If in addition, 
there exists $\de_i\inn D(\De_y)$ that is not conjugate to $\de_1$ (i.e.~$\ka_{1i}$ is invertible as $\cV$ is assumed small),
then by Proposition~\ref{PrpDominating},
on $\cV_y$,
the $i$-th entry of the pullback of the second row of (\ref{e_Mn5}) divides the other entries of the second row,
hence the pullback of $\varphi$ is diagonalized after ($\rd_1\ph_5$).
Moreover,
$(\rd_2)$-$(\rd_3\ph_2)$ do not affect $\cV_y$ because $(\rd_1\ph_1)$ does not affect $\cV$;
and
$(\rd_3\ph_3)$-$(\rd_3\ph_4)$ do not affect $\cV_y$ because $\de_i$ is not conjugate to $\de_1$.
Therefore,
the pullback of $\cV_y$ in $\ti\fM_2^{\rm div}$ is equal to $\cV_y$,
hence is smooth.	
In sum, 
we have established the following.
\begin{prop}\label{Prp:Case_A.5.1}
	Proposition~\ref{PrpChangeofPhi} holds in \ref{Case:5}.\ref{Case:5.5} if there exists $\de\inn D(\De_y)$ that is not conjugate to the point $D\!\cap\!F$.
\end{prop}
	
If $\de_i$ and $\de_1$ are conjugate for all $\de_i\inn D(\De_y)$,
then $(\rd_3\ph_3)$ is involved (and so is $(\rd_3\ph_4)$ if such $i$ is unique).
We shall deal with this situation in the following two subsections,
which will complete the proof of \ref{Case:5}.\ref{Case:5.5}.

\subsection{First-order derived trees and the proof of~\ref{Case:5}.\ref{Case:5.5}, Part~II}
\label{SubsecTree'}
We continue with the proof of~\ref{Case:5}.\ref{Case:5.5} under the setting of \S\ref{Subsec:Case5.5}.
Recall we have already assumed
$D\!\cap\!F\eq\{\de_1\}$,
and fixed a lift $y$ of~$\wh x\eq(C,D)$ as well as a small neighborhood $\cV_y$ of $y$ in $\ti\cV^{\rd_1\ph_5}$.
The RL-sequence  of $y$ is written as $\ov\bE\eq\{\fE_1,\ldots,\fE_\cht\}$,
and the  set of dominant edges of $y$ is denoted by $\De_y$.

To complete the proof of~\ref{Case:5}.\ref{Case:5.5},
it remains to handle the case when $\de_i$ and $\de_1$ are conjugate for all $\de_i\inn D(\De_y)$.
We will tackle the complicated $|D(\De_y)|\!=\!1$ sub-case in the succeeding subsection;
in this subsection,
we assume
\begin{align*}
	\big|D(\De_y)\big|\ge 2\,.
\end{align*}
W.l.o.g.~we further assume $D(\De_y)\eq\{\de_2,\ldots,\de_\ell\}$ for some $\ell\!\ge \!3$.

The above assumptions imply that there exists a unique tail $T$ containing $\de_2,\ldots,\de_\ell$,
for otherwise there would be multiple tails whose pivotal nodes are conjugate with $\de_1$,
which could only occur when these tails and $\de_1$ are on a non-separating bridge of $F$,
hence the image of $\wh x$ in $\fM_2\wt$ would belong to $\fM^{\mn}_{(3)}$ instead of $\fM^{\mn}$,
contradicting the hypothesis of \ref{Case:5}.
Therefore,
neither $\de_1$ nor the pivotal node $p$ ($=\!\lr{\de_i}$ for all $2\!\le\!i\!\le\!\ell$) of $T$ is on any non-separating bridge of $F$.

By Proposition~\ref{PrpDominating},
under suitable trivialization,
the pullback of (\ref{e_Mn5}) can be written as
\begin{align*}
	\ti\varphi^{\rd_1\ph_5}\!=
	\left[\;
	\begin{matrix} 
		1 & 0 \\
		0&
		\prod_{k\in\lrbr\cht}\!\wc\ve_k
	\end{matrix}\;
	\right]
	\left[\;
	\begin{matrix} 
		1 & 0 &\cdots&0 & 0 &\cdots&0\\
		0&
		\wc\ka_{12} & \cdots &
		\wc\ka_{1\ell} &
		\wc\ka_{1,\ell+1}\prod_{e\in N_{[\de_{\ell+1}]}\bsl\De_y}\!\!\wc\ze_e 
		&\cdots
		&\wc\ka_{1m}\prod_{e\in N_{[\de_{m}]}\bsl\De_y}\!\!\wc\ze_e 
	\end{matrix}\;
	\right].
\end{align*}
Here, $\wc\ve_k$'s are the local parameters corresponding to the proper transforms $\wc\cE_{\fE_k}$'s of the exceptional divisors obtained in ($\rd_1\ph_5$);
see Lemma~\ref{LmCoordPT}.

Recall for every $i\inn\lrbr m$, $N_{[\de_2\wedge\de_i]}\eq N_{[\de_2]}\!\cap\!N_{[\de_i]}$ as in (\ref{Eqn:wedge}).
Hence by Proposition~\ref{Prp:phi_key}~\ref{Part:kappa},
there exist invertible $u'_i$ and $v'_i$ such that
\begin{align}\label{Eqn:[m]_wedge}
	\wc\ka_{1i}=
	u'_i\cdot
	\wc\ka_{12}+v'_i\cdot\!\!\prod_{k\in{\lrbr \cht}_{[\de_2\wedge\de_i]}}\!\!\!\!\!\!\!\wc\ve_k\,,\quad\tn{where}\quad
	{\lrbr \cht}_{[\de_i\wedge\de_j]}:=
	\{k\inn\lrbr\cht:\,N_{[\de_i\wedge\de_j]}\!\cap\!\fE_k\!\ne\!\emptyset\}.
\end{align}
In addition, by (\ref{Eqn:I_y}), we have
\begin{align}\label{Eqn:I_y'}
	N_{[\de_i]}\bsl\De_y=\emptyset\qquad
	\forall~2\!\le\!i\!\le\!\ell.
\end{align} 
Therefore,
after taking elementary column operations,
we can rewrite the second row of $\ti\varphi^{\rd_1\ph_5}$ as
\begin{align*}
	\big(\!\prod_{k\in\lrbr\cht}\!\!\wc\ve_k\,\big)
	\left[\;
	\begin{matrix} 
		0&
		\wc\ka_{12} &
		\eta'_3
		&\cdots
		&
		\eta'_m
	\end{matrix}\,
	\right],
	\qquad\tn{where}\quad
	\eta'_i:=
	\big(\!\prod_{k\in{\lrbr\cht}_{[\de_2\wedge\de_i]}}\!\!\!\!\!\!\!\wc\ve_k\,\big)
	\big(\!\prod_{e\in N_{[\de_{i}]}\bsl\De_y}\!\!\!\!\!\wc\ze_e \,\big).
\end{align*}

Since $\de_2$ is an arbitrary point of $D(\De_y)$,
there should be a way to rewrite the second row of $\ti\varphi^{\rd_1\ph_5}$ so that the terms are independent of the choice of $\de_2$ (after suitable elementary column operations).
This follows from the lemma below.

\begin{lemm}
	\label{Lm:wedge}
	There exists $3\!\le\!j\!\le\!\ell$ such that
	\begin{align*}
		\big(N_{[\wedge D(\De_y)]}:=\big)\quad
		\bigcap_{\de\in D(\De_y)} \!\!\!\!\!N_{[\de]}
		=N_{[\de_2\wedge\de_j]}.
	\end{align*}
\end{lemm}

\begin{proof}
	Notice the left-hand side of the above equality is equal to $\bigcap_{i=2}^\ell\big(N_{[\de_2]}\!\cap\!N_{[\de_i]}\big)$.
	In addition, notice that
	given three smooth points $\de,\de',\de''$,
	one of $N_{[\de]}\!\cap\!N_{[\de']}$ and  $N_{[\de]}\!\cap\!N_{[\de'']}$ must contain the other as a subset.
	Therefore, $\cN'_2:=\big\{N_{[\de_2]}\!\cap\!N_{[\de_i]}:
	2\!\le\!\ell\big\}$ has a natural linear order given by the inclusion of subsets.
	As $\cN'_2$ is a finite set, there exists $3\!\le\!j\!\le\!\ell$ such that $N_{[\de_2]}\!\cap\!N_{[\de_j]}$ is contained in any element of $\cN'_2$.
	Then, 
	\begin{align*}
		\bigcap_{i=2}^\ell N_{[\de_i]}
		=
		\bigcap_{i=2}^\ell\big(N_{[\de_2]}\!\cap\!N_{[\de_i]}\big)
		=
		N_{[\de_2]}\!\cap\!N_{[\de_j]}
		=
		N_{[\de_2\wedge\de_j]},
	\end{align*}
	where the last equality follows from (\ref{Eqn:wedge}).
\end{proof}

Let
\begin{equation}\begin{split}\label{Eqn:[m]_de_wedge}
	&{\lrbr \cht}_{[\wedge D(\De_y)]}:=
	\big\{\,k\inn\lrbr\cht:\,N_{[\wedge D(\De_y)]}\!\cap\!\fE_k\!\ne\!\emptyset\,\big\},
	\\
	&{\lrbr \cht}_{[\de_i\wedge D(\De_y)]}:=
	\big\{\,k\inn\lrbr\cht:\,N_{[\de_i]}\!\cap\! N_{[\wedge D(\De_y)]}\!\cap\!\fE_k\!\ne\!\emptyset\,\big\}\ \subset {\lrbr \cht}_{[\wedge D(\De_y)]},\quad
	\forall~2\!\le\!i\!\le\!m\,.
\end{split}\end{equation}
By Lemma~\ref{Lm:wedge} and (\ref{Eqn:I_y'}),
there exists $3\!\le\!j\!\le\!\ell$ such that
\begin{align*}
	\eta'_j
	=\prod_{k\in{\lrbr \cht}_{[\de_j\wedge D(\De_y)]}}\!\!\!\!\!\!\!\wc\ve_k\ 
	=\prod_{k\in{\lrbr \cht}_{[\wedge D(\De_y)]}}\!\!\!\!\!\wc\ve_k\,.
\end{align*}
Moreover,
for every $3\!\le\!i\!\le\!m$ other than $j$,
\begin{itemize}[leftmargin=*]
	\item either $N_{[\de_2\wedge\de_i]}\!\supset\!N_{[\de_2\wedge\de_j]}$,
	which, by Lemma~\ref{Lm:wedge},
	implies $\eta_j'|\eta_i'$,
	\item or $N_{[\de_2\wedge\de_i]}\!\subsetneq\!N_{[\de_2\wedge\de_j]}$,
	which, also by Lemma~\ref{Lm:wedge},
	implies
	\begin{align*}
		\eta'_i:=
		\big(\!\prod_{k\in{\lrbr \cht}_{[\de_i\wedge D(\De_y)]}}\!\!\!\!\!\!\!\wc\ve_k\,\big)
		\big(\!\prod_{e\in N_{[\de_{i}]}\bsl\De_y}\!\!\!\!\!\wc\ze_e \,\big).
	\end{align*}
\end{itemize}
So, under suitable elementary column operations,
we further rewrite the second row of $\ti\varphi^{\rd_1\ph_5}$ as
\begin{align}\label{Eqn:phi_r1p5}
	\big(\!\prod_{k\in\lrbr\cht}\!\!\wc\ve_k\,\big)
	\left[\;
	\begin{matrix} 
		0&
		\wc\ka_{12} &
		\eta_3
		&\cdots
		&
		\eta_m
	\end{matrix}\,
	\right],
	\qquad\tn{where}\quad
	\eta_i:=
	\big(\!\prod_{k\in{\lrbr \cht}_{[\de_i\wedge D(\De_y)]}}\!\!\!\!\!\!\!\!\!\wc\ve_k\ \big)
	\big(\!\prod_{e\in N_{[\de_{i}]}\bsl\De_y}\!\!\!\!\!\wc\ze_e \,\big).
\end{align}

By Lemmas~\ref{Lm:K_smoothness}~(\ref{Cond:K_param}) and~\ref{LmCoordPT}~\ref{Claim3CoordPT},
we see $\wc\ka_{12}$, $\wc\ve_k$, $k\inn{\lrbr\cht}$, and $\wc\ze_e$, $e\inn N_{[\de_i]}\bsl\De_y$, $3\!\le\!i\!\le\!m$, together form a subset of a system of local parameters on $\cV_y$.
Since $|D\!\cap\!F|\!>\!0$ and $|D(\De_y)|\!\ge\!2$,
we conclude that $(\rd_2)$-$(\rd_3\ph_3)$ do not affect $\cV_y$.
Similar to \ref{Case:5}.\ref{Case:5.4},
we aim to construct a rooted tree $\varrho_y$ whose root-to-leaf paths are exactly those inclusion-minimal ${\lrbr \cht}_{[\de_i\wedge D(\De_y)]}\!\sqcup\! \big(N_{[\de_i]}\bsl\De_y\big)$ among all $3\!\le\!i\!\le\!m$,
and show $(\rd_3\ph_4)$ is $\ex(\varrho_y)$-compatible on $\cV_y$.

\vsp
We begin with $\varrho_y$.
Similar to (\ref{Eqn:tau_CaseA.4}),
we set
\begin{align*}
	D_{y,\im}:=\big\{\,\de\inn D\bsl F:\;
	&
	{\lrbr \cht}_{[\de\wedge D(\De_y)]}\!\sqcup\! \big(N_{[\de]}\bsl\De_y\big)~\tn{is~inclusion-minimal~among~all}\\
	&{\lrbr \cht}_{[\de_i\wedge D(\De_y)]}\!\sqcup\! \big(N_{[\de_i]}\bsl\De_y\big),~2\!\le\!i\!\le\!m\,\big\}.
\end{align*}
It is straightforward that $D_{y,\im}\!\subset\!D_{\im}$.
So by writing
\begin{align*}
	E_y:=\!\bigcup_{\de\in D_{y,\im}}\!\!\!\!\Big({\lrbr \cht}_{[\de\wedge D(\De_y)]}\!\sqcup\! \big(N_{[\de]}\bsl\De_y\big)\Big),
\end{align*}
we see the subset $E_y\big\bsl {\lrbr \cht}_{[\wedge D(\De_y)]}$ is also a subset of the underlying set of the rooted tree $\tau_1$ of (\ref{Eqn:tree_A.5}).
On $E_y$,
consider the partial order $\preceq_y$ determined by
\begin{itemize}[leftmargin=*]
	\item for every $k,k'\inn{\lrbr \cht}_{[\wedge D(\De_y)]}$,
	$k\!\prec_y\!k'$ if and only if $k\!>\!k'$;
	\item for every $e, e'\inn\bigcup_{\de\in D_{y,\im}}\big(N_{[\de]}\bsl\De_y\big)$,
	$e\!\preceq_y\!e'$ if and  only if any connected subcurve of $C$ containing $F$ and $e$ must contain $e'$;
	\item 
	for every $k\inn{\lrbr \cht}_{[\wedge D(\De_y)]}$ and $e\inn \bigcup_{\de\in D_{y,\im}}\big(N_{[\de]}\bsl\De_y\big)$,
	$e\!\prec_y \!k$ if and only if $e\inn\big(N_{[\wedge D(\De_y)]}\!\cap\!\fE_k\big)^\prec$ in $\tau_1$.
\end{itemize}
It is a direct check that $\preceq_y$ is a tree order on $E_y$ as per (\ref{Eqn:tree_order}).
We call the rooted tree 
\begin{align}\label{Eqn:tree_A.5.2}
	\varrho_y:=(E_y,\preceq_y)
\end{align}
the \ts{first-order derived tree} at $y$.
Notice that restriction of the tree order $\prec_y$ to $E_y\big\bsl {\lrbr \cht}_{[\wedge D(\De_y)]}$ is consistent with the restriction of the tree order of (\ref{Eqn:tree_A.5}).
Also notice the root-to-leaf paths of the
are exactly those inclusion-minimal ${\lrbr \cht}_{[\de_i\wedge D(\De_y)]}\!\sqcup\! \big(N_{[\de_i]}\bsl\De_y\big)$ among all $2\!\le\!i\!\le\!m$;
particularly,
${\lrbr \cht}_{[\wedge D(\De_y)]}$ is a root-to-leaf path of $\varrho_y$.

In the following example,
we provide illustration of the notion of first-order derived trees.
The rooted trees labeled as $\varrho_{y,1}$ and $\varrho_{y,2}$ in Figure~\ref{figDerivedTFMRs} are irrelevant here;
they will be introduced in \S\ref{Subsubsec:Case_E.1.e_1}.

\begin{figure}[htp]
	\begin{center}
		\begin{tikzpicture}
			\draw[xshift=-4cm]
			(0,0)--(1.2,1.2)
			(0,0)--(-.6,.6)
			(.6,.6)--(0,1.2)
			;
			\filldraw[xshift=-4cm]
			(0,0) circle (2pt)
			(.6,.6) circle (2pt)
			(-.6,.6) circle (2pt)
			(1.2,1.2) circle (2pt)
			(0,1.2) circle (2pt)
			;
			\draw[xshift=-4cm]
			(.2,.2) node[right] {\scriptsize{$e_b$}}
			(-.15,.2) node[left] {\scriptsize{$e_a$}}
			(.85,.85) node[right] {\scriptsize{$e_d$}}
			(.4,.85) node[left] {\scriptsize{$e_c$}}
			;
			
			\draw[xshift=-.5cm] (0,0)--(-.6,.6)--(-.6,1.2)
			(0,0)--(.6,.6)
			(0,0)--(0,.6);
			\filldraw[xshift=-.5cm] (0,0) circle (2pt)
			%(0,-.6) circle (2pt)          
			(.6,.6) circle (2pt)
			(-.6,.6) circle (2pt)
			(-.6,1.2) circle (2pt)
			(0,.6) circle (2pt);
			\draw[xshift=-.5cm]
			(.45,.2) node {\scriptsize{$e_d$}}
			(-.4,.2) node {\scriptsize{$1$}}
			(-.45,.9) node {\scriptsize{$2$}}
			(-.12,.4) node {\scriptsize{$e_c$}}
			%(0,-.3) node[right] {$e_r$}
			%(-.3,0) node[left] {$e_p$}
			%(1,-.5) node {$\ga_2$}
			;
			
			\draw[xshift=1.8cm] (0,0)--(-.6,.6);
			\draw[xshift=1.8cm] (0,0)--(.6,.6);
			\draw[xshift=1.8cm] (0,0)--(0,.6);
			\filldraw[xshift=1.8cm] (0,0) circle (2pt)
			%(0,-.6) circle (2pt)          
			(.6,.6) circle (2pt)
			(-.6,.6) circle (2pt)
			%(-.6,1.2) circle (2pt)
			(0,.6) circle (2pt);
			\draw[xshift=1.8cm]%(0,0) node[right] {$o$}
			%(.6,.6) node[right] {$b$}
			%(-.6,.6) node[right] {$a$}
			%(1.2,1.2) node[right] {$d$}
			%(0,1.2) node[right] {$c$}
			(.45,.2) node {\scriptsize{$e_d$}}
			(-.4,.2) node {\scriptsize{$1$}}
			%(-.45,.9) node {\scriptsize{$\se_2$}}
			(-.12,.4) node {\scriptsize{$e_c$}}
			%(0,-.3) node[right] {$e_r$}
			%(-.3,0) node[left] {$e_p$}
			%(1,-.5) node {$\ga_2$}
			;
			
			\draw[xshift=3.6cm] (0,0)--(1.2,1.2);
			\draw[xshift=3.6cm] (0,0)--(-.6,.6);
			\draw[xshift=3.6cm] (.6,.6)--(0,1.2);
			\filldraw[xshift=3.6cm] (0,0) circle (2pt)
			%(0,-.6) circle (2pt)          
			(.6,.6) circle (2pt)
			(-.6,.6) circle (2pt)
			(1.2,1.2) circle (2pt)
			(0,1.2) circle (2pt);
			\draw[xshift=3.6cm]%(0,0) node[right] {$o$}
			%(.6,.6) node[right] {$b$}
			%(-.6,.6) node[right] {$a$}
			%(1.2,1.2) node[right] {$d$}
			%(0,1.2) node[right] {$c$}
			(.45,.2) node {\scriptsize{$e_b$}}
			(-.4,.2) node {\scriptsize{$1$}}
			(1.05,.8) node {\scriptsize{$e_d$}}
			(.2,.8) node {\scriptsize{$e_c$}}
			%(0,-.3) node[right] {$e_r$}
			%(-.3,0) node[left] {$e_p$}
			%(1,-.5) node {$\ga_2$}
			;
			
			\draw[xshift=6cm] (0,0)--(.6,.6) 
			(0,0)--(-.6,.6);
			\filldraw[xshift=6cm] 
			(0,0) circle (2pt)
			%(0,-.6) circle (2pt)          
			(.6,.6) circle (2pt)
			(-.6,.6) circle (2pt)
			%(1.2,1.2) circle (2pt)
			%(0,1.2) circle (2pt)
			;
			\draw[xshift=6cm]%(0,0) node[right] {$o$}
			%(.6,.6) node[right] {$b$}
			%(-.6,.6) node[right] {$a$}
			%(1.2,1.2) node[right] {$d$}
			%(0,1.2) node[right] {$c$}
			(.45,.2) node {\scriptsize{$1$}}
			(-.4,.2) node {\scriptsize{$e_a$}}
			%(1.05,.8) node {\scriptsize{$e_d$}}
			%(.2,.8) node {\scriptsize{$e_c$}}
			%(0,-.3) node[right] {$e_r$}
			%(-.3,0) node[left] {$e_p$}
			%(1,-.5) node {$\ga_2$}
			;
			
			\draw[xshift=8.1cm] (0,0)--(1.2,1.2);
			\draw[xshift=8.1cm] (0,0)--(-.6,.6);
			\draw[xshift=8.1cm] (.6,.6)--(0,1.2);
			\filldraw[xshift=8.1cm] (0,0) circle (2pt)
			%(0,-.6) circle (2pt)          
			(.6,.6) circle (2pt)
			(-.6,.6) circle (2pt)
			(1.2,1.2) circle (2pt)
			(0,1.2) circle (2pt);
			\draw[xshift=8.1cm]%(0,0) node[right] {$o$}
			%(.6,.6) node[right] {$b$}
			%(-.6,.6) node[right] {$a$}
			%(1.2,1.2) node[right] {$d$}
			%(0,1.2) node[right] {$c$}
			(.45,.2) node {\scriptsize{$1$}}
			(-.4,.2) node {\scriptsize{$e_a$}}
			(1.05,.8) node {\scriptsize{$e_d$}}
			(.2,.8) node {\scriptsize{$2$}}
			%(0,-.3) node[right] {$e_r$}
			%(-.3,0) node[left] {$e_p$}
			%(1,-.5) node {$\ga_2$}
			;
			
			\draw[xshift=.5cm,yshift=-1.3cm] (0,0)--(-.6,.6);
			\draw[xshift=.5cm,yshift=-1.3cm] (0,0)--(.6,.6);
			\draw[xshift=.5cm,yshift=-1.3cm] (0,0)--(0,.6);
			\filldraw[xshift=.5cm,yshift=-1.3cm] (0,0) circle (2pt)
			%(0,-.6) circle (2pt)          
			(.6,.6) circle (2pt)
			(-.6,.6) circle (2pt)
			%(-.6,1.2) circle (2pt)
			(0,.6) circle (2pt);
			\draw[xshift=.5cm,yshift=-1.3cm]
			(.45,.2) node {\scriptsize{$e_d$}}
			(-.4,.2) node {\scriptsize{$1$}}
			%(-.45,.9) node {\scriptsize{$\se_2$}}
			(-.12,.4) node {\scriptsize{$e_c$}}
			%(0,-.3) node[right] {$e_r$}
			%(-.3,0) node[left] {$e_p$}
			%(1,-.5) node {$\ga_2$}
			;
			
			\draw[xshift=-1.5cm,yshift=-1.3cm] (0,0)--(-.6,.6);
			\draw[xshift=-1.5cm,yshift=-1.3cm] (0,0)--(.6,.6);
			\draw[xshift=-1.5cm,yshift=-1.3cm] (0,0)--(0,.6);
			\filldraw[xshift=-1.5cm,yshift=-1.3cm] (0,0) circle (2pt)
			%(0,-.6) circle (2pt)          
			(.6,.6) circle (2pt)
			(-.6,.6) circle (2pt)
			%(-.6,1.2) circle (2pt)
			(0,.6) circle (2pt);
			\draw[xshift=-1.5cm,yshift=-1.3cm]
			(.45,.2) node {\scriptsize{$e_d$}}
			(-.4,.2) node {\scriptsize{$2$}}
			%(-.45,.9) node {\scriptsize{$\se_2$}}
			(-.12,.4) node {\scriptsize{$e_c$}}
			%(0,-.3) node[right] {$e_r$}
			%(-.3,0) node[left] {$e_p$}
			%(1,-.5) node {$\ga_2$}
			;
			
			\draw[xshift=7.4cm,yshift=-1.3cm] (0,0)--(-.6,.6);
			\draw[xshift=7.4cm,yshift=-1.3cm] (0,0)--(.6,.6);
			\draw[xshift=7.4cm,yshift=-1.3cm] (0,0)--(0,.6);
			\filldraw[xshift=7.4cm,yshift=-1.3cm] (0,0) circle (2pt)
			%(0,-.6) circle (2pt)          
			(.6,.6) circle (2pt)
			(-.6,.6) circle (2pt)
			%(-.6,1.2) circle (2pt)
			(0,.6) circle (2pt);
			\draw[xshift=7.4cm,yshift=-1.3cm]
			(.45,.2) node {\scriptsize{$e_d$}}
			(-.4,.2) node {\scriptsize{$2$}}
			%(-.45,.9) node {\scriptsize{$\se_2$}}
			(-.12,.4) node {\scriptsize{$e_a$}}
			%(0,-.3) node[right] {$e_r$}
			%(-.3,0) node[left] {$e_p$}
			%(1,-.5) node {$\ga_2$}
			;
			
			\draw[xshift=9.4cm,yshift=-1.3cm] (0,0)--(-.6,.6);
			\draw[xshift=9.4cm,yshift=-1.3cm] (0,0)--(.6,.6);
			\filldraw[xshift=9.4cm,yshift=-1.3cm] (0,0) circle (2pt)
			%(0,-.6) circle (2pt)          
			(.6,.6) circle (2pt)
			(-.6,.6) circle (2pt)
			%(-.6,1.2) circle (2pt)
			%(0,.6) circle (2pt)
			;
			\draw[xshift=9.4cm,yshift=-1.3cm]
			(.45,.2) node {\scriptsize{$e_a$}}
			(-.4,.2) node {\scriptsize{$1$}}
			%(-.45,.9) node {\scriptsize{$\se_2$}}
			%(-.12,.4) node {\scriptsize{$e_a$}}
			%(0,-.3) node[right] {$e_r$}
			%(-.3,0) node[left] {$e_p$}
			%(1,-.5) node {$\ga_2$}
			;
			
			\draw 
			(-4.1,-.2) node {\tiny{$o$}}
			(-3.5,-.4) node {\scriptsize{$\tau$}}
			(4.1,1.95) node {\scriptsize{possible $\varrho_y$}}
			(-1.3,-1.7) node {\scriptsize{$\varrho_{y,2}$}}
			(.7,-1.7) node {\scriptsize{$\varrho_{y,1}$}}
			(7.6,-1.7) node {\scriptsize{$\varrho_{y,2}$}}
			(9.6,-1.7) node {\scriptsize{$\varrho_{y,1}$}}
			(.1,1.2) node {\scriptsize{(1)}}
			(1.2,1.2) node {\scriptsize{(2)}}
			(3,1.2) node {\scriptsize{(3)}}
			(5.4,1.2) node {\scriptsize{(4)}}
			(7.5,1.2) node {\scriptsize{(5)}}
			;
			\draw [loosely dashed]
			(-1.3,1.4) rectangle (.3,-.2)
			(1,1.4) rectangle (2.6,-.2)
			(2.8,1.4) rectangle (5,-.2)
			(5.2,1.4) rectangle (6.8,-.2)
			(7.3,1.4) rectangle (9.5,-.2)
			;
			\draw[decoration={brace,amplitude=8pt},decorate]
			(-1.35,1.5) -- (9.55,1.5);
			\draw[decoration={brace,amplitude=6pt},decorate]
			(-2.2,-.5) -- (1.2,-.5);
			\draw[decoration={brace,amplitude=6pt},decorate]
			(6.7,-.5) -- (10.1,-.5);
		\end{tikzpicture}
	\end{center}
	\caption{First-order derived trees in Example~\ref{EgTree'}}\label{figDerivedTFMRs}
\end{figure}

\begin{exam}\label{EgTree'}
	Let $\tau$  be the rooted tree given by leftmost graph in Figure~\ref{figDerivedTFMRs},
	whose root is $\{e_a,e_b\}$ (i.e.~the root vertex $o$ is the lowest one).
	As in Example~\ref{Eg:G1},
	$\tau$ has two transverse sections:
	$$
	\fE_{1}\eq \{e_a,e_b\}\succ
	\fE_{2}\eq \{e_a,e_c,e_d\}.
	$$
	Given $\wh x\inn X^\cV_{\tau}$ as well as $y$ lifting $\wh x$, we analyze $\varrho_y$ for all possible RLS $\ov\bE$ and $\De_{y}$ of $y$.
	
	\begin{enumerate}
		[label=(\arabic*),leftmargin=*]
		\item If $\De_y\eq \{e_a,e_b\}$ and $\ov \bE\eq\{\fE_1,\fE_2\}$,
		then $\varrho_y$ is given by Graph~$(1)$ of Figure~\ref{figDerivedTFMRs}.
		
		\item If $\De_y\eq \{e_a,e_b\}$ and $\ov \bE\eq\{\fE_1\}$,
		then $\varrho_y$ is given by Graph~$(2)$ of Figure~\ref{figDerivedTFMRs}.
		
		\item If $\De_y\eq \{e_a\}$ and $\ov \bE\eq\{\fE_1\}$,
		then $\varrho_y$ is given by Graph~$(3)$ of Figure~\ref{figDerivedTFMRs}.
		
		\item If $\De_y\eq \{e_b,e_c,e_d\}$ and $\ov \bE\eq\{\fE_1,\fE_2\}$,
		then $\varrho_y$ is given by Graph~$(4)$ of Figure~\ref{figDerivedTFMRs}.
		
		\item If $\De_y\eq \{e_b,e_c\}$ and $\ov \bE\eq\{\fE_1,\fE_2\}$
		then $\varrho_y$ is given by Graph~$(5)$ of Figure~\ref{figDerivedTFMRs}.
		The case that $\De_y\eq \{e_b,e_d\}$ and $\ov \bE\eq\{\fE_1,\fE_2\}$ is parallel.
		
		\item In all other cases (e.g.~$\De_y\eq \{e_a,e_b,e_c\}$ and $\ov \bE\eq\{\fE_1,\fE_2\}$),
		$\varrho\eq\tau_\bullet$ because $N_{[\wedge D(\De_y)]}\eq\emptyset$.
	\end{enumerate}
\end{exam}

\vsp
In order to show $(\rd_3\ph_4)$ is $\ex(\varrho_y)$-compatible on $\cV_y$,
first notice that each transverse section $\fE$ of $\varrho_y$ contains a unique element $k_\fE\inn{\lrbr \cht}_{[\wedge D(\De_y)]}$, and determines a transverse section
\begin{align*}
	\fE\dual:=
	\big(\fE\bsl\{k_\fE\}\big)\sqcup 
	\big(N_{[\wedge D(\De_y)]}\!\cap\!\fE_{k_\fE}\big)
\end{align*}
of the rooted tree $\tau_1$.
Since $\big(\fE\bsl\{k_\fE\}\big)\!\subset\!(\tau_1\bsl\De_y)$,
we have $\fE\dual\!\preceq\!\fE_{k_\fE}$,
hence $|\fE|\eq|\fE\dual|\!\ge\!|\fE_{k_\fE}|$.

For every $1\!\le\!h\!\le\!h'$,
with $\cQ_{h,h'}$ as in (\ref{rd3ph4-pi++}),
we denote by $\wc\cQ_{h,h'}$ its proper transform after ($\rd_1\ph_5$).
Then, it is a direct check that
\begin{align}\label{Eqn:r3p4_tree_cmptb}
	\wc\cQ_{h,h'}\!\cap\!\cV_y=
	\bigcup_{
	\fE\in\Xi(\varrho_y)~\tn{s.t.}~
	|\fE_{k_\fE}|= h,~
	|\fE|= h'
	}\!\!\!\!\!\!\!\!\!\!\!\!
	\big\{\,
	\wc\ka_{12}\eq 0;~
	\wc\ve_{k_\fE}\eq 0;~
	\wc\ze_e\eq 0~\forall~e\inn \fE\bsl\{k_\fE\}
	\big\}\,.
\end{align}
This gives rise to a partition of $\Xi(\varrho_y)$, hence of $\Xi\big(\ex(\varrho_y)\big)$.
Observe that for every $\fE,\fE'\inn\Xi(\varrho_y)$ with $\fE\!\succ\!\fE'$,
either we have $k_{\fE}\!\succ_y\!k_{\fE'}$,
i.e.~$k_{\fE}\!<\!k_{\fE'}$, hence $|\fE_{k_\fE}|\!<\!|\fE_{k_{\fE'}}|$;
or we have $k_{\fE}\!=\!k_{\fE'}$ and $\fE\dual\!\succ\!(\fE')\dual$ in $\Xi(\tau_1)$,
hence $|\fE|\eq|\fE\dual|\!\le\!|(\fE')\dual|\eq|\fE'|.$
Therefore, the above partition of  $\Xi\big(\ex(\varrho_y)\big)$ satisfies
Definition~\ref{DfnGaAdm},
i.e.~($\rd_3\ph_4$) is $\ex(\varrho_y)$-compatible on $\cV_y$.

So after ($\rd_3\ph_4$),  Proposition~\ref{PrpDominating} implies either the pullback of $\wc\ka_{12}$ or the pullback of some $\eta_i$, $3\!\le\!i\!\le\!m$,
divides the other terms of the second row of the pullback $\ti\varphi^{\rd_3\ph_4}$ of $\ti\varphi^{\rd_1\ph_5}$,
i.e.~$\ti\varphi^{\rd_3\ph_4}$ is diagonalized.
Moreover,
Lemma~\ref{LmLocalLoci} ensures the pullback of $\cV_y$ is smooth after ($\rd_3\ph_4$).
To summarize,
in this subsection,
we have established the following.
\begin{prop}\label{Prp:Case_A.5.2}
	Proposition~\ref{PrpChangeofPhi} holds in \ref{Case:5}.\ref{Case:5.5} if $|D(\De_y)|\!\ge\!2$ and every point of $D(\De_y)$ is  conjugate to the point $D\!\cap\!F$.
\end{prop}

\subsection{Proper transforms of blowups and the proof of \ref{Case:5}.\ref{Case:5.5}, Part III}
\label{SubsecTwoTrees}

\subsubsection{Proof of \ref{Case:5}.\ref{Case:5.5}, Part III, up to ($\rd_3\ph_3$)}
\label{Subsubsec:Case_A.5.III}

We continue with the proof of~\ref{Case:5}.\ref{Case:5.5} under the setting of \S\ref{Subsec:Case5.5} and \S\ref{SubsecTree'}.
In particular,
recall we have assumed
$D\!\cap\!F\eq\{\de_1\}$,
and fixed a lift $y$ of~$\wh x\eq(C,D)$ as well as a small neighborhood $\cV_y$ of $y$ in $\ti\cV^{\rd_1\ph_5}$.
The RL-sequence  of $y$ is written as $\ov\bE\eq\{\fE_1,\ldots,\fE_\cht\}$,
and the  set of dominant edges of $y$ is denoted by $\De_y$.

In this subsection,
we aim to show the only sub-case of~\ref{Case:5}.\ref{Case:5.5} that has not been established,
namely when $|D(\De_y)|\!=\!1$ and the point of $D(\De_y)$ is conjugate to $\de_1$.
W.l.o.g.~we further assume $D(\De_y)\eq\{\de_2\}$.

Under the above assumptions,
the pullback $\ti\varphi^{\rd_1\ph_5}$ of the structural homomorphism $\varphi$ is analogous to that in \S\ref{SubsecTree'},
but simpler:
\begin{align}\label{Eqn:phi_A.5.III}
	&
	\ti\varphi^{\rd_1\ph_5}\!=
	\left[\;
	\begin{matrix} 
		1 & 0 \\
		0&
		\prod_{k\in\lrbr\cht}\!\wc\ve_k
	\end{matrix}\;
	\right]
	\left[\;
	\begin{matrix} 
		1 & 0 & 0 &\cdots&0 \\
		0&
		\wc\ka_{12} &
		\eta_3
		&\cdots
		&
		\eta_m
	\end{matrix}\;
	\right],\\
	&
	\tn{where}\qquad
	\eta_i:=
	\big(\!\prod_{k\in{\lrbr \cht}_{[\de_2\wedge\de_i]}}\!\!\!\!\!\!\!\wc\ve_k\; \big)
	\big(\!\prod_{e\in N_{[\de_{i}]}\bsl\De_y}\!\!\!\!\!\wc\ze_e \,\big)\,.\nonumber
\end{align}

The blowups of
$(\rd_2)$-$(\rd_3\ph_2)$ do not affect $\cV_y$ because $|D\!\cap\!F|\!>\!0$,
so we identify $\cV_y$ with its pullback in $\ti\fM^{\rd_3\ph_2}$.
The hypotheses $|D\!\cap\!F|\!=\!1$, $|D(\De_y)|\eq 1$, and $\de_1$ and $\de_2$ are conjugate together imply that $y$ lies in the intersection of certain $\cH_h^{\rd_3\ph_2}$ as in (\ref{Eqn:r3p3_blowup_center}), as well as the proper transforms $\wc\cQ_{h,h'}$ of certain $\cQ_{h,h'}$ as in (\ref{rd3ph4-pi++}).

Precisely, consider the following two  rooted trees:
one is $\tau_2$ as in (\ref{Eqn:tree_A.4}),
with $D_{\im}$ of (\ref{Eqn:tau_CaseA.4}) replaced by
\begin{align*}
	\big\{\,\de_i\inn D:\,
N_{[\de_i]}\bsl\De_y~\tn{is~inclusion-minimal~among~all}~3\!\le\!i\!\le\!m\,\big\};
\end{align*}
the other is the first order derived tree $\varrho_y$ as in (\ref{Eqn:tree_A.5.2}).
Notice $\varrho_y$ is still well-defined when $|D(\De_y)|\eq 1$;
in this case
\begin{align*}
	N_{[\wedge D(\De_y)]}=N_{[\de_2]},\qquad
	{\lrbr \cht}_{[\wedge D(\De_y)]}=\lrbr\cht,\qquad
	{\lrbr \cht}_{[\de_i\wedge D(\De_y)]}=
	{\lrbr \cht}_{[\de_2\wedge\de_i]}\,.
\end{align*}
It is then a direct check that (\ref{Eqn:r3p3_tree_cmptb}) and (\ref{Eqn:r3p4_tree_cmptb}) remain valid on $\cV_y$.
Therefore,
($\rd_3\ph_3$) is  $\ex(\tau_2)$-compatible on $\cV_y$,
and in particular,
the pullback $\ti\cV_y^{\rd_3\ph_3}\!\subset\!\ti\fM^{\tn{div},\rd_3\ph_3}$ of $\cV_y$ is smooth.
Moreover, the sequential blowup
\begin{align}
	\label{Eqn:r3p4_proto}
	%(\pi^{\rd_3\ph_4})':
	(\ti\fM^{\tn{div},\rd_3\ph_4})'\lra
	\ti\fM^{\tn{div},\rd_3\ph_2}
\end{align}
successively (w.r.t.~the usual lexicographic order) along the proper transforms of $\cQ_{h,h'}$ in $\ti\fM^{\tn{div},\rd_3\ph_2}$,
which serves as the
``prototype'' of ($\rd_3\ph_4$),
is $\ex(\varrho_y)$-compatible on $\cV_y$.

After ($\rd_3\ph_3$),
fix an arbitrary lift $z\inn\ti\fM^{\tn{div},\rd_3\ph_3}$ of $y$, which determines $\De_z\!\subset\!\ex(\tau_2)$ as in Definition~\ref{DfnDominant}.
Let $\cV_z\!\subset\!\ti\cV_y^{\rd_3\ph_3}$ be a small neighborhood of $z$.

\begin{lemm}
	\label{Lm:PT_blowup_center}
	For every $\fE\inn\Xi\big(\ex(\varrho_y)\big)$ satisfying $\fE\!\cap\!\De_z\!\ne\!\emptyset$,
	the proper transform (on $\ti\cV_y^{\rd_3\ph_3}$) of the vanishing locus $X^{\cV_y}_\fE$  is disjoint from  $\cV_z$.
\end{lemm}

\begin{proof}
	The statement holds because the pullback to $\cV_z$ of $X^{\cV_y}_\fE$ is contained in the proper transform of some exceptional divisor of $\cV_z/\cV_y$; this is similar to~(\ref{Eqn:X_pullback}).
\end{proof}

By Lemma~\ref{Lm:PT_blowup_center}, on $\cV_z$,
the proper transforms of $\cQ_{h,h'}$ of (\ref{rd3ph4-pi++}) on $\cV_z$, which are exactly the blowup centers $(\rd_3\ph_4)$,
are all unions of the proper transforms of $X^{\cV_y}_\fE$ with $\fE\inn\Xi\big(\ex(\varrho_y)\big)$ satisfying $\fE\!\cap\!\De_z\!=\!\emptyset$.
We thus expect such transverse sections of $\ex(\varrho_y)$ form a new rooted tree,
with which ($\rd_3\ph_4$) is compatible on $\cV_z$.

\subsubsection{Proper transforms of blowups and rooted trees}\label{Subsubsec:PT}
In order to study the rooted tree that  ($\rd_3\ph_4$) is compatible with on $\cV_z$,
we take a detour and introduce the following notions under a more general setting.

\begin{defi}
	\label{Dfn:PT}
	Let
	$\pi:\ti\fM\!\to\!\fM$ and $\pi':\ti\fM'\!\to\!\fM$ respectively be the sequential blowups of a stack $\fM$ successively along the proper transforms of the closed substacks 
	$Z_1,Z_2,\ldots\!\subset\!\fM$ and 
	$Z_1',Z_2',\ldots\!\subset\!\fM$.
	Let 
	$\wc Z_1',\wc Z_2',\ldots\!\subset\!\ti\fM$  be the proper transforms of $Z_1',Z_2',\ldots$,
	respectively.
	We call the sequential blowup of $\ti\fM$ successively along the proper transforms of $\wc Z_1',\wc Z_2',\ldots$ the \ts{proper transform of} $\pi'$ (\ts{with respect to $\pi$}),
	and denote it by
	\begin{align*}
		\PT_{\pi}(\pi'):\ti\fM''\lra\ti\fM\,.
	\end{align*}
\end{defi}

For example, $(\rd_3\ph_4)$ is the proper transform of the sequential blowup (\ref{Eqn:r3p4_proto}) with respect to $(\rd_3\ph_3)$.

With notation as above,
assume there exist an affine smooth chart $\cV$ of $\fM$, two rooted trees $\tau$ and $\tau'$,
and a $\tau\!\cup\!\tau'$-labeled subset of a system of local parameters on $\cV$ as per (\ref{e_ze}), such that
$\pi$ and $\pi'$ are respectively $\tau$- and $\tau'$-compatible.
Here in  $\tau\!\cup\!\tau'$, $\tau$ and $\tau'$ are considered as sets instead of posets.
The intersection $\tau\!\cap\!\tau'$ may or may not be empty.

Fix a point $y\inn X^\cV_\tau\!\cap\!X^\cV_{\tau'}$,
as well as a lift $z\inn\ti\fM$ of $y$ (i.e.~$\pi(z)\eq y$). 
By Lemma~\ref{LmCoordPT},
$\{\wc\ze_e:e\inn\tau'\bsl\De_z\}$ form a subset of a system of local parameters on a neighborhood $\cV_z\!\subset\!\ti\fM$ of $z$.
Below,
we construct the rooted tree that $\PT_\pi(\pi')$ is compatible with on $\cV_z$.

Let
\begin{align}\label{Eqn:PT_tree}
	\Xi^\complement_z(\tau'):=
	\big\{\,\fE\inn\Xi(\tau'):\,
	\fE\!\cap\!\De_z\eq\emptyset\,\big\}\,,\qquad
	E':=\!
	\bigcup_{\fE\in \Xi^\complement_z(\tau')}\!\!\!\!\fE\quad \big(\subset(\tau'\bsl \De_z)\subset\tau'\big)\,.
\end{align}
We denote by $\preceq'$ the tree order of $\tau'$, whose restriction  to the subset $E'$ is also written as $\preceq'$.
It is straightforward that
\begin{align*}
	\PT_{z}(\tau')=
	\PT_{\pi,z}(\tau'):=
	\big(E',\preceq'\big)
\end{align*}
is a rooted tree, called the \ts{proper transform of $\tau'$ (with respect to $\pi$) at $z$}. (Indeed, every subposet of a rooted tree is naturally a rooted tree; c.f.~\cite[Definition~2.10]{HN2}.)

An example of the proper transform of a rooted tree is provided in Example~\ref{Eg:PT} at the end of \S\ref{SubsecTwoTrees}.

\begin{lemm}
	\label{Lm:PT_trans_sect}
With notation as above,
we have $\Xi\big(\PT_{z}(\tau')\big)\eq 
\Xi^\complement_z(\tau')$.
\end{lemm}

\begin{proof}
W.l.o.g.~we assume $\Xi^\complement_z(\tau')\!\ne\!\emptyset$,
which is equivalent to $\PT_{z}(\tau')\!\ne\!\tau_\bullet$.

Given $\fE\inn\Xi^\complement_z(\tau')$,
any two edges of $\fE$ are incomparable in $\tau'$, hence in $\PT_{z}(\tau')$.
Moreover, since $\PT_{z}(\tau')\!\subset\!\tau'$ as sets,
we see  every edge of $\PT_{z}(\tau')$ is  an edge of $\tau'$, hence is comparable with some edge of $\fE$.
The second criterion of Definition~\ref{Dfn:transverse_sections} then implies $\fE\inn \Xi\big(\PT_{z}(\tau')\big)$,
so $\Xi^\complement_z(\tau')\!\subset\!\Xi\big(\PT_{z}(\tau')\big)$.

Conversely, given $\fE\inn\Xi\big(\PT_{z}(\tau')\big)$,
any two edges of $\fE$ are incomparable in $\PT_{z}(\tau')$, hence in $\tau'$.
It remains to show
for every $e'\inn \tau'$,
there exists $e\inn\fE$ comparable with $e'$.
Let $\wp'$ be an arbitrary root-to-leaf path of $\tau'$ that contains $e'$.
By the first criterion of Definition~\ref{Dfn:transverse_sections},
each $\fE'\inn \Xi^\complement_z(\tau')$ meets $\wp'$ at a unique edge $e_{\fE'}$.
Since $\wp'$, along with the restriction of $\preceq'$,
is a linear set,
we denote by $e_0$ the least element of the set $\{e_{\fE'}\}$,
which is comparable with $e'$.
In addition, observe that $\wp'\!\cap\!\PT_{z}(\tau')$ is a root-to-leaf path of $\PT_{z}(\tau')$,
so it meets $\fE$ at a unique edge $e$.
Then, $e_0\!\preceq'\!e$.
\begin{itemize}[leftmargin=*]
	\item If $e'\!\preceq'\!e_0$,
	then $e'\!\preceq'\!e$.
	\item If $e'\!\succ'\!e_0$,
	then (\ref{Eqn:tree_order}) implies $e'$ and $e$ are comparable.
\end{itemize}
In sum, we find $e\inn\fE$ comparable with $e'$.
By the second criterion of Definition~\ref{Dfn:transverse_sections},
we thus have $\fE\inn \Xi(\tau')$.
Taking $\fE\inn E'\!\subset\!\tau'\bsl\De_z$ into consideration, we conclude that $\Xi\big(\PT_{z}(\tau')\big)\!\subset\!\Xi^\complement_z(\tau')$.
\end{proof}

\begin{coro}
	\label{Crl:PT}
	$\PT_{\pi}(\pi')$ is $\PT_{z}(\tau')$-compatible on $\cV_z$.
\end{coro}

\begin{proof}
	Let $\Xi(\tau')\eq\bigsqcup_{k\ge 1}\Xi_k(\tau')$ be the partition determined by $\pi'$ as per Definition~\ref{DfnGaAdm}.
	It induces a partition of $\Xi\big(\PT_{z}(\tau')\big)$:
	\begin{align*}
		\Xi\big(\PT_{z}(\tau')\big)=
		\Xi^\complement_z(\tau')=
		\bigsqcup_{k\ge 1}\Xi_{z,k}^\complement(\tau'),\qquad
		\tn{where}\quad
		\Xi_{z,k}^\complement(\tau'):=
		\{\,\fE\inn\Xi_k(\tau'):\,
		\fE\!\cap\!\De_z\eq\emptyset\,\}\,.
	\end{align*}
	The first identity above follows from Lemma~\ref{Lm:PT_trans_sect}.
	By the analogue of Lemma~\ref{Lm:PT_blowup_center} in the current setting,
	we see each $\wc Z_k'$ is the union of $X_\fE^{\cV_z}$, $\fE\inn \Xi_{z,k}^\complement(\tau')$,
	which justifies \ref{ConditionLocalLoci}  of Definition~\ref{DfnGaAdm} for $\PT_{\pi}(\pi')$.
	Since 
	\ref{ConditionOrder} naturally holds for $\PT_{\pi}(\pi')$,
	the statement of the corollary is established.
\end{proof}

\begin{prop}
	\label{Prp:PT_rl_path}
	Every root-to-leaf path of $\PT_{z}(\tau')$ is an inclusion-minimal element of 
	\begin{align*}
		S':=
		\big\{\,
		\wp'\bsl\De_z:\,
		\wp'~\tn{is~a~root-to-leaf~path~of~} \tau'
		\,
		\big\}\,,
	\end{align*}
	and vice versa.
\end{prop}

We remark that
each root-to-leaf path of $\PT_{z}(\tau')$,
as a subposet of $\PT_z(\tau')$,
is naturally a subposet of $\tau'$,
because so is $\PT_z(\tau')$.

\begin{proof}[Proof of Proposition~\ref{Prp:PT_rl_path}]
	If $\De_z$ contains a root-to-leaf path of $\tau'$,
	then on the one hand, we have $\emptyset\inn S'$, hence the only inclusion-minimal element of $S'$ is $\emptyset$;
	on the other hand, we have $\Xi^\complement_z(\tau')\eq\emptyset$ by Definition~\ref{Dfn:transverse_sections},
	so $\PT_z(\tau')$ is trivial,
	thus the only root-to-leaf path is the trivial poset. 
	%(Recall $\emptyset$, along with its unique order, is a linearly ordered set).
	So Proposition~\ref{Prp:PT_rl_path} is true when $\De_z$ contains a root-to-leaf path of $\tau'$.
	
	Below, we assume $\De_z$ does not contain any root-to-leaf path of $\tau'$ (which implies $\tau'$ is not trivial).
	
\begin{enumerate}[leftmargin=*,label=(\alph*)]
	\item 
	\label{Part:im_S'_subset_PT}
	We begin with showing every inclusion-minimal element of $S'$ is a root-to-leaf path of $\PT_z(\tau')$.
	
	\begin{itemize}[leftmargin=*]
		\item 
		To this end,
		we first show every inclusion-minimal element $\ti\wp\bsl\De_z$ of $S'$ is contained in $\PT_z(\tau')$.
		
		Notice each element of $S'$ is nonempty and thus has a unique least element (w.r.t.~the tree order),
		hence
		\begin{align*}
			\fE':=
			\big\{\,
			\min(\varsigma'):\,
			\varsigma'~\tn{is~an~inclusion-minimal~element~of}~S'
			\,\big\}
		\end{align*}
		is a well-defined nonempty subset of $\tau'\bsl\De_z$.
		We claim $$\fE'\in\Xi(\tau').$$
		
		In fact,
		for every root-to-leaf path $\wp'$ of $\tau'$,
		there exists a root-to-leaf path $\wp''$ of $\tau'$ such that 
		\begin{align*}
			\wp''\bsl \De_z~\tn{is~an~inlucsion-minimal~elment~of}~S'\qquad\tn{and}\qquad
			(\wp''\bsl\De_z)\subset(\wp'\bsl \De_z)\,.
		\end{align*}
		With $e''$ denoting the least element of $\wp''\bsl\De_z$, we have $$e''\in\big(\fE'\!\cap\!(\wp''\bsl\De_z)\big)
		\subset \big(\fE'\!\cap\!(\wp'\bsl\De_z)\big)
		\subset 
		\big(\fE'\!\cap\!\wp'\big).$$
		Suppose $\fE'\!\cap\!\wp'$ contains another edge $\wh e$ distinct from $e''$.
		Then, 
		either $\wh e\!\succ\!e''$ or $\wh e\!\prec\!e''$, because
		$e''$ and $\wh e$ belong to the linearly ordered set $\wp'$.
		However, $\wh e\inn\fE'$ implies
		there exists a root-to-leaf path $\wh\wp$ of $\tau'$ such that $\wh\wp\bsl\De_z$ is inclusion-minimal in $S'$,
		and $\wh e$ is the least element of $\wh\wp\bsl\De_z$.
		Consequently, one of $\wp''\bsl\De_z$ and $\wh\wp\bsl\De_z$ must contain the other as a strict subset,
		hence is not inclusion-minimal.
		This contradiction shows $e''$ is the only element of $\fE'\!\cap\!\wp'$,
		which by Definition~\ref{Dfn:transverse_sections} implies $\fE'\inn\Xi(\tau')$.
		
		Now, consider an arbitrary inclusion-minimal element $\ti\wp\bsl\De_z$ of $S'$, where $\ti\wp$ is a root-to-leaf path  of $\tau'$.
		For every $\ti e\inn\ti\wp\bsl\De_z$,
		we claim 
		\begin{align}\label{Eqn:ti_e}
			\ti e\in
			\ti\fE'\in\Xi(\tau'),\qquad
			\tn{where}\quad \ti\fE':=\max\big(\fE'\!\cup\! \{\ti e\}\big)\,.
		\end{align}
		\begin{itemize}[leftmargin=*]
			\item 
			The reason that $\ti\fE'\inn\Xi(\tau')$ is as follows.
			Notice that distinct maximal elements of $\fE'\!\cup\! \{\ti e\}$ are incomparable.
			In addition,
			for every $e\inn\tau'$,
			there exist $e'\inn\fE'$ comparable with $e$ because $\fE'\inn\Xi(\tau')$,
			as well as $e''\inn\ti\fE'$ satisfying $e''\!\succeq\!e'$.
			If $e'\!\succeq\!e$, then $e''\!\succeq\!e$;
			if $e'\!\prec\!e$, then by (\ref{Eqn:tree_order}),
			$e$ and $e''$ are comparable.
			In sum, we have $\ti\fE'\inn\Xi(\tau')$.
			
			\item 
			Suppose $\ti e\!\notin\!\ti\fE'$,
			then there would exist $\ti e'\inn\ti\fE'$ comparable with $\ti e$.
			From the definition of $\ti\fE'$ in (\ref {Eqn:ti_e}),
			we conclude that $\ti e\!\prec\!\ti e'$ and $\ti e'\inn\fE'$.
			However,
			the least element $\ti e^\flat$ of $\ti\wp\bsl\De_z$
			is contained in $\fE'$, satisfying 
			$\ti e^\flat\!\preceq\!\ti e\!\prec\!\ti e'$,
			which contradicts the fact that distinct elements of the transverse section $\fE'$ are incomparable.
			Therefore, $\ti e\!\in\!\ti\fE'$.
		\end{itemize}
		The above two paragraphs then give rise to  (\ref{Eqn:ti_e}).
		Taking $\fE'\!\cap\!\De_z\eq\emptyset$ and $\ti e\!\notin\!\De_z$ into consideration, we obtain $$\ti e\in\ti\fE'\in\Xi^\complement_z(\tau'),$$
		hence $\ti e\inn\PT_z(\tau')$.
		In other words,
		the inclusion-minimal element $\ti\wp\bsl\De_z$ of $S'$ is contained in $\PT_z(\tau')$.

		\item We then show the above $\ti\wp\bsl\De_z$ is a root-to-leaf path of $\PT_z(\tau')$,
		i.e.~a  linearly ordered subset of $\PT_z(\tau')$ that is maximal with respect to inclusion.
		
		Indeed,
		it is linearly ordered because so is $\ti\wp$.
		
		Suppose it is not maximal with respect to inclusion,
		then there exists a root-to-leaf path $\wp$ of $\PT_z(\tau')$ containing $\ti\wp\bsl\De_z$ as a strict subset.
		Take $e\inn\wp$ that is not in $\ti\wp\bsl\De_z$.
		By (\ref{Eqn:PT_tree}),
		there exists $\fE\inn\Xi(\tau')$ that is disjoint from $\De_z$ and contains $e$.
		Since $\ti\wp$ is a root-to-leaf path of $\tau'$,
		it meets $\fE$ at a unique edge $e_0$.
		\begin{itemize}[leftmargin=*]
			\item If $e_0\inn\De_z$,
			then $e_0\inn(\fE\!\cap\!\De_z)$,
			contradicting the fact that $\fE$ is disjoint from $\De_z$.
			
			\item If $e_0\inn(\ti\wp\bsl\De_z)$,
			then $e_0\!\ne\!e$ because $e\!\notin\!(\ti\wp\bsl\De_z)$.
			However, by the choice of $\wp$, we have
			$e_0\inn \wp$. 
			Since $\wp$ is linearly ordered, $e_0$ and $e$ are  comparable,
			contradicting the fact that they are distinct elements of the transverse section $\fE$. 
		\end{itemize}
		The above contradictions indicate $\ti\wp\bsl\De_z$ is a root-to-leaf path of $\PT_z(\tau')$.
		This completes the proof of Part~\ref{Part:im_S'_subset_PT}.
	\end{itemize}
	
	\item We next show every root-to-leaf path $\wp$ of $\PT_{z}(\tau')$ is an inclusion-minimal element of $S'$.
	
	To see this, notice that $\wp$
	%, as a maximal linearly ordered subset of $\PT_{z}(\tau')$,
	is a priori a linearly ordered subset of $\tau'$.
	Therefore, there exists a maximal linearly ordered subset $\wp'$ of $\tau'$, i.e.~a root-to-leaf path of $\tau'$,
	such that $\wp\!\subset\!\wp'$.
	Since the underlying set of $\PT_{z}(\tau')$ is contained in $\tau'\bsl\De_z$,
	we have $\wp\!\subset\!(\wp'\bsl\De_z)$.
	Then, there exists a root-to-leaf path $\wp''$ of $\tau'$ such that
	\begin{align*}
		\wp''\bsl \De_z~\tn{is~an~inlucsion-minimal~elment~of}~S'\qquad\tn{and}\qquad
		(\wp''\bsl\De_z)\subset(\wp'\bsl \De_z)\,.
	\end{align*}
	We will show $\wp\eq(\wp''\bsl\De_z)$.
	
	Recall by Part~\ref{Part:im_S'_subset_PT},
	$\wp''\bsl\De_z$ is a root-to-leaf path of $\PT_z(\tau')$.
	Therefore, in order to show $\wp\eq(\wp''\bsl\De_z)$,
	it suffices to show $\wp\!\subset\!(\wp''\bsl\De_z)$.
	Indeed,
	for every $e\inn\wp$ ($\subset\!\PT_{z}(\tau')$),
	by (\ref{Eqn:PT_tree}),
	there exists $\fE\inn\Xi^\complement_z(\tau')$ containing $e$.
	As a transverse section of $\tau'$,
	$\fE$ meets $\wp''$ at a unique edge $e''$.
	Since $\fE\!\cap\!\De_z\eq\emptyset$,
	we have 
	\begin{align*}
		e\in\wp\subset 
		(\wp'\bsl\De_z),\qquad
		e''\in (\wp''\bsl\De_z)\subset 
		(\wp'\bsl\De_z)\,.
	\end{align*}
	This implies
	$e$ and $e''$ both belong to $\wp'\!\cap\!\fE$,
	which is a singleton by Definition~\ref{Dfn:transverse_sections}.
	Therefore, $e\eq e''$,
	thus $\wp\!\subset\!(\wp''\bsl\De_z)$.
	\qedhere		
\end{enumerate}	
\end{proof}

\subsubsection{Proof of \ref{Case:5}.\ref{Case:5.5}, Part III,  ($\rd_3\ph_4$)}
\label{Subsubsec:Case_A.5.III_2}
We apply the results of \S\ref{Subsubsec:PT} to the fixed lift $z\inn\ti\fM^{\tn{div},\rd_3\ph_3}$ of $y\inn\ti\fM^{\tn{div},\rd_1\ph_5}$.
By Corollary~\ref{Crl:PT},
$(\rd_3\ph_4)$ is $\PT_{z}\big(\ex(\varrho_y)\big)$-compatible on $\cV_z$,
so after $(\rd_3\ph_4)$, the pullback of $\cV_z$ is smooth.

It remains to verify the pullback of (\ref{Eqn:phi_A.5.III}) is diagonalized after $(\rd_3\ph_4)$.
\begin{itemize}[leftmargin=*]
\item 
If $\De_z$ contains the grafted edge (corresponding to the parameter $\wc\ka_{12}$) of $\ex(\tau_2)$,
then the pullback of (\ref{Eqn:phi_A.5.III}) to $\cV_z$ is already diagonalized.

\item 
If $\De_z$ does not contain the grafted edge of $\ex(\tau_2)$,
then the second row of the pullback of
(\ref{Eqn:phi_A.5.III}) takes the form
\begin{align*}
	\big(\!\prod_{k\in\lrbr\cht}\!\!\wc\ve_k\,\big)
	\big(\!\prod_{k\in\lrbr{\cht^{\rd_3\ph_3}}}\!\!\!\!\wc\ve_k^{\,\rd_3\ph_3}\,\big)
	\left[\;
	\begin{matrix} 
		0&
		\wc\ka_{12} &
		\wc \eta_3
		&\cdots
		&
		\wc \eta_m
	\end{matrix}\,
	\right],
	\quad
	\tn{where}\ \ 
	\wc\eta_i:=
	\big(\!\prod_{k\in{\lrbr \cht}_{[\de_2\wedge \de_i]}}\!\!\!\!\!\!\!\!\wc\ve_k\ \big)
	\big(\!\prod_{e\in N_{[\de_{i}]}\bsl(\De_y\sqcup\De_z)}\!\!\!\!\!\!\!\!\!\wc\ze_e~\big).
\end{align*}
Here, the superscript $\rd_3\ph_3$ indicates the parameters corresponding to the exceptional divisors obtained in ($\rd_3\ph_3$).
Notice that
for each $3\!\le\!i\!\le\!m$ with
$N_{[\de_i]}\bsl\De_y\!\subset\!\De_z$,
the corresponding $\wc\eta_i\eq \prod_{k\in{\lrbr \cht}_{[\de_2\wedge \de_i]}}\wc\ve_k$.
So by Proposition~\ref{Prp:PT_rl_path},
the inclusion-minimal elements of the set 
\begin{align*}
	\{e_\ex\}\sqcup
	\big\{\,
	{\lrbr \cht}_{[\de_2\wedge \de_i]}\sqcup 
	\big(N_{[\de_{i}]}\bsl(\De_y\sqcup\De_z)\big):\,
	3\!\le\!i\!\le\!m
	\,\big\}
\end{align*}
are exactly the root-to-leaf paths of $\PT_z\big(\ex(\varrho_y)\big)$.
Hence by Proposition~\ref{PrpDominating},
on the pullback of $\cV_z$,
the pullback of
(\ref{Eqn:phi_A.5.III}) is diagonalized.
\end{itemize}

To summarize,
in this subsection,
we have established the following.
\begin{prop}\label{Prp:Case_A.5.3}
	Proposition~\ref{PrpChangeofPhi} holds in \ref{Case:5}.\ref{Case:5.5} if $|D(\De_y)|\!\ge\!1$ and the point of $D(\De_y)$ is  conjugate to the point $D\!\cap\!F$.
\end{prop}

Propositions~\ref{Prp:Case_A.5.1}, \ref{Prp:Case_A.5.2} and~\ref{Prp:Case_A.5.3} together give rise to the following.
\begin{prop}
	\label{Prp:phi_M5_core_wt1}
	Proposition~\ref{PrpChangeofPhi} holds in \ref{Case:5}.\ref{Case:5.5}.
\end{prop}

We conclude \S\ref{SubsecTwoTrees} with an example that demonstrates the idea of proper transforms of blowups as well as the structure of the proof of Part III.

\begin{exam}\label{Eg:PT}
	Let $\wh x\eq(C,D)\inn\fM_2^{\tn{div}}$ be such that $C$ is the same rooted tree as in Figure~\ref{Fig:abcd}, but the distribution of the elements of $D$ is different:
	$$
	D\!\cap\!F\eq\{\de_1\},\quad
	D\!\cap\!C_b\eq\{\de_2\},\quad
	D\!\cap\!C_a\!\ni\!\{\de_3\},\quad 
	D\!\cap\!C_c\!\ni\!\{\de_4\},\quad 
	D\!\cap\!C_d\!\ni\!\{\de_5\}.
	$$
	Moreover,
	we assume $\de_1$ and $\de_2$ are conjugate.
	This implies $\de_1$ and $\de_3$ cannot be conjugate,
	nor can $\de_3$ be conjugate to $\de_4$ or $\de_5$.
	
	By Proposition~\ref{Prp:phi_key}, under suitable trivialization, 
	$\varphi$ on a chart $\cV$ containing $(C,D)$ can be written as
	$$
	\left[
	\begin{matrix}
		1 & 0 & 0 & 0 & 0 & 0 & \cdots\\
		0 & {\ka}_{12}\ze_b & \ze_a & \ze_b^2\ze_c & \ze_b^2\ze_d & 0 & \cdots
	\end{matrix}
	\right]\, .
	$$
	
	As shown in \S\ref{Subsec:Case5.5},
	$(\rd_1\ph_5)$ is compatible with $\tau_1$ (\ref{Eqn:tree_A.5}) near $\wh x$.
	Here,
	$\tau_1$ is given by the set $\{q_a,q_b\}$ along with the trivial order.
	
	After $(\rd_1\ph_5)$,
	we take a lift $y$ of $\wh x$ whose RLS and dominant edges are respectively given by
	\begin{align*}
		\ov\bE=\big\{\{q_a,q_b\}\big\},\qquad
		\De_y=\{q_b\}.
	\end{align*}
	Locally near $y$,
	the pullback (\ref{Eqn:phi_A.5.III}) of $\varphi$ can be further simplified as
	$$
	\left[
	\begin{matrix}
		1 & 0 \\
		0 & \ve_1^{\rd_1\ph_5}
	\end{matrix}
	\right]
	\left[
	\begin{matrix}
		1 & 0 & 0 & 0 & 0 & 0 & \cdots\\
		0 & \wc{\ka}_{12} & \wc\ze_a & \ve_1^{\rd_1\ph_5}\ze_c & \ve_1^{\rd_1\ph_5}\ze_d & 0 & \cdots
	\end{matrix}
	\right]\, .
	$$
	Here, $\ve_1^{\rd_1\ph_5}$ corresponds to the exceptional divisor after blowing up along $(\ze_a\eq\ze_b\eq 0)$.
	
	Let $\tau_{2}$ be the rooted tree given by the set $\{q_a,q_c,q_d\}$ along with the trivial order, which is the analogue of (\ref{Eqn:tree_A.4}) in this situation.
	In addition,
	let $\varrho_y$ be the first-order derived tree at $y$ as in (\ref{Eqn:tree_A.5}),
	which is given by the set $\{1,q_a\}$ along with the trivial order.
	
	As shown in the beginning of \S\ref{Subsubsec:Case_A.5.III},
	$(\rd_3\ph_3)$ is compatible with $\ex(\tau_2)$ near $y$,
	where the grafted edge corresponds to $\wc\ka_{12}$.
	After $(\rd_3\ph_3)$, we fix a lift $z$ of $y$.
	As shown in \S\ref{Subsubsec:Case_A.5.III_2},
	$(\rd_3\ph_4)$ is compatible with $\PT_z\big(\ex(\varrho_y)\big)$ near $z$,
	where the grafted edge corresponds to $\wc\ka_{12}$.
	
	If either $\wc\ka_{12}$ or $q_a$ is contained in $\De_z$,
	then on the one hand,
	we immediately see the pullback of $\varphi$ becomes diagonalized near any lift of $z$.
	On the other hand,
	we have $\PT_z\big(\ex(\varrho_y)\big)\eq\tau_\bullet$ because the only transverse section of $\ex(\varrho_y)$ is $\{e_\ex,1,q_a\}$;
	in other words, $(\rd_3\ph_4)$ does not affect a neighborhood of $z$.
	
	If neither $\wc\ka_{12}$ nor $q_a$ is contained in $\De_z$,
	then at least one between $q_c$ and $q_d$,
	say $q_c$, is in $\De_z$.
	On the one hand,
	near $z$, the pullback of $\varphi$ can be rewritten as 
	$$
	\left[
	\begin{matrix}
		1 & 0 \\
		0 & \ve_1^{\rd_1\ph_5}\ve_1^{\rd_3\ph_3}
	\end{matrix}
	\right]
	\left[
	\begin{matrix}
		1 & 0 & 0 & 0 & 0 &  \cdots\\
		0 & \wc{\ka}_{12} & \wc\ze_a & \ve_1^{\rd_1\ph_5} &  0 & \cdots
	\end{matrix}
	\right]\, .
	$$
	Here, $\ve_1^{\rd_1\ph_5}$ corresponds to the exceptional divisor after blowing up along $(\wc\ze_a\eq\ze_c\eq\ze_d\eq 0)$.
	(By abuse of notation, we still write the proper transform of $\ve_1^{\rd_1\ph_5}$ as $\ve_1^{\rd_1\ph_5}$.)
	On the other hand,
	we have $\PT\big(\ex(\varrho_y)\big)\eq\ex(\varrho_y)$ because the only transverse section $\{e_\ex,1,q_a\}$ of $\ex(\varrho_y)$ does not contain $q_c$ or $q_d$.
	In $(\rd_3\ph_4)$, the locus $(\wc\ka_{12}\eq\ve_1^{\rd_1\ph_5}\eq\wc\ze_a\eq 0)$ is blown up,
	hence the pullback of $\varphi$ becomes diagonalized near any lift of $z$.
\end{exam}

\subsection{Proof of \ref{Case:4}}
\label{Subsec:Case_B}
In this subsection, we prove Proposition~\ref{PrpChangeofPhi} in \ref{Case:4}.

Let 
$
\cV\to\fM_2^{\rm div}
$ be a small affine smooth chart containing~$\wh x\eq(C,D)$.
In~\ref{Case:4},
we assume~$x$, the image  of $\wh x$ in $\fM_2\wt$, lies in $\fM^\mn_{(4)}$.
This implies the core $F$ of $\wh x$ can be written as
\begin{align*}
	F=F_1\cup \tn B\cup F_2,
\end{align*}
where $F_1$ and $F_2$ are both genus 1 inseparable components, and $\tn B$ is either a separating bridge or a node, such that any two among $F_1$, $F_2$ and $\tn B$ do not share any common irreducible component.
W.l.o.g. we assume that 
\begin{gather*}
	a_1\in F_1\,,\qquad
	a_2\in F_2\,,\qquad\tn{and}\qquad
	D\!\cap\! F
	=\{\de_1,\cdots,\de_\ell\}
	\subset F_1\!\cup\! \tn B
\end{gather*}
for some $\ell\inn\lrbr m$.

Notice that $D\!\cap\! F$ cannot be entirely contained in $\tn B$, for otherwise $x$ would be in $\fM_{(2)}^\mn$.
Hence $D$ has an element, say $\de_1$, such that
\begin{align}\label{Eqn:r1p4_de_1}
	\de_1\inn F_1,\qquad
	\tn{i.e.}\qquad
	N_{[\de_1,a_1]}=\emptyset\,.
\end{align}
So by Proposition~\ref{Prp:phi_key},
after suitable elementary row and column operations, $\varphi$ can be written as
\begin{equation}
	\label{Eqn:Case_B_phi}
	\varphi=\left[
	\begin{matrix} 
		1 & 0 &  \cdots & 0\\
		0 & \ka_{12}\ze_{[\de_2,a_2]} & \cdots 
		& \ka_{1m}\ze_{[\de_m,a_2]}
	\end{matrix} 
	\right]\,.
\end{equation}
Also notice $D\!\cap\! F$ cannot belong to any non-separating bridge of $F_1$ (should there be any),
for otherwise $x$ would be in $\fM_{(3)}^\mn$.
Therefore,
($\rd_1\ph_1$)-($\rd_1\ph_3$) do not affect $\cV$.

Mimicking (\ref{Eqn:tau_CaseA.4}),
we set
\begin{align*}
	&
	D_{\im}:=\big\{\,\de\inn D:\,
	N_{[\de,a_2]}~\tn{is~inclusion-minimal~among~all}~N_{[\de',a_2]},~\de'\inn D\,\big\},
	\\
	&
	E:=\!\bigcup_{\de\in D_{\im}}\!\!\!N_{[\de,a_2]}\ \  \big(\subset N(C)\big).
\end{align*}
On the set $E$,
we define the relation $\preceq$ such that $e\!\preceq\!e'$ if and  only if any connected subcurve of $C$ containing $F_2$ and $e$ must contain $e'$,
which once again satisfies (\ref{Eqn:tree_order}),
i.e.~the poset
\begin{align}\label{Eqn:tree_B}
	\tau^\dag:=(E,\preceq)
\end{align}
is a rooted tree.
The root-to-leaf paths of $\tau^\dag$ are exactly $N_{[\de,a_2]}$, $\de\inn D_{\im}$.

An example of $\tau^\dag$ is provided in Figure~\ref{Fig:B}.
\begin{figure}[htb]
	\begin{center}
		\begin{tikzpicture}
			\def\g1{
				(-1,0) ellipse (1 and 0.5)
				(-1.4,0)..controls(-1,0.1)..(-0.6,0)
				(-0.6,0)..controls(-1,-0.1)..(-1.4,0)
				(-0.5,0.05)--(-0.6,0)
				(-1.4,0)--(-1.5,0.05)
			}
			\def\halfg1{
				(-.32,0)..controls(-.32,-.1) and (-.4,-.2)..(-.6,-.2)
				(-.6,-.2)..controls(-.8,-.2) and (-1,-.25)..(-1,-.35)
				(-.6,-.5)..controls(-.8,-.5) and (-1,-.45)..(-1,-.35)
				(-.6,-.5)..controls(-.25,-.5) and (0,-.35)..(0,0)
				(-.32,0)..controls(-.32,.1) and (-.4,.2)..(-.6,.2)
				(-.6,.2)..controls(-.8,.2) and (-1,.25)..(-1,.35)
				(-.6,.5)..controls(-.8,.5) and (-1,.45)..(-1,.35)
				(-.6,.5)..controls(-.25,.5) and (0,.35)..(0,0)
			}
			\def\crs{
				(-.03,-.03)--(.03,.03)
				(-.03,.03)--(.03,-.03)
			}
			
			\draw\g1;
			\draw[xshift=-1cm]
			(-1.4,0) circle (.4cm)
			;
			\draw[xshift=-2.8cm]
			\halfg1;
			\draw[xshift=-4.8cm,xscale=-1]
			\halfg1;
			
			\draw
			(-2.4,.8) circle (.4cm)
			(-1,.9) circle (.4cm)
			;
			
			\draw[xshift=-4.5cm,yshift=.3cm]
			\crs			
			;
			\draw[xshift=-4cm,yshift=-.32cm]
			\crs			
			;
			\draw[xshift=-2.5cm,yshift=1cm]
			\crs			
			;
			\draw[xshift=-.8cm,yshift=.95cm]
			\crs			
			;
			\draw[xshift=-1.1cm,yshift=1.05cm]
			\crs			
			;
			\draw[xshift=-3cm,yshift=-.1cm]
			\crs			
			;
			
			\draw[xshift=4cm]
			(0,0)--(1.2,1.2)
			(0,0)--(-.6,.6)
			(.6,.6)--(0,1.2)
			;
			\filldraw[xshift=4cm]
			(0,0) circle (2pt)
			(.6,.6) circle (2pt)
			(-.6,.6) circle (2pt)
			(1.2,1.2) circle (2pt)
			(0,1.2) circle (2pt)
			;
			\draw[xshift=4cm]
			(.2,.2) node[right] {\scriptsize{$p$}}
			(-.15,.2) node[left] {\scriptsize{$a$}}
			(.85,.85) node[right] {\scriptsize{$b$}}
			(.4,.85) node[left] {\scriptsize{$q$}}
			;
			
			\draw [xshift=2cm]
			(1.9,-.2) node {\tiny{$o$}}
			(2.5,-.36) node {{$\tau^\dag$}};
			
			\draw
			(0,-.4) node[right] {{$x\eq (C,D)$}}
			(-2.05,0) node[right] {\tiny{$p$}}
			(-2.85,0) node[right] {\tiny{$q$}}
			(-3.8,.4) node[above] {\tiny{$r$}}
			(-3.8,-.4) node[below] {\tiny{$s$}}
			(-1,.45) node[above] {\tiny{$a$}}
			(-2.4,.35) node[above] {\tiny{$b$}}
			;			
		\end{tikzpicture}
	\end{center}
	\caption{An example of $\tau^\dag$}\label{Fig:B}
\end{figure}

\begin{lemm}
	\label{Lm:r1p4_tree}
	Given $\de\inn D\bsl F$,
	let $\lr\de\inn F$ be as in (\ref{Eqn:lr}).
	If $\lr\de\inn F_1$,
	then $\de\!\notin\!D_{\im}$.
\end{lemm}

\begin{proof}
	If $\de\!\notin\! F$ and $\lr\de\inn F_1$,
	then $N_{[\de]}\!\ne\!\emptyset$ and $N_{[\de,a_2]}\eq N_{[\de]}\!\sqcup\!N_{[a_2,F_1]}$.
	Since $\de_1\inn D\!\cap\!F$,
	we have $N_{[\de_1,a_2]}\!\subset\!N_{[a_2,F_1]}\!\subsetneq\!N_{[\de,a_2]}$,
	thus $\de\!\notin\!D_{\im}$.
\end{proof}

As described in \S\ref{rd1},
the blowup centers of $(\rd_1\ph_4)$ are locally given by
\begin{align*}
	&\ov\fM_{(4,k)}\!\cap\!\cV=\!\!
	\bigcup_{\fE\in\Xi_k(\tau^\dag)}\!\!
	\{\,
	\ze_e\eq 0:e\inn\fE\,\},
	\qquad
	\tn{where}\qquad
	\Xi_k(\tau^\dag)=\{\fE\inn\Xi(\tau^\dag):|\fE|\eq k\}.
\end{align*}
From the same argument as in the proof of Proposition~\ref{Prp:phi_M5_core_wt2},
we conclude that $(\rd_1\ph_4)$ is $\tau^\dag$-compatible on $\cV$.
Lemma~\ref{LmLocalLoci} then implies the pullback $\ti\cV^{\rd_1\ph_4}$ of $\cV$ is smooth.

After $(\rd_1\ph_4)$,
we fix an arbitrary lift $y\inn \ti\cV^{\rd_1\ph_4}$ of $\wh x$, as well as a small neighborhood $\cV_y$ of $y$ in $\ti\cV^{\rd_1\ph_4}$.
The RL sequence of $y$ is denoted by $\ov \bE\eq \{\fE_1,\ldots,\fE_\cht\}$ as before.
By Definition~\ref{DfnDominant}, there exists $i\inn\lrbr m$ such that $N_{[\de_i,a_2]}\!\subset\!\De_y$;
such $\de_i$ is naturally contained in $D_{\im}$.
Mimicking the notation of (\ref{Eqn:I_y}),
we write
\begin{align*}
	D(\De_y):=
	\big\{\,\de\inn D:\,
	N_{[\de,a_2]}\!\subset\!\De_y
	\big\}\quad
	\big(=
	\big\{\,\de\inn D_{\im}:\,
	N_{[\de,a_2]}\!\subset\!\De_y
	\big\}\,\big)\,.
\end{align*}

\begin{itemize}[leftmargin=*]
	\item 
If $D(\De_y)\!\not\subset\! F$,
there exists $i\!>\!\ell$ such that $\de_i\inn D(\De_y)\!\subset\!D_{\im}$.
Lemma~\ref{Lm:r1p4_tree} then implies $\lr{\de_i}\!\notin\!F_1$.
From (\ref{Eqn:r1p4_de_1}) and Lemma~\ref{Lm:K_position},
we conclude that $\de_1$ and $\de_i$ are not conjugate,
i.e.~the proper transform of $\ka_{1i}$ (which is equal to the pullback in this case) does not vanish at $y$,
hence is invertible on the small neighborhood $\cV_y$.
So by Proposition~\ref{PrpDominating},
the pullback of $\ka_{1i}\ze_{[\de_i,a_2]}$ divides all the other terms of the second row of (\ref{Eqn:Case_B_phi}).
That is, after suitable elementary column operations,
(\ref{Eqn:Case_B_phi}) becomes diagonalized.
Moreover,
it is straightforward that ($\rd_1\ph_5$)-($\rd_3\ph_4$) do not affect $\cV_y$,
so the pullback of $\cV_y$ to $\ti\fM_2^{\rm div}$ is smooth.

\item
If
$D(\De_y)\!\subset\! F$,
then all the separating nodes of $F$ are in $\De_y$,
hence 
\begin{align*}
	N_{[\de_i,a_2]}\bsl\De_y=N_{[\de_i]}\bsl\De_y\qquad
	\forall~i\inn\lrbr m\,.
\end{align*}
Therefore, 
the pullback of (\ref{Eqn:Case_B_phi}) to $\cV_y$,
after suitable elementary column operations, can be written as
\begin{equation}\begin{split}
	\label{Eqn:Case_B_phi'}
	&
	\left[
	\begin{matrix} 
		1 & 0\\
		0 & \prod_{k\in\lrbr\cht}\wc\ve_k
	\end{matrix} 
	\right]
	\left[
	\begin{matrix} 
		1 & 0 &  \cdots & 0\\
		0 & \wc\ka_{12}\prod_{e\in N_{[\de_{2}]}\bsl\De_y}\!\!\wc\ze_e & \cdots & 
		\wc\ka_{1m}\prod_{e\in N_{[\de_{m}]}\bsl\De_y}\!\!\wc\ze_e
	\end{matrix} 
	\right]\,,		
\end{split}\end{equation}
where $\wc\ve_k$ are the local parameters on $\cV_y$ corresponding to the exceptional divisors obtained in $(\rd_1\ph_4)$;
c.f.~Lemma~\ref{LmCoordPT}.
W.l.o.g.~we assume there exists $j\inn\lrbr\ell$ such that
\begin{align*}
	&
	D(\De_y)=\{\de_h,\cdots,\de_\ell\}\quad\big(\subset F\big)\,.
\end{align*}
Then, we have
\begin{align}
	\label{Eqn:r1p4_dom_core}
	\prod_{e\in N_{[\de_{i}]}\bsl\De_y}\!\!\!\!\!\!\wc\ze_e
	\in\Ga(\sO^*_{\cV_y})\qquad\forall~
	h\!\le\!i\!\le\!\ell\,.
\end{align}
\begin{itemize}[leftmargin=*]
	\item 
In \ref{Case:4}.\ref{Case:4.1},
recall $D\!\cap\! F$ cannot belong to any non-separating bridge of $F_1$ (should there be any),
for otherwise $x$ would be in $\fM_{(3)}^\mn$.
Therefore,
there exists $h\!\le\!j\!\le\!\ell$ such that $\de_j$ is not conjugate to $\de_1$,
i.e.~$\wc\ka_{1j}$ is invertible on $\cV_y$.
Along with (\ref{Eqn:r1p4_dom_core}),
this implies in the second row of (\ref{Eqn:Case_B_phi'}),
the $j$-th entry divides the other entries,
hence (\ref{Eqn:Case_B_phi'}) is diagonalized on $\cV_y$.
Moreover,
it is straightforward that ($\rd_1\ph_5$)-($\rd_3\ph_4$) do not affect $\cV_y$,
so the pullback of $\cV_y$ to $\ti\fM_2^{\rm div}$ is smooth.

\item 
In~\ref{Case:4}.\ref{Case:4.2},
the same argument as above can be applied verbatim.

\item 
In~\ref{Case:4}.\ref{Case:4.3},
we have $h\eq 1$, $\ell\eq 2$, and $\wc\ka_{12}$ vanishes at $y$.
Once again,
every $\de_i$, $i\!\ge\!3$, is not conjugate to $\de_1$ because $\de_1$ and $\de_2$ are not on any non-separating bridge.
Taking (\ref{Eqn:r1p4_dom_core}) into consideration,
we can rewrite (\ref{Eqn:Case_B_phi'}) as
\begin{equation}\begin{split}\label{Eqn:Case_B_phi''}
	&
	\left[
	\begin{matrix} 
		1 & 0\\
		0 & \prod_{k\in\lrbr\cht}\wc\ve_k
	\end{matrix} 
	\right]
	\left[
	\begin{matrix} 
		1 & 0 & 0 & \cdots & 0\\
		0 & \wc\ka_{12} & \prod_{e\in N_{[\de_{3}]}\bsl\De_y}\!\!\wc\ze_e & \cdots & 
		\prod_{e\in N_{[\de_{m}]}\bsl\De_y}\!\!\wc\ze_e
	\end{matrix} 
	\right]\,,		
\end{split}\end{equation}
In this situation, it is straightforward that ($\rd_1\ph_5$)-($\rd_3\ph_2$) do not affect $\cV_y$.
Let $\tau_2$ be the rooted tree as in (\ref{Eqn:tree_A.4}),
whose root-to-leaf paths are those inclusion-minimal $N_{[\de_i]}$ among all $3\!\le\!i\!\le\!m$.
Then, by Corollary~\ref{Crl:PT} and the proof of Proposition~\ref{Prp:phi_M5_core_wt2}, 
we see $(\rd_3\ph_3)$ is $\PT_y\big(\ex(\tau_2)\big)$-compatible on $\cV_y$,
so the pullback of $\cV_y$ to $\ti\fM^{\tn{div},\rd_3\ph_3}$,
denoted by $\ti\cV_y^{\rd_3\ph_3}$, is smooth.
In addition, by Propositions~\ref{Prp:PT_rl_path} and~\ref{PrpDominating},
the pullback of (\ref{Eqn:Case_B_phi''}) is diagonalized on $\ti\cV_y^{\rd_3\ph_3}$.
Moreover,
it is straightforward that ($\rd_3\ph_4$) does not affect $\ti\cV_y^{\rd_3\ph_3}$ because ($\rd_1\ph_5$) does not affect $\cV_y$;
therefore,  the pullback of $\cV_y$ to $\ti\fM^{\tn{div}}_2$ is smooth.

\item 
In~\ref{Case:4}.\ref{Case:4.4},
we have $h\eq \ell\eq 1$.
In this situation, 
let $\tau_1$ be the rooted tree as in (\ref{Eqn:tree_A.5}).
Then, by the proof of Proposition~\ref{Prp:phi_M5_core_wt1}, we see $(\rd_1\ph_5)$ is $\PT_y(\tau_1)$-compatible on $\cV_y$.
Consequently, the three cases of the proof of Proposition~\ref{Prp:phi_M5_core_wt1},
namely those of Propositions~\ref{Prp:Case_A.5.1}, \ref{Prp:Case_A.5.2}, and \ref{Prp:Case_A.5.3},
can be applied to show
the pullback of $\cV_y$ to $\ti\fM^{\tn{div}}_2$ is smooth,
on which the pullback of (\ref{Eqn:Case_B_phi'}) is diagonalized.
\end{itemize}
\end{itemize}

To summarize, 
we have shown the following.

\begin{prop}\label{PrpChangeofPhiM4}
	Proposition~\ref{PrpChangeofPhi} holds in \ref{Case:4}. 
\end{prop}

\subsection{Proof of \ref{Case:3}}
\label{Subsec:Case_C}

In this subsection, we prove Proposition~\ref{PrpChangeofPhi} in \ref{Case:3}.

Let 
$
\cV\to\fM_2^{\rm div}
$ be a small affine smooth chart containing~$\wh x\eq(C,D)$.
In~\ref{Case:3},
we assume~$x$, the image  of $\wh x$ in $\fM_2\wt$, lies in $\fM^\mn_{(3)}$.
This implies the core $F$ of $x$ contains a 
{\it maximal}
non-separating bridge $\tn B \eq \tn B[\fp,\fq]$ so that
$$
D\cap \tn B = D\cap F.
$$
W.l.o.g.~we assume there exist $1\!\le\!h\!\le\!\ell\!\le\! m$ such that
\begin{align*}
	D\!\cap\! F
	=\{\de_1,\ldots,\de_h\},\qquad
	\{\de\inn D:\lr{\de}\inn\tn B\}
	=\{\de_1,\cdots,\de_\ell\}\,,
\end{align*}
where $\lr\de$ are as in (\ref{Eqn:lr}).
Furthermore,
we assume
\begin{align*}
	N_{[\de_1,\fp]}\!\subset\!N_{[\de_i,\fp]}\quad\forall~i\inn\lrbr{h}.
\end{align*}
In addition, if $|D\!\cap\!F|\!\ge\!2$ (i.e.~$h\!\ge\!2$),
then we assume
\begin{align*}
	N_{[\de_2,\fq]}\!\subset\!N_{[\de_i,\fq]}\quad\forall~i\inn\lrbr{h}.
\end{align*}

Let $F_1$ be the subcurve of $F$ as in Definition~\ref{Dfn:bridge}, determined by 
$\tn B\!\cup\!F_1\eq F$ and $\tn B\!\cap\!F_1\eq\{\fp,\fq\}$. 
The subcurve $F_1$ contains a unique genus 1 inseparable component,
denoted by $T_1$.
Notice $T_1$ is strictly smaller than $F_1$ if and only if $F$ is separable.

By Part~\ref{AssI} of
Assumption~\ref{Ass:basic}, 
if $F$ is inseparable,
then $a_1$ and $a_2$ are both on $T_1$ ($=\! F_1$);
if $F$ is separable,
w.l.o.g.~we assume $a_1$ and  $\tn B$ belong to the same genus 1 inseparable component,
whereas $a_2\inn T_1$.
In either case,
we have $N_{[\de_1,a_1]}\eq\emptyset$,
hence after suitable row and column operations, the structural homomorphism
$\varphi$ still takes the form of (\ref{Eqn:Case_B_phi});
that is,
\begin{equation}
	\label{Eqn:Case_C_phi}
	\varphi=\left[
	\begin{matrix} 
		1 & 0 &  \cdots & 0\\
		0 & \ka_{12}\ze_{[\de_2,a_2]} & \cdots 
		& \ka_{1m}\ze_{[\de_m,a_2]}
	\end{matrix} 
	\right]\,.
\end{equation}
Moreover, in either case,
\begin{align}\label{Eqn:sep_nod_Case_3}
	N_{[\de_i,a_2]}=N_{[a_1,a_2]}\qquad
	\forall~i\inn\lrbr h.
\end{align}

For the proof of \ref{Case:3},
we delve more deeply into (\ref{Eqn:Case_C_phi}).
Notice that 
\begin{align*}
	\ka_{1i}\in\Ga(\sO^*_\cV)\quad\forall~\ell\!<\!i\!\le\!m,\qquad
	\ka_{1i}(\wh x)\eq 0\quad\forall~i\inn\lrbr{\ell}.
\end{align*}
More precisely, for $i\inn\lrbr{\ell}$, we have the follows.
Recall for every $\de\inn D$,
$z_{\lr\de}$ is a smooth point on $\tn B$ that is sufficiently close to $\lr\de$;
see Lemmas~\ref{Lm:W_smoothness} and~\ref{Lm:K_smoothness}.
\begin{itemize}[leftmargin=*]
	\item When $h\!\ge\!2$,
	by Proposition~\ref{Prp:phi_key}~\ref{Part:kappa},
	we have
	\begin{equation}\begin{split}\label{Eqn:ka_Case_C}
		&\ka_{12}=
		f_2\ze_{[\de_1,\fp]}+g_2\ze_{[\de_2,\fq]}\,,\qquad
		\ka_{1i}=
		v_i\ka_{12}+
		g_i'\ze_{[\de_2,\fq]}\,,
		\quad 2\!<\!i\!\le\!h\,,\\
		&\ka_{1i}=
		v_i\ka_{12}+g_i'
		\be_i,\quad h\!<\!i\!\le\!\ell,
		\qquad\tn{where}\quad
		\be_i:=\begin{cases}
			\ze_{[z_{\lr{\de_i}},\fp]}
			&\tn{if}~[z_{\lr{\de_i}},\fp]\!\subsetneq\![\de_1,\fp],\\
			\ze_{[z_{\lr{\de_i}},\fq]}
			&\tn{if}~[z_{\lr{\de_i}},\fq]\!\subsetneq\![\de_2,\fq],\\
			\ze_{[\de_2,\fq]}
			&\tn{otherwise},
		\end{cases}\end{split}
	\end{equation}
	and $f_2$, $g_2$, $v_i$ and $g'_i$ are invertible functions on $\cV$.
	Taking (\ref{Eqn:sep_nod_Case_3}) into consideration, we can apply suitable column operations to 
	(\ref{Eqn:Case_C_phi}) and obtain the follows.
	\begin{itemize}[leftmargin=*]
		\item If $h\!\ge\!3$, then the structural homomorphism $\varphi$ can be rewritten as 
		\begin{align}
			\left[
			\begin{matrix} 
				1 & 0 & 0 & 0 & \cdots & 0 &
				0 & \cdots & 0 & \vec{\bf{0}}\\
				0 & \ka_{12}\ze_{[a_1,a_2]} & 
				\ze_{[\de_2,\fq]}\ze_{[a_1,a_2]}
				& \be_{h+1}\ze_{[\de_{h+1},a_2]}
				&\cdots
				& \be_{\ell}\ze_{[\de_\ell,a_2]}
				&\ze_{[\de_{\ell+1},a_2]}
				&\cdots
				&\ze_{[\de_{m},a_2]}
				& \vec{\bf{0}}
			\end{matrix} 
			\right],\label{Eqn:Case_C_phi_3+}
		\end{align}
		where $\vec{\bf{0}}$ denotes the  zero matrix of size $1\!\times\!(h\!-\!3)$.
		\item 
		If $h\!=\!2$, then $\varphi$ can be rewritten as 
		\begin{align}
			&\varphi=\left[
			\begin{matrix} 
				1 & 0 & 0 & \cdots & 0
				& 0 &\cdots & 0
				\\
				0 & \ka_{12}\ze_{[a_1,a_2]} 
				& \be_{h+1}\ze_{[\de_{h+1},a_2]}
				&\cdots
				& \be_{\ell}\ze_{[\de_m,a_2]}
				&\ze_{[\de_{\ell+1},a_2]}
				&\cdots
				&\ze_{[\de_{m},a_2]}
			\end{matrix} 
			\right].\label{Eqn:Case_C_phi_2}
		\end{align}
	\end{itemize}
	
	\item When $h\!=\!1$,
	the points of $D\bsl\{\de_1\}$ are all on tails,
	so before applying blowups, there is no obvious choice of $\de_2\inn D\bsl\{\de_1\}$ such that $\ka_{1i}$, $i\!\ge\!3$, should be compared with $\ka_{12}$.
	In this case, we postpone the comparison of the $\ka_{1i}$'s until $(\rd_1\ph_3)$ is complete.
\end{itemize}

We are ready to analyze the effects of the sequential blowups on $\cV$.
Since $D\!\cap\! F$ belongs to a non-separating bridge of $F$,
($\rd_1\ph_1$) and ($\rd_1\ph_2$) do not affect $\cV$.
Next, we construct the rooted tree $\tau^\ddag$ that $(\rd_1\ph_3)$ is compatible with on $\cV$.
Let
\begin{align*}
	S'_\de:=
	\begin{cases}
		\big\{N_{[\de]}\big\}
		&\tn{if}~\lr{\de}\!\notin\!\tn B,\\
		\big\{\,
		N_{[\de]}\!\sqcup\!N_{[z_{\lr\de},\fp]},\,
		N_{[\de]}\!\sqcup\!N_{[z_{\lr\de},\fq]}\,
		\big\}
		&\tn{if}~\lr{\de}\!\in\!\tn B,
	\end{cases}
	\quad\de\inn D;
	\qquad
	S':=\bigcup_{\de\in D}S'_\de.
\end{align*}
Particularly,
we observe that $N_{[\de_1,\fp]}$ and $N_{[\de_2,\fq]}$ are inclusion-minimal in $S'$ if $h\!\ge\!2$,
while $N_{[\de_1,\fp]}$ and $N_{[\de_1,\fq]}$ are inclusion-minimal in $S'$ if $h\!=\!1$.

We take the underlying set $E$ of the proposed rooted tree $\tau^\ddag$ to be the union of the inclusion-minimal elements of $S'$, i.e.
\begin{align*}
	E=\bigcup_{\tn{inclusion-minimal}~N'\in S'}\!\!\!\!\!\!\!N'
	\quad\qquad\big(\subset\!N(C)\big)\,.
\end{align*}
Let $C_1$ be the maximal (w.r.t.~inclusion) connected genus-1 subcurve of $C$ that is disjoint from $D\!\cap\!F$.
Then, $E\!\subset\!C_1$.
We define the relation $\preceq$ on $E$ so that $e\!\preceq\!e'$ if and only if 
every connected subcurve $C'$ of $C_1$ containing $e$ and $F_1$ must contain $e'$.
Intuitively, the two criteria above indicate $e'$ is ``closer'' to $F_1$ than $e$ on $N'$,
which shares the same idea as in the constructions of $\tau_1$, $\tau_2$ and $\tau^\dag$ in \S\ref{SubsecDom}, \S\ref{Subsec:Case5.5} and \S\ref{Subsec:Case_B}, respectively.
The relation $\preceq$ satisfies (\ref{Eqn:tree_order}), thus the poset
\begin{align}\label{Eqn:tree_C}
	\tau^\ddag:=(E,\prec)
\end{align}
is a rooted tree.
The root-to-leaf paths of $\tau^\ddag$ are exactly the inclusion-minimal elements of $S'$.

	An example of $\tau^\ddag$ is provided in Figure~\ref{Fig:B}.
	Notice that $\tau^\dag$ as in (\ref{Eqn:tree_B}) is also defined in this case, and is involved in the proof of \S\ref{Subsubsec:Case_C.4}.
	\begin{figure}[htb]
		\begin{center}
			\begin{tikzpicture}
				\def\g1{
					(-1,0) ellipse (1 and 0.5)
					(-1.4,0)..controls(-1,0.1)..(-0.6,0)
					(-0.6,0)..controls(-1,-0.1)..(-1.4,0)
					(-0.5,0.05)--(-0.6,0)
					(-1.4,0)--(-1.5,0.05)
				}
				\def\halfg1{
					(-.32,0)..controls(-.32,-.1) and (-.4,-.2)..(-.6,-.2)
					(-.6,-.2)..controls(-.8,-.2) and (-1,-.25)..(-1,-.35)
					(-.6,-.5)..controls(-.8,-.5) and (-1,-.45)..(-1,-.35)
					(-.6,-.5)..controls(-.25,-.5) and (0,-.35)..(0,0)
					(-.32,0)..controls(-.32,.1) and (-.4,.2)..(-.6,.2)
					(-.6,.2)..controls(-.8,.2) and (-1,.25)..(-1,.35)
					(-.6,.5)..controls(-.8,.5) and (-1,.45)..(-1,.35)
					(-.6,.5)..controls(-.25,.5) and (0,.35)..(0,0)
				}
				\def\crs{
					(-.03,-.03)--(.03,.03)
					(-.03,.03)--(.03,-.03)
				}
				
				\draw\g1;
				\draw[xshift=-1cm]
				(-1.4,0) circle (.4cm)
				;
				\draw[xshift=-2.8cm]
				\halfg1;
				\draw[xshift=-4.8cm,xscale=-1]
				\halfg1;
				
				\draw
				(-2.4,.8) circle (.4cm)
				(-1,.9) circle (.4cm)
				;
				
				\draw[xshift=-4.5cm,yshift=.3cm]
				\crs			
				;
				\draw[xshift=-4cm,yshift=-.32cm]
				\crs			
				;
				\draw[xshift=-2.5cm,yshift=1cm]
				\crs			
				;
				\draw[xshift=-.8cm,yshift=.95cm]
				\crs			
				;
				\draw[xshift=-1.1cm,yshift=1.05cm]
				\crs			
				;
				
				\draw[xshift=4cm]
				(0,0)--(1.2,1.2)
				(0,0)--(-.6,.6)
				(.6,.6)--(0,1.2)
				;
				\filldraw[xshift=4cm]
				(0,0) circle (2pt)
				(.6,.6) circle (2pt)
				(-.6,.6) circle (2pt)
				(1.2,1.2) circle (2pt)
				(0,1.2) circle (2pt)
				;
				\draw[xshift=4cm]
				(.2,.2) node[right] {\scriptsize{$p$}}
				(-.15,.2) node[left] {\scriptsize{$a$}}
				(.85,.85) node[right] {\scriptsize{$b$}}
				(.4,.85) node[left] {\scriptsize{$q$}}
				;
				
				\draw [xshift=2cm]
				(1.9,-.2) node {\tiny{$o$}}
				(2.5,-.36) node {{$\tau^\dag$}};
				
				\draw[xshift=7cm]
				(-.9,1.2)--(0,0)--(.9,1.2)
				(-.3,1.2)--(0,0)--(.3,1.2)
				;
				\filldraw[xshift=7cm]
				(0,0) circle (2pt)
				(.3,1.2) circle (2pt)
				(-.3,1.2) circle (2pt)
				(.9,1.2) circle (2pt)
				(-.9,1.2) circle (2pt)
				;
				\draw[xshift=7cm]
				(.3,.8) node {\scriptsize{$b$}}
				(.75,.8) node {\scriptsize{$a$}}
				(-.3,.8) node {\scriptsize{$s$}}
				(-.8,.8) node {\scriptsize{$r$}}
				;
				
				\draw [xshift=5cm]
				(1.9,-.2) node {\tiny{$o$}}
				(2.5,-.36) node {{$\tau^\ddag$}};
				
				\draw
				(0,-.4) node[right] {{$x\eq (C,D)$}}
				(-2.05,0) node[right] {\tiny{$p$}}
				(-2.85,0) node[right] {\tiny{$q$}}
				(-3.8,.4) node[above] {\tiny{$r$}}
				(-3.8,-.4) node[below] {\tiny{$s$}}
				(-1,.45) node[above] {\tiny{$a$}}
				(-2.4,.35) node[above] {\tiny{$b$}}
				;			
			\end{tikzpicture}
		\end{center}
		\caption{An example of $\tau^\ddag$}\label{Fig:C}
	\end{figure}

As described in \S\ref{rd1},
the blowup centers of $(\rd_1\ph_3)$ are locally given by
\begin{align*}
	&\ov\fM_{(3,k)}\!\cap\!\cV=\!\!
	\bigcup_{\fE\in\Xi_k(\tau^\ddag)}\!\!\!
	\{\,
	\ze_e\eq 0:e\inn\fE\,\},
	\qquad
	\tn{where}\quad
	\Xi_k(\tau^\ddag)=\{\fE\inn\Xi(\tau^\ddag):|\fE|\eq k\!+\!1\},\quad
	k\ge 1.
\end{align*}
By the stability of $\fM_2^{\rm div}$,
we conclude that $(\rd_1\ph_3)$ is $\tau^\ddag$-compatible on $\cV$.
Lemma~\ref{LmLocalLoci} then implies the pullback $\ti\cV^{\rd_1\ph_3}$ of $\cV$ is smooth.

After $(\rd_1\ph_3)$,
we fix an arbitrary lift $y\inn \ti\cV^{\rd_1\ph_3}$ of $\wh x$, as well as a small neighborhood $\cV_y$ of $y$ in $\ti\cV^{\rd_1\ph_3}$.
The RL sequence of $y$ is denoted by $\ov \bE\eq \{\fE_1,\ldots,\fE_\cht\}$ as before.

By Definition~\ref{DfnDominant}, there exists at least one inclusion-minimal $N'\!\in\!S'$ such that $N'\!\subset\!\De_y$,
hence there exists $\de\inn D$ such that $N'\inn S'_\de$.
Mimicking  (\ref{Eqn:I_y}),
we set
\begin{align*}
	S'(\De_y):=
	\big\{\,
	N'\inn S':\,N'\!\subset\!\De_y
	\,\big\},\qquad
	D(\De_y):=
	\big\{\,\de\inn D:\,
	S'_\de\!\cap\!S'(\De_y)\!\ne\!\emptyset\,
	\big\}\,.
\end{align*}

\subsubsection{\ref{Case:3}.\ref{Case:3.1}}
\label{Subsubsec:Case_C.1}
In this sub-case, we assume $h\!\ge\!3$ and $F$ is inseparable,
so we have 
\begin{align*}
	N_{[a_1,a_2]}=\emptyset\qquad
	\big(\Longrightarrow\ze_{[a_1,a_2]}= 1\big).
\end{align*}
Moreover, $h\!\ge\!3$ implies $(\rd_1\ph_4)$-$(\rd_3\ph_4)$ do not affect $\cV_y$,
hence the pullback of $\cV_y$ to the final stack is smooth.

It remains to show the pullback of (\ref{Eqn:Case_C_phi}) is diagonalized after after $(\rd_1\ph_3)$.
We divide the proof into three sub-cases as follows.
\begin{enumerate}[leftmargin=*,label=(\arabic*)]
\item \label{Part:Case_3_1_1}
If $D(\De_y)\!\subset\!F$,
then there exists $i\inn\lrbr h$ such that 
at least one between $N_{[\de_i,\fp]}$ and $N_{[\de_i,\fq]}$,
say $N_{[\de_i,\fp]}$,
is contained in $\De_y$.
By the assumption on $\de_1$,
we then have $N_{[\de_1,\fp]}\!\subset\!\De_y$.
Along with (\ref{Eqn:ka_Case_C}) and Proposition~\ref{PrpDominating},
this implies on $\cV_y$, 
the pullbacks of $\ka_{12}$ and $\ze_{[\de_2,\fq]}$ respectively take the form
\begin{align*}
	\ti\ka_{12}=\big(\prod_{k\in\lrbr{\cht}}\!\!\wc\ve_k\big)\cdot
	\big(\ti f_2+\ti g_2\wc\ze_{[\de_2,\fq]}
	\big),\qquad
	\ti\ze_{[\de_2,\fq]}=
	\big(\prod_{k\in\lrbr{\cht}}\!\!\wc\ve_k\big)\cdot\wc\ze_{[\de_2,\fq]}\,,
\end{align*}
where $\ti f_2$ and $\ti g_2$ are invertible functions,
and $\wc\ze_{[\de_2,\fq]}$ denotes the proper transform of $\ze_{[\de_2,\fq]}$.
Since $\cV_y$ is assumed small,
we conclude that either $\wc\ze_{[\de_2,\fq]}$ or $\big(\ti f_2+\ti g_2\wc\ze_{[\de_2,\fq]}\big)$ is invertible.

Moreover, for each $h\!<\!i\!\le\!\ell$,
notice that
\begin{align*}
	N_{[z_{\lr{\de_i}},\fp]}\!\sqcup\! N_{[\de_{i},a_2]}=
	N_{[\de_i]}\!\sqcup\!N_{[z_{\lr{\de_i}},\fp]}\in S',\quad
	N_{[z_{\lr{\de_i}},\fq]}\!\sqcup\! N_{[\de_{i},a_2]}=
	N_{[\de_i]}\!\sqcup\!N_{[z_{\lr{\de_i}},\fq]}\in S',
\end{align*}
hence by (\ref{Eqn:ka_Case_C}) and Proposition~\ref{PrpDominating}, we see $\prod_{k\in\lrbr{\cht}}\wc\ve_k$ divides the pullback of $\be_i\ze_{[\de_i,a_2]}$.
Similarly,
for each $i\!>\!\ell$,
notice that $N_{[\de_i,a_2]}\eq N_{[\de_i]}\inn S'$,
hence $\prod_{k\in\lrbr{\cht}}\wc\ve_k$ divides the pullback of $\ze_{[\de_i,a_2]}$.

To summarize, we conclude that either $\ti\ka_{12}$ or $\ti\ze_{[\de_2,\fq]}$ divides all the entries of the second row of the pullback of (\ref{Eqn:Case_C_phi_3+}),
hence the pullback of $\varphi$ is diagonalized after $(\rd_1\ph_3)$.

\item \label{Part:Case_3_1_2}
If $D(\De_y)\!\not\subset\!F$, and $\lr\de\inn\tn B$ for all $\de\inn D(\De_y)$,
then w.l.o.g.~we assume 
\begin{align*}
	\de_{h+1}\in D(\De_y)\qquad\tn{and}\qquad
	N_{[z_{\lr{\de_{h+1}}},\fp]}\subsetneq
	N_{[\de_1,\fp]}\,,
\end{align*}
which implies 
\begin{align*}
	N_{[\de_{h+1}]}\!\sqcup\! N_{[z_{\lr{\de_{h+1}}},\fp]} 
	\in 
	S'(\De_y)\,.
\end{align*}
Notice that $N_{[z_{\lr{\de_{h+1}}},\fp]}\!\sqcup\!N_{[\de_{h+1},a_2]}\eq N_{[z_{\lr{\de_{h+1}}},\fp]}\!\sqcup\!N_{[\de_{h+1}]}$.
So by (\ref{Eqn:ka_Case_C}),
we see the pullback of $\be_{h+1}\ze_{[\de_{h+1},a_2]}$ takes the form
\begin{align*}
	\ti \be_{h+1}\ti\ze_{[\de_{h+1},a_2]}=
	u_{h+1}\cdot
	\big(\prod_{k\in\lrbr{\cht}}\!\!\wc\ve_k\big),\qquad
	\tn{where}\quad u_{h+1}\inn\Ga\big(\sO^*_{\cV_y}\big)\,.
\end{align*}
Meanwhile,
since
\begin{align*}
	&N_{[\de_1,\fp]},\qquad
	N_{[\de_2,\fq]},\qquad
	N_{[z_{\lr{\de_i}},\fp]}\!\sqcup\!N_{[\de_i,a_2]}\eq
	N_{[z_{\lr{\de_i}},\fp]}\!\sqcup\!N_{[\de_i]},\ \ h\!<\!i\!\le\!\ell,\\
	&
	N_{[z_{\lr{\de_i}},\fq]}\!\sqcup\!N_{[\de_i,a_2]}\eq
	N_{[z_{\lr{\de_i}},\fq]}\!\sqcup\!N_{[\de_i]},\ \ h\!<\!i\!\le\!\ell,\qquad\tn{and}\quad 
	N_{[\de_j,a_2]}\eq N_{[\de_j]},\ \ \ \ell\!<\!j\!\le\!m
\end{align*}
are all in $S'$,
we conclude from Proposition~\ref{PrpDominating} that $\prod_{k\in\lrbr{\cht}}\wc\ve_k$, consequently $\ti \be_{h+1}\ti\ze_{[\de_{h+1},a_2]}$, divides all the entries of the second row of the pullback of (\ref{Eqn:Case_C_phi_3+}),
hence the pullback of $\varphi$ is diagonalized after $(\rd_1\ph_3)$.

\item \label{Part:Case_3_1_3}
Otherwise, there exists $\de\inn D(\De_y)$ such that $\lr\de\!\notin\!\tn B$ (which implies $\de\!\notin\!F$).
W.l.o.g.~we assume 
\begin{align*}
	\de_{\ell+1}\inn D(\De_y)\,.
\end{align*}
Since $N_{[\de_{\ell+1},a_2]}\eq N_{[\de_{\ell+1}]}$, the pullback of $\ze_{[\de_{\ell+1},a_2]}$ takes the form
\begin{align*}
	\ti\ze_{[\de_{\ell+1},a_2]}=
	u_{\ell+1}\cdot
	\big(\prod_{k\in\lrbr{\cht}}\!\!\wc\ve_k\big),\qquad
	\tn{where}\quad u_{\ell+1}\inn\Ga\big(\sO^*_{\cV_y}\big)\,.
\end{align*}
The same argument as in Part~\ref{Part:Case_3_1_2} once again implies the pullback of $\varphi$ is diagonalized after $(\rd_1\ph_3)$. 
\end{enumerate}

In sum, the proof of \ref{Case:3}.\ref{Case:3.1} is complete.

\subsubsection{\ref{Case:3}.\ref{Case:3.2}}
\label{Subsubsec:Case_C.2}
In this sub-case, we assume $h\!=\!2$ and $F$ is inseparable,
so we still have 
\begin{align*}
	N_{[a_1,a_2]}=\emptyset\qquad
	\big(\Longrightarrow\ze_{[a_1,a_2]}= 1\big).
\end{align*}
Moreover, it is a direct check that $(\rd_1\ph_4)$-$(\rd_3\ph_2)$ and $(\rd_3\ph_4)$ do not affect $\cV$.

Below,
we divide the proof of \ref{Case:3}.\ref{Case:3.2} into three sub-cases in the same way as in \ref{Case:3}.\ref{Case:3.1}.
First, observe that the arguments of Parts~\ref{Part:Case_3_1_2} and~\ref{Part:Case_3_1_3} of \ref{Case:3}.\ref{Case:3.1} apply to  \ref{Case:3}.\ref{Case:3.2} verbatim,
so after $(\rd_1\ph_3)$, $\cV_y$ is smooth, on which the pullback of $\varphi$ is already diagonalized.
In addition, under the assumption of Part \ref{Part:Case_3_1_2} or~\ref{Part:Case_3_1_3},
there exists $i\!\ge\!3$ such that $N_{[\de_i]}\!\subset\!\De_y$,
so $(\rd_3\ph_3)$ does not affect $\cV_y$ either, so the pullback of $\cV_y$ to the final stack is isomorphic to itself, hence is smooth.

It remains to show \ref{Case:3}.\ref{Case:3.2} when $D(\De_y)\!\subset\!F$.
W.l.o.g.~we assume $$
N_{[\de_1,\fp]}\subset\De_y.$$
By (\ref{Eqn:ka_Case_C}) and Parts~\ref{Claim3CoordPT} and~\ref{Claim4CoordPT} of Lemma~\ref{LmCoordPT},
we can write the pullback $\ti\ka_{12}$ and the proper transform $\wc\ka_{12}$ of $\ka_{12}$ on $\cV_y$ as
\begin{align*}
	\ti\ka_{12}=
	\big(\prod_{k\in\lrbr{\cht}}\!\wc\ve_k\big)\cdot\wc\ka_{12},\qquad
	\wc\ka_{12}=f_2'\cdot\!\big(\!\!\!\prod_{e\in N_{[\de_1,\fp]}}\!\!\!(1\!+\!c_e\xi_e)\big)+\,
	g_2'\cdot\big(\!\!\prod_{e\in N_{[\de_2,\fq]}\cap\De_y}\!\!\!\!\!\!\!\!\!(1\!+\!c_{e}\xi_{e})\big)\cdot 
	\big(\!\prod_{e\in N_{[\de_2,\fq]}\bsl\De_y}\!\!\!\!\!\!\!\wc\ze_e\;\big)\,,
\end{align*}
where $f_2'$ and $g_2'$ are invertible functions,
$\xi_e$'s are independent local parameters on $\cV_y$ vanishing at $y$, and $c_e$'s are constants satisfying $c_e\eq 0$ if and only if $e\inn\{\se_1,\ldots,\se_\cht\}$,
where  $\se_k\inn\De_{y,k}$ are as in Part~\ref{Claim1CoordPT} of Lemma~\ref{LmCoordPT}.
So particularly,
by taking $e_1$  to be the unique element of $N_{[\de_1,\fp]}\!\cap\!\fE_\cht$ and $e_2$  to be the unique element of $N_{[\de_2,\fq]}\!\cap\!\fE_\cht$,
we conclude that $c_{e_1}$ and $c_{e_2}$ cannot be zero simultaneously.
\begin{itemize}[leftmargin=*]
	\item If $\wc\ka_{12}(y)\!\ne\!0$,
	then $\wc\ka_{12}$ is invertible on $\cV_y$ because $\cV_y$ is small.
	Therefore, parallel to Part~\ref{Part:Case_3_1_1} of \ref{Case:3}.\ref{Case:3.1},
	we conclude that on $\cV_y$,
	among all the entries of of second row of the pullback of (\ref{Eqn:Case_C_phi_2}),
	the second entry divides the remaining entries,
	hence the pullback of $\varphi$ is still diagonalized after $(\rd_1\ph_3)$.
	
	Moreover, $\wc\ka_{12}$ is invertible on $\cV_y$ implies the proper transforms of the blowup centers of $(\rd_3\ph_3)$ are disjoint from $\cV_y$,
	hence $(\rd_3\ph_3)$ does not affect $\cV_y$ either.
	In this way, we see the pullback of $\cV_y$ to the final stack is smooth.
	
	\item
	Assume $\wc\ka_{12}(y)\eq 0$.
	Then, the above equation of $\wc\ka_{12}$ implies $N_{[\de_2,\fq]}\!\subset\!\De_y$,
	hence  $\wc\ka_{12}$ is a local parameter on $\cV_y$ that vanishes at $y$;
	moreover, $\wc\ka_{12}$ and $\wc\ze_e$, $e\inn N(C)\bsl\De_y$ together form a subset of a system of local parameters on $\cV_y$.
	
	To study the effect of $(\rd_3\ph_3)$ on $\cV_y$,
	consider the sequential blowup of $$\pi':(\fM_2^{\rm div})'\lra\fM_2^{\rm div}$$ along the proper transforms of $Z'_1,Z_2',\ldots$,
	where
	$Z'_j$ is the closed substack of $\fM_2^{\rm div}$ whose general points are pairs $(C,D)$ such that $C$ has $j$ tails attached to its core $F$, and $|D\!\cap\!F|\eq 2$.
	Recall $\tau_2$ is the rooted tree of (\ref{Eqn:tree_A.4}),
	whose root-to-leaf paths are those inclusion-minimal $N_{[\de_i]}$ among all $3\!\le\!i\!\le\!m$.
	It is then a direct check that $\wh x\inn X^\cV_{\tau_2}$ and $\pi'$ is $\tau_2$-compatible on $\cV$.
	Corollary~\ref{Crl:PT} thus implies
	$\PT_y(\tau')$ is $\PT_y(\tau_2)$-compatible on $\cV_y$.
	
	%\begin{rema}
	We emphasize that with $\cH_k\!\subset\!\fM_2^{\rm div}$ as in \S\ref{rd3ph3},
	the sequential blowup along the proper transforms of $\cH_k$, $k\!\ge\!1$,
	which serves as the ``prototype'' of ($\rd_3\ph_3$),
	is {\it not} $\ex(\tau_2)$-compatible on $\cV$,
	because $\ka_{12}$, as a sum of two products of local parameters in (\ref{Eqn:ka_Case_C}), is not always a local parameter on $\cV$.
	%\end{rema}
	
	Nonetheless, the assumptions of \ref{Case:3}.\ref{Case:3.2} as well as $D(\De_y)\!\subset\!F$ imply that $y$ lies in the blowup centers of $\PT_y(\pi')$.
	Taking $\wc\ka_{12}(y)\eq 0$ into consideration, we conclude that 
	\begin{align*}
		y\in X^{\cV_y}_{\ex(\PT_y(\tau_2))}
		%=X^{\cV_y}_{\PT_y(\ex(\tau_2))}
		\,,
	\end{align*}
	where the grafted edge corresponds to $\wc\ka_{12}$.
	Moreover, notice that for every $j\!\ge\!1$,
	with $\wc Z'_j$ denoting the proper transform of $Z_j'$, and $\cH_j^{\rd_3\ph_2}\!\subset\!\ti\fM^{{\rm div},\rd_3\ph_2}$ denoting the proper transform of $\cH_j$ as in (\ref{Eqn:r3p3_blowup_center}), we have
	\begin{align*}
		\{\wc\ka_{12}\eq 0\}\!\cap\!\wc Z'_j\!\cap\!\cV_y
		=
		\cH_j^{\rd_3\ph_2}\!\cap\!\cV_y\,.
	\end{align*}
	Therefore,
	$(\rd_3\ph_3)$ is $\ex\big(\PT_y(\tau_2)\big)$-compatible on $\cV_y$,
	hence the pullback of $\cV_y$ to $\ti\fM^{{\rm div},\rd_3\ph_3}$, hence to the final stack, is smooth.
	
	Finally, we check the pullback of (\ref{Eqn:Case_C_phi_2}) is diagonalized on the pullback of $\cV_y$.
	In fact, on $\cV_y$, the pullback of (\ref{Eqn:Case_C_phi_2}) takes the form
	\begin{align}
		&
		\left[
		\begin{matrix} 
			1 & 0
			\\
			0 & \prod_{k\in\lrbr\cht}\!\wc\ve_k
		\end{matrix} 
		\right]
		\left[
		\begin{matrix} 
			1 & 0 & 0 & \cdots & 0
			\\
			0 & \wc\ka_{12}
			&\prod_{e\in N_{[\de_{3}]}\bsl\De_y}\!\wc\ze_e
			&\cdots
			& \prod_{e\in N_{[\de_{m}]}\bsl\De_y}\!\wc\ze_e
		\end{matrix} 
		\right].\label{Eqn:Case_C_phi_2'}
	\end{align}
	By Proposition~\ref{Prp:PT_rl_path},
	we see the root-to-leaf paths of $\PT_y(\tau_2)$ are exactly the inclusion-minimal elements of $\{N_{[\de_i]}\bsl\De_y:3\!\le\!i\!\le\!m\}$.
	Taking $(\rd_3\ph_3)$ is $\ex\big(\PT_y(\tau_2)\big)$-compatible on $\cV_y$ into consideration,
	we conclude that the pullback of (\ref{Eqn:Case_C_phi_2'}) is diagonalized after $(\rd_3\ph_3)$.
\end{itemize}

In sum, the proof of \ref{Case:3}.\ref{Case:3.2} is complete.

\subsubsection{\ref{Case:3}.\ref{Case:3.3}}
In this sub-case, we assume $h\!=\!1$ (i.e.~$D\!\cap\!F\eq\{\de_1\}$) and $F$ is inseparable,
so we still have 
\begin{align*}
	N_{[a_1,a_2]}=\emptyset\qquad
	\big(\Longrightarrow\ze_{[a_1,a_2]}= 1\big).
\end{align*}
Moreover, it is a direct check that $(\rd_1\ph_4)$ and $(\rd_2)$-$(\rd_3\ph_2)$ do not affect $\cV$.

\begin{enumerate}[leftmargin=*,label=(\arabic*)]
	\item If there exists $\de\inn D(\De_y)$ such that $\lr\de\!\notin\!\tn B$,
	then the argument of Part~\ref{Part:Case_3_1_3} of \ref{Case:3}.\ref{Case:3.1} still applies to this situation verbatim.
	
	\item If $\lr\de\!\in\!\tn B$ for all $\de\inn D(\De_y)$, and $D(\De_y)\!\not\subset\! F$,
	then w.l.o.g.~we assume
	\begin{align*}
		\de_{2}\inn D(\De_y)\bsl F\qquad\tn{and}\qquad
		N_{[z_{\lr{\de_{2}}},\fp]}\!\subsetneq\!
		N_{[\de_1,\fp]},
	\end{align*}
	which implies
	\begin{align*}
		N_{[\de_{2}]}\!\sqcup\!
		N_{[z_{\lr{\de_{2}}},\fp]}\in S'(\De_y),\qquad
		\ka_{12}=f_2'\ze_{[z_{\lr{\de_2}},\fp]}+g_2'\ze_{[\de_1,\fq]}\quad
		\tn{for~some}~f_2',g_2'\inn\Ga(\sO^*_\cV),
	\end{align*}
	and $(\rd_1\ph_4)$-$(\rd_3\ph_2)$ and $(\rd_3\ph_4)$ do not affect (the pullback of) $\cV_y$.
	Here,
	the expression of $\ka_{12}$ above follows from  Proposition~\ref{Prp:phi_key}~\ref{Part:kappa}.
	
	As $\de_2\inn D(\De_y)$ and $F$ is inseparable,
	the pullback of the second entry of the second row of (\ref{Eqn:Case_C_phi}),
	i.e.~$\ti\ka_{12}\ti\ze_{[\de_2,a_2]}$,
	can be written as 
	\begin{align*}
		\ti\ka_{12}\ti \ze_{[\de_2,a_2]}=
		\ti\ka_{12}\ti \ze_{[\de_2]}=
		u_2\cdot\big(\prod_{k\in\lrbr{\cht}}\!\!\wc\ve_k\big)
		\cdot
		\Big(f''_2+
		g''_2\!\cdot\!\ti \ze_{[\de_2]}\!\cdot\!\big(\!\!\prod_{e\in N_{[\de_1,\fq]}\bsl\De_y}\!\!\!\!\!\!\!\wc\ze_e\,\big)\Big),
	\end{align*}
	where $u_{2},$ $f''_2,$ and $g''_2$ are invertible functions on $\cV_y$.
	Notice that $\de_2$ is on a tail, so
	$\ti\ze_{[\de_2]}$ vanishes at $y$;
	Taking $f''_2$ is invertible and $\cV_y$ is small into consideration,
	we see $\ti\ka_{12}\ti\ze_{[\de_2,a_2]}$ is equal to $\prod_{k\in\lrbr{\cht}}\wc\ve_k$ up to a unit.
	Particularly, this implies 
	$\wc\ka_{12}$ is invertible on ${\cV_y}$,
	hence $(\rd_3\ph_3)$ does not affect $\cV_y$.
	
	Meanwhile,
	since
	\begin{align*}
	&
	N_{[z_{\lr{\de_i}},\fp]}\!\sqcup\!N_{[\de_i,a_2]}\eq
	N_{[z_{\lr{\de_i}},\fp]}\!\sqcup\!N_{[\de_i]},\ \ 2\!<\!i\!\le\!\ell,\\
	&
	N_{[z_{\lr{\de_i}},\fq]}\!\sqcup\!N_{[\de_i,a_2]}\eq
	N_{[z_{\lr{\de_i}},\fq]}\!\sqcup\!N_{[\de_i]},\ \ 2\!<\!i\!\le\!\ell,\qquad\tn{and}\quad 
	N_{[\de_j,a_2]}\eq N_{[\de_j]},\ \ \ \ell\!<\!j\!\le\!m
	\end{align*}
	are all in $S'$,
	we conclude from Proposition~\ref{PrpDominating} that $\prod_{k\in\lrbr{\cht}}\wc\ve_k$, consequently $\ti\ka_{12}\ti\ze_{[\de_2,a_2]}$, divides all the entries of the second row of the pullback of (\ref{Eqn:Case_C_phi}),
	hence the pullback of $\varphi$ is diagonalized after $(\rd_1\ph_3)$.
	
	\item
	If $D(\De_y)\!\subset\!F$,
	i.e.~$D(\De_y)\eq\{\de_1\}$,
	then 
	(\ref{Eqn:Case_C_phi}) pulls back to 
	\begin{align*}
		\left[
		\begin{matrix} 
			1 & 0
			\\
			0 & \prod_{k\in\lrbr\cht}\!\wc\ve_k
		\end{matrix} 
		\right]
		\left[
		\begin{matrix} 
			1 & 0 & \cdots & 0
			\\
			0 & \wc\ka_{12}
			\prod_{e\in N_{[\de_{2}]}\bsl\De_y}\!\wc\ze_e
			&\cdots
			& \wc\ka_{1m}\prod_{e\in N_{[\de_{m}]}\bsl\De_y}\!\wc\ze_e
		\end{matrix} 
		\right]\,,
	\end{align*}
	where $N_{[\de_i]}\bsl\De_y$, $i\!\ge\!2$, are all nonempty.
	Observe that	
	$(\rd_1\ph_4)$ does not affect $\cV_y$ because $F$ is assumed inseparable.
	In addition, $(\rd_1\ph_5)$ is $\PT_y(\tau_1)$-compatible on $\cV_y$,
	where $\tau_1$ is as in (\ref{Eqn:tree_A.5}).
	
	By Proposition~\ref{Prp:PT_rl_path},
	the inclusion-minimal elements of $N_{[\de_{i}]}\bsl\De_y$, $i\!\ge\!2$, are exactly the root-to-leaf paths of $\PT_y(\tau_1)$.
	Comparing the above pullback of $\varphi$ with (\ref{e_Mn5}),
	we conclude that
	the argument for \ref{Case:5}.\ref{Case:5.5}, i.e.~the proof of Proposition~\ref{Prp:phi_M5_core_wt1},
	applies to the $D(\De_y)\!\subset\!F$ subcase,
	as long as we can show the functions $\wc\ka_{1i}$, $i\!\ge\!2$,
	behave in the same way as $\ka_{1i}$, $i\!\ge\!2$, do in  \ref{Case:5}.\ref{Case:5.5}.
	
	In fact, since $D(\De_y)\eq\{\de_1\}$,
	w.l.o.g.~we assume $$N_{[\de_1,\fp]}\subset\De_y.$$	
	Mimicking the argument that $\wc\ka_{12}$ is a local parameter in \S\ref{Subsubsec:Case_C.2},
	we conclude from Proposition~\ref{Prp:phi_key}~\ref{Part:kappa} that
	\begin{itemize}[leftmargin=*]
		\item 
		for every  $2\!\le\!i\!\le\!m$,
		the proper transform $\wc\ka_{1i}$ of $\ka_{1i}$ is either a unit or a local parameter on~$\cV_y$;
		\item 
		$\wc\ka_{1i}(y)$ and $\wc\ka_{1j}(y)$ cannot be simultaneously zero whenever $\lr{\de_i}\!\ne\!\lr{\de_j}$;
		\item 
		if $\wc\ka_{1i}(y)\eq\wc\ka_{1j}(y)\eq 0$,
		then there exist invertible functions $v_{ij}$ and $u'_{ij}$ on $\cV_y$ such that
		$$\wc\ka_{1j}
		= v_{ij}\wc\ka_{1j} +
		u'_{ij}\prod_{e\in N_{[\de_j\wedge\de_k]}\bsl\De_y}
		\!\!\!\!
		\wc\ze_e,$$
		where $N_{[\de_j\wedge\de_k]}$ is as in (\ref{Eqn:wedge}).
	\end{itemize}
	Consequently,
	the functions $\wc\ka_{1i}$, $i\!\ge\!2$,
	behave in the same way as $\ka_{1i}$, $i\!\ge\!2$, do in  \ref{Case:5}.\ref{Case:5.5},
	thus the $D(\De_y)\!\subset\!F$ sub-case follows from the argument for \ref{Case:5}.\ref{Case:5.5}.
\end{enumerate}

\subsubsection{\ref{Case:3}.\ref{Case:3.4}}
\label{Subsubsec:Case_C.4}
In this sub-case,
the underlying curve $C$ of $\wh x$ is separable,
and after $(\rd_1\ph_3)$, on $\cV_y$, (\ref{Eqn:Case_C_phi}) pulls back to 
\begin{align*}
	\ti\varphi^{\rd_1\ph_3}=
	\left[
	\begin{matrix} 
		1 & 0\\
		0 & \prod_{k\in\lrbr\cht}\wc\ve_k
	\end{matrix} 
	\right]
	\left[
	\begin{matrix} 
		1 & 0 &  \cdots & 0\\
		0 & \wc\ka_{12}\prod_{e\in N_{[\de_{2},a_2]}\bsl\De_y}\!\!\wc\ze_e & \cdots & 
		\wc\ka_{1m}\prod_{e\in N_{[\de_{m},a_2]}\bsl\De_y}\!\!\wc\ze_e
	\end{matrix} 
	\right]\,.
\end{align*}

Recall that throughout \S\ref{Subsec:Case_C},
$T_1$ refers to the genus 1 inseparable  component that is disjoint from the non-separating bridge $\tn B$, hence disjoint from $D\!\cap\!F$.
If there exists $\de_i\inn D(\De_y)$ such that $\lr{\de_i}\inn T_1$,
then $N_{[\de_{i},a_2]}\bsl\De_y\eq 
N_{[\de_{i}]}\bsl\De_y\eq\emptyset$.
Moreover, by
Lemma~\ref{Lm:K_smoothness}, we have $\ka_{1i}$ is invertible on $\cV$.
Therefore, 
$\wc\ka_{1i}\prod_{e\in N_{[\de_{i},a_2]}\bsl\De_y}\!\wc\ze_e$ is a unit on $\cV_y$, hence
$\ti\varphi^{\rd_1\ph_3}$ is diagonalized.
In addition,
($\rd_1\ph_4$)-$(\rd_3\ph_4)$ do not affect $\cV_y$.

It remains to verify the situation when $\lr{\de}\inn F\bsl T_1$ for every $\de\inn D(\De_y)$.
In such a situation,
we observe that $N_{[\de_i,a_2]}\bsl\De_y$ is nonempty for every $i\inn\lrbr m$, and ($\rd_1\ph_4$) is $\PT_y(\tau^\dag)$-compatible on $\cV_y$,
where the rooted tree $\tau^\dag$ is as in (\ref{Eqn:tree_B}).
Moreover,
by Proposition~\ref{Prp:PT_rl_path},
the inclusion-minimal elements of $N_{[\de_{i},a_2]}\bsl\De_y$, $i\inn\lrbr m$, are exactly the root-to-leaf paths of $\PT_y(\tau^\dag)$.
Furthermore,
mimicking the argument used in the $D(\De_y)\!\subset\!F$ sub-case of the proof of \ref{Case:3}.\ref{Case:3.3},
we see that for every
$\de_i$ satisfying $N_{[\de_i,a_2]}\bsl\De_y$ is inclusion-minimal among all $2\!\le\!i\!\le\!m$,
$\wc\ka_{1i}$
behaves in the same way as $\ka_{1i}$ does in  \ref{Case:4}.
Comparing the above $\ti\varphi^{\rd_1\ph_3}$ with (\ref{Eqn:Case_B_phi}),
we thus conclude that
the argument for \ref{Case:4}, i.e.~the proof of Proposition~\ref{PrpChangeofPhiM4},
applies to the current situation.

To summarize, 
we have shown the following in \S\ref{Subsec:Case_C}.

\begin{prop}\label{PrpChangeofPhiM3}
	Proposition~\ref{PrpChangeofPhi} holds in \ref{Case:3}. 
\end{prop}

\subsection{Proof of \ref{Case:2}}
\label{Subsec:Case_D}

In this subsection, we prove Proposition~\ref{PrpChangeofPhi} in \ref{Case:2}.

Let 
$
\cV\to\fM_2^{\rm div}
$ be a small affine smooth chart containing~$\wh x\eq(C,D)$.
In~\ref{Case:2},
we assume~$x$, the image  of $\wh x$ in $\fM_2\wt$, lies in $\fM^\mn_{(2)}$.
This implies the core $F$ of $x$ contains two genus 1 inseparable components $T_1$ and $T_2$, as well as a 
(maximal) separating bridge $\tn B$ so that
$$
D\cap \tn B = D\cap F\ne\emptyset.
$$
W.l.o.g.~we assume $a_1\inn T_1$, $a_2\inn T_2$, and there exists $1\!\le\!h\!\le\! m$ such that
\begin{align*}
	D\!\cap\! F
	=\{\de_1,\ldots,\de_h\}.
\end{align*}
Furthermore,
we assume
\begin{align*}
	N_{[\de_1,a_1]}\!\subset\!N_{[\de_i,a_1]}\quad\forall~i\inn\lrbr{h}.
\end{align*}
In addition, if $|D\!\cap\!F|\!\ge\!2$ (i.e.~$h\!\ge\!2$),
then we assume
\begin{align*}
	N_{[\de_2,a_2]}\!\subset\!N_{[\de_i,a_2]}\quad\forall~i\inn\lrbr{h}.
\end{align*}
Since $D\!\cap\!F\!\ne\!\emptyset$,
$(\rd_1\ph_1)$ does not affect $\cV$.

Next, 
we construct the rooted tree $\tau_\wedge$ that $(\rd_1\ph_2)$ is compatible with on $\cV$.
Let
\begin{align*}
	S'':=
	\bigcup_{\de\in D}\big\{N_{[\de,a_1]},N_{[\de,a_2]}\big\}\,.
\end{align*}
Particularly,
if $h\!\ge\!2$, then $N_{[\de_1,a_1]}$ and $N_{[\de_2,a_2]}$ are inclusion-minimal;
if $h\eq 1$, then $N_{[\de_1,a_1]}$ and $N_{[\de_1,a_2]}$ are inclusion-minimal.

We take the underlying set $E$ of the proposed rooted tree $\tau_\wedge$ to be the union of the inclusion-minimal elements of $S''$, i.e.
\begin{align*}
	E=\bigcup_{\tn{inclusion-minimal}~N''\in S''}\!\!\!\!\!\!\!N''
	\quad\qquad\big(\subset\!N(C)\big)\,.
\end{align*}
It is a direct check that $E$ decomposes into two disjoint nonempty subsets: 
$$E=E_1\sqcup E_2,$$
such that for $s\inn\lrbr 2$, $e\inn E_s$ if and only if
there exists $i\inn\lrbr m$ such that $e\inn N_{[\de_i,a_s]}\!\subset\! E$.

For $s\inn\lrbr 2$,
we define the relation $\preceq_s$ on $E_s$ such that $e\!\preceq_s\!e'$ if and  only if any connected subcurve of $C$ containing $T_s$ and $e$ must contain $e'$,
which once again satisfies (\ref{Eqn:tree_order}).
In other words,
$$ (E_1,\preceq_1),\ (E_2,\preceq_2)\in\bT.$$
The tree orders $\preceq_1$ and $\preceq_2$ together determine a tree order $\preceq$ on $E$:
$e\preceq e'$ if and only if either $e\preceq_1 e'$ or $e\preceq_2 e'$.
In other words,
\begin{align}\label{Eqn:tree_D}
	\tau_\wedge:=(E,\preceq)\in\bT\,.
\end{align}
The root-to-leaf paths of $\tau_\wedge$ are exactly the inclusion-minimal elements of $S''$.

	An example of $\tau_\wedge$ is provided in Figure~\ref{Fig:D}.
	\begin{figure}[htb]
		\begin{center}
			\begin{tikzpicture}
				\def\g1{
					(-1,0) ellipse (1 and 0.5)
					(-1.4,0)..controls(-1,0.1)..(-0.6,0)
					(-0.6,0)..controls(-1,-0.1)..(-1.4,0)
					(-0.5,0.05)--(-0.6,0)
					(-1.4,0)--(-1.5,0.05)
				}
				\def\halfg1{
					(-.32,0)..controls(-.32,-.1) and (-.4,-.2)..(-.6,-.2)
					(-.6,-.2)..controls(-.8,-.2) and (-1,-.25)..(-1,-.35)
					(-.6,-.5)..controls(-.8,-.5) and (-1,-.45)..(-1,-.35)
					(-.6,-.5)..controls(-.25,-.5) and (0,-.35)..(0,0)
					(-.32,0)..controls(-.32,.1) and (-.4,.2)..(-.6,.2)
					(-.6,.2)..controls(-.8,.2) and (-1,.25)..(-1,.35)
					(-.6,.5)..controls(-.8,.5) and (-1,.45)..(-1,.35)
					(-.6,.5)..controls(-.25,.5) and (0,.35)..(0,0)
				}
				\def\crs{
					(-.03,-.03)--(.03,.03)
					(-.03,.03)--(.03,-.03)
				}
				
				\draw\g1;
				\draw[xshift=-1cm]
				(-1.4,0) circle (.4cm)
				;
				\draw[xshift=-2.8cm]
				\halfg1;
				\draw[xshift=-4.8cm,xscale=-1]
				\halfg1;
				
				\draw
				(-5.2,0) circle (.4cm)
				(-1,.9) circle (.4cm)
				;
				
				\draw[xshift=-5.25cm,yshift=.26cm]
				\crs			
				;
				\draw[xshift=-5.2cm,yshift=-.28cm]
				\crs			
				;
				\draw[xshift=-2.2cm,yshift=-.15cm]
				\crs			
				;
				\draw[xshift=-.8cm,yshift=.95cm]
				\crs			
				;
				\draw[xshift=-1.1cm,yshift=1.05cm]
				\crs			
				;
				
				\draw[xshift=4cm]
				(-.9,1.2)--(0,0)--(.9,1.2)
				(-.3,1.2)--(0,0)--(.3,1.2)
				;
				\filldraw[xshift=4cm]
				(0,0) circle (2pt)
				(.3,1.2) circle (2pt)
				(-.3,1.2) circle (2pt)
				(.9,1.2) circle (2pt)
				(-.9,1.2) circle (2pt)
				;
				\draw[xshift=4cm]
				(.3,.8) node {\scriptsize{$p$}}
				(.75,.8) node {\scriptsize{$a$}}
				(-.3,.8) node {\scriptsize{$q$}}
				(-.8,.8) node {\scriptsize{$c$}}
				;
				
				\draw [xshift=2cm]
				(1.9,-.2) node {\tiny{$o$}}
				(2.5,-.36) node {{$\tau_\wedge$}};
				
				\draw
				(-4.75,0) node[left] {\tiny{$c$}}
				(0,-.4) node[right] {{$x\eq (C,D)$}}
				(-2.05,0) node[right] {\tiny{$p$}}
				(-2.85,0) node[right] {\tiny{$q$}}
				(-3.8,.4) node[above] {\tiny{$r$}}
				(-3.8,-.4) node[below] {\tiny{$s$}}
				(-1,.45) node[above] {\tiny{$a$}}
				;			
			\end{tikzpicture}
		\end{center}
		\caption{An example of $\tau_\wedge$}\label{Fig:D}
	\end{figure}

As described in \S\ref{rd1},
the blowup centers of $(\rd_1\ph_2)$ are locally given by
\begin{align*}
	&\ov\fM_{(2,k)}\!\cap\!\cV=\!\!
	\bigcup_{\fE\in\Xi_k(\tau_\wedge)}\!\!\!
	\{\,
	\ze_e\eq 0:e\inn\fE\,\},
	\qquad
	\tn{where}\qquad
	\Xi_k(\tau_\wedge)=\{\fE\inn\Xi(\tau_\wedge):|\fE|\eq k\!+\!1\},\quad
	k\!\ge\!1.
\end{align*}
By the stability of $\fM_2^{\rm div}$,
we conclude that $(\rd_1\ph_2)$ is $\tau_\wedge$-compatible on $\cV$.
Lemma~\ref{LmLocalLoci} then implies the pullback $\ti\cV^{\rd_1\ph_2}$ of $\cV$ is smooth.

After $(\rd_1\ph_2)$,
we fix an arbitrary lift $y\inn \ti\cV^{\rd_1\ph_2}$ of $\wh x$, as well as a small neighborhood $\cV_y$ of $y$ in $\ti\cV^{\rd_1\ph_2}$.
The RL sequence of $y$ is denoted by $\ov \bE\eq \{\fE_1,\ldots,\fE_\cht\}$ as before.
The pullback $\ti\varphi^{\rd_1\ph_2}$ of structural homomorphism
$\varphi$ to $\cV_y$ can be written as
\begin{equation}
	\label{Eqn:Case_D_phi}
	\big(\prod_{k\in\lrbr\cht}\!
	\wc\ve_k\big)\cdot
	\left[
	\begin{matrix} 
		c'_{11}\prod_{e\in N_{[\de_1,a_1]}\bsl \De_y}\!\wc\ze_e &   \cdots & c'_{1m}\prod_{e\in N_{[\de_m,a_1]}\bsl \De_y}\!\wc\ze_e\\
		c'_{21}\prod_{e\in N_{[\de_1,a_2]}\bsl \De_y}\!\wc\ze_e & \cdots 
		& c'_{2m}\prod_{e\in N_{[\de_m,a_2]}\bsl \De_y}\!\wc\ze_e
	\end{matrix} 
	\right]\,,
\end{equation}
where $\det\left[\begin{matrix}
	c_{1i} & c_{1j}\\ c_{2i} & c_{2j}\end{matrix}\right]$
is equal to $\wc\ka_{ij}\prod_{e\in N_{[\de_i\wedge\de_j]}\bsl\De_y}\!\wc\ze_e$ up to a unit.

By Definition~\ref{DfnDominant}, there exists at least one inclusion-minimal $N''\!\in\!S''$ such that $N''\!\subset\!\De_y$.
We set
\begin{align*}
	S''(\De_y):=
	\big\{\,
	N''\inn S'':\,N''\!\subset\!\De_y
	\,\big\},\qquad
	D(\De_y):=
	\big\{\,\de\inn D:\,
	\{N_{[\de,a_1]},N_{[\de,a_2]}\}\!\cap\!S''(\De_y)\!\ne\!\emptyset\,
	\big\}\,,
\end{align*}
which are both nonempty.

\begin{itemize}[leftmargin=*]
\item 
If there exist distinct $i$ and $j$ such that 
$N_{[\de_i,a_1]}\!\subset\!\De_y$, $N_{[\de_j,a_2]}\!\subset\!\De_y$,
and $\de_i$ is not conjugate to $\de_j$ (i.e.~$\ka_{ij}$ is invertible on $\cV$),
then 
$(\rd_1\ph_3)$-$(\rd_3\ph_4)$ do not affect $\cV_y$, and
the entries of the $i$-th and $j$-th columns of (\ref{Eqn:Case_D_phi}),
as well as the corresponding $2\!\times\!2$ minor,
are all invertible,
hence $\ti\varphi^{\rd_1\ph_2}$ is diagonalizable on $\cV_y$.

\item 
If there exist distinct $i$ and $j$ such that 
$N_{[\de_i,a_1]}\!\subset\!\De_y$ and $N_{[\de_j,a_2]}\!\subset\!\De_y$,
and $\de_i$ is conjugate to $\de_j$ whenever $N_{[\de_i,a_1]}\!\subset\!\De_y$ and $N_{[\de_j,a_2]}\!\subset\!\De_y$,
then by Lemma~\ref{Lm:K_position},
$\de_i$ and $\de_j$ must be on the same irreducible component of the separating bridge $\tn B$,
and there are no other points of $D$ lying on $F$.
From the assumptions on $D\!\cap\!\tn B$ in the proof of \ref{Case:2} as well as Lemma~\ref{Lm:K_smoothness},
we conclude that $D(\De_y)\eq\{\de_1,\de_2\}\eq D\!\cap\tn B$, and $\ka_{12}$ is a local parameter on $\cV$ that vanishes at $\wh x$.
Consequently, after suitable row and column operations, (\ref{Eqn:Case_D_phi}) can be rewritten as
\begin{equation*}
	\big(\prod_{k\in\lrbr\cht}\!
	\wc\ve_k\big)\cdot
	\left[
	\begin{matrix} 
		1 & 0 & 0 &   \cdots & 0\\
		0 & \wc\ka_{12} &
		\prod_{e\in N_{[\de_3]}\bsl \De_y}\!\wc\ze_e & \cdots 
		& \prod_{e\in N_{[\de_m]}\bsl \De_y}\!\wc\ze_e
	\end{matrix} 
	\right]\,.
\end{equation*}

Notice that $(\rd_1\ph_3)$-$(\rd_3\ph_2)$ and $(\rd_3\ph_4)$ do not affect (the pullback of) $\cV_y$,
and $(\rd_3\ph_3)$ is $\ex\big(\PT_y(\tau_2)\big)$-compatible on $\cV_y$, where $\tau_2$ is the rooted tree given by (\ref{Eqn:tree_A.4}).
Therefore,
the pullback of $\cV_y$ after $(\rd_3\ph_3)$, hence after the entire blowup,
is smooth,
on which the pullback of $\ti\varphi^{\rd_1\ph_2}$ is diagonalized.

\item 
If there exists a unique $\de\eq D$ such that $N_{[\de,a_1]}\!\subset\!\De_y$ and $N_{[\de,a_2]}\!\subset\!\De_y$,
then $D\!\cap\!\tn B\eq\{\de\}$,
so $h\eq 1$ and $\de\eq\de_1$.
By Lemma~\ref{Lm:K_position},
we further have $\ka_{1i}$ is invertible on $\cV$ for every $i\!\ge\!2$ with $N_{[\de_i,a_1]}\!\subset\!E$ or $N_{[\de_i,a_2]}\!\subset\!E$.
Consequently, after suitable row and column operations, (\ref{Eqn:Case_D_phi}) can be rewritten as
\begin{equation*}
	\big(\prod_{k\in\lrbr\cht}\!
	\wc\ve_k\big)\cdot
	\left[
	\begin{matrix} 
		1 & 0 &    \cdots & 0\\
		0 & \wc\ka_{12}
		\prod_{e\in N_{[\de_2]}\bsl \De_y}\!\wc\ze_e & \cdots 
		& \wc\ka_{1m}\prod_{e\in N_{[\de_m]}\bsl \De_y}\!\wc\ze_e
	\end{matrix} 
	\right]\,.
\end{equation*}

Notice in this situation, $(\rd_1\ph_3)$ and $(\rd_1\ph_4)$ do not affect $\cV_y$,
whereas $(\rd_1\ph_5)$ is $\PT_y(\tau_1)$-compatible on $\cV_y$,
where $\tau_1$ is as in (\ref{Eqn:tree_A.5}).
Moreover,
$\wc\ka_{1i}$, $i\!\ge\!2$, behave in the same way as $\ka_{1i}$, $i\!\ge\!2$, do in \ref{Case:5}.\ref{Case:5.5},
hence the argument for \ref{Case:5}.\ref{Case:5.5} (i.e.~the proof of Proposition~\ref{Prp:phi_M5_core_wt1}) apply to the this situation.

\item 
If one of $E_1$ and $E_2$, say $E_1$,
satisfies $N''\!\subset\!E_1$ for all $N''\inn S''(\De_y)$,
then re-labeling $\de_1$ if necessary, after taking suitable row and column operations, we can rewrite (\ref{Eqn:Case_D_phi}) as
\begin{equation*}
	\big(\prod_{k\in\lrbr\cht}\!
	\wc\ve_k\big)\cdot
	\left[
	\begin{matrix} 
		1 & 0 &    \cdots & 0\\
		0 & \wc\ka_{12}
		\prod_{e\in N_{[\de_2,a_2]}\bsl \De_y}\!\wc\ze_e & \cdots 
		& \wc\ka_{1m}\prod_{e\in N_{[\de_m,a_2]}\bsl \De_y}\!\wc\ze_e
	\end{matrix} 
	\right]\,.
\end{equation*}
We observe that $N_{[\de_i,a_2]}\bsl\De_y$ is nonempty for every $i\inn\lrbr m$, and ($\rd_1\ph_4$) is $\PT_y(E_2,\preceq_2)$-compatible on $\cV_y$.
Moreover,
the inclusion-minimal elements of $N_{[\de_{i},a_2]}\bsl\De_y$, $i\inn\lrbr m$, are exactly the root-to-leaf paths of $\PT_y(E_2,\preceq_2)$.
Furthermore,
mimicking the argument used in the $D(\De_y)\!\subset\!F$ sub-case of the proof of \ref{Case:3}.\ref{Case:3.3},
we see that for every
$\de_i$ satisfying $N_{[\de_i,a_2]}\bsl\De_y$ is inclusion-minimal among all $2\!\le\!i\!\le\!m$,
$\wc\ka_{1i}$
behaves in the same way as $\ka_{1i}$ does in  \ref{Case:4}.
Comparing the above $\ti\varphi^{\rd_1\ph_2}$ with (\ref{Eqn:Case_B_phi}),
we thus conclude that
the argument for \ref{Case:4}, i.e.~the proof of Proposition~\ref{PrpChangeofPhiM4},
applies to the current situation.
\end{itemize}

To summarize, 
we have shown the following in \S\ref{Subsec:Case_D}.

\begin{prop}\label{PrpChangeofPhiM2}
Proposition~\ref{PrpChangeofPhi} holds
in \ref{Case:2}.
\end{prop}

\subsection{Proof of \ref{Case:1}, Part I}
\label{Subsec:Case_E}

In this subsection, we prove Proposition~\ref{PrpChangeofPhi} in \ref{Case:1},
when the image of $\wh x$ in $\fM_2\wt$ belongs to $\fM^\mn_{(1)}$.
This implies
\begin{align*}
	D\cap F=\emptyset\,,
\end{align*}
i.e.~all the points of $D$ are on tails.
	
Let 
$
\cV\to\fM_2^{\rm div}
$ be a small affine smooth chart containing~$\wh x\eq(C,D)$.
%We begin with the rooted tree $\tau_0$ that $(\rd_1\ph_1)$ is compatible with on $\cV$.
By defining the subsets $D_{\im}\!\subset\!D$ and $E\!\subset\!N(C)$ exactly in the same way as (\ref{Eqn:tau_CaseA.4}),
and defining the relation $\preceq$ on $E$ given by $e\!\preceq\!e'$ if and  only if any connected subcurve of $C$ containing $F$ and $e$ must contain $e'$,
we see
\begin{align}\label{Eqn:tree_E}
	\tau_0:=(E,\preceq)
\end{align}
is a rooted tree,
whose root-to-leaf paths are exactly $N_{[\de]}$, $\de\inn D_{\im}$.
Although the constructions of $\tau_0$ here is parallel to that of $\tau_2$ in (\ref{Eqn:tree_A.4}) and $\tau_1$ in (\ref{Eqn:tree_A.5}), we emphasize the set $D\bsl F$ containing $D_{\im}$ are different for $\tau_0$, $\tau_1,$ and $\tau_2$.
In Figure~\ref{Fig:CaseA.4},
if the two points of $D$ on the core are removed,
then $\tau_0$ is equal to the rooted tree of  Figure~\ref{Fig:CaseA.4} (whereas $\tau_2$ is not defined in this situation).

As described in \S\ref{rd1},
the blowup centers of $(\rd_1\ph_1)$ locally are given by
\begin{equation}\begin{split}
		\label{Eqn:r1p1_tree_cmptb}
		&\ov\fM_{(1,k)}\!\cap\ti\cV
		=\!\bigcup_{\fE\in\Xi_k(\tau_0)}\!\!\!\!
		\{\,
		\ze_e\eq 0~\forall~e\inn\fE\,\},\qquad
		\tn{where}\quad
		\Xi_k(\tau_0)=\{\fE\inn\Xi(\tau_0):|\fE|\eq k\}.
\end{split}\end{equation}
It is thus a direct check that $(\rd_1\ph_1)$ is $\tau_0$-compatible on $\cV$.

After $(\rd_1\ph_1)$, we denote by $\ti\cV^{\rd_1\ph_1}$ the pullback of $\cV$, and fix an arbitrary lift $y\inn \ti\cV^{\rd_1\ph_1}$ of $\wh x$, as well as a small neighborhood $\cV_y$ of $y$ in $\ti\cV^{\rd_1\ph_1}$.
The RL sequence of $y$ is denoted by $\ov \bE\eq \{\fE_1,\ldots,\fE_\cht\}$ as before.
Parallel to (\ref{Eqn:I_y}), we set
\begin{align}\label{Eqn:D(De)_r1p1}
	D(\De_y):=
	\big\{\,\de\inn D:\,
	N_{[\de]}\!\subset\!\De_y
	\big\}\,,
\end{align}
which determines a subset of the core $F$ of $C$:
\begin{align*}
	\lr{\De_y}:=
	\big\{\,\lr{\de}:\,
	\de\inn D(\De_y)\,\big\}\,.
\end{align*}
Based on the topological type of $F$ as well as the locations of the points of $D(\De_y)$,
we divide the proof of \ref{Case:1} into the following sub-cases:
\begin{enumerate}[leftmargin=*,label=\arabic*]
	\item \label{Case:1.1}
	\!\!\!. there does not exist any genus 1 inseparable component of $F$ that is disjoint from $\lr{\De_y}$, and one of the following holds:
	\begin{enumerate}[leftmargin=*,label=\alph*]
		\item \label{Case:1.1.a}
		\!\!\!. 
		$F$ is separable;
		\item \label{Case:1.1.b}
		\!\!\!. 
		$F$ is inseparable, and $|\lr{\De_y}|\!\ge\!3$;
		\item \label{Case:1.1.c}
		\!\!\!. 
		$F$ is inseparable, $|\lr{\De_y}|\!=\!2$, and the elements of $\lr{\De_y}$ are not conjugate to each other;
		\item \label{Case:1.1.d}
		\!\!\!. 
		$F$ is inseparable, $|\lr{\De_y}|\!=\!2$, and the elements of $\lr{\De_y}$ are conjugate to each other;
		\item \label{Case:1.1.e}
		\!\!\!. 
		$F$ is inseparable, $|\lr{\De_y}|\!=\!1$, and the element of $\lr{\De_y}$ is not a Weierstrass point of $F$;
		\item \label{Case:1.1.f}
		\!\!\!. 
		$F$ is inseparable, $|\lr{\De_y}|\!=\!1$, and the element of $\lr{\De_y}$ is a Weierstrass point of $F$;
	\end{enumerate}
	\item \label{Case:1.2}
	\!\!\!. there exists a unique genus 1 inseparable component of $F$ that is disjoint from $\lr{\De_y}$,
	and there does not exist any (non-separating) bridge of $F$ that contains $\lr{\De_y}$ as a subset;
	\item \label{Case:1.3}
	\!\!\!. there exists a unique non-separating bridge of $F$ that contains $\lr{\De_y}$ as a subset;
	\item \label{Case:1.4}
	\!\!\!. there exists a unique separating bridge of $F$ that contains $\lr{\De_y}$ as a subset.
\end{enumerate}

The proof of \ref{Case:1}.\ref{Case:1.1}.\ref{Case:1.1.a}-\ref{Case:1.1.c} is parallel to that of Proposition~\ref{Prp:phi_M5_simple}.
More precisely,
\begin{itemize}[leftmargin=*]
	\item in \ref{Case:1}.\ref{Case:1.1}.\ref{Case:1.1.a},
	there exist two points of $D(\De_y)$, say $\de_1$ and $\de_2$,
	such that for $s\eq 1,2$, $\lr{\de_s}$ and $a_s$ belong to the same genus 1 inseparable component of $F$;
	\item 
	in \ref{Case:1}.\ref{Case:1.1}.\ref{Case:1.1.b} and \ref{Case:1.1.c},
	there exist two points of $D(\De_y)$, say $\de_1$ and $\de_2$,
	such that $\lr{\de_1}$ and $\lr{\de_2}$ are distinct and not conjugate to each other.
\end{itemize}
In either situation above,
by Proposition~\ref{Prp:phi_key}~\ref{Part:phi} (and \ref{Part:theta} in the latter situation) as well as Proposition~\ref{PrpDominating},
the pullback of the structural homomorphism $\varphi$ takes the following form on $\cV_y$:
\begin{align*}
	\Big(\prod_{k\in\lrbr\cht}\!\wc\ve_k\Big)\cdot
	\left[\;
	\begin{matrix} 
		1 & 0 & 0 &\cdots&0 \\
		0&
		1 &
		0
		&\cdots
		&
		0
	\end{matrix}\;
	\right]\,,
\end{align*}
i.e.~it is diagonalized after $(\rd_1\ph_1)$.
Moreover,
the blowups starting from $(\rd_1\ph_2)$ do not affect $\cV_y$, hence the pullback of $\cV_y$ in the final stack is smooth.

The remaining sub-cases of \ref{Case:1} are more complicated. 
We will verify them in later subsections.

\subsection{First-order doubly-derived trees and the proof of \ref{Case:1}, Part II}
In this subsection,
we aim to show \ref{Case:1}.\ref{Case:1.1}.\ref{Case:1.1.d},
when there are two tails $C_p$ and $C_q$ of $C$,
whose pivotal nodes are $p$ and $q$, respectively,
such that 
\begin{align*}
	D(\De_y)\subset C_p\!\sqcup\!C_q,\qquad
	D_p(\De_y):=D(\De_y)\!\cap\! C_p\ne\emptyset,\qquad
	D_q(\De_y):=D(\De_y)\!\cap\! C_q\ne\emptyset,
\end{align*}
and  $p$ is conjugate to $q$ on $F$.
W.l.o.g.~we assume there exist $1\!\le\!h\!\le\!\ell\!<\!s\!\le\!t\!\le\!m$ such that
\begin{align*}
	D(\De_y)\eq \{\de_1,\cdots\!,\de_{h}\}
	\!\sqcup\!
	\{\de_{\ell+1},\cdots\!,\de_{s}\},\qquad
	\lr{\de_1}\eq\cdots\eq \lr{\de_{\ell}}\eq p,\qquad 
	\lr{\de_{\ell+1}}\eq\cdots\eq\lr{\de_{t}}\eq q.
\end{align*}
Moreover,
we observe the assumptions of \ref{Case:1}.\ref{Case:1.1}.\ref{Case:1.1.d} imply $(\rd_1\ph_2)$-$(\rd_2)$ do not affect $\cV_y$.

Since $p$ and $q$ are not on any non-separating bridge,
by Proposition~\ref{Prp:phi_key}~\ref{Part:kappa},
we have
\begin{align*}
	\ka_{1,{j}}~\tn{is~a~local~parameter} \quad
	\forall~\ell\!<\!j\!\le\!t\,,\qquad
	\ka_{1i}\inn\Ga(\sO^*_\cV)\quad
	\forall~1\!<\!i\!\le\!\ell\,.
\end{align*}
Hence by Proposition~\ref{Prp:phi_key}~\ref{Part:theta},
after suitable elementary row and column operations,
the pullback $\ti\varphi^{\rd_1\ph_1}$ of the structural homomorphism $\varphi$ takes the following form on $\cV_y$:
\begin{align*}
	\Big(\prod_{k\in\lrbr\cht}\!\wc\ve_k\Big)\cdot
	\left[\;
	\begin{matrix} 
		1 & 0 & \cdots & 0 & 0 & 0 &\cdots&0 
		& 0 &\cdots&0 
		\\
		0&
		\undermat{\tn{DNE~if}~\ell\eq 1}{\eta'_{2}& 
		\cdots 
		&
		\eta'_{\ell}}
		&
		\wc\ka_{1,\ell+1}
		&
		\undermat{\tn{DNE~if}~t\eq \ell\!+\!1}{\eta'_{\ell+2}
		&\cdots
		&
		\eta'_{t}}
		&
		\eta'_{t+1}
		&\cdots
		&
		\eta'_{m}
	\end{matrix}\;
	\right]\,,
\end{align*}
where 
\begin{align*}
	\eta_{i}':=
	\begin{cases}
	\big(\prod_{k\in\lrbr{\cht}_{[\de_1\wedge\de_i]}}
	\!\wc\ve_k\big)\big(\prod_{e\in N_{[\de_i]}\bsl\De_y}\!\wc\ze_e\big) &\tn{if}~1\!<\!i\!\le\!\ell,
	\\
	\big(\prod_{k\in\lrbr{\cht}_{[\de_{\ell+1}\wedge\de_i]}}\!\wc\ve_k\big)\big(\prod_{e\in N_{[\de_i]}\bsl\De_y}\!\wc\ze_e\big)\quad
	&\tn{if}~\ell\!+\!1\!<\!i\!\le\!t,
	\\
	\prod_{e\in N_{[\de_i]}\bsl\De_y}\!\wc\ze_e
	&\tn{if}~t\!<\!i;
	\end{cases}
	\qquad
	\lrbr{\cht}_{[\de_i\wedge\de_j]}~\tn{are as in}~(\ref{Eqn:[m]_wedge}).
\end{align*}

Parallel to Lemma~\ref{Lm:wedge},
we see there exist $1\!\le\!j\!\le\!h$ and $\ell\!+\!1\!\le\!r\!\le\!s$ such that
\begin{align}\label{Eqn:N_wedge'}
	N_{[\wedge D_p(\De_y)]}:=
	\bigcap_{\de\in D_p(\De_y)} \!\!\!\!\!\!N_{[\de]}
	=N_{[\de_1\wedge\de_{j}]}\,,
	\qquad
	N_{[\wedge D_q(\De_y)]}:=
	\bigcap_{\de\in D_q(\De_y)}\!\!\!\!\!\!N_{[\de]}
	=N_{[\de_{\ell+1}\wedge\de_{r}]}\,.
\end{align}
Let
\begin{equation}\begin{split}\label{Eqn:cht_wedge'}
	&{\lrbr \cht}_{[\wedge_2 D(\De_y)]}:=
	\big\{\,k\inn\lrbr\cht:\,N_{[\wedge D_p(\De_y)]}\!\cap\!\fE_k\!\ne\!\emptyset,
	~N_{[\wedge D_q(\De_y)]}\!\cap\!\fE_k\!\ne\!\emptyset\,\big\},
	\\
	&{\lrbr \cht}_{[\de_i\wedge_2 D(\De_y)]}:=
	\begin{cases}
	\big\{\,k\inn{\lrbr \cht}_{[\wedge D(\De_y)]}:\,N_{[\de_i]}\!\cap\! N_{[\wedge D_p(\De_y)]}\!\cap\!\fE_k\!\ne\!\emptyset\,\big\}\quad
	&
	\tn{if}~1\!\le\!i\!\le\!\ell,
	\\
	\big\{\,k\inn{\lrbr \cht}_{[\wedge D(\De_y)]}:\,N_{[\de_i]}\!\cap\! N_{[\wedge D_q(\De_y)]}\!\cap\!\fE_k\!\ne\!\emptyset\,\big\}
	&
	\tn{if}~\ell\!+\!1\!\le\!i\!\le\!t,
	\\
	\emptyset & \tn{if}~t\!+\!1\!\le\!i\!\le\!m.
	\end{cases}
\end{split}\end{equation}
Here, the subscript $2$ of $\wedge_2$ indicates $D(\De_y)$ splits into two subsets $D_p(\De_y)$ and $D_q(\De_y)$,
and ``$\wedge$'' is taken within each of the two subsets.
Then, we obtain the following analogue of (\ref{Eqn:phi_r1p5}):
under suitable elementary column operations,
the second row of $\ti\varphi^{\rd_1\ph_1}$ can be written as
\begin{align}\label{Eqn:Case_E.1.d_phi}
	&\big(\prod_{k\in\lrbr\cht}\!\wc\ve_k\big)
	\left[\;
	\begin{matrix} 
		0&
		\undermat{\tn{DNE~if}~\ell\eq 1}{\eta_{2}& 
			\!\cdots\! 
			&
			\eta_{\ell}}
		&
		\wc\ka_{1,\ell+1}
		&
		\undermat{\tn{DNE~if}~t\eq \ell\!+\!1}{\eta_{\ell+2}
			&\!\cdots\!
			&
			\eta_{t}}
		&
		\eta_{t+1}
		&\!\cdots\!
		&
		\eta_{m}
	\end{matrix}\,
	\right],
\end{align}
where
\begin{align*}
	\eta_i\!:=
	\big(\!\!\prod_{k\in{\lrbr \cht}_{[\de_i\wedge_2 D(\De_y)]}}\!\!\!\!\!\!\!\!\!\!\!\wc\ve_k\ \; \big)
	\big(\!\prod_{e\in N_{[\de_{i}]}\bsl\De_y}\!\!\!\!\!\!\!\wc\ze_e \,\big).
\end{align*}
Below, we divide the proof into two sub-cases, depending on the cardinal of $D(\De_y)$.

\subsubsection{$|D(\De_y)|\!\ge\! 3$}\label{Subsubsec:Case_E.1.d_1}
	In this situation, we have $h\!+\!(s\!-\!\ell)\!\ge\!3$, i.e.~$\{\de_1,\de_{\ell+1}\}$ is a strict subset of $D(\De_y)$.
	Then, (\ref{Eqn:N_wedge'}) implies there exists $i_0\inn\lrbr{t} \bsl\{1,\ell\!+\!1\}$ such that 
	\begin{align*}
		{\lrbr \cht}_{[\de_{i_0}\wedge_2 D(\De_y)]}
		=
		{\lrbr \cht}_{[\wedge_2 D(\De_y)]}.
	\end{align*}
	The corresponding term $\eta_{i_0}$ in (\ref{Eqn:Case_E.1.d_phi}) can thus be written as
	$\eta_{i_0}\eq \prod_{k\in{\lrbr \cht}_{[\wedge_2 D(\De_y)]}}\!\wc\ve_k$.
	
	To see how $(\rd_3\ph_1)$ affects $\cV_y$, we mimic the construction of the first-order derived tree $\varrho_y$ as in (\ref{Eqn:tree_A.5.2}) and set
	\begin{align*}
		&D_{y,\im}:=\big\{\,\de\inn D:\;
		{\lrbr \cht}_{[\de_i\wedge_2 D(\De_y)]}\!\sqcup\! \big(N_{[\de_i]}\bsl\De_y\big)~\tn{is~inclusion-minimal~among~all}~i\!\in\!\lrbr{m}\,\big\},\\
		&
		E_y:=\!\bigcup_{\de\in D_{y,\im}}\!\!\!\!\Big({\lrbr \cht}_{[\de\wedge_2 D(\De_y)]}\!\sqcup\! \big(N_{[\de]}\bsl\De_y\big)\Big).
	\end{align*}
	On $E_y$,
	consider the partial order $\preceq_y$ determined by
	\begin{itemize}[leftmargin=*]
		\item for every $k,k'\inn{\lrbr \cht}_{[\wedge_2 D(\De_y)]}$,
		$k\!\prec_y\!k'$ if and only if $k\!>\!k'$;
		\item for every $e, e'\inn\bigcup_{\de\in D_{y,\im}}\big(N_{[\de]}\bsl\De_y\big)$,
		$e\!\preceq_y\!e'$ if and  only if every connected subcurve of $C$ containing $F$ and $e$ must contain $e'$;
		\item 
		for every $k\inn{\lrbr \cht}_{[\wedge_2 D(\De_y)]}$ and $e\inn \bigcup_{\de\in D_{y,\im}}\!\!\big(N_{[\de]}\bsl\De_y\big)$ ($\subset\!\tau_0$),
		\begin{align*}
			e\!\prec_y \!k
			\qquad\Longleftrightarrow\qquad
			e\inn\big(N_{[\wedge D_p(\De_y)]}\!\cap\!\fE_k\big)^\prec
			\sqcup 
			\big(N_{[\wedge D_q(\De_y)]}\!\cap\!\fE_k\big)^\prec
			\ \tn{in}\ \tau_0.
		\end{align*}
	\end{itemize}
	It is a direct check that $\preceq_y$ is a tree order on $E_y$ as per (\ref{Eqn:tree_order}).
	We call the rooted tree 
	\begin{align}\label{Eqn:tree_doubly_der}
		\u_y:=(E_y,\preceq_y)
	\end{align}
	\ts{first-order doubly-derived} tree.

	In Figure~\ref{Fig:double_der},
	we provide illustration of the notion of first-order doubly-derived trees.
	Here,
	$\tau$ is the same rooted tree as in Figure~\ref{figDerivedTFMRs}.
	Given
	$\wh x\inn X^\cV_{\tau}$,
	consider two lifts $y$ and $z$ of $\wh x$,
	sharing the same RLS $\ov\bE\eq\big\{\{e_a,e_b\},\{e_a,e_c,e_d\}\big\}$ and
	satisfying
	\begin{align*}
		\De_{y}=\{e_a,e_b,e_c\}\,,\qquad
		\De_{z}=\{e_a,e_b,e_c,e_d\}\,.
	\end{align*}
	The corresponding doubly-derived trees $\upsilon_{y}$ and $\upsilon_z$ are respectively illustrated in Figure~\ref{Fig:double_der}.
	
	\begin{figure}[htp]
		\begin{center}
			\begin{tikzpicture}
				\draw[xshift=-4cm]
				(0,0)--(1.2,1.2)
				(0,0)--(-.6,.6)
				(.6,.6)--(0,1.2)
				;
				\filldraw[xshift=-4cm]
				(0,0) circle (2pt)
				(.6,.6) circle (2pt)
				(-.6,.6) circle (2pt)
				(1.2,1.2) circle (2pt)
				(0,1.2) circle (2pt)
				;
				\draw[xshift=-4cm]
				(.2,.2) node[right] {\scriptsize{$e_b$}}
				(-.15,.2) node[left] {\scriptsize{$e_a$}}
				(.85,.85) node[right] {\scriptsize{$e_d$}}
				(.4,.85) node[left] {\scriptsize{$e_c$}}
				;
				
				\draw[xshift=-.5cm]
				(0,.6)--(.6,1.2)
				(0,0)--(0,.6)--(-.6,1.2);
				\filldraw[xshift=-.5cm] 
				(0,0) circle (2pt)
				%(0,-.6) circle (2pt)          
				(.6,1.2) circle (2pt)
				(-.6,1.2) circle (2pt)
				(0,.6) circle (2pt);
				\draw[xshift=-.5cm]
				(.45,.8) node {\scriptsize{$e_d$}}
				(-.4,.8) node {\scriptsize{$2$}}
				(-.12,.3) node {\scriptsize{$1$}}
				%(0,-.3) node[right] {$e_r$}
				%(-.3,0) node[left] {$e_p$}
				%(1,-.5) node {$\ga_2$}
				;
				
				\draw[xshift=2cm] (0,0)--(0,.6);
				\filldraw[xshift=2cm] (0,0) circle (2pt)
				(0,.6) circle (2pt);
				\draw[xshift=2cm]
				(-.12,.3) node {\scriptsize{$1$}}
				%(0,-.3) node[right] {$e_r$}
				%(-.3,0) node[left] {$e_p$}
				%(1,-.5) node {$\ga_2$}
				;
				
				\draw 
				(-4.1,-.2) node {\tiny{$o$}}
				(-3.5,-.4) node {\scriptsize{$\tau$}}
				(0,-.4) node {\scriptsize{$\upsilon_y$}}
				(2.5,-.4) node {\scriptsize{$\upsilon_z$}};
			\end{tikzpicture}
		\end{center}
		\caption{First-order doubly-derived trees}\label{Fig:double_der}
	\end{figure}

	Notice the root-to-leaf paths of $\u_y$ are those ${\lrbr \cht}_{[\de\wedge_2 D(\De_y)]}\!\sqcup\! \big(N_{[\de]}\bsl\De_y\big)$ with $\de\inn D_{y,\im}$.
	Particularly,
	${\lrbr \cht}_{[\wedge_2 D(\De_y)]}$ gives a root-to-leaf path of $\u_y$;
	in other words,
	every transverse section $\fF$ of $\u_y$ contains a unique element of ${\lrbr \cht}_{[\wedge_2 D(\De_y)]}$,
	denoted by $k_\fF$.
	
	Recall (the prototypes of) the blowup centers $K_{k,k'}$, $2\!\le\!k\!\le\!k'$, of ($\rd_3\ph_1$) are described in \S\ref{rd3ph1}.
	It is then a direct check that
	\begin{align*}
		\wc K_{k,k'}\cap\cV_y
		=
		\bigcup_{
			\fF\in\Xi(\u_y)~\tn{s.t.}~
			|\fE_{k_\fF}|= k,~
			|\fF|= k'-1
		}\!\!\!\!\!\!\!\!\!\!\!\!\!\!\!\!\!\!
		\big\{\,
		\wc\ka_{1,\ell+1}\eq 0;~
		\wc\ve_{k_\fF}\eq 0;~
		\wc\ze_e\eq 0~\forall~e\inn \fF\bsl\{k_\fF\}
		\big\}\,,
		\qquad
		2\!\le\!k\!\le\!k'.
	\end{align*}
	Following the same argument as for (\ref{Eqn:r3p4_tree_cmptb}),
	we see the above equations gives a partition of $\Xi(\u_y)$, hence of $\Xi\big(\ex(\u_y)\big)$,
	satisfying Definition~\ref{DfnGaAdm}.
	In other words,
	$(\rd_3\ph_1)$ is $\ex(\u_y)$-compatible on $\cV_y$,
	where the grafted edge corresponds to the local parameter $\wc\ka_{1,\ell+1}$.
	Comparing the root-to-leaf paths of $\ex(\u_y)$ and the terms of (\ref{Eqn:Case_E.1.d_phi}),
	we conclude from Proposition~\ref{PrpDominating} that
	the pullback of $\ti\varphi^{\rd_1\ph_1}$ is diagonalized after $(\rd_3\ph_1)$. 
	Moreover, by Lemma~\ref{LmLocalLoci}, the pullback of $\cV_y$ is smooth.
	Finally,
	notice $(\rd_3\ph_2)$-$(\rd_3\ph_4)$ do not affect the pullback of $\cV_y$,
	hence the proof for the $h\!+\!(s\!-\!\ell)\!\ge\!3$ situation is complete.
	
\subsubsection{$|D(\De_y)|\eq 2$}\label{Subsubsec:Case_E.1.d_2}
	In this situation, we have $h\!+\!(s\!-\!\ell)\!=\!2$, i.e.~$D_p(\De_y)\eq\{\de_1\}$ and $D_q(\De_y)\eq\{\de_{\ell+1}\}$.
	Then, 
	the above construction of the first-order doubly derived tree $\u_y$ still applies to this situation verbatim,
	and $(\rd_3\ph_1)$ is sill $\ex(\u_y)$-compatible on $\cV_y$,
	so the pullback of $\cV_y$ is still smooth.
	However,
	on the one hand, 
	${\lrbr \cht}_{[\wedge_2 D(\De_y)]}$ is a root-to-leaf path of $\u_y$;
	on the other hand,
	for every $i\inn\lrbr{m}\bsl\{1,\ell\!+\!1\}$,
	the term $\eta_i$ in (\ref{Eqn:Case_E.1.d_phi}) contains factors that are not labeled by ${\lrbr \cht}_{[\wedge_2 D(\De_y)]}$ because
	$N_{[\de_i]}\bsl \De_y$ is nonempty.
	Thus, the pullback of $\ti\varphi^{\rd_1\ph_1}$ is possibly not diagonalized after $(\rd_3\ph_1)$.
	
	Precisely, let $z$ be an arbitrary lift of $y$ after $(\rd_3\ph_1)$.
	From (\ref{Eqn:Case_E.1.d_phi}), we see the pullback of $\ti\varphi^{\rd_1\ph_1}$ is not diagonalized if and only if ${\lrbr \cht}_{[\wedge_2 D(\De_y)]}\!\subset\!\De_z$,
	and $\big(N_{[\de_i]}\bsl \De_y\big)\!\not\subset\!\De_z$ for any $i\inn\lrbr{m}\bsl\{1,\ell\!+\!1\}$.
	
	Nonetheless,
	since $|D(\De_y)|\eq 2$ and $\ka_{1,\ell+1}(\wh x)\eq 0$,
	we define the rooted tree $\tau_2$ parallel to (\ref{Eqn:tree_A.4}),
	but with two modifications:
	\begin{itemize}[leftmargin=*]
		\item each $N_{[\de]}$ should be replaced by $N_{[\de]}\bsl\De_y$, and
		\item $\de_2$ should be replaced by $\de_{\ell+1}$.
	\end{itemize}
	It is then a direct check that
	$(\rd_3\ph_2)$ does not affect the pullback of $\cV_y$,
	whereas the prototype of $(\rd_3\ph_3)$,
	i.e.~the blowup of $\fM_2^{\rm div}\!\times\!_{\fM_2^{\rm wt}}\ti\fM^{\rd_2}$ successively along the proper transforms of $\cH_k$, $k\!\ge\!0$,
	is $\ex(\tau_2)$-compatible on $\cV_y$.
	Hence by Corollary~\ref{Crl:PT},
	$(\rd_3\ph_3)$ is $\PT_z\big(\ex(\tau_2)\big)$-compatible,
	or equivalently $\ex\big(\PT_z(\tau_2)\big)$-compatible, on a small (smooth) neighborhood $\cV_z$ of $z$.
	Particularly, this implies $\cV_z$ is smooth.
	Moreover,
	the root-to-leaf paths of $\tau_2$ are those inclusion-minimal $N_{[\de_i]}\bsl(\De_y\!\sqcup\!\De_z)$,
	along with the grafted edge corresponding to $\wc\ka_{1,\ell+1}$.
	Therefore,
	the pullback of $\ti\varphi^{\rd_3\ph_1}$ becomes diagonalized on $\cV_z$.
	Finally, notice $(\rd_3\ph_4)$ does not affect $\cV_z$, so the proof of the $h\!+\!(s\!-\!\ell)\!=\!2$ situation is complete.

\subsection{Proof of \ref{Case:1}, Part III}
In this subsection,
we aim to show \ref{Case:1}.\ref{Case:1.1}.\ref{Case:1.1.e},
when $D(\De_y)$ is contained in a unique tail,
whose pivotal node is not a Weierstrass point.
W.l.o.g.~we assume there exist $1\!\le\!h\!\le\!\ell\!\le\!m$ such that
\begin{align*}
	D(\De_y)=\{\de_1,\cdots,\de_h\},\qquad
	\lr{\de_i}=\lr{\de_1}\quad\forall~1\!\le\!i\!\le\!\ell,\qquad
	\lr{\de_i}\ne\lr{\de_1}\quad\forall~\ell\!<\!i\!\le\!m.
\end{align*}

Since $\lr{\de_1}$ is not a Weierstrass point of $F$,
by Proposition~\ref{Prp:phi_key}~\ref{Part:theta} and the argument for (\ref{Eqn:phi_r1p5}),
after suitable elementary row and column operations,
the pullback $\ti\varphi^{\rd_1\ph_1}$ of the structural homomorphism $\varphi$ takes the following form on $\cV_y$:
\begin{align}\label{Eqn:r2_phi}
	\Pi^{\rd_1\ph_1}
	\cdot
	\left[\;
	\begin{matrix} 
		1 & 0 & \cdots & 0  & 0 &\cdots&0 
		\\
		0&
		\undermat{\tn{DNE~if}~\ell\eq 1}{\eta_{2}& 
			\cdots 
			&
			\eta_{\ell}}
		&
		\wc\ka_{1,\ell+1}
		\eta_{\ell+1}
		&\cdots
		&
		\wc\ka_{1m}
		\eta_{m}
	\end{matrix}\;
	\right]\,,
\end{align}
where 
\begin{align*}
	\Pi^{\rd_1\ph_1}:=
	\prod_{k\in\lrbr\cht}\!\wc\ve_k,\qquad
	\eta_{i}:=
	\big(\prod_{k\in\lrbr{\cht}_{[\de_i\wedge D(\De_y)]}}
	\!\!\!\!\!\!\!\!\wc\ve_k\ \big)\big(\prod_{e\in N_{[\de_i]}\bsl\De_y}\!\!\!\wc\ze_e\ \big),
\end{align*}
and $\lrbr{\cht}_{[\de_i\wedge D(\De_y)]}$'s are as in (\ref{Eqn:[m]_de_wedge}).
Moreover, when $h\!\ge\!2$,
there exists $2\!\le\!i_0\!\le\!h$ such that
$\lrbr{\cht}_{[\de_{i_0}\wedge D(\De_y)]}\eq 
\lrbr{\cht}_{[\wedge D(\De_y)]}$.

The assumption of \ref{Case:1}.\ref{Case:1.1}.\ref{Case:1.1.e} implies $(\rd_1\ph_2)$-$(\rd_1\ph_4)$ and $(\rd_3\ph_2)$ do not affect $\cV_y$.
Depending on whether $D(\De_y)$ is a singleton (i.e.~whether $h\eq 1$),
$(\rd_1\ph_5)$, $(\rd_3\ph_3)$, and $(\rd_3\ph_4)$ may or may not be relevant,
so we divide the proof of \ref{Case:1}.\ref{Case:1.1}.\ref{Case:1.1.e} into two sub-cases as follows.

\subsubsection{$|D(\De_y)|\!\ge\!2$}
\label{Subsubsec:Case_E.1.e_1}
In this situation,
$(\rd_1\ph_5)$, $(\rd_3\ph_3)$, and $(\rd_3\ph_4)$ do not affect $\cV_y$.

To see how $(\rd_2)$ affects $\cV_y$,
we break it into ``phases''.
Precisely,
for each $j\!\ge\!1$,
we denote by $\pi'_j$ the blowup of $\fM_2^{\rm div}\!\times_{\fM_2^{\rm wt}}\!\ti\fM^{\rd_1}$ successively along the proper transforms of $$X_{j,j'}^{\rm div}:=\fM_2^{\rm div}\!\times_{\fM_2^{\rm wt}}\!X_{j,j'}, \qquad j'\!=j,\,j\!+\!1,\,\cdots.$$
As mentioned in \S\ref{rd2},
%locally on $\cV_y$,
the stability of  $\fM_2^{\rm wt}$ implies there exists a finite number $j_0\!\ge\!1$ such that 
$(\rd_2)$ is equivalent to exerting $\pi'_{j_0}$ to $\cV_y$,
then exerting $\PT_{\pi'_{j_0}}\!(\pi'_{j_0-1})$ (see Definition~\ref{Dfn:PT}) to the pullback of $\cV_y$, and so forth.
Below, we find the rooted trees that these $\pi'_j$'s are compatible with on $\cV_y$.

Consider the first-order derived tree $\varrho_y$ given in (\ref{Eqn:tree_A.5.2}),
which can be defined verbatim in the current situation.
(Notice that $D\bsl F\eq D$ here, while $D\bsl F\eq D\bsl\{\de_1\}$ in (\ref{Eqn:tree_A.5.2}).)
The root-to-leaf paths of $\varrho_y$ are exactly the inclusion-minimal elements of 
\begin{align*}
	\big\{\,\lrbr{\cht}_{[\de_i\wedge D(\De_y)]}\!\sqcup\!
\big(N_{[\de_i]}\bsl\De_y\big):\,i\!\in\!\lrbr{m}\,\big\}\,.
\end{align*}
For each $k\inn\lrbr{\cht}_{[\wedge D(\De_y)]}$,
we denote by $\varrho_{y,k}$ the {\it subposet} of $\varrho_y$ determined by 
\begin{align*}
	\varrho_{y,k}:=
	\bigcup_{\fF\in\Xi(\varrho_y)\,\tn{s.t.}\,k\in\fF}\!\!\!\!\!\!
	\fF
	\qquad\subset\varrho_y\,.
\end{align*}
It is straightforward that each $\varrho_{y,k}$ is a rooted tree as per Definition~\ref{Dfn:Rooted_tree},
containing the singleton $\{k\}$ as a root-to-leaf path.
Below, we show $\pi'_{|\fE_k|}$ is $\varrho_{y,k}$-compatible on $\cV_y$.

In fact, it is a direct check that for each $j\!\ge\!1$,
the following holds. 
\begin{itemize}[leftmargin=*]
	\item If there exists $k\inn\lrbr{\cht}_{[\wedge D(\De_y)]}$ such that $|\fE_k|\eq j$ (recall $\fE_k\inn\ov\bE$ is a transverse section of $\tau_0$),
	then the stability of $\Mw$ implies this $k$ is unique.
	We denote it by $k_j$.
	In this situation, the proper transform of $X^{\rm div}_{j,j'}$ takes the following form on $\cV_y$:
	\begin{align*}
		\wc X^{\rm div,\rd_1\ph_1}_{j,j'}\!\cap\!\cV_y
		=
		\bigcup_{\fF\in \Xi(\varrho_{y,k_j})\,\tn{s.t.}\,
		|\fF|= j'}\!\!\!\!\!\!\!\!\!\!\!
		\big\{\,
		\wc\ve_{k_j}\eq 0\,;\;
		\wc\ze_e\eq 0~\forall\,e\inn\fF\bsl\{k_j\}
		\,\big\}\,.
	\end{align*}
	Here, recall  
	$\{\wc\ve_{k_j}\eq 0\}$ is the exceptional divisor on $\cV_y$ determined by $\fE_{k_j}$; see (\ref{e_excdiv}) and (\ref{e_wcEtiE}).
	Once again, by the stability of $\Mw$,
	$\pi'_j$ is $\varrho_{y,k_j}$-compatible on $\cV_y$.
	\item 
	Otherwise, we have
	\begin{align*}
		\wc X^{\rm div,\rd_1\ph_1}_{j,j'}\!\cap\!\cV_y
		=\emptyset,
	\end{align*}
	hence $\pi'_{j}$ does not affect $\cV_y$.
\end{itemize}
By Lemma~\ref{LmLocalLoci},
the pullback of $\cV_y$ becomes smooth after $(\rd_2)$.

Next, we study the pullback $\ti\varphi^{\rd_2}$ of $\varphi$ after $(\rd_2)$.
For each $k\inn\lrbr{\cht}_{[\wedge D(\De_y)]}$,
we fix a lift $y_k$ of $y$ after exerting the proper transform of $\pi'_{|\fE_k|}$, satisfying the image of $y_k$ under the proper transform of $\pi'_{|\fE_k|}$ is $y_{k+1}$;
here, when $k\eq\max\big(\lrbr{\cht}_{[\wedge D(\De_y)]}\big)$,
we set $y_{k+1}\!:=\!y$.
Particularly,
$$y_1\in\ti\fM_2^{\rm div}.$$
The RLS of each $y_k$ is denoted by $\ov\bE_k\eq\{\fE_{k,1},\ldots,\fE_{k,\cht_k}\}$.
It is noteworthy that some $\cht_k$ can be 0;
i.e.~the proper transform of $\pi'_{|\fE_k|}$ does not affect a neighborhood of $y_k$.
Near the final lift $y_1$,
the local parameters corresponding the exceptional divisors obtained in $(\rd_2)$ are denoted by $\wc\ve_{k,k'}$,
$k\inn\lrbr{\cht}_{[\wedge D(\De_y)]}$,
$k'\inn\lrbr{\cht_k}$.
Let
\begin{align}\label{Eqn:r2_prod_ve}
	\Pi^{\rd_2}:=
	\prod_{k\in\lrbr{\cht}_{[\wedge D(\De_y)]}}
	\prod_{k'\in\lrbr{\cht_k}}
	\wc\ve_{k,k'}\,.
\end{align}

The construction of $\varrho_{y,k}$ implies
\begin{align}\label{Eqn:r2_trans}
	\Xi(\varrho_{y})=\bigsqcup_{k\in \lrbr{\cht}_{[\wedge D(\De_y)]}}\!\!\!\!\!\!\!\!\Xi(\varrho_{y,k})\,.
\end{align}
So, the intersection of every root-to-leaf path of $\varrho_y$ and every $\fE_{k,k'}$ is a singleton.
Therefore, for every $2\!\le\!i\!\le\!m$,
the pullback $\ti\eta_i^{\rd_2}$ of  $\eta_i$ after $(\rd_2)$ satisfies
\begin{align}\label{Eqn:r2_Row2_etc}
	\Pi^{\rd_2}\big|\;
	\ti\eta_i^{\rd_2}
	\ \ \tn{on a neighborhood of}\ y_1.
\end{align}
Along with Lemma~\ref{Lm:PT_trans_sect},
(\ref{Eqn:r2_trans}) also implies the existence of a root-to-leaf path $\wp$ of $\varrho_y$ satisfying
\begin{align}\label{Eqn:r2_dom}
	\wp\subset\De^{\rd_2}_{y_1} :=
	\bigsqcup_{k\in \lrbr{\cht}_{[\wedge D(\De_y)]}}\!\!\!\!\!\!\!
	\De_{y_k}\,,
\end{align}
where $\De_{y_k}$ are as in Definition~\ref{DfnDominant}.
In other words, there exists $2\!\le\!i_1\!\le\!m$ such that 
\begin{align}\label{Eqn:r2_Row2_dom}
	\ti\eta_{i_1}^{\rd_2}\big/
	\Pi^{\rd_2}\ \ \tn{is invertible on a neighborhood of}\ y_1.
\end{align}
\begin{itemize}[leftmargin=*]
	\item If there exists a root-to-leaf path $\wp$ of $\varrho_y$ satisfying (\ref{Eqn:r2_dom})
	as well as $\wp\!\cap\!\lrbr\cht\!\ne\!\emptyset$,
	then the index $i_1$ above can be taken in $\lrbr{\ell}$,
	i.e.~$\lr{\de_{i_1}}\eq\lr{\de_1}$.
	So, by (\ref{Eqn:r2_phi}), (\ref{Eqn:r2_Row2_etc}) and (\ref{Eqn:r2_Row2_dom}),
	$\ti\varphi^{\rd_2}$ is diagonalized after $(\rd_2)$.
	\item 
	If every root-to-leaf path $\wp$ of $\varrho_y$ satisfying (\ref{Eqn:r2_dom}) is disjoint from $\lrbr{\cht}$,
	then we set
	\begin{align}\label{Eqn:D(r2_dom)}
		D(\De_{y_1}^{\rd_2}):=
		\big\{\,
		\de_{i_1}:\,
		2\!\le\!i_1\!\le\!m,~
		i_1~\tn{satisfies}~(\ref{Eqn:r2_Row2_dom})
		\,\big\}\,.
	\end{align}
	For every $\de_{i_1}\inn D(\De_{y_1}^{\rd_2})$, we have  $\lr{\de_{i_1}}\!\ne\!\lr{\de_1}$.
	We have the following two possibilities.
	\begin{itemize}[leftmargin=*]
		\item 
		If there exists $\de_{\ell'}\inn D(\De_{y_1}^{\rd_2})$ such that  $\lr{\de_{\ell'}}$ is not conjugate to $\lr{\de_1}$ on $F$,
		then $\wc\ka_{1,\ell'}$ is invertible on $\cV_y$.
		By (\ref{Eqn:r2_phi}), (\ref{Eqn:r2_Row2_etc}) and (\ref{Eqn:r2_Row2_dom}),
		$\ti\varphi^{\rd_2}$ is diagonalized after $(\rd_2)$.
		\item 
		If 
		$\lr{\de_{\ell'}}$ is conjugate to $\lr{\de_1}$ on $F$ for all $\de_{\ell'}\inn D(\De_{y_1}^{\rd_2})$,
		then the assumption of \ref{Case:1}.\ref{Case:1.1} implies these $\de_{\ell'}$ are on the same tail $C''$,
		whose pivotal node is distinct from and conjugate to $\lr{\de_1}$;
		in addition, $\wc\ka_{1,\ell+1}$ is a local parameter vanishing at $y_1$.
%		W.l.o.g.~we write 
%		\begin{align*}
%		D(\De_{y_1}^{\rd_2})=\de_{\ell+1},\cdots,\de_{s},\qquad
%		D\!\cap\! C''=\de_{\ell+1},\cdots,\de_{t}
%		\end{align*}
%		for some $\ell\!+\!1\!\le\!s\!\le\!t\!\le\!m$.	
		
		Notice we can rewrite the notion ${\lrbr \cht}_{[\wedge_2 D(\De_y)]}$ of (\ref{Eqn:cht_wedge'}) in the following  equivalent way:
		\[
		{\lrbr \cht}_{[\wedge_2 D(\De_y)]}:=
		\Big\{\,k\inn\lrbr\cht:\,
		\Big|\big(\bigcup_{\de\in D(\De_y)}\!\!\!\! N_{[\de]}\big)\!\cap\!\fE_k\Big|\!=\!2\,\Big\},
		\]
		which gives rise to the same partition $D(\De_y)\eq D_p(\De_y)\!\sqcup\!D_q(\De_y)$.
		Then, we can define ${\lrbr \cht}_{[\de_i\wedge_2 D(\De_y)]}$, $i\inn\lrbr m$, likewise.
		Applying the same idea to the current situation,
		we set
		\begin{align}\label{Eqn:cht_wedge''}
		{\lrbr \cht}_{[\wedge_2 D(\De_y,\De_{y_1}^{\rd_2})]}:=
		\Big\{\,k\inn\lrbr\cht:\,
		\Big|\big(\bigcup_{\de\in D(\De_y)\sqcup D(\De_{y_1}^{\rd_2})}\!\!\!\!\!\!\!\!\!\!\!\!\! N_{[\de]}\,\big)\!\cap\!\fE_k\Big|\!=\!2\,\Big\},
		\end{align}
		and define ${\lrbr \cht}_{[\de_i\wedge_2 D(\De_y,\De_{y_1}^{\rd_2})]}$, $i\inn\lrbr m$, analogously.
		Then, by Proposition~\ref{Prp:phi_key}~\ref{Part:kappa},
		under suitable elementary column operations,
		the second row of $\ti\varphi^{\rd_2}$ can be written as
		\begin{align*}
			&
			\Pi^{\rd_1\ph_1}\!\cdot\!
			\Pi^{\rd_2}\cdot\left[\;
			\begin{matrix} 
				0&
				\wc\eta_{2}& 
					\!\cdots\! 
					&
					\wc\eta_{\ell}
				&
				\wc\ka_{1,\ell+1}
				&
				\wc\eta_{\ell+2}
				&\!\cdots\!
				&
				\wc\eta_{m}
			\end{matrix}\,
			\right],\\
			&\tn{where}\qquad\wc\eta_i\!:=
			\big(\!\!\prod_{k\in{\lrbr \cht}_{[\de_i\wedge_2 D(\De_y,\De_{y_1}^{\rd_2})]}}\!\!\!\!\!\!\!\!\!\!\wc\ve_k\ \big)
			\big(\!\prod_{e\in N_{[\de_{i}]}\bsl(\De_y\sqcup \De^{\rd_2}_{y_1})}\!\!\!\!\!\!\!\wc\ze_e \ \big)\,.
		\end{align*}
		Comparing the above expression of $\ti\varphi^{\rd_2}$ with (\ref{Eqn:Case_E.1.d_phi}),
		we see the construction of \S\ref{Subsubsec:Case_E.1.d_1} can be applied in a parallel manner,
		which implies the pullback of $\varphi^{\rd_2}$ becomes diagonalized,
		and the pullback of a neighborhood of $y_1$ is smooth, after $(\rd_3\ph_1)$.
	\end{itemize}
\end{itemize}

\subsubsection{$|D(\De_y)|\!=\!1$}
In this situation,
$(\rd_1\ph_5)$ is involved.
By defining the subsets $D_{\im,y}\!\subset\!D$ and $E_y\!\subset\!N(C)$ exactly in the same way as $D_{\im}$ and $E$ in (\ref{Eqn:tau_CaseA.4}),
and defining the relation $\preceq_y$ on $E_y$ given by $e\!\preceq_y\!e'$ if and  only if any connected subcurve of $C$ containing $F$ and $e$ must contain $e'$,
we see
\begin{align*}
	\tau_{1,y}:=(E_y,\preceq_y)
\end{align*}
is a rooted tree,
whose root-to-leaf paths are exactly $N_{[\de]}\bsl\De_y$, $\de\inn D_{\im,y}$.
It is then a direct check that $(\rd_1\ph_5)$ is $\tau_{1,y}$-compatible on $\cV_y$.
After $(\rd_1\ph_5)$,
we fix a lift $z$ of $y$ in $\ti\fM^{\rd_1\ph_5}$ and define $D(\De_z)$ to be the analogue of (\ref{Eqn:I_y}).
\begin{itemize}[leftmargin=*]
	\item If there exists $\de\inn D(\De_z)$ such that $\lr{\de}\!\ne\!\lr{\de_1}$,
	then the proof of \ref{Case:5}.\ref{Case:5.5} can be applied to this situation in a parallel way;
	c.f.~\S\ref{Subsec:Case5.5}-\S\ref{SubsecTwoTrees}.
	\item If $\lr{\de}\!=\!\lr{\de_1}$ for all $\de\inn D(\De_z)$,
	then the argument in \S\ref{Subsubsec:Case_E.1.e_1} can be applied to this situation in a parallel way.
\end{itemize}

\subsection{Second-order derived trees and the proof of \ref{Case:1}, Part IV}
In this subsection,
we aim to show \ref{Case:1}.\ref{Case:1.1}.\ref{Case:1.1.f},
when $D(\De_y)$ is contained in a unique tail,
whose pivotal node is a Weierstrass point.
W.l.o.g.~we assume there exists $1\!\le\!\ell\!\le\!m$ such that
\begin{align*}
	D(\De_y)\subset\{\de_1,\cdots,\de_\ell\},\qquad
	\lr{\de_i}=\lr{\de_1}\quad\forall~1\!\le\!i\!\le\!\ell,\qquad
	\lr{\de_i}\ne\lr{\de_1}\quad\forall~\ell\!<\!i\!\le\!m.
\end{align*}

Since $\lr{\de_1}$ is a Weierstrass point of $F$,
we have $\ka_{1i}(\ti x)\eq 0$ for all $i\inn\lrbr\ell$,
whereas $\ka_{1j}$, $\ell\!<\!j\!\le\!m$,
are all invertible.
By Proposition~\ref{Prp:phi_key}~\ref{Part:theta} and the argument for (\ref{Eqn:phi_r1p5}),
after suitable elementary row and column operations,
the pullback $\ti\varphi^{\rd_1\ph_1}$ of the structural homomorphism $\varphi$ takes the following form on $\cV_y$:
\begin{align}\label{Eqn:r3p2_phi}
	\ti\varphi^{\rd_1\ph_1}=
	\Pi^{\rd_1\ph_1}
	\cdot
	\left[\;
	\begin{matrix} 
		1 & 0 & \cdots & 0  & 0 &\cdots&0 
		\\
		0&
		\undermat{\tn{DNE~if}~\ell\eq 1}{\wc\ka_{12}\eta_{2}'& 
			\cdots 
			&
			\wc\ka_{1\ell}\eta_{\ell}'}
		&
		\eta_{\ell+1}'
		&\cdots
		&
		\eta_{m}'
	\end{matrix}\;
	\right]\,,
\end{align}
where 
\begin{align*}
	\Pi^{\rd_1\ph_1}:=
	\prod_{k\in\lrbr\cht}\!\wc\ve_k,\qquad
	\eta_{i}':=
	\big(\prod_{k\in\lrbr{\cht}_{[\de_1\wedge \de_i]}}
	\!\!\!\!\!\!\wc\ve_k\ \big)\big(\prod_{e\in N_{[\de_i]}\bsl\De_y}\!\!\!\!\!\wc\ze_e\ \big),
\end{align*}
and $\lrbr{\cht}_{[\de_1\wedge \de_i]}$'s are as in (\ref{Eqn:[m]_wedge}).

The assumption of \ref{Case:1}.\ref{Case:1.1}.\ref{Case:1.1.f} implies $(\rd_1\ph_2)$-$(\rd_1\ph_4)$ do not affect $\cV_y$.
Depending on the cardinal of $D(\De_y)$,
$(\rd_1\ph_5)$, $(\rd_2)$, $(\rd_3\ph_1)$, $(\rd_3\ph_3)$, and $(\rd_3\ph_4)$ may or may not be relevant,
so we divide the proof of \ref{Case:1}.\ref{Case:1.1}.\ref{Case:1.1.f} into the following sub-cases.

\subsubsection{$|D(\De_y)|\!\ge\!2$}
\label{Subsubsec:Case_E.1.f_1}
In this situation,
we have $h\!\ge\!2$,
hence $(\rd_1\ph_5)$ and $(\rd_3\ph_4)$ do not affect $\cV_y$.

As shown in \S\ref{Subsubsec:Case_E.1.f_1},
the pullback of $\cV_y$ is smooth after $(\rd_2)$.
In addition,
we obtain a sequence $y_k$, $k\inn{\lrbr\cht}_{[\wedge D(\De_y)]}$, of the lifts of $y$,
with $y_1\inn\ti\fM_2^{\rm div}$.
For $\Pi^{\rd_2}$ as in (\ref{Eqn:r2_prod_ve}),
the analogue of (\ref{Eqn:r2_Row2_etc}) is still valid in the current case,
with each $\ti\eta_i^{\rd_2}$ replaced with the pullback of $\eta_i'$.
Let $\De_{y_1}^{\rd_2}$ and $D(\De_{y_1}^{\rd_2})$ be as in 
(\ref{Eqn:r2_dom}) and (\ref{Eqn:D(r2_dom)}),
respectively,
and ${\lrbr \cht}_{[\wedge_2 D(\De_y,\De_{y_1}^{\rd_2})]}$ be as in  (\ref{Eqn:cht_wedge''}).

\begin{itemize}[leftmargin=*]
	\item If there exists $\ell\!<\!j\!\le\!m$ such that  $\de_j\inn D(\De_{y_1}^{\rd_2})$,
	then the pullback of $\eta_i'$ is equal to $\Pi^{\rd_2}$ (up to a unit) locally near $y_1$.
	Therefore,
	$\ti\varphi^{\rd_2}$ is diagonalized.
	
	\item if $D(\De_{y_1}^{\rd_2})\!\subset\!\{\de_1,\ldots,\de_\ell\}$,
	then the following analogue of Lemma~\ref{Lm:wedge} holds:
	there exists $2\!\le\!i\!\le\!h$,
	say $i\eq 2$, such that
	\begin{align*}
		N_{[\de_{1}\wedge \de_2]}= 
		\bigcap_{\de\in D(\De_y)\sqcup D(\De_{y_1}^{\rd_2})} \!\!\!\!\!\!\!\!\!N_{[\de]}.
	\end{align*}
	Applying Proposition~\ref{Prp:phi_key}~\ref{Part:kappa},
	we see after suitable elementary row and column operations,
	the second row of the pullback $\ti\varphi^{\rd_2}$ of (\ref{Eqn:r3p2_phi}) takes the following form near $y_1$:
	\begin{align*} 
		&\Pi^{\rd_1\ph_1}\cdot
		\Pi^{\rd_2}\cdot	
		\left[\;
		\begin{matrix} 
			0&
			\wc\ka_{12}
			&
			\eta_3''
			&\cdots
			&
			\eta_{m}''
		\end{matrix}\;
		\right]\,,
		\qquad
		\tn{where}\\
		&
		\eta_i'':=
		\Big(\prod_{
			k\in\lrbr{\cht}_{[\de_i\wedge D(\De_y,\De_{y_1}^{\rd_2})]}}
		\prod_{ 
			k'\in\lrbr{\cht_k}}
		\!\!\wc\ve_{k,k'}\Big)
		\Big(
		\prod_{k\in\lrbr{\cht}_{[\de_i\wedge_2 D(\De_y,\De_{y_1}^{\rd_2})]}}
		\!\!\!\!\!\!\wc\ve_k\Big)
		\Big(\prod_{e\in N_{[\de_{i}]}\bsl(\De_y\sqcup \De^{\rd_2}_{y_1})}\!\!\!\!\!\!\!\wc\ze_e \ \Big)\,,
	\end{align*}
	where \begin{align*}
		{\lrbr \cht}_{[\wedge D(\De_y,\De_{y_1}^{\rd_2})]}:=
		\Big\{\,k\inn\lrbr\cht:\,
		\Big|\big(\bigcup_{\de\in D(\De_y)\sqcup D(\De_{y_1}^{\rd_2})}\!\!\!\!\!\!\!\!\!\!\!\!\! N_{[\de]}\,\big)\!\cap\!\fE_k\Big|\!=\!1\,\Big\},
	\end{align*}
	${\lrbr \cht}_{[\de_i\wedge D(\De_y,\De_{y_1}^{\rd_2})]}$ are analogous to (\ref{Eqn:cht_wedge'}),
	and $\cht_k$ are as in \S\ref{Subsubsec:Case_E.1.e_1}.
	
	To see how ($\rd_3\ph_1$) and ($\rd_3\ph_2$) affects a neighborhood of $y_1$,
	first, we observe the inclusion-minimal elements of the following set
	\begin{align*}
		\Big\{\,
		\lrbr{\cht}_{[\de_i\wedge_2 D(\De_y,\De_{y_1}^{\rd_2})]}\sqcup 
		\big(N_{[\de_{i}]}\bsl(\De_y\sqcup \De^{\rd_2}_{y_1})\big)\,:\;
		3\!\le\!i\!\le\!m
		\,\Big\}
	\end{align*} 
	are exactly the root-to-leaf path of the rooted tree $\u_{y_1}$,
	and $(\rd_3\ph_1)$ is $\ex(\u_{y_1})$-compatible near $y_1$,
	where the grafted edge corresponding to the local parameter $\wc\ka_{12}$.
	
	Next,
	we mimic the first-order derived tree $\varrho_y$ of (\ref{Eqn:tree_A.5.2}) and construct the \ts{second-order derived tree} $\varrho_{y_1}^{(2)}$ at $y_1$.
	Precisely,
	let $E'_{y_1}$ be the union of the inclusion-minimal elements of the following set
	\begin{align}\label{Eqn:2nd_der_inc_min}
		\Big\{\,
		\big\{(k,k'):k\inn\lrbr{\cht}_{[\de_i\wedge D(\De_y,\De_{y_1}^{\rd_2})]},\,
		k'\in\lrbr{\cht_k}
		\big\}\sqcup 
		\big(N_{[\de_{i}]}\bsl(\De_y\sqcup \De^{\rd_2}_{y_1})\big)\,:\;
		3\!\le\!i\!\le\!m
		\,\Big\}\,.
	\end{align}
	This is the analogue of $E_y$ in (\ref{Eqn:tree_A.5.2}).
	Then, we endow $E'_{y_1}$ with the partial order $\prec_{y_1}'$ such that 
	\begin{itemize}[leftmargin=*]
		\item for every $(k,k'), (h,h')\inn E_{y_1}'$, we have
		$(k,k')\!\prec_{y_1}'\!(h,h')$ if and only if either $k\!>\!h$, or $k\eq h$ and $k'\!>\!h'$;
		\item for every $e, e'\inn E'_{y_1}\!\cap\!N(C)$, we have
		$e\!\preceq_y\!e'$ if and  only if any connected subcurve of $C$ containing $F$ and $e$ must contain $e'$;
		\item 
		for every $(k,k')$ and $e$ in $E'_{y_1}$,
		$e\!\prec_{y_1}' \!(k,k')$ if and only if $e\inn\big(\lrbr{\cht_k}\!\cap\!\fE_{k,k'}\big)^{\prec_y}$ in $\varrho_{y,k}$,
		where $\varrho_{y,k}$ is as in \S\ref{Subsubsec:Case_E.1.e_1}.
	\end{itemize}
	It is a direct check that $\prec'_{y_1}$ is a tree order as per (\ref{Eqn:tree_order}).
	Then, we set $$\varrho_{y_1}^{(2)}:=
	\big(\,E'_{y_1}\,,\,\prec'_{y_1}\big)\,.$$
	 
	In Figure~\ref{Fig:2nd_der}, we provide an example illustrating both first-order doubly-derived and second-order derived trees.
	Here,
	$\tau$ is the same rooted tree as in Figure~\ref{figDerivedTFMRs}.
	Given
	$\wh x\inn X^\cV_{\tau}$,
	consider a lift  $y$  of $\wh x$ after $(\rd_1\ph_1)$, whose RLS and dominant edges are given by
	satisfying
	\begin{align*}
		\ov\bE\eq\big\{\{e_a,e_b\},\{e_a,e_c,e_d\}\big\},\qquad
		\De_{y}=\{e_b,e_c\}\,.
	\end{align*}
	The first-order derived tree $\varrho_y$ is the same as Graph $(5)$ of Figure~\ref{figDerivedTFMRs}.
	
	Let $y_1$ be a lift of $y$ after $(\rd_2)$,
	satisfying
	\begin{align*}
		\De^{\rd_2}_{y_1}=\{e_d,1\}\,.
	\end{align*}
	The corresponding doubly-derived tree $\upsilon_{y_1}$ and second-order derived tree $\varrho^{(2)}_{y_1}$ are respectively illustrated in Figure~\ref{Fig:2nd_der}.
	
	\begin{figure}[htp]
		\begin{center}
			\begin{tikzpicture}
				\draw[xshift=-4cm]
				(0,0)--(1.2,1.2)
				(0,0)--(-.6,.6)
				(.6,.6)--(0,1.2)
				;
				\filldraw[xshift=-4cm]
				(0,0) circle (2pt)
				(.6,.6) circle (2pt)
				(-.6,.6) circle (2pt)
				(1.2,1.2) circle (2pt)
				(0,1.2) circle (2pt)
				;
				\draw[xshift=-4cm]
				(.2,.2) node[right] {\scriptsize{$e_b$}}
				(-.15,.2) node[left] {\scriptsize{$e_a$}}
				(.85,.85) node[right] {\scriptsize{$e_d$}}
				(.4,.85) node[left] {\scriptsize{$e_c$}}
				;
				
				\draw[xshift=-.5cm] (0,0)--(1.2,1.2);
				\draw[xshift=-.5cm] (0,0)--(-.6,.6);
				\draw[xshift=-.5cm] (.6,.6)--(0,1.2);
				\filldraw[xshift=-.5cm] (0,0) circle (2pt)
				%(0,-.6) circle (2pt)          
				(.6,.6) circle (2pt)
				(-.6,.6) circle (2pt)
				(1.2,1.2) circle (2pt)
				(0,1.2) circle (2pt);
				\draw[xshift=-.5cm]%(0,0) node[right] {$o$}
				%(.6,.6) node[right] {$b$}
				%(-.6,.6) node[right] {$a$}
				%(1.2,1.2) node[right] {$d$}
				%(0,1.2) node[right] {$c$}
				(.45,.2) node {\scriptsize{$1$}}
				(-.4,.2) node {\scriptsize{$e_a$}}
				(1.05,.8) node {\scriptsize{$e_d$}}
				(.2,.8) node {\scriptsize{$2$}}
				%(0,-.3) node[right] {$e_r$}
				%(-.3,0) node[left] {$e_p$}
				%(1,-.5) node {$\ga_2$}
				;
				
				\draw[xshift=3cm]
				(-.6,.6)--(0,0)--(.6,.6);
				\filldraw[xshift=3cm] (0,0) circle (2pt)
				(.6,.6) circle (2pt)
				(-.6,.6) circle (2pt);
				\draw[xshift=3cm]
				%(-.12,.3) node {\scriptsize{$1$}}
				(.45,.2) node {\scriptsize{$e_a$}}
				(-.4,.2) node {\scriptsize{$2$}}
				%(1,-.5) node {$\ga_2$}
				;
				
				\draw[xshift=6cm]
				(-.6,.6)--(0,0)--(.6,.6);
				\filldraw[xshift=6cm] (0,0) circle (2pt)
				(.6,.6) circle (2pt)
				(-.6,.6) circle (2pt);
				\draw[xshift=6cm]
				%(-.12,.3) node {\scriptsize{$1$}}
				(.45,.2) node {\scriptsize{$e_a$}}
				(-.65,.2) node {\scriptsize{$(1,1)$}}
				%(1,-.5) node {$\ga_2$}
				;
				
				\draw 
				(-4.1,-.2) node {\tiny{$o$}}
				(-3.5,-.4) node {\scriptsize{$\tau$}}
				(0,-.4) node {\scriptsize{$\varrho_{y}$}}
				(3.5,-.4) node {\scriptsize{$\upsilon_{y_1}$}}
				(6.5,-.4) node {\scriptsize{$\varrho^{(2)}_{y_1}$}};
			\end{tikzpicture}
		\end{center}
		\caption{First-order doubly-derived and second-order derived trees}\label{Fig:2nd_der}
	\end{figure}

	Observe that the root-to-leaf paths of $\varrho_{y_1}^{(2)}$ are exactly the inclusion-minimal elements of (\ref{Eqn:2nd_der_inc_min}).	
	In addition, recall in \S\ref{rd3ph2},
	(the prototypes of) the blowup centers of ($\rd_3\ph_2$) are denoted by $W_{k,k',k''}$, $1\!\le\!k\!\le\!k'\!\le\!k''$.
	It is also a direct check that near $y_1$,
	the proper transform of each $W_{k,k',k''}$ takes the form
	\begin{align*}
		\bigcup_{
			\fF\in\Xi(\varrho_{y_1}^{\rd_2})~\tn{s.t.}~
			|\fE_{k_\fF}|= k,~
			|\fF|= k''
		}\!\!\!\!\!\!\!\!\!\!\!\!\!\!\!\!\!\!
		\big\{\,
		\wc\ka_{12}\eq 0;~
		\wc\ve_{k_\fF,k'}\eq 0;~
		\wc\ze_e\eq 0~\forall~e\inn \fF\bsl\{(k_\fF,k')\}
		\big\}\,,
		\qquad
		1\!\le\!k\!\le\!k'\!\le\!k''.
	\end{align*}
	Following the same argument as for (\ref{Eqn:r3p4_tree_cmptb}),
	we see the above equations gives a partition of $\Xi(\varrho_{y_1}^{\rd_2})$, hence of $\Xi\big(\ex(\varrho_{y_1}^{\rd_2})\big)$,
	satisfying Definition~\ref{DfnGaAdm}.
	In other words, the prototype of
	$(\rd_3\ph_2)$ is $\ex(\varrho_{y_1}^{\rd_2})$-compatible near $y_1$,
	where the grafted edge corresponds to the local parameter $\wc\ka_{12}$.
	Consequently, for any lift $z$ of $y_1$ after $(\rd_3\ph_1)$, Corollary~\ref{Crl:PT} ensures
	$(\rd_3\ph_2)$ is $\PT_{z}\big(\ex(\varrho_{y_1}^{\rd_2})\big)$-compatible near $z$.
	
	From the above two paragraphs,
	we see the pullback of a neighborhood of $y_1$ is smooth after $(\rd_3\ph_2)$. 
	Taking the above equations for $\eta''_i$ into consideration,
	we conclude that the pullback of  $\ti\varrho^{\rd_2}$ is diagonalized after $(\rd_3\ph_2)$.
\end{itemize}

\subsubsection{$|D(\De_y)|\!=\!1$}

In this situation,
$(\rd_1\ph_5)$ is involved,
which is locally $\tau_{y,1}$-compatible on $\cV_y$.
Here, $\tau_{y,1}$ is the analogue of $\tau_1$ as in (\ref{Eqn:tree_A.5}),
with $N_{[\de]}$ replaced by $N_{[\de]}\bsl\De_y$.
After $(\rd_1\ph_5)$,
we fix an arbitrary lift $z$ of $y$ and define $D(\De_z)$ in the same way as (\ref{Eqn:I_y}).

If there exists $\ell\!+\!1\!\le\!i\!\le\!m$ such that $\de_i\inn D(\De_z)$,
then from (\ref{Eqn:r3p2_phi}) we conclude that the pullback of $\varphi$ is diagonalized on a neighborhood $\cV_z$ of $z$, and the blowups starting from $(\rd_2)$ do not affect $\cV_z$.

Otherwise, the points of $D(\De_z)$ are all on the tail that contains $\{\de_1\}$ ($=\!D(\De_y)$).
The argument is then parallel to \S\ref{Subsubsec:Case_E.1.f_1}.
We omit further details.

\subsection{Proof of \ref{Case:1}, Part V}
\label{Subsec:Case_E_last}
In this subsection,
we aim to show \ref{Case:1}.\ref{Case:1.2}, 
\ref{Case:1}.\ref{Case:1.3}, and \ref{Case:1}.\ref{Case:1.4}.

\subsubsection{Proof of \ref{Case:1}.\ref{Case:1.2}}
In this case, we write 
\begin{align*}
	F=F_1\cup \tn B\cup F_2
\end{align*} as in the beginning \S\ref{Subsec:Case_B},
and assume that $a_1\inn F_1$, $a_2\inn F_2$,
and the pivotal nodes of the tails containing points of $D(\De_y)$ are on $F_1\!\cup\tn B$.
It is a direct check that $(\rd_1\ph_2)$ and $(\rd_1\ph_3)$ do not affect $\cV_y$, but $(\rd_1\ph_4)$ is involved,
which is locally $\tau_{y}^\dag$-compatible on $\cV_y$.
Here, $\tau_{y}^\dag$ is the analogue of $\tau^\dag$ as in (\ref{Eqn:tree_B}),
with $N_{[\de,a_2]}$ replaced by $N_{[\de,a_2]}\bsl\De_y$.
After $(\rd_1\ph_4)$,
we fix an arbitrary lift $z$ of $y$ and define $D(\De_z)$ in the same way as \S\ref{Subsec:Case_B}.

If there exists $\de\inn D(\De_z)$ such that the pivotal node of the tail containing $\de$ is on $F_2$,
then we can mimic the argument of  \S\ref{Subsec:Case_B} and conclude that the pullback of $\varphi$ is diagonalized on a neighborhood $\cV_z$ of $z$, and the blowups starting from $(\rd_1\ph_5)$ do not affect $\cV_z$.

Otherwise, the pivotal nodes of the tails containing points of $D(\De_z)$ are on $F_1\!\cup\tn B$ as well.
The argument is then parallel to the proof of  \ref{Case:1}.\ref{Case:1.1}.
We omit further details.

\subsubsection{Proof of \ref{Case:1}.\ref{Case:1.3}}

In this case, we write 
\begin{align*}
	F=F_1\cup \tn B
\end{align*} as in the beginning \S\ref{Subsec:Case_C},
and assume the pivotal nodes of the tails containing points of $D(\De_y)$ are on $\tn B$.
It is a direct check that $(\rd_1\ph_2)$ does not affect $\cV_y$, but $(\rd_1\ph_3)$ is involved,
which is locally $\tau_{y}^\ddag$-compatible on $\cV_y$.
Here, $\tau_{y}^\ddag$ is the analogue of $\tau^\ddag$ as in (\ref{Eqn:tree_C}),
with $N'$ replaced by $N'\bsl\De_y$.
After $(\rd_1\ph_3)$,
we fix an arbitrary lift $z$ of $y$ and define $D(\De_z)$ in the same way as \S\ref{Subsec:Case_C}.

Depending on the position of the points of $D(\De_z)$,
we have one of the following: 
\begin{itemize}[leftmargin=*]
	\item the pullback of $\varphi$ is diagonalized on a neighborhood $\cV_z$ of $z$, and the blowups starting from $(\rd_1\ph_4)$ do not affect $\cV_z$; or
	\item  the argument is parallel to  the proof of  \ref{Case:1}.\ref{Case:1.2}; or
	\item  the argument is parallel to  the proof of  \ref{Case:1}.\ref{Case:1.1}.
\end{itemize}
We omit further details.

\subsubsection{Proof of \ref{Case:1}.\ref{Case:1.4}}

In this case, we write 
\begin{align*}
	F=T_1\cup \tn B\cup T_2
\end{align*} as in the beginning \S\ref{Subsec:Case_D},
and assume that $a_1\inn F_1$, $a_2\inn F_2$, the pivotal nodes of the tails containing points of $D(\De_y)$ are on $\tn B$.
It is a direct check that $(\rd_1\ph_3)$ does not affect $\cV_y$, but $(\rd_1\ph_2)$ is involved,
which is locally $\tau_{y,\wedge}$-compatible on $\cV_y$.
Here, $\tau_{y,\wedge}$ is the analogue of $\tau_{\wedge}$ as in (\ref{Eqn:tree_D}),
with $N''$ replaced by $N''\bsl\De_y$.
After $(\rd_1\ph_2)$,
we fix an arbitrary lift $z$ of $y$ and define $D(\De_z)$ in the same way as \S\ref{Subsec:Case_C}.

Depending on the position of the points of $D(\De_z)$,
we have one of the following: 
\begin{itemize}[leftmargin=*]
	\item the pullback of $\varphi$ is diagonalized on a neighborhood $\cV_z$ of $z$, and the blowups starting from $(\rd_1\ph_3)$ do not affect $\cV_z$; or
	\item  the argument is parallel to  the proof of  \ref{Case:1}.\ref{Case:1.2}; or
	\item  the argument is parallel to  the proof of  \ref{Case:1}.\ref{Case:1.1}.
\end{itemize}
We omit further details.

\vsp
To summarize, in \S\ref{Subsec:Case_E}-\ref{Subsec:Case_E_last},
we have shown the following.
\begin{prop}\label{PrpChangeofPhiM1}
Proposition~\ref{PrpChangeofPhi} holds
in \ref{Case:1}.
\end{prop}

\vsp
\section{Resolutions of $\ov  M_2(\P^n,d)$}
\label{localEquations}
\label{SecLocalEqns}

In this section, we construct the stacks $\ti M_2(\P^n,d)$ and $\ti M_2(\P^n,d;k)$, $k\!\ge\!2$, that are proposed in \S\ref{SecIntro}, and prove Theorems 1 and 4 as well as Corollaries 2 and 3 of \S\ref{SecIntro}.

\subsection{The construction of $\ti M_2(\P^n,d)$}
We continue with the notation as in \S \ref{subsect:localEq}
and the previous sections.
Let
$\wv\eq\cV \!\times_{\fdd}\! \ti\fM^{\rm div}_2.$
Then
$$\sE_{\wv}=\sE_\cV\times_\cV\wv
$$
is the total space of the pullback bundle $\beta\sta
\rho\lsta\sL(\cA)^{\oplus n}$, 
where $\beta: \wv \to \cV$ denotes the projection. The tautological restriction
homomorphism $``$rest$"$  of \eqref{restHom-0} pullbacks to give
rise to \beq\label{pullbackRest}  \widetilde{\text{rest}}: \beta^* \rho_*
\sL(\cA)^{\oplus n}\lra \beta^*\rho_* (\sL(\cA)^{\oplus n}|_\cA).
\eeq The section $\Phi$ of  \eqref{sec1-0}  pullbacks to give rise
to \beq \tilde\Phi \in \Gamma (\sE_{\wv}\,,\;{\tilde p}^*\beta^*\rho_*
(\sL(\cA)^{\oplus n}|_{\cA})), \eeq 
where $\tilde p\!: \sE_{\wv}\!\to\!
\wv$ denotes the projection. 
Given small open $U\!\subset\!\ov M_2(\P^n,d)$ and $H\inn\bH_U$ as in (\ref{Eqn:H}),
the morphism $f_{U,H}:U\!\to\!\cV\,(\to\!\fdd)$ determines
\begin{align}
	\label{Eqn:cU}
\widetilde\cU=\widetilde\cU_{U,H}:=\wv\times_{\cV;f_{U,H}} U.
\end{align}
The immersion $\cU\!\to\!\sE_\cV$ of
Theorem \ref{thm:immersion} then naturally induces an open immersion
\begin{align*}
	\widetilde\cU \lra \big(\ti\Phi=0 \big) \sub \sE_{\wv}.
\end{align*}

\begin{lemm}\label{LmNormalCrossing}
With notation as above,
$\widetilde\cU $ has smooth irreducible components and normal crossing singularities.
%Moreover,
%the pullback of the derived object $\bR\pi_*\ff^*\sO_{\P^n}(k)$ to $\ti \cU$ is locally diagonalizable.
\end{lemm}

\begin{proof} We trivialize $\beta^*
\rho_*\sM(\cD)$ and $\beta^* \rho_* \sO_{\cA}(\cA)$ so that
$$\sE_{\wv} \cong \wv \times  (\AA^{d+1})^n.$$
By Proposition~\ref{PrpChangeofPhi}, 
$\ti\cV$ can be covered by $\{\ti\cV'\}$ so that on each $\ti\cV'\!\subset\!\ti\cV$,
the homomorphism $$\beta^*\varphi: \beta^*
\rho_*\sM(\cD) \lra \beta^* \rho_* \sO_{\cA}(\cA)$$ can be
diagonalized as
$$
\left[\begin{matrix} z_1   &  0 & 0& \cdots & 0 \\
	0  & z_2 &  0 & \cdots & 0
\end{matrix} \right]
\qquad\tn{with}\quad
z_1|z_2\in\Ga\big(\sO_{\ti\cV'}\big),$$
where $z_1$ and $z_2$ are both products of some smooth parameters corresponding to (the proper transforms of) the exceptional divisors.
%By taking $m\eq kd$,
%this leads to the local diagonalizability of the pullback of $\bR\pi_*\ff^*\sO_{\P^n}(k)$ to $\ti \cU$.

%Below, we consider $m\eq d$.
Let $w_j^i \inn \Ao$ for all $1 \!\le\! i \!\le\! n$ and $0 \!\le\! j \!\le\! d$.
The equation
$\ti\Phi =0$ is then equivalent to the following equations
\beq\label{finalEq'}
z_1 w_1^i =0,\qquad z_2 w_2^i=0, \qquad 1 \le i \le n.
\eeq
Thus, $\ti\cU$ has normal crossing singularities.

Since the main component of $
\ti\Phi^{-1}(0)\!\cap\!\ti\cV'$ consists of general points
with non-vanishing $z_1$ and $z_2$, we conclude from (\ref{finalEq'}) that the main component is defined by
\begin{align*} %\label{finalEqforMain}
w_1^i = w_2^i=0,\qquad1 \le i \le n.
\end{align*}
Thus, the main component of $\ti\Phi^{-1}(0)\!\cap\!\ti\cV'$ is smooth.

Each of the remaining irreducible component $X$ of $\ti\Phi^{-1}(0)\!\cap\!\ti\cV'$ is contained in the proper transform of an exceptional divisor,
so locally there exists a unique local parameter $\xi$ (whose zero locus locally defines the proper transform of the given exceptional divisor) such that $X\!\subset\!\{\xi\eq 0\}(\subset\!\sE_{\ti\cV})$.
This $\xi$
is either a factor of $z_1$ (hence $z_2$),
or a factor of $z_2$ but not $z_1$.
In the former case,
(\ref{finalEq'}) implies $X$ is simply given by
$\xi\eq 0$; there are no constraints on $w_1^i$ or $w_2^i$.
In the latter case,
(\ref{finalEq'}) also implies $X$ is given by
\begin{align*}
	\xi=
	w_1^i=0,\qquad
	1\le i\le n.
\end{align*}
Either way, $X$ is smooth.
\end{proof}

Next, we show all $\ti\cU_{U,H}$ as in (\ref{Eqn:cU}) glue to form the desired $\tMPdd$.
Consider arbitrary  small open $U_s/\cV_s$, $s\eq 1,2$, of $\ov M_2(\P^n,d)$, along with the morphism $f_{U_s,H_s}:U_s\!\to\!\cV_s$, as in \S\ref{sheafStructures},
satisfying $U_1\!\cap\!U_2\!\ne\!\emptyset$.
For each step $\ell\!:=\!\rd_i\ph_j\st_t$ of the sequential blowup of $\fdd$,
we denote by $\ell\!+\!1$  the step immediately after $\ell$, and by $\ti\fM_2^{{\rm div},\ell}$ the corresponding blowup stack.
Let $\ti\cV_s^\ell\eq\cV_s\!\times_{\fdd}\!\ti\fM_2^{{\rm div},\ell}$, $s\eq 1,2$.
In addition,
we set $\ti\fM_2^{{\rm div},0}\!:=\!\ti\fM_2^{{\rm div}}$ and $\ti\cV_s^0\!:=\!\cV_s$.

\begin{lemm}
	\label{Lm:U_pullback}
With notation as above, 
there exists an isomorphism
\begin{align*}
	\big(\ti\cV_1^\ell\!\times_{\cV_1;f_{U_1,H_1}}\!\!(U_1\!\cap\!U_2)\big)
	\big/(U_1\!\cap\!U_2)
	\lra 
	\big(\ti\cV_2^\ell\!\times_{\cV_2;f_{U_2,H_2}}\!\!(U_1\!\cap\!U_2)\big)
	\big/(U_1\!\cap\!U_2)
\end{align*}
between the above two blowups of $U_1\!\cap\!U_2$.
\end{lemm}

The statement of Lemma~\ref{Lm:U_pullback} would be trivial if there existed $H\inn\breve{\P}^n$ lying in the common intersection of $\bH_U$ (c.f.~(\ref{Eqn:H})) for all small $U\!\subset\MPdd$,
which would give rise to a natural global morphism from $\MPdd$ to $\fM_2^{\rm div}$.
Since such $H$ does not exist,
we introduce Lemma~\ref{Lm:U_pullback} to ensure that $\ti\cU_{U,H}$ glue.

\begin{proof}[Proof of Lemma~\ref{Lm:U_pullback}]
For conciseness,
in this proof,
we write 
\begin{align*}
	\Up= U_1\!\cap\!U_2,\quad
\ti{\Up}_{s}^\ell= \ti\cV_s^\ell\!\times_{\cV_s;f_{U_s,H_s}}\!\!\Up,\quad 
f_s=f_{U_s,H_s}|_{\Up}:\Up\lra\cV_s,\quad 
s\eq 1,2.
\end{align*}
In addition, we denote by
\begin{align*}
	\tn p_s:\ti{\Up}_{s}^\ell\lra \Up\quad
	\tn{and}\quad
	f_s^\ell:\ti{\Up}_{s}^\ell\lra\ti\cV_s^\ell,\qquad
	s\eq 1,2,
\end{align*}
the natural morphisms.
We aim to show by induction over $\ell$ that there exist isomorphisms
$\phi^\ell_{21}:\ti\Up_{1}^\ell\!\to\!\ti\Up_{2}^\ell$ and $\phi^\ell_{12}:\ti\Up_{2}^\ell\!\to\!\ti\Up_{1}^\ell$, fitting in the Cartesian diagram:
\begin{center}
	\begin{tikzpicture}{h}
		\draw 
		(2.5,1.2) node {$\ti \Up_{1}^{\ell}\eq\ti\cV_{1}^{\ell} \!\times_{\cV_{1};f_1}\!\Up$}
		(5.5,1.2) node {$\ti\cV_{1}^{\ell}$}
		(-2.5,1.2) node {$\ti \Up_{2}^{\ell}\eq \ti\cV_{2}^{\ell}\!\times_{\cV_{2};f_2}\!\Up$}
		(-5.5,1.2) node {$\ti\cV_{2}^{\ell}$}
		(0,0) node {$\Up$}
		(5.5,0) node {$\cV_{1}$}
		(-5.5,0) node {$\cV_{2}$}
		%	(1,1.2) node[below] {\small{$\Phi_x$}}
		(2.7,.2) node {\scriptsize{$f_{1}$}}
		(-2.7,.2) node {\scriptsize{$f_{2}$}}
		(4.6,.9) node {\scriptsize{$f_{1}^\ell$}}
		(-4.6,.9) node {\scriptsize{$f_{2}^\ell$}}
		(-.9,.55) node {\scriptsize{${\tn p}_2$}}
		(.96,.52) node {\scriptsize{${\tn p}_1$}}
		(0,1.45) node {\tiny{$\phi^\ell_{1,2}$}}
		(0,.9) node {\tiny{$\phi^\ell_{2,1}$}};
%		(2.25,1.93) node {\tiny{$\ov f_{1,2}$}};
		\draw[->,>=stealth'] 
		(4.1,1.2)--(5.15,1.2);
		\draw[->,>=stealth'] 
		(-4.1,1.2)--(-5.15,1.2);
		\draw[->,>=stealth'] 
		(.33,0)--(5.15,0);
		\draw[->,>=stealth']
		(-.35,0)--(-5.15,0);
		\draw[->,>=stealth'] 
		(5.4,.9)--(5.4,.3);
		\draw[->,>=stealth'] 
		(-5.6,.9)--(-5.6,.3);
		\draw[->,>=stealth'] 
		(.93,.9)--(.23,.23);
		\draw[->,>=stealth'] 
		(-.9,.9)--(-.23,.23);
		\draw[dashed,->,>=stealth'] 
		(-.8,1.25)--(.8,1.25);
		\draw[dashed,->,>=stealth'] 
		(.8,1.15)--(-.8,1.15);
%		\draw[dashed,->,>=stealth'] 
%		(-.85,1.55) .. controls (.15,2.4) and (4,2.4) .. (5.2,1.5);
	\end{tikzpicture}		
\end{center}

The base case ($\ell\eq 0$) is when no blowup is exerted on $U$, which is trivial.

Assume the above diagram holds for the unique isomorphisms $\phi^\ell_{21}$ and $\phi^{\ell}_{12}$ after the $\ell$-th step.
For $s\eq 1,2$, let
\begin{align*}
	Z_s\subset\ti\cV_s^\ell\qquad\tn{and}\qquad
	\tn q_s:\ti\Up_s^{\ell+1}\eq 
	\ti\Up_s^\ell\!\times_{\ti\cV_s^\ell}\!
	\ti\cV_s^{\ell+1}
	\lra\ti\Up_s^{\ell}
\end{align*}
be the center of the blowup $\ti\cV_s^{\ell+1}/\ti\cV_s^\ell$ and the natural morphism, respectively.

Notice that for every node $e$ of $C$,
the node smoothing parameters $\ze_e^{\cV_1}$ and $\ze_e^{\cV_2}$ satisfy
\begin{align}\label{Eqn:gluing_ze}
	\ze_e^{\cV_1}\circ f_1=0
	\quad\Longleftrightarrow\quad
	\ze_e^{\cV_2}\circ f_2=0\qquad
	\tn{on}~\Up\,.
\end{align}
In other words,
$\ze_e^{\cV_1}\!\circ\! f_1$ and $\ze_e^{\cV_2}\!\circ\! f_2$ differ by a (multiplicative) unit on $\Up$.

Moreover,
if the $(\ell\!+\!1)$-th step is in ($\rd_3$), and the blowups $\ti\cV^{\ell+1}_s/\ti\cV^\ell_s$, $s\eq 1,2$, are not trivial,
then for every  $z_s\inn\Up_s^\ell$ whose image in $\ti\cV_s^\ell$ is contained in $Z_s$,
by writing $(C,u)\eq \tn p_s(z_s)$,
we have $\deg(u|_{F})\!\le\!2$, where $F$ denotes the core of $C$. 
Let 
$$\Up':=\big\{\,(C,u)
\inn\Up:\,
\deg(u|_{F})\!\le\!2\,\big\}\quad\subset\Up\,.
$$ 
This implies
\begin{align*} 
	(f_s^\ell)^{-1}(Z_s)\subset \tn p_s^{-1}(\Up'),\qquad
	s\eq 1,2.
\end{align*}
Notice for every $(C,u)\inn\Up'$ and irreducible component $\Si$ of $C$,
the divisors $u^{-1}(H_1)\!\cap\!\Si$ and $u^{-1}(H_2)\!\cap\!\Si$ are linearly equivalent,
hence
\begin{align}\label{Eqn:gluing_ka}
	\ka_{ij}\circ f_1=0 \quad\Longleftrightarrow\quad
	\ka_{ij}\circ f_2=0\qquad
	\tn{on}~\Up'\,.
\end{align} 
In other words,
$\ka_{ij}\!\circ\! f_1$ and $\ka_{ij}\!\circ\! f_2$ differ by a (multiplicative) unit on $\Up'$.

Given $(C,u)\inn\Up$,
let $\wh x_1$ and $\wh x_2$ be 
its images in $\fM_2^{\rm div}$ under $f_1$ and $f_2$, respectively;
i.e.~$\wh x_1\eq\big(C,u^{-1}(H_1)\big)$ and $\wh x_2\eq\big(C,u^{-1}(H_2)\big)$.
As shown in \S\ref{SecChangeOfPhi},
near the respective pullbacks of $\wh x_1$ and $\wh x_2$,
the blowup centers of any step $\ell'$ of the sequential blowups are locally given by the {\it same} equations
\begin{align}\label{Eqn:blowup_center_match'}
	\{\ga_{\ell';i}\eq 0: i\inn I_{\ell'}\},
\end{align}
where $I_{\ell'}$ is an index set,
and each $\ga_{\ell';i}$ is a quotient of monomials of the pullbacks of $\ze_e^{\cV_s}$ and/or $\ka_{ij}$.
Hence by (\ref{Eqn:gluing_ze}) and (\ref{Eqn:gluing_ka}), we have
\begin{align}\label{Eqn:blowup_center_match}
	\big(f_1^\ell\circ\phi^\ell_{1,2}\big)^{-1}(Z_1)=
	(f_2^\ell)^{-1}(Z_2).
\end{align}

For $s\eq 1,2$, let $\tn N_s^\ell$ 
be the union of all the irreducible components of $\ti\Up_s^\ell$ whose images in $\ti\cV_s^\ell$ under $f_s^\ell$ are contained in $Z_s$, and $\tn M_s^\ell$ be~the union of the remaining irreducible components of $\ti\Up_s^\ell$.
Then, we have
$$
\ti\Up_s^\ell=\tn M_s^\ell\cup\tn N_s^\ell,\qquad
\phi^\ell_{1,2}(\tn M_2^\ell)=\tn M_1^\ell,\qquad
\phi^\ell_{1,2}(\tn N_2^\ell)=\tn N_1^\ell.
$$
We  then set 
$$\ti {\tn M}_s^{\ell+1}=\tn q_s^{-1} (\tn M_s^\ell),\qquad
\ti{\tn N}_s^{\ell+1}=\tn q_s^{-1} (\tn N_s^\ell).$$

By (\ref{Eqn:blowup_center_match}), we have
\begin{align*}
	\big(f_1^\ell\circ\phi^\ell_{1,2}\circ \tn q_2\big)^{-1}(Z_1)=
	(f_2^\ell\circ \tn q_2)^{-1}(Z_2).
\end{align*}
Observe that the intersection of $(f_2^\ell\circ \tn q_2)^{-1}(Z_2)$ and $\ti{\tn M}_2^{\ell+1}$ is a Cartier divisor of $\ti{\tn M}_2^{\ell+1}$,
because the pullback of $Z_2$ to $\ti\cV_2^{\ell+1}$ is the exceptional divisor, and the image of every irreducible component of $\tn M_2^\ell$ under $f_2^\ell$ is not contained in $Z_2$.
Therefore, the intersection of $\big(f_1^\ell\circ\phi^\ell_{1,2}\circ \tn q_2\big)^{-1}(Z_1)$ and $\ti{\tn M}_2^{\ell+1}$
is a Cartier divisor of $\ti{\tn M}_2^{\ell+1}$.
The universality of blowing up thus induces a unique morphism 
\begin{align}\label{Eqn:f_12^ell+1}
	f_{1,2}^{\ell+1}:\ti{\tn M}_2^{\ell+1}\lra
	\ti\cV_1^{\ell+1}
\end{align}
lifting the restriction to $\ti{\tn M}_2^{\ell+1}$ of the morphism $f_1^\ell\!\circ\!\phi^\ell_{1,2}\!\circ\!\tn q_2$.

To extend $f_{1,2}^{\ell+1}$ to the entire $\ti\Up_2^{\ell+1}$,
notice that
$$\ti{\tn N}^{\ell+1}_s= {\tn N}^{\ell}_s\!\times_{f_s^\ell,Z_s}\cE_s^{\ell+1},\qquad
s\eq 1,2,
$$
where $\cE_s^{\ell+1}$ denotes the exceptional divisor of $\ti\cV_s^{\ell+1}/\ti\cV_s^{\ell}$.
Therefore, (\ref{Eqn:gluing_ze}), (\ref{Eqn:gluing_ka}),
and the sentence containing (\ref{Eqn:blowup_center_match'}) naturally induce an isomorphism 
$$
\phi^{\ell+1}_{1,2;\tn N}:
\ti{\tn N}^{\ell+1}_2\stackrel{\sim}{\lra}
\ti{\tn N}^{\ell+1}_1,
$$
satisfying that $f_1^{\ell+1}\!\circ\!\phi^{\ell+1}_{1,2;\tn N}$ matches (\ref{Eqn:f_12^ell+1}) on $\ti{\tn M}^{\ell+1}_2\!\cap\!\ti{\tn N}^{\ell+1}_2$.
In other words,
we obtain 
\begin{align*}
	f_{1,2}^{\ell+1}:\ti{\Up}_2^{\ell+1}\lra
	\ti\cV_1^{\ell+1}
\end{align*}
extending (\ref{Eqn:f_12^ell+1}), whose restriction to $\ti{\tn N}^{\ell+1}_2$ equals $f_1^{\ell+1}\!\circ\!\phi^{\ell+1}_{1,2;\tn N}$.

Then,
the universality of pullbacks induces a unique morphism  
$$\phi_{1,2}^{\ell+1}:\ti\Up_2^{\ell+1}\lra
\ti\Up_1^{\ell+1}$$
that $f_{1,2}^{\ell+1}$ and $\phi^\ell_{1,2}\!\circ\!\tn q_2$ respectively factor through.
By symmetry, we analogously obtain a unique morphism
$$\phi_{2,1}^{\ell+1}:\ti\Up_1^{\ell+1}\lra
\ti\Up_2^{\ell+1}$$
that $\phi^\ell_{2,1}\!\circ\!\tn q_1$ factors through.
In sum, we have the following commutative diagram:
\begin{center}
	\begin{tikzpicture}{h}
		\draw 
		(2.5,2.4) node {$\ti \Up_{1}^{\ell+1}\eq\ti\cV_{1}^{\ell+1} \!\times_{\cV_{1};f_1}\!\Up$}
		(5.5,2.4) node {$\ti\cV_{1}^{\ell+1}$}
		(-2.5,2.4) node {$\ti \Up_{2}^{\ell+1}\eq \ti\cV_{2}^{\ell+1}\!\times_{\cV_{2};f_2}\!\Up$}
		(-5.5,2.4) node {$\ti\cV_{2}^{\ell+1}$}		
		(2.5,1.2) node {$\ti \Up_{1}^{\ell}\eq\ti\cV_{1}^{\ell} \!\times_{\cV_{1};f_1}\!\Up$}
		(5.5,1.2) node {$\ti\cV_{1}^{\ell}$}
		(-2.5,1.2) node {$\ti \Up_{2}^{\ell}\eq \ti\cV_{2}^{\ell}\!\times_{\cV_{2};f_2}\!\Up$}
		(-5.5,1.2) node {$\ti\cV_{2}^{\ell}$}
		(0,0) node {$\Up$}
		(5.5,0) node {$\cV_{1}$}
		(-5.5,0) node {$\cV_{2}$}
		%	(1,1.2) node[below] {\small{$\Phi_x$}}
		(2.7,.2) node {\scriptsize{$f_{1}$}}
		(-2.7,.2) node {\scriptsize{$f_{2}$}}
		(4.6,.95) node {\scriptsize{$f_{1}^\ell$}}
		(-4.6,.95) node {\scriptsize{$f_{2}^\ell$}}
		(-.92,.56) node {\scriptsize{${\tn p}_2$}}
		(1.35,.53) node {\scriptsize{${\tn p}_1$}}
		(-2.88,1.85) node {\scriptsize{${\tn q}_2$}}
		(1.58,1.85) node {\scriptsize{${\tn q}_1$}}
		(0,1.45) node {\tiny{$\phi^\ell_{1,2}$}}
		(0,.9) node {\tiny{$\phi^\ell_{2,1}$}}
		(0,2.65) node {\tiny{$\phi^{\ell+1}_{1,2}$}}
		(0,2.1) node {\tiny{$\phi^{\ell+1}_{2,1}$}}
		(1.025,3.13) node {\tiny{$f_{1,2}^{\ell+1}$}};
		\draw[->,>=stealth'] 
		(3.9,1.2)--(5.15,1.2);
		\draw[->,>=stealth'] 
		(-3.9,1.2)--(-5.15,1.2);
		\draw[->,>=stealth'] 
		(4.3,2.4)--(5.05,2.4);
		\draw[->,>=stealth'] 
		(-4.3,2.4)--(-5.05,2.4);
		\draw[->,>=stealth'] 
		(.33,0)--(5.15,0);
		\draw[->,>=stealth']
		(-.35,0)--(-5.15,0);
		\draw[->,>=stealth'] 
		(5.4,.9)--(5.4,.3);
		\draw[->,>=stealth'] 
		(-5.6,.9)--(-5.6,.3);
		\draw[->,>=stealth'] 
		(5.4,2.1)--(5.4,1.5);
		\draw[->,>=stealth'] 
		(-5.6,2.1)--(-5.6,1.5);
		\draw[->,>=stealth'] 
		(1.85,2.1)--(1.85,1.5);
		\draw[->,>=stealth'] 
		(-3.16,2.1)--(-3.16,1.5);
		\draw[->,>=stealth'] 
		(1.8,.9)--(.23,.23);
		\draw[->,>=stealth'] 
		(-3.1,.9)--(-.23,.23);
		\draw[->,>=stealth'] 
		(-1,1.25)--(1,1.25);
		\draw[->,>=stealth'] 
		(1,1.15)--(-1,1.15);
		\draw[dashed,->,>=stealth'] 
		(-.6,2.45)--(.6,2.45);
		\draw[dashed,->,>=stealth'] 
		(.6,2.35)--(-.6,2.35);
		\draw[dashed,->,>=stealth'] 
		(-3.15,2.75) .. controls (-1.95,3.6) and (4,3.6) .. (5.2,2.7);
	\end{tikzpicture}		
\end{center}

Notice the composition $\phi_{2,1}^{\ell+1}\!\circ\!\phi_{1,2}^{\ell+1}$
restricts to the identity morphism on $\ti{\tn M}^{\ell+1}_2\big\bsl (f_2^\ell\!\circ\!{\tn q}_2)^{-1}(Z_2)$,
which is open and dense in $\ti{\tn M}^{\ell+1}_2$,
hence  $$\phi_{1,2}^\ell|_{\ti{\tn M}^{\ell+1}_2}\qquad\tn{and}\qquad\phi_{2,1}^\ell|_{\ti{\tn M}^{\ell+1}_1}$$ are inverse to each other.
As 
$$\phi_{1,2}^\ell|_{\ti{\tn N}^{\ell+1}_2}\qquad\tn{and}\qquad\phi_{2,1}^\ell|_{\ti{\tn N}^{\ell+1}_1}$$ are respectively $\phi_{1,2;\tn N}^\ell$ and its inverse,
we conclude that $\phi_{1,2}^\ell$ and $\phi_{2,1}^\ell$ are inverse to each other.
This completes the induction.
\end{proof}

Lemma~\ref{Lm:U_pullback} implies that we can glue $\ti\cU_{U,H}$ for all small open $U\!\subset\!\MPdd$ and all $H\inn\bH_U$ together.
The resulting  Deligne-Mumford stack is denoted by $$\ti M_2(\P^n,d).$$ 
The corollary below follows directly from Lemma~\ref{Lm:U_pullback}.

\begin{coro}
	\label{Crl:Gluing}
For every small open $U\!\subset\!\ov M_2(\P^n,d)$ and every $H\inn \bH_U$,
there exists a commutative diagram
\[
\begin{tikzcd}[column sep=.5em,row sep=1.5em]
	\ti \cU_{U,H}  \arrow[dr,""] \arrow[rr,"\sim"] && U\times_{\ov M_2(\P^n,d)}\ti M_2(\P^n,d)
	\arrow[dl,""]
	\\
	& U 
\end{tikzcd}
\]
where the horizontal arrow is an  isomorphism of stacks over $U$,
and the other two arrows are natural morphisms.
\end{coro}

The stack $\ti M_2(\P^n,d)$ enjoys the following properties.
Recall the main component of $\widetilde{M}_2(\Pn,d)$ is denoted by $\widetilde{M}_2(\Pn,d)^{\rm mc}$.

\begin{theo} \label{inducedImmersion} 
$\tMPdd$ has smooth irreducible components and normal crossing singularities.
Moreover, for $d\!>\!2$, $\widetilde{M}_2(\Pn,d)^{\rm mc}$  is of expected dimension.
\end{theo}

\begin{proof}
The first statement follows from  Lemma~\ref{LmNormalCrossing}.
As for the dimension, note that $\dim \sE_{\wv}= 3+d + (d+1)n$. Hence the dimension of the main component of  $\widetilde\cU$
is $$3+d +  (d+1)n - 2n = d(n+1) -n+ 3$$ which is the virtual dimension of $\MPdd$.
\end{proof}

\vsp
When $d=2$, 
the interior of $\widetilde{M}_2(\Pn,2)^{\rm mc}$ consists of stable maps that are double covers of smooth rational curves,
which is smooth and of dimension
$2n+4$.
Since the virtual dimension of $\overline{M}_2(\Pn,2)$ is $n+5$, the main component is of wrong dimension unless $n\eq 1$.
The entire moduli can be treated by hand.

	\vsp
\subsection{The desingularization of $\pi_*\ff^*\sO_{\P^n}(k)$}
In this subsection, we study the pullback of the derived object $\bR\pi_*\ff^*\sO_{\P^n}(k)$ to $\widetilde{M}_2(\Pn,d)$.
Recall $(\pi,\ff)$ is the universal family of $\MPdd$; see (\ref{Eqn:univ_fami})

\begin{prop}
	\label{Prp:Diag}	
For every $k\inn\mathbb Z_{>0}$,
the pullback of  $\bR\pi_*\ff^*\sO_{\P^n}(k)$ to $\ti M_2(\P^n,d)^{\rm mc}$ is locally diagonalizable.	
Moreover, for $k\eq 1$, the pullback of  $\bR\pi_*\ff^*\sO_{\P^n}(1)$ to the entire $\ti M_2(\P^n,d)$ is locally diagonalizable.
\end{prop}

\begin{proof}
Given $k\inn\mathbb Z_{>0}$ and $z\eq[C_0,u_0]\inn\MPdd$,
we can choose a sufficiently small open $U\!\subset\!\ov M_2(\P^n,d)$ containing $z$,
a hyperplane $H\inn\breve{P}^n$, and
a degree $k$ morphism
$\psi_k:\P^n\!\lra\!\P^n$
such that 
\[(C_0,u_0^{-1}\!\circ\!\psi_k^{-1}(H)\big)
\in\fdd\,.
\]
Given a small chart $\cV_{kd}\!\lra\!\fdd$ containing the above point,
since $U$ is assumed to be sufficiently small, we see that
\begin{align}\label{Eqn:f_k}
	f_k=f_{U,H,\psi_k}:
	U\lra\cV_{kd}\,,\qquad
	[C,u]\mapsto
	\big(C,u^{-1}\!\circ\!\psi_k^{-1}(H)\big),
\end{align}
is well-defined.

By Proposition~\ref{PrpChangeofPhi} and Lemma~\ref{lemm:usefulFacts},
we see the pullback of $\bR\rho_*\sO_\cC(\cD)$ to the blowup $\ti\cV_{kd}$ of $\cV_{kd}$ is locally diagonalizable.
Therefore,
the pullback of $\bR\pi_*\ff^*\sO_{\P^n}(k)$  is locally diagonalizable on 
\begin{align}\label{Eqn:U_kd}
	\ti\cU_{kd}:=
	U\times_{f_{k};\cV_{kd}}\ti\cV_{kd}\,.
\end{align}

When $k\eq 1$,
by setting $\psi_1$ to be the identity morphism,
we see $f_{1}$ is equal to $f_{U,H}$ of (\ref{toP0}),
hence
$\ti\cU_d\eq\ti\cU_{U,H}$ is an open neighborhood of the preimage of $z$ in $\ti M_2(\P^n,d)$.
Since $z$ is arbitrary,
we obtain the latter statement of Proposition~\ref{Prp:Diag}.

To justify the former statement of Proposition~\ref{Prp:Diag},
we assume $k\!\ge\!2$.
Following the notation in the proof of Lemma~\ref{Lm:U_pullback},
for each step $\ell\!:=\!\rd_i\ph_j\st_t$ of the sequential blowups of $\fdd$,
we denote by $\ell\!-\!1$  the step immediately before $\ell$, and by $\ti\fM_2^{{\rm div},\ell}$ the corresponding blowup stack.
In addition,
we set 
$$\ti\fM_2^{{\rm div},0}:=\ti\fM_2^{{\rm div}},
\qquad
%\ti\cV_{kd}^0:=\cV_{kd},\qquad
\ti\cV_{kd}^\ell:=\cV_{kd}\!\times_{\fdd}\!\ti\fM_2^{{\rm div},\ell},
\qquad
\ti\cU_{kd}^\ell:=
U\times_{f_{k};\cV_{kd}}\ti\cV_{kd}^\ell.
$$
Similarly, by writing $\ell\eq \rd_i$ (resp.~$\ell\eq\rd_i\ph_j$),
we consider $\ell$ as the last step of $(\rd_i)$ (resp.~$(\rd_i\ph_j)$) that affects the local chart.
Moreover,
we denote by
\begin{align*}
	\ti\cU^{\ell,{\rm mc}}_{kd}\subset
	\ti\cU^{\ell}_{kd}
\end{align*}
the main component of $\ti\cU^{\ell}_{kd}$,
i.e.~the irreducible component whose general points have smooth domain curves.
Since interior of the main components remain unchanged during the sequential blowups,
we have
\begin{align}\label{Eqn:MC_inclusion}
	\tn p_{k}^{\ell-1,\ell}\big(\ti\cU^{\ell,{\rm mc}}_{kd}\big)\subset \ti\cU^{\ell-1,{\rm mc}}_{kd}
\end{align}
for any step $\ell$, 
where $\tn p_{k}^{\ell-1,\ell}:\ti\cU_{kd}^\ell\lra\ti\cU_{kd}^{\ell-1}$ is the natural morphism.

For every step $\ell$ in $(\rd_1\ph_1)$-$(\rd_1\ph_4)$,
we claim 
there exists an isomorphism $\phi^\ell_k:\ti \cU^\ell_{d}\!\lra\!\ti \cU^\ell_{kd}$,
fitting in the following Cartesian diagram:
\begin{center}
	\begin{tikzpicture}{h}
		\draw 
		(2.5,1.2) node {$\ti \cU_{kd}^{\ell}\eq \cU\!\times_{f_{k};\cV_{kd}}\!\ti\cV_{kd}^{\ell}$}
		(5.5,1.2) node {$\ti\cV_{kd}^{\ell}$}
		(-2.3,1.2) node {$\ti \cU_{d}^{\ell}\eq \cU\!\times_{f_{1};\cV_{d}}\!\ti\cV_{d}^{\ell}$}
		(-5.3,1.2) node {$\ti\cV_{d}^{\ell}$}
		(0,0) node {$U$}
		(5.5,0) node {$\cV_{kd}$}
		(-5.3,0) node {$\cV_{d}$}
		%	(1,1.2) node[below] {\small{$\Phi_x$}}
		(2.7,.2) node {\scriptsize{$f_{k}$}}
		(-2.6,.2) node {\scriptsize{$f_{1}$}}
		(4.6,.9) node {\scriptsize{$f_{k}^\ell$}}
		(-4.3,.9) node {\scriptsize{$f_{1}^\ell$}}
		(-.9,.55) node {\scriptsize{${\tn p}_1^\ell$}}
		(.96,.52) node {\scriptsize{${\tn p}_k^\ell$}}
		(0,1.45) node {\tiny{$\phi^\ell_{k}$}}
		%(0,.9) node {\tiny{$\phi^\ell_{2,1}$}}
		;
		%		(2.25,1.93) node {\tiny{$\ov f_{1,2}$}};
		\draw[->,>=stealth'] 
		(4.1,1.2)--(5.15,1.2);
		\draw[->,>=stealth'] 
		(-3.8,1.2)--(-4.95,1.2);
		\draw[->,>=stealth'] 
		(.33,0)--(5.15,0);
		\draw[->,>=stealth']
		(-.35,0)--(-4.95,0);
		\draw[->,>=stealth'] 
		(5.4,.9)--(5.4,.3);
		\draw[->,>=stealth'] 
		(-5.4,.9)--(-5.4,.3);
		\draw[->,>=stealth'] 
		(.93,.9)--(.23,.23);
		\draw[->,>=stealth'] 
		(-.9,.9)--(-.23,.23);
		\draw[dashed,->,>=stealth'] 
		(-.8,1.2)--(.8,1.2);
		%\draw[dashed,->,>=stealth'] 
		%(.8,1.15)--(-.8,1.15);
		%		\draw[dashed,->,>=stealth'] 
		%		(-.85,1.55) .. controls (.15,2.4) and (4,2.4) .. (5.2,1.5);
	\end{tikzpicture}		
\end{center}
where $f_1^\ell$, $\tn p_1^\ell$, $f_k^\ell$, and $\tn p_k^\ell$ are respectively the natural morphisms.

The above claim follows from an inductive argument (over $\ell$) similar to the proof of Lemma~\ref{Lm:U_pullback}.
The key point is as follows.
For every $[C,u]\inn U$,
let $(C,D_{kd})$ and $(C,D_{d})$ be its images in $\fdd$ under $f_{k}$ and $f_{1}$, respectively.
On the one hand, the difference between $D_{kd}$ and $D_d$ is that on each irreducible component $\Si$ of $C$,
we have
\begin{align}\label{Eqn:k_vs_1}
	|D_{kd}\cap \Si|=k\,|D_{d}\cap \Si|\,.
\end{align}
On the other hand, notice the blowup centers in $(\rd_1\ph_1)$-$(\rd_1\ph_4)$ are all determined by whether the weight of an irreducible component of a curve is zero or positive, regardless of the precise value of the weight.
Therefore,
for each step $\ell$ in $(\rd_1\ph_1)$-$(\rd_1\ph_4)$,
the pullbacks of the blowup centers respectively via $f_k^{\ell-1}\!\circ\!\phi_k^{\ell-1}$ and $f_1^{\ell-1}$ are identical on $\ti\cU_{d}^{\ell-1}$.
The remainder of the inductive argument utilizes the universalities of blowups and pullbacks, which is parallel to the proof of Lemma~\ref{Lm:U_pullback}, hence is omitted.

Depending on the location of $z\inn\MPdd$,
the blowups in ($\rd_1\ph_5$) may affect  $\ti\cU_d^{\rd_1\ph_4}$, but they cannot affect $\ti\cU_{kd}^{\rd_1\ph_4}$ because of (\ref{Eqn:k_vs_1}).
Although this still leads to a morphism $\ti\cU_d^{\rd_1\ph_5}/U\!\lra\!\ti\cU_{kd}^{\rd_1\ph_5}/U$,
the blowup centers of $(\rd_2)$ do not always match,
so there may not exist any morphism of $\ti\cU_d^{\rd_2}/U$ to $\ti\cU_{kd}^{\rd_2}/U$.
Nonetheless,
we will show ($\rd_1\ph_5$) does not affect the main component of  $\ti\cU_d^{\rd_1\ph_4}$ as follows.

For every $$\ti z\,\in\,\ti\cU_d^{\rd_1\ph_4},$$
we denote by $y^{\rd_1\ph_j}$, $1\!\le\!j\!\le\!4$, its images in $\ti\cV_d^{\rd_1\ph_j}$,
respectively.
If $y^{\rd_1\ph_4}$ lies in some blowup centers of $(\rd_1\ph_5)$,
then there are only two possibilities:
\begin{itemize}[leftmargin=*]
	\item
	either the core $F$ of $C$ does not contain any point of $D_d$, and there exists a unique point of $D_d$, say $\de_1$, such that $D(\De_{y^{\rd_1\ph_1}})$ as in (\ref{Eqn:D(De)_r1p1}) is equal to $\{\de_1\}$,
	and for every $\de\inn D_d\bsl\{\de_1\}$,
	\begin{align}\label{Eqn:N_de_other}
		N_{[\de]}\not\subset
		\De_{y^{\rd_1\ph_1}}\sqcup 
		\De_{y^{\rd_1\ph_2}}\sqcup 
		\De_{y^{\rd_1\ph_3}}\sqcup 
		\De_{y^{\rd_1\ph_4}}\,;
	\end{align}
	\item 
	or
	$D_d\!\cap\!F$ is a singleton, whose unique element, denoted by $\de_1$, belongs to a separating bridge of $F$,
	and
	(\ref{Eqn:N_de_other}) holds for  every $\de\inn D_d\bsl\{\de_1\}$.
\end{itemize}
In either situation,
the homomorphism 
$\varphi$,
given by the $2\!\times\!d$ matrix $\left[c_{si}\ze_{[\de_i,a_s]}\right]_{1\le s\le 2, 1\le i\le d}$ as in Proposition~\ref{HomFirstOrder},
pulls back to the following form on $\ti\cV_d^{\rd_1\ph_4}$:
\begin{align*}
	\left[\begin{matrix} \xi_1   &  0 \\
		0  & \xi_2 
	\end{matrix} \right]
	\left[\begin{matrix} \ti c_{11}   &  g_{12} & \cdots & g_{1d} \\
		\ti c_{21}  & g_{22} &  \cdots & g_{2d}
	\end{matrix} \right]\,,
\end{align*}
where $\ti c_{s1}$ are invertible, $\xi_s$ are the products of certain local parameters corresponding to (the proper transforms of) the exceptional divisors obtained prior to $(\rd_1\ph_5)$,
and $g_{si}$ are regular functions satisfying \begin{align}\label{Eqn:g_vanish}
	g_{si}(y^{\rd_1\ph_4})=0\,.
\end{align} 
Here, (\ref{Eqn:g_vanish}) follows from (\ref{Eqn:N_de_other}) and the fact that $\ze_{[\de_i,a_s]}$ is divisible by $\ze_{[\de_i]}$.
%Notice that so far, we have not applied any elementary column operation to the matrix of $\varphi$.

Similar to the proof of Lemma~\ref{LmNormalCrossing},
the pullbacks of $\rho_*\sM(\cD_i)$, $1\!\le\!i\!\le\!d$, as well as $\rho_*\sO_\cA(\cA)$ can be trivialized so that
\begin{align*}
	\sE_{\cV_d}\!\times_{\cV_d}\!\ti\cV_d^{\rd_1\ph_4}
	\cong \ti\cV_d^{\rd_1\ph_4}\! \times\!  (\AA^{d+1})^n\,.
\end{align*}
The main component $\ti\cU_d^{\rd_1\ph_4,{\rm mc}}$ of $\ti\cU_d^{\rd_1\ph_4}$ satisfies the following local equations:
\begin{align}\label{Eqn:MC_r1p4}
	\ti c_{s1} w_1^h+\sum_{i=2}^dg_{si}w_i^h =0,
	\qquad
	1\!\le\!s\!\le\!2,\quad
	1\!\le\!h\!\le\!n,
\end{align}
where each $w_i^h\inn\A^1$ is a free variable corresponding to the trivialization of the pullback of $\rho_*\sM(\cD_i)$.

Suppose $\ti z\inn\ti\cU_d^{\rd_1\ph_4,{\rm mc}}$, while $y^{\rd_1\ph_4}$ lies in some blowup centers of $(\rd_1\ph_5)$.
Then,
by (\ref{Eqn:g_vanish}) and (\ref{Eqn:MC_r1p4}),
$\ti z$ satisfies
\begin{align*}
	w_1^1=w_1^2=\cdots=w_1^n=0.
\end{align*}
However, the above equalities imply the restriction of $u$ to the irreducible component $\Si\!\subset\!C$ containing $\de_1$ is constant,
because $\Si$ does not contain any other point of $D_d$.
This contradicts the fact that $\deg(u|_\Si)\eq 1$ (because $D_d\!\cap\!\Si\eq\{\de_1\}$).
Therefore,
the image of  $\ti\cU_d^{\rd_1\ph_4,{\rm mc}}$ in $\ti\cV_d^{\rd_1\ph_4}$ misses the blowup centers of $(\rd_1\ph_5)$,
i.e.~$\ti\cU_d^{\rd_1\ph_5,{\rm mc}}/U$ is isomorphic to $\ti\cU_d^{\rd_1\ph_4,{\rm mc}}/U$.

Since $\ti\cU_{kd}^{\rd_1\ph_5,{\rm mc}}/U$ is isomorphic to $\ti\cU_{kd}^{\rd_1\ph_4,{\rm mc}}/U$,
the isomorphism $\phi^{\rd_1\ph_4}_k$ induces an isomorphism 
\begin{align*}
	\phi^{\rd_1\ph_5,{\rm mc}}_k:
	\ti\cU_d^{\rd_1\ph_5,{\rm mc}}/U\lra 
	\ti\cU_{kd}^{\rd_1\ph_5,{\rm mc}}/U\,.
\end{align*}
Moreover, just like in the first four phases of $(\rd_1)$, 
the blowups of $(\rd_2)$-$(\rd_3\ph_2)$ have the same effect on $\ti\cU_d^{\rd_1\ph_5,{\rm mc}}$ and $\ti\cU_{kd}^{\rd_1\ph_5,{\rm mc}}$ because of (\ref{Eqn:k_vs_1}).
Taking (\ref{Eqn:MC_inclusion}) into consideration,
we obtain an isomorphism 
\begin{align*}
	\phi^{\rd_3\ph_2,{\rm mc}}_k:
	\ti\cU_d^{\rd_3\ph_2,{\rm mc}}/U\lra 
	\ti\cU_{kd}^{\rd_3\ph_2,{\rm mc}}/U\,.
\end{align*}

Finally,
notice that $(\rd_3\ph_3)$ does not affect $\ti\cU_{kd}^{\rd_3\ph_2,{\rm mc}}$,
for otherwise $k\eq 2$,
and $\ti\cU_d^{\rd_3\ph_2,{\rm mc}}$ contains points whose images in $\ti\cU_d^{\rd_1\ph_4,{\rm mc}}$ belong to some blowup centers of $(\rd_1\ph_5)$, which we have shown is impossible.
In addition, $(\rd_3\ph_4)$ does not affect $\ti\cU_{kd}^{\rd_3\ph_2,{\rm mc}}$ either,
because we have shown $(\rd_1\ph_5)$ does not affect $\ti\cU_{kd}^{\rd_1\ph_4}$.
In other words, $\ti\cU_{kd}^{\rd_3\ph_4,{\rm mc}}/U$ is isomorphic to
$\ti\cU_{kd}^{\rd_3\ph_2,{\rm mc}}/U$.
After the entire sequential blowups terminate,
we thus obtain a morphism
\begin{align*}
	\phi^{{\rm mc}}_k: \big(U\!\times_{\MPdd}\ti M_2(\P^n,d)^{\rm mc}\big)\big/ U\lra
	\ti\cU_{kd}/U\,,
\end{align*}
given by the composition of the following:
\begin{align*}
	\big(U\!\times_{\MPdd}\ti M_2(\P^n,d)^{\rm mc}\big)\stackrel{\sim}{\lra}\ti\cU_d^{\rd_3\ph_4,{\rm mc}}\lra \ti\cU_d^{\rd_3\ph_2,{\rm mc}}
	\stackrel{\sim}{\lra}
	\ti\cU_{kd}^{\rd_3\ph_2,{\rm mc}}
	\stackrel{\sim}{\lra}
	\ti\cU_{kd}^{\rd_3\ph_4,{\rm mc}}
	\hookrightarrow
	\ti\cU_{kd}\,,
\end{align*}
in which the first isomorphism is as in Corollary~\ref{Crl:Gluing},
and the second isomorphism is $\phi^{\rd_3\ph_2,{\rm mc}}_k$.
As shown in the sentence containing (\ref{Eqn:U_kd}),
the pullback of $\bR\pi_*\ff^*\sO_{\P^n}(k)$ is locally diagonalizable on $\ti\cU_{kd}$,
hence is on $U\!\times_{\MPdd}\!\ti M_2(\P^n,d)^{\rm mc}$.
Since $U$ is arbitrary,
the proof of Proposition~\ref{Prp:Diag} is complete.
\end{proof}

\vsp
\begin{coro}
	\label{Crl:Diag}
Let $(\ti\pi^{\rm mc}, \ti\ff^{\rm mc})$ be the pullback to $\ti M_2(\P^n,d)^{\rm mc}$ of the universal family $(\pi, \ff)$. Then the direct image sheaf $(\ti\pi^{\rm mc})_{\ast} (\ti\ff^{\rm mc})^*
\sO_{\Pn}(k)$  is locally free for all $k\!\ge\! 1$.
It is of rank $kd\!-\!1$ if $d\!>\!2$.
\end{coro}

\begin{proof}
Given small open $U/\cV$ of $\MPdd$ and $H\inn\bH_U$, let 
$\ti\cU\eq\ti\cU_{U,H}$ be as in (\ref{Eqn:cU}).
Recall $\cX/U$ is the universal family over $U$, $(\cC,\cD)$ is the universal family over $\cV$, 
and $\cA\!\subset\!\cC$ is the auxiliary divisor as in (\ref{package}).
We set
\begin{align*}
	\ti\cX'=\cX\!\times_U\!\ti\cU=\cC\!\times_\cV\!\ti\cU,\qquad
	\ti\cD'=\cD\!\times_U\!\ti\cU=\cD\!\times_\cV\!\ti\cU,
\end{align*}
where the second equality in each set of identities follows from the universality of $\fdd$.
Let $\ti\cA'\eq \cA\!\times_{\cV}\ti\cU$ and $\ti\sL'\eq\sO_{\ti\cX'}(\ti\cD')$.

The same argument used in the proof of \cite[Theorem 2.11]{HL10}, particularly the sentence containing \cite[(5.22)]{HL10},
applies to the current situation verbatim.
More precisely, the Cartesian diagram
\begin{align*}
	\begin{CD}
		\ti\cX' @>{\ti\be}>> \cC\\
		@V{\ti\pi'}VV  @V{\rho}VV \\
		\ti\cU @>{\be}>> \cV
	\end{CD}
\end{align*}
along with the cohomology and base change theorem,
give the following Cartesian diagram
\begin{align*}
	\begin{CD}
		\ti\pi_*'\ti\sL'(\ti\cA') @>{\ti\varphi}>> \ti\pi_*'\ti\sO'_{\ti\cA'}(\ti\cA')\\
		@V{\wr}VV  @V{\wr}VV \\
		\be^*\rho_*\sL(\cA) @>{\be^*\varphi}>> \be^*\rho_*\sO_{\cA}(\cA)
	\end{CD}
\end{align*}
In the diagram above, the two vertical arrows are isomorphisms, and $\ti\varphi$ is the evaluation homomorphism.
Thus,
investigating $\ti\varphi$ is equivalent to investigating the pullback of $\varphi$ to $\ti\cU$.
Therefore, in order to show $(\ti\pi^{\rm mc})_{\ast} (\ti\ff^{\rm mc})^*
\sO_{\Pn}(k)$  is locally free,
it suffices to show the pullback of $\pi_{\ast} \ff^*
\sO_{\Pn}(k)$ to $\ti M_2(\P^n,d)^{\rm mc}$ is locally free,
which follows immediately from  Propositions~\ref{Prp:Diag} and~\ref{PrpLocallyFree}.

As for the rank, when $d\!>\!2$,
notice near a general point of $\MPdd^{\rm mc}$,
the matrix of $\varphi$, whose size is $2\!\times\!kd$, is of rank 2.
Thus, $\ker\varphi$ is of rank $kd\!-\!2$.
So by Lemma~\ref{lemm:usefulFacts}, $\ti\pi'_*\ti\sL'$ is of  rank $kd\!-\!1$.
\end{proof}

\vsp
\subsection{The construction of $\ti M_2(\P^n,d;k)$}
For every $k\inn\mathbb Z_{>0}$, following the same argument as in Lemma~\ref{Lm:U_pullback}, we observe the pullbacks $\ti\cU_{kd}$ as in (\ref{Eqn:U_kd})
% constructed in the proof of Proposition~\ref{Prp:Diag} 
can also be glued together to form a Deligne-Mumford stack,
denoted by 
\begin{align*}
	\ti M_2(\P^n,d;k)\,.
\end{align*}
For $k\!>\!1$, $\ti M_2(\P^n,d;k)$ is generally not as smooth as $\ti M_2(\P^n,d;1)\eq\ti M_2(\P^n,d)$;
nonetheless,
as far as the local diagonalization of $\bR\pi_*\ff^*\sO_{\P^n}(k)$ is concerned,
$\ti M_2(\P^n,d;k)$ behaves nicer than $\ti M_2(\P^n,d)$.

\begin{prop}
	\label{Prp:Diag_k}
For every $k\inn\mathbb Z_{>0}$, there exist a Deligne-Mumford stack $\ti M_2(\P^n,d;k)$
and a morphism $\ti M_2(\P^n,d;k)\!\lra\!\ov M_2(\P^n,d)$
such that for every small open $U\!\subset\!\ov M_2(\P^n,d)$, $H\inn\breve{\P}^n$, and degree $k$ morphism $\psi_k:\P^n\!\lra\!\P^n$ satisfying that $f_{U,H,\psi_k}$ as in (\ref{Eqn:f_k}) is well-defined,
we have the following commutative diagram
\[
\begin{tikzcd}[column sep=.5em,row sep=1.5em]
	U\times_{f_{U,H,\psi_k};\,\fM_2^{\rm div}}\ti\fM_2^{\rm div}  \arrow[dr,""] \arrow[rr,"\sim"] && U\times_{\ov M_2(\P^n,d)}\ti M_2(\P^n,d;k)
	\arrow[dl,""]
	\\
	& U 
\end{tikzcd}
\]
where the horizontal arrow is an isomorphism (of stacks over $U$),
and the other two arrows are natural morphisms.

Moreover, the pullback of the derived object $\bR \pi_* \ff^* \sO_{\Pn}(k)$ to $\ti M_2(\P^n,d;k)$ becomes locally diagonalizable. 

Furthermore,
for every irreducible component $N$ of $\ti M_2(\P^n,d;k)$,
with $(\ti\pi_N, \ti\ff_N)$ denoting the pullback to $N$ of the universal family (\ref{Eqn:univ_fami}), the direct image sheaf $(\ti\pi_N)_{\ast} (\ti\ff_N)^*
\sO_{\Pn}(k)$  is locally free.
It is of rank $kd\!-\!1$ if $d\!>\!2$ and the general points of $N$ have smooth domain curves.
\end{prop}

\begin{proof}
	The existence of $\ti M_2(\P^n,d;k)$ follows from a line by line repetition of the proof of Lemma~\ref{Lm:U_pullback},
	hence we omit the detail.
	The local diagonalizability of the pullback of $\bR \pi_* \ff^* \sO_{\Pn}(k)$ to $\ti M_2(\P^n,d;k)$ follows directly from Proposition~\ref{PrpChangeofPhi}.
	The proof of the remaining statements are parallel to the proof of Corollary~\ref{Crl:Diag},
	hence is omitted.
\end{proof}

\vsp

\end{document}